\newtheorem{corollary}{Corollary}
\newtheorem{prop}{Proposition}
\newtheorem{thm}{Theorem}
\newtheorem{lemma}{Lemma}
\theoremstyle{definition}
\newtheorem{defn}{Definition}
\newtheorem{remark}{Remark}
\newtheorem{ex}{Example}
\newcommand{\black}{\color{black}}
\newcommand{\blue}{\color{blue}}
\begin{document}

  \title{ Theoretical Results on  Fractionally Integrated Exponential
    Generalized Autoregressive Conditional Heteroskedastic  Processes}

\author{S\'ilvia R.C. Lopes \ and Taiane   S. Prass\footnote{Corresponding author. E-mail:
   taianeprass@gmail.com}\vspace{0.2cm}\\
  Mathematics Institute - UFRGS\vspace{0.1cm}\\
  Porto Alegre - RS - Brazil}

\maketitle

  \begin{abstract}
        Here we present a theoretical study on the main properties of
        Fractionally Integrated Exponential Generalized Autoregressive Conditional
        Heteroskedastic (FIEGARCH)  processes.  We analyze the conditions for
        the existence, the invertibility, the stationarity and the ergodicity of
        these processes.  We prove that, if $\{X_t\}_{t \in \mathds{Z}}$ is a
        FIEGARCH$(p,d,q)$ process then,   under mild conditions,
        $\{\ln(X_t^2)\}_{t\in\mathds{Z}}$ is an ARFIMA$(q,d,0)$,  that is, an
        autoregressive fractionally integrated moving average process.   The
        convergence order for the polynomial coefficients that describes the
        volatility is presented and results related to the spectral representation
        and to the covariance structure of both processes
        $\{\ln(X_t^2)\}_{t\in\mathds{Z}}$ and $\{\ln(\sigma_t^2)\}_{t\in\mathds{Z}}$
        are also discussed.  Expressions for the kurtosis and the asymmetry measures
        for any stationary FIEGARCH$(p,d,q)$ process are also derived.  The $h$-step
        ahead forecast for the processes $\{X_t\}_{t \in \mathds{Z}}$,
        $\{\ln(\sigma_t^2)\}_{t\in\mathds{Z}}$ and $\{\ln(X_t^2)\}_{t\in\mathds{Z}}$
        are given with their respective mean square error forecast.  The work also
        presents a Monte Carlo simulation study showing how to generate, estimate
        and forecast based on six different FIEGARCH models. The forecasting
        performance of  six models belonging to the class of autoregressive
        conditional heteroskedastic models (namely, ARCH-type models)  and radial basis models
        is compared through an empirical application to Brazilian  stock market exchange index.    \\

\noindent {\bf Keywords:}   Long-Range Dependence, Volatility, Stationarity, Ergodicity,  FIEGARCH Processes.

\noindent {\bf  MSC (2000)}:  60G10, 62G05, 62G35, 62M10, 62M15, 62M20

  \end{abstract}

\section{Introduction}

Financial time series present an important characteristic known as volatility
which can be defined/mea\-sured in different ways but it is not directly
observable.  A common approach, but not unique, is to define the volatility as
the conditional standard deviation (or the conditional variance) of the process
and use heteroskedastic models to describe it.

ARCH-type models, proposed by \cite{EN82}, constitute one of the main classes of
econometric models used for representing the dynamic evolution of volatilities.
Another popular one is the class of Stochastic Volatility (SV) models (see,
\cite{BREA98} and references therein).   In both cases, ARCH-type and SV
models, the stochastic process $\{X_t\}_{t\in\mathds{Z}}$ can be written as
\[
X_t = \sigma_tZ_t, \quad \mbox{for all } t \in \mathds{Z},
\]
where $\{Z_t\}_{t\in\mathds{Z}}$ is a sequence of independent identically
distributed (i.i.d.) random variables, with zero mean and variance equal to one,
and $\sigma_t := \mbox{Var}(X_t|\mathcal{F}_{t-1})$, where $\mathcal{F}_{t-1}$
denotes the sigma field generated by the past informations until time $t-1$.  An
important difference between these two classes is that, for ARCH-type models,
$\mathcal{F}_{t} := \sigma(\{X_s\}_{s\leq t})$ or $\mathcal{F}_{t} :=
\sigma(\{Z_s\}_{s\leq t})$, while for SV models $\mathcal{F}_{t} :=
\sigma(\{Z_s, \eta_s\}_{s\leq t})$, where $\{\eta_t\}_{t\in\mathds{Z}}$ is a
sequence of latent variables, independent of $\{Z_t\}_{t\in\mathds{Z}}$.
Therefore,  the volatility of a SV process is specified as a latent
variable which is not directly observable and this can make the estimation
challenging, which is a known drawback of this class of models.

By ARCH-type models we mean not only the ARCH$(p)$ model,  proposed by \cite{EN82},  where
\[
\sigma_t^2 = \alpha_0 + \sum_{i = 1}^p\alpha_i X_{t-k}^2, \quad \mbox{for all }
t\in \mathds{Z},
\]
 (which characterizes the volatility as a function of powers of past
observed values, consequently, the volatility can be observed one-step ahead),
but also the several generalizations that were lately proposed to properly model
the dynamics of the volatility.  Among the generalizations of the ARCH model are
the  Generalized ARCH (GARCH) processes, proposed by \cite{ENBO86}, and the
Exponential GARCH (EGARCH) processes, proposed by \cite{NE91}.  These models are given,
respectively, by \eqref{figarch} and \eqref{fiegarch} below by setting $d = 0$.
The usual definition of $\sigma_t^2$ for a GARCH($p^*,q$) model, namely,
\[
\sigma_t^2 = \omega + \sum_{i = 1}^{p^*} \alpha_i X_{t-i}^2 + \sum_{j =
  1}^q\beta_{j}\sigma_{t-j}^2, \quad \mbox{for all} \quad t \in\mathds{Z},
\]
is obtained from \eqref{figarch} by letting $p^* := \max\{p,q\}$ and  $\alpha(z)
= \sum_{i = 1}^{p^*}\alpha_iz^i := \beta(z) - \phi(z)$,  where $\beta(z) :=  1 -
\sum_{j=1}^q \beta_jz^j$ and $\phi(z) :=  1 -\sum_{k=1}^p\phi_kz^k$.

ARCH, GARCH and EGARCH are all short memory models.  Among the generalizations
that capture the effects of long-memory characteristic in the conditional
variance are the Fractionally Integrated GARCH (FIGARCH), proposed by
\cite{BAEA96}, and the Fractionally Integrated EGARCH (FIEGARCH), introduced by
\cite{BOMI96}.   For a FIGARCH$(p,d,q)$, $\sigma_t^2$ is given by
\begin{equation}\label{figarch}
 \bigg[1 -  \sum_{j=1}^q \beta_j\mathcal{B}^k\bigg]\sigma_{t}^2 =  \omega + \bigg(1 -  \sum_{j=1}^q \beta_j\mathcal{B}^k - \bigg[1 -  \sum_{k=1}^p\phi_k\mathcal{B}^k\bigg](1-\mathcal{B})^{d}\bigg)X_t^2,  \quad \mbox{for all } t\in \mathds{Z},
\end{equation}
while for a FIEGARCH$(p,d,q)$, $\sigma_t^2$ is defined through the relation,
\begin{align}
  \ln(\sigma_t^2)  &= \omega+\frac{1 - \sum_{i=1}^p\alpha_i\mathcal{B}^i}{1 - \sum_{j=1}^q\beta_j\mathcal{B}^j}(1-\mathcal{B})^{-d}\big(\theta Z_{t-1} + \gamma [|Z_{t-1}| - \mathds{E}(|Z_{t-1}|)]\big)\nonumber\\
  &:=
  \omega+\frac{\alpha(\mathcal{B})}{\beta(\mathcal{B})}(1-\mathcal{B})^{-d}g(Z_{t-1}),
  \quad \mbox{for all } t\in \mathds{Z},\label{fiegarch}
\end{align}
where $\mathcal{B}$ is the backward shift
operator defined by $\mathcal{B}^k (X_t) = X_{t-k}$, for all $k\in\mathds{N}$,
and  $(1-\mathcal{B})^d$ is the operator defined by its Maclaurin series
expansion as,
\begin{equation*}
  (1-\mathcal{B})^d = \sum_{k=0}^{\infty}\frac{\Gamma(k-d)}{\Gamma(k+1)\Gamma(-d)} :=
  \sum_{k=0}^{\infty}\delta_{d,k}\,\mathcal{B}^k := \delta_d(\mathcal{B}),
\end{equation*}
with $\Gamma(\cdot)$  the gamma function.

FIEGARCH models have not only the capability of modeling clusters of volatility
(as in the ARCH and GARCH models) and capturing its asymmetry\footnote{By
  asymmetry we mean that the volatility reacts in an asymmetrical form to the
  returns, that is, volatility tends to rise in response to ``bad'' news and to
  fall in response to ``good'' news.}  (as in the EGARCH models) but they also
take into account the characteristic of long memory in the volatility (as in the
FIGARCH models, with the advantage of been weakly stationary if $d < 0.5$).
Besides non-stationarity (in the weak sense), another drawback of the
FIGARCH$(p,d,q)$ models is that we must have $d \geq 0$ and the polynomial
coefficients in its definition must satisfy some restrictions so the conditional
variance will be positive.  FIEGARCH$(p,d,q)$ models do not have this problem
since the variance is defined in terms of the logarithm function.

Some authors argue that the long memory behavior observed in the sample
autocorrelation and periodogram functions of financial time series could
actually be caused by the non-stationarity property.  According to
\cite{MIST99}, long range behavior could be just an artifact due to structural
changes. On the other hand, \cite{MIST99} also argue that, when modeling return
series with large sample size, considering a single GARCH model is unfeasible
and that the best alternative would be to update the parameter values along the
time.  As an alternative to the traditional heteroskedastic models, \cite{MI00}
presents a regime switching model that, combined with heavy tailed
distributions, presents the long memory characteristic.

It is our belief that FIEGARCH models are a competitive alternative for modeling
large sample sized data, especially because they avoid parameter updating. Also,
as we prove in this work, FIEGARCH processes are weakly stationary if and only
if $d<0.5$ and hence, non-stationarity can be easily identified. Moreover,
\cite{SAEA06} analyze the daily returns of the Tunisian stock market and rule
out the random walk hypothesis.  According to the authors, the rejection of this
hypothesis seems to be due to substantial non-linear dependence and not to
non-stationarity in the return series and, after comparing several ARCH-type
models they concluded that a stationary FIEGARCH model provides the best fit for
the data.  Furthermore, \cite{JA09} presents a sub period investigation of long
memory and structural changes in volatility. The authors consider FIEGARCH
models to examine the long run persistence of stock return volatility for 23
developing markets for the period of January 2000 to October 2007.  No clear
evidence that long memory characteristic could be attributed to structural
changes in volatility was found.

 Although, in practice, often a simple FIEGARCH$(p,d,q)$ model with $p, q
\in \{0,1\}$ suffices to fully describe financial time series (for instance,
\cite{JA09} and \cite{RUVE08}, consider FIEGARCH$(0,d,1)$ models and
\cite{SAEA06} considers FIEGARCH$(1,d,1)$ models), there are evidences that for
some financial time series higher values of $p$ and $q$ are in fact necessary
(\cite{PRLO1},\cite{PRLO2},\cite{ PR08}).  In this work we present a theoretical
study on the main properties of FIEGARCH$(p,d,q)$ processes, for any $p,q \geq
0$.

One of the contributions of the paper is to extend, for any $p$ and $q$, the
results already known in the literature for $p, q \in\{0,1\}$ or $d=0$. In
particular,   we provide the expressions for the asymmetry and
kurtosis measures of FIEGARCH$(p,d,q)$ process, for all $p,q \geq 0$. These
results extends the one in \cite{RUVE08} where only the case $p=0$ and $q=1$
was considered and only the kurtosis measure was derived.

Another contribution of this work is the ARFIMA representation of
$\{\ln(X_t^2)\}_{t\in\mathds{Z}}$, when $\{X_t\}_{t\in\mathds{Z}}$ is a FIEGARCH
process, which is derived in the paper.  This results is
very useful in model identification and parameter estimation since the
literature of ARFIMA models is well developed (see \cite{LO08} and references
therein) and, to the best of our knowledge, this result is absent in the
literature.

To derive the properties of $\{\ln(X_t^2)\}_{t\in\mathds{Z}}$,  we first
investigate the conditions for the existence of power series representation for
$\lambda(z) = \alpha(z)[\beta(z)]^{-1}(1-z)^{-d}$ and the behavior of the
coefficients in this representation.  This study is fundamental not only for
simulation purposes but also to draw conclusions on the autocorrelation and
spectral density functions decay of the non-observable process
$\{\ln(\sigma_t^2)\}_{t\in\mathds{Z}}$ and the observable one
$\{\ln(X_t^2)\}_{t\in\mathds{Z}}$.  We also provide a recurrence formula to
calculate the coefficients of the series expansion of $\lambda(\cdot)$,  for any $p,q \geq 0$. This
recurrence formula allows to easily simulate FIEGARCH processes.

The fact that  $\{\ln(\sigma_t^2)\}_{t\in\mathds{Z}}$ is
an ARFIMA$(q,d,p)$ process and the result that any FIEGARCH process is a
martingale difference with respect to the natural filtration
$\{\mathcal{F}_{t}\}_{t\in\mathds{Z}}$, where $\mathcal{F}_{t} := \sigma(\{Z_s\}_{s\leq t})$,
are applied to obtain the $h$-step ahead forecast
for the processes $\{X_t\}_{t \in \mathds{Z}}$ and $\{X^2_t\}_{t \in
  \mathds{Z}}$.  We also present the $h$-step ahead forecast for both
$\{\ln(\sigma_t^2)\}_{t\in\mathds{Z}}$ and $\{\ln(X_t^2)\}_{t\in\mathds{Z}}$
processes, with their respective mean square error forecast.  To the best of our
knowledge, formal proofs for these expressions are not given in the literature
of FIEGARCH$(p,d,q)$ processes.

We also present a simulation study including generation, estimation and
forecasting features of FIEGARCH models.  Despite the fact that the
quasi-likelihood is one of the most applied methods in non-linear process
estimation, asymptotic results for FIEGARCH processes are still an open question
(see \cite{PA07})\footnote{The asymptotic properties for the quasi-likelihood
  method are well established for ARCH/GARCH models (see, for instance,
  \cite{LEHA94}, \cite{LU96}, \cite{BEEA03}, \cite{BEHO03} and \cite{HAYA03})
  and also for EGARCH models (see, for instance, \cite{STMI06}).}. Therefore, we
consider here a simulation study to investigate the finite sample
performance of the estimator. Since it is expected that, the better the fit, the better the
forecasting, we also investigate the fitted models' forecasting performance.

The paper is organized as follows: Section \ref{fiesection} presents the formal
definition of FIEGARCH process and its theoretical properties.  We give a
recurrence formula to obtain the coefficients in the power series expansion of
the polynomial that describes the volatility and we show their asymptotic
properties.  The autocovariance and spectral density functions of the processes
$\{\ln(\sigma_t^2)\}_{t\in\mathds{Z}}$ and $\{\ln(X_t^2)\}_{t\in\mathds{Z}}$ are
also presented and analyzed. The asymmetry and kurtosis measures of any
stationary FIEGARCH process are also presented. Section \ref{forecastingsection}
presents the theoretical results regarding the forecasting.  Section
\ref{simulationsection} presents a Monte Carlo simulation study including the
generation of FIEGARCH time series, estimation of the model parameters and the
forecasting based on the fitted model.  Section \ref{analysisObserved} presents
the analysis of an observed time series and the comparison of the forecasting
performance for different ARCH-type and radial basis models.  Section \ref{conclusions} concludes
the paper.

\section{FIEGARCH Process}\label{fiesection}

In this section we present the \emph{Fractionally Integrated Exponential
  Generalized Autoregressive Conditional Heteroskedastic} process
(FIEGARCH). This class of processes, introduced by \cite{BOMI96}, describes not
only the volatility varying on time and the volatility clusters (known as
ARCH/GARCH effects) but also the volatility long-range dependence and its
asymmetry.

Here, we present some results related to the existence, stationarity and
ergodicity for these processes. We analyze the autocorrelation and the spectral
density functions decay for both $\{\ln(\sigma_t^2)\}_{t\in\mathds{Z}}$ and
$\{\ln(X_t^2)\}_{t\in\mathds{Z}}$ processes.  Conditions for the existence of a
series expansion for the polynomial that describes the volatility are given and
a recurrence formula to calculate the coefficients of this expansion is
presented. We also discuss the coefficients asymptotic behavior. We observe
that  if $\{X_t\}_{t \in \mathds{Z}}$ is a FIEGARCH$(p,d,q)$ process then
$\{\ln(\sigma_t^2)\}_{t\in\mathds{Z}}$ is an ARFIMA$(q,d,p)$ process and we
prove that, under mild conditions, $\{\ln(X_t^2)\}_{t\in\mathds{Z}}$ is an
ARFIMA$(q,d,0)$ process with correlated innovations. We present the expression
for the kurtosis and the asymmetry measures for any stationary FIEGARCH$(p,d,q)$
process.

Throughout the paper, given $a\in \mathds{R} \cup \{-\infty,+\infty\}$, $f(x) =
O(g(x))$ means that $|f(x)|\le c|g(x)|$, for some $c>0$, as $x \rightarrow a$;
$f(x) = o(g(x))$ means that $f(x)/g(x) \rightarrow 0$, as $x \rightarrow a$;
$f(x) \sim g(x)$ means that $ f(x)/g(x) \rightarrow 1$, as $x \rightarrow a$.
We also say that $f(x) \approx g(x)$, as $x \rightarrow \infty$, if for any
$\varepsilon > 0$, there exists $x_0 \in \mathds{R}$ such that $|f(x) - g(x)| <
\varepsilon$, for all $x \geq x_0$. Also, given any set $T$, $T^*$ corresponds
to the set $T\backslash\{0\}$ and $\mathbb{I}_{A}(\cdot)$ is the indicator
function defined as $\mathbb{I}_{A}(z) = 1$, if $z\in A$, and 0, otherwise.

From now on, let $(1-\mathcal{B})^d$ be the operator defined by its Maclaurin
series expansion as,
\begin{equation}\label{binomialexp}
  (1-\mathcal{B})^d = \sum_{k=0}^{\infty}\frac{\Gamma(k-d)}{\Gamma(k+1)\Gamma(-d)} :=
  \sum_{k=0}^{\infty}\delta_{d,k}\,\mathcal{B}^k := \delta_d(\mathcal{B}),
\end{equation}
where $\Gamma(\cdot)$ is the gamma function, $\mathcal{B}$ is the backward shift
operator defined by $\mathcal{B}^k (X_t) = X_{t-k}$, for all $k\in\mathds{N}$,
and the coefficients $\delta_{d,k}$ are such that $\delta_{d,0} = 1$ and
$\delta_{d,k-1} = \delta_{d,k-1}\big( \frac{k-1-d}{k}\big)$, for all $k\geq
1$.

\begin{remark}
  Note that expression \eqref{binomialexp} is valid only for non-integer values
  of $d$. When $d\in\mathds{N}$, $(1-\mathcal{B})^d$ is merely the difference
  operator $1-\mathcal{B}$ iterated $d$ times.  Also, one observe that, upon
  replacing $d$ by $-d$, the operator $(1-\mathcal{B})^{-d}$ has the same
  binomial expansion as the polynomial given in \eqref{binomialexp}, that is

  \begin{equation}\label{pik}
    (1-\mathcal{B})^{-d} = \sum_{j=0}^{\infty} \delta_{-d,j}\mathcal{B}^{j} := \sum_{k=0}^{\infty}
    \pi_{d,k}\mathcal{B}^{k},
  \end{equation}
  where $\pi_{d,j} = \delta_{-d,j}$, for all $j\in\mathds{N}$. Moreover,
  $\pi_{d,k} \sim \frac{1}{\Gamma(d)\, k^{1-d}}$, as $k\rightarrow \infty$ (see
  \cite{PR08}).  Therefore, $\pi_{d,k} = O(k^{d-1})$, as $k$ goes to infinity.
\end{remark}

Suppose that $\{Z_t\}_{t \in \mathds{Z}}$ is a sequence of independent and
identically distributed (i.i.d.) random variables, with zero mean and variance
equal to one.  Let $\alpha(\cdot)$ and $\beta(\cdot)$ be the polynomials of
order $p$ and $q$ defined, respectively, by

\begin{equation}
  \alpha(z) = \sum_{i=0}^{p}(-\alpha_i)z^i = 1-\sum_{i=1}^{p}\alpha_iz^i \quad \mbox{ and}
  \quad \beta(z) = \sum_{j=0}^{q}(-\beta_j)z^j = 1-\sum_{j=1}^{q}\beta_jz^j,\label{alphabeta}
\end{equation}
with $\alpha_0 = \beta_0 = -1$.  We assume that $\beta(z)\neq 0$, if $|z|\leq
1$, and that $\alpha(\cdot)$ and $\beta(\cdot)$ have no common roots. These
conditions assure that the operator
$\frac{\alpha(\mathcal{B})}{\beta(\mathcal{B})}$ is well defined.

\begin{defn}\label{definitionfie}
  Let $\{X_t\}_{t \in \mathds{Z}}$ be the stochastic process defined as
  \begin{align} X_t& \, = \, \sigma_tZ_t,\label{generalproc}\\
    \ln(\sigma_t^2)&\, = \,
    \omega+\frac{\alpha(\mathcal{B})}{\beta(\mathcal{B})}(1-\mathcal{B})^{-d}g(Z_{t-1}),
    \quad \mbox{ for all } t\in\mathds{Z},\label{fieprocess}
  \end{align}
  where $\omega \in \mathds{R}$ and $g(\cdot)$ is defined by
  \begin{equation}\label{functiong}
    g(Z_{t})=\theta Z_t +
    \gamma\left[|Z_t|-\mathds{E}(|Z_t|)\right],\quad \mbox{ for all }
    t\in\mathds{Z}, \ \mbox{ with } \theta, \gamma \in \mathds{R}.
  \end{equation}
  Then  $\{X_t\}_{t \in \mathds{Z}}$ is a \emph{Fractionally Integrated} EGARCH
  \emph{process}, denoted by FIEGARCH$(p,d,q)$.
\end{defn}

\begin{figure}[!ht]
    \centering
    \mbox{
      \subfigure[ $\{x_t\}_{t =1}^{2000}$]
      {\includegraphics[width = 0.32\textwidth]{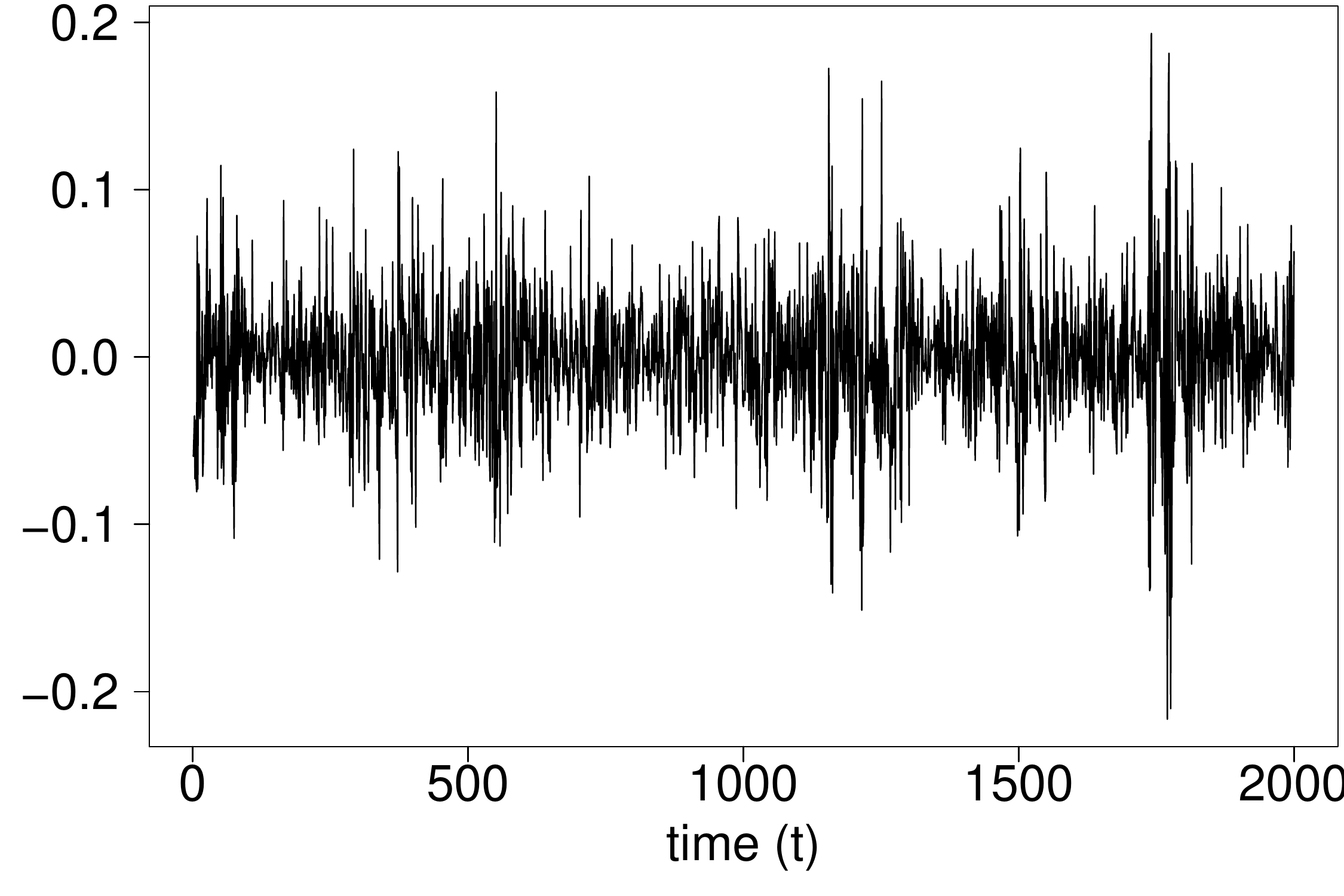}}
      \subfigure[$\{\sigma_t^2\}_{t=1}^{2000}$]
      {\includegraphics[width = 0.32\textwidth]{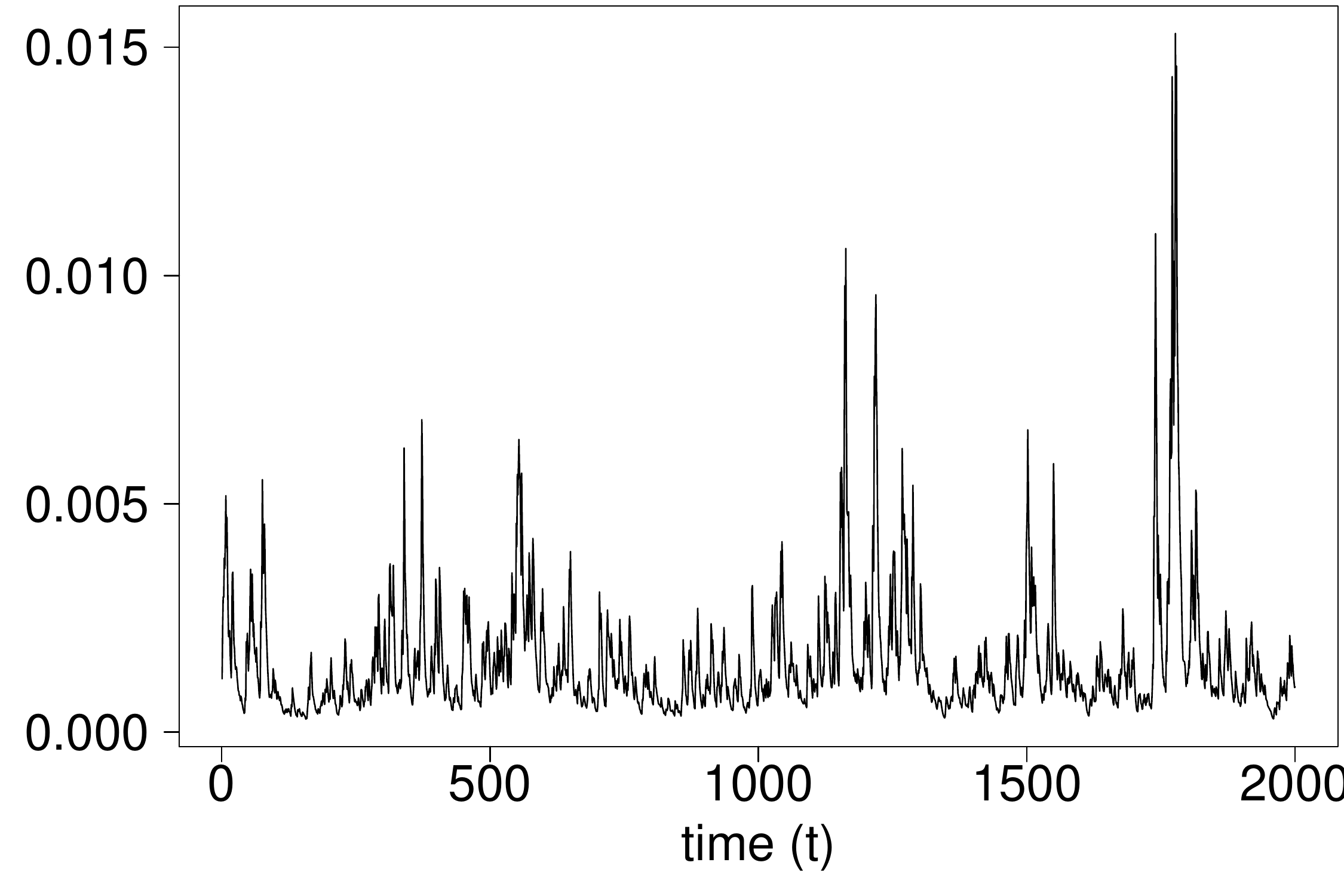}}
      \subfigure[$\{\ln(\sigma_t^2)\}_{t=1}^{2000}$]
      {\includegraphics[width = 0.32\textwidth]{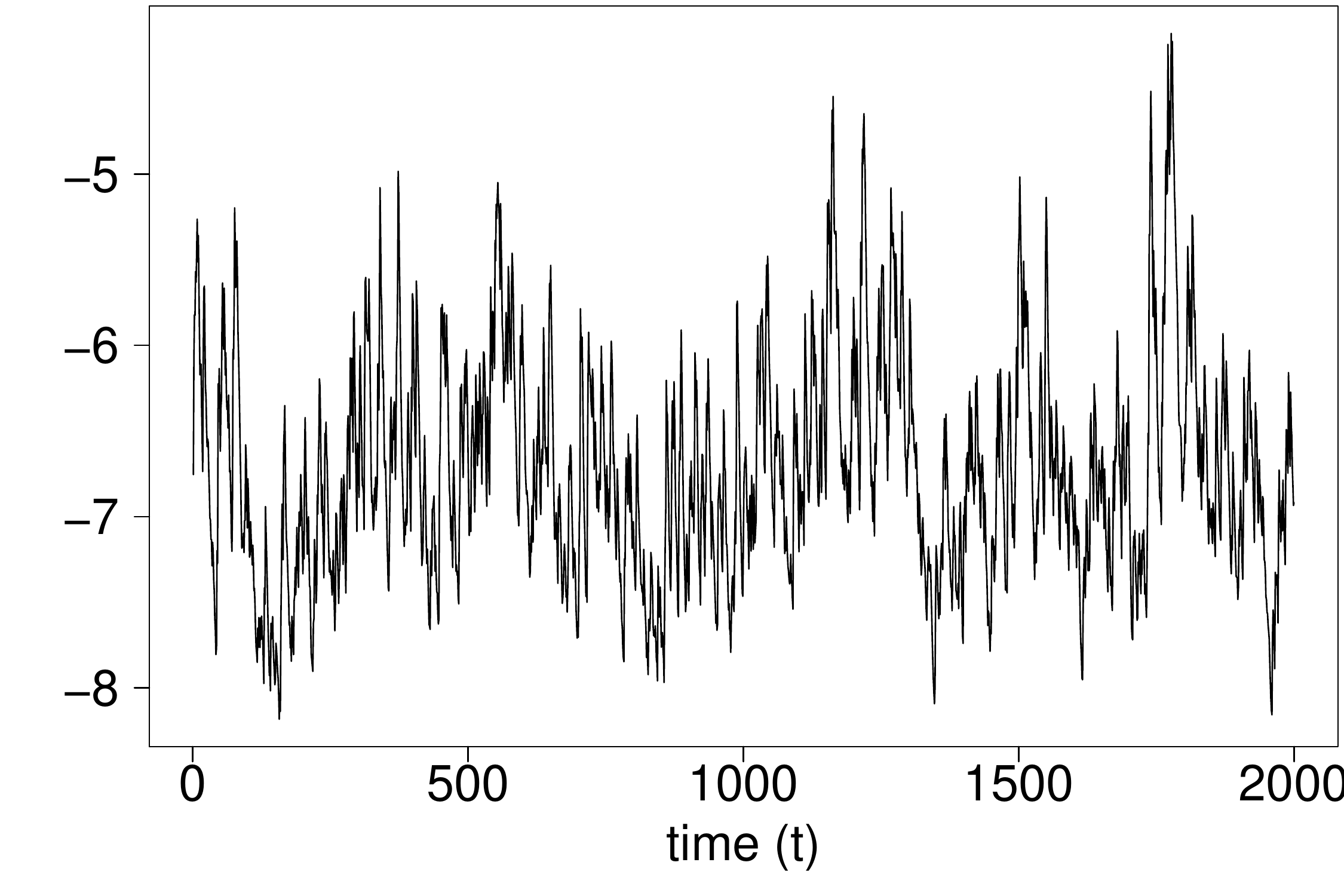}} }

\mbox{
      \subfigure[ $\{x_t\}_{t
        =1}^{2000}$]{\includegraphics[width = 0.32\textwidth]{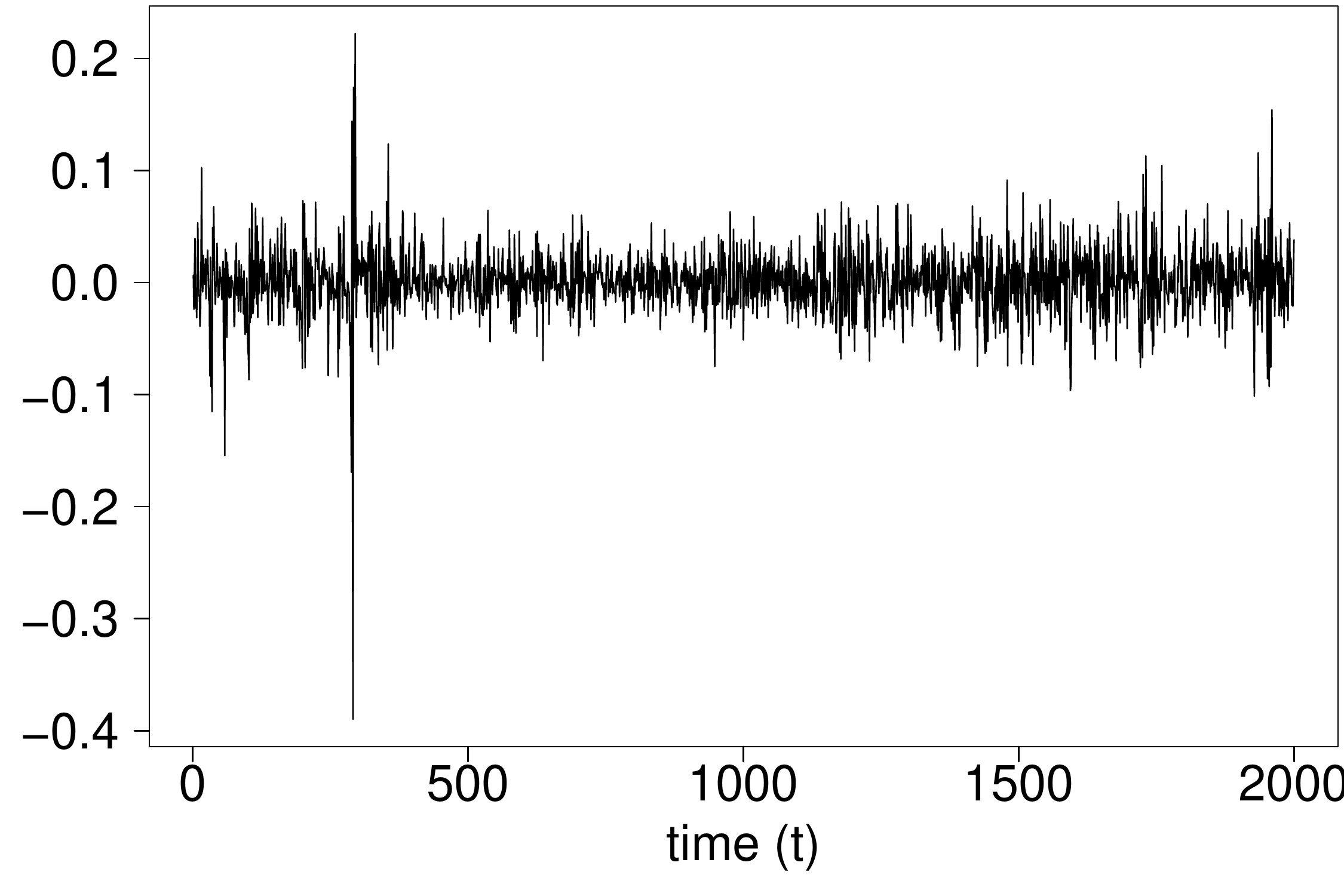}}
      \subfigure[$\{\sigma_t^2\}_{t=1}^{2000}$
      ]{\includegraphics[width = 0.32\textwidth]{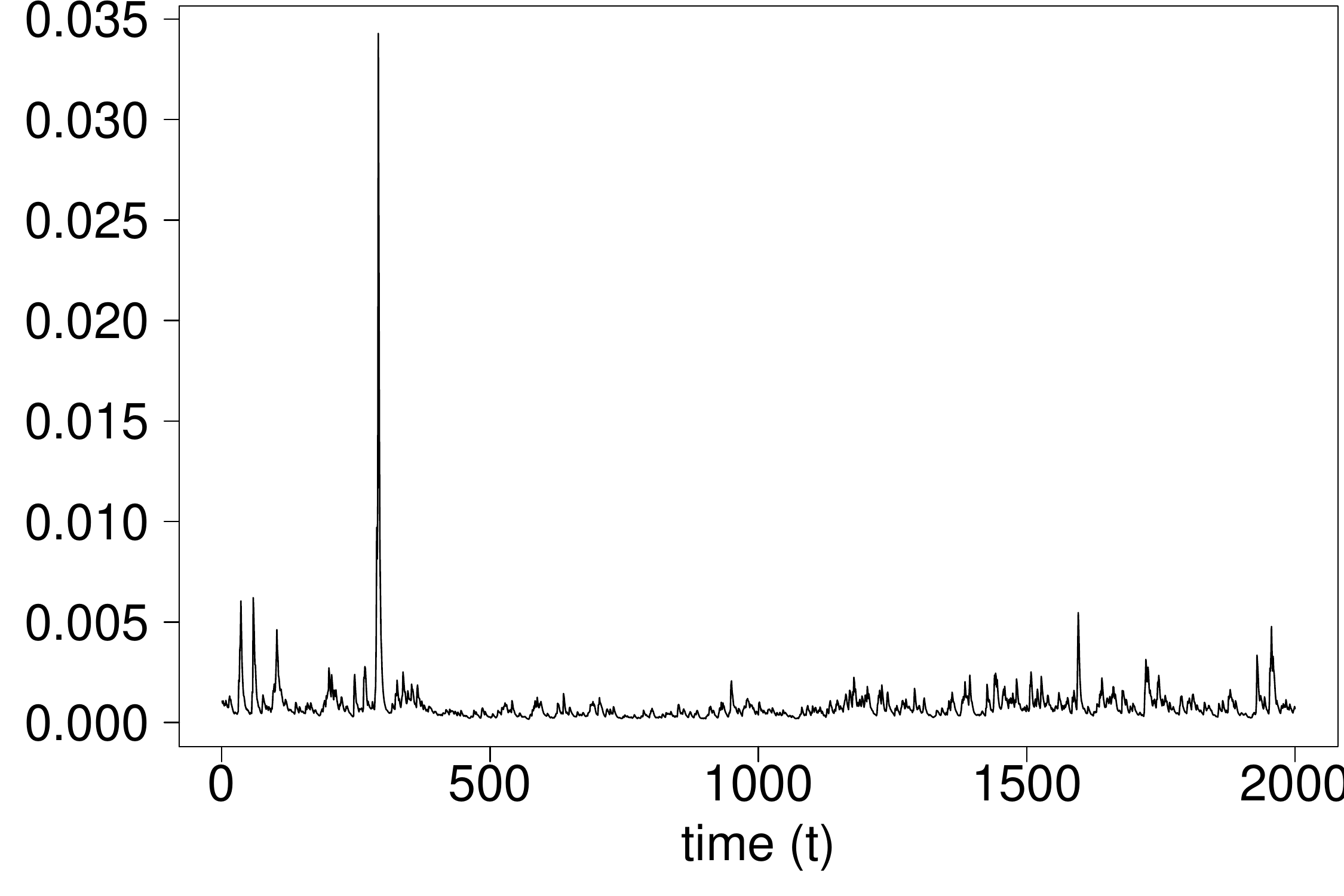}}
      \subfigure[$\{\ln(\sigma_t^2)\}_{t=1}^{2000}$]
      {\includegraphics[width =  0.32\textwidth]{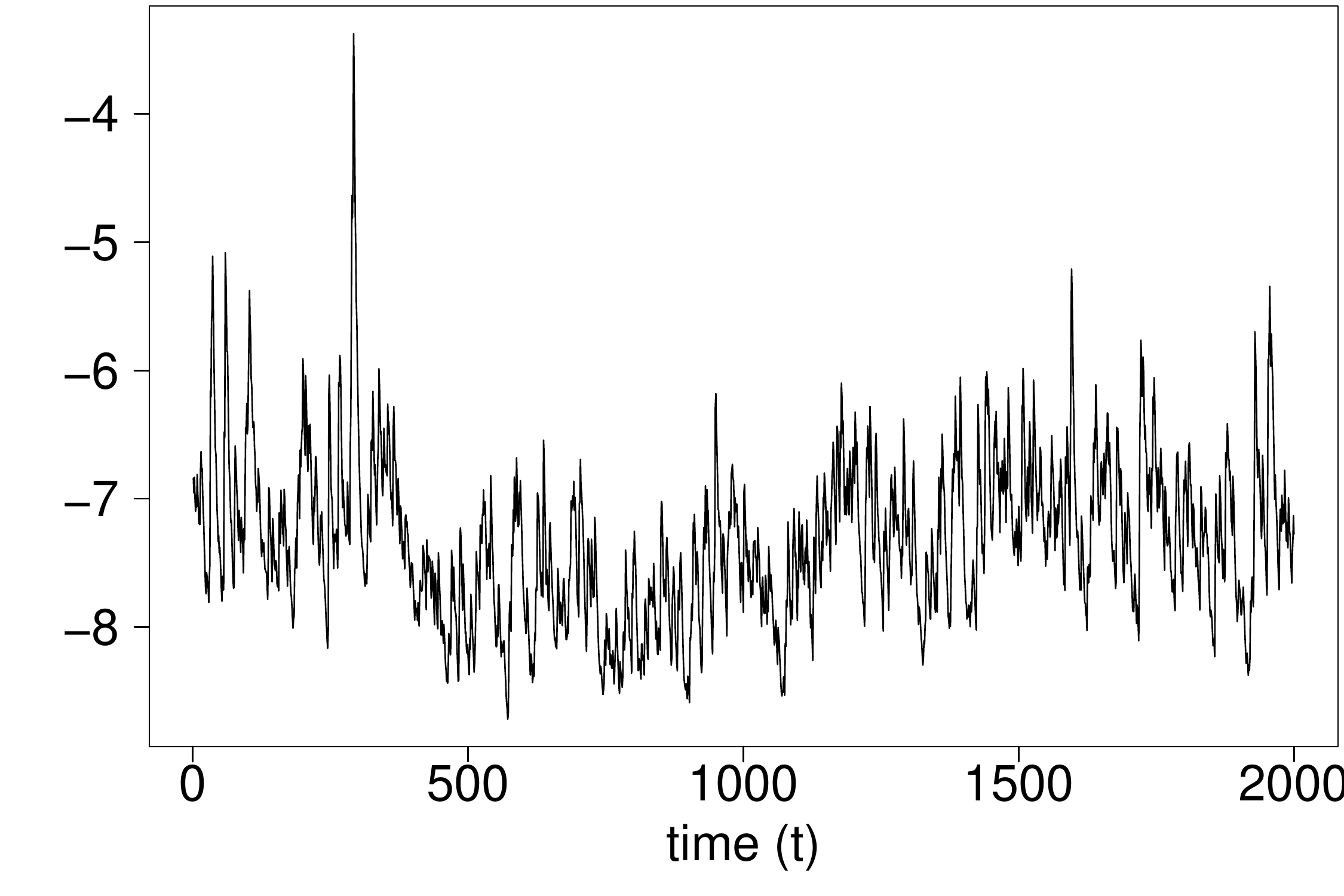}} }

    \caption{ Samples from  FIEGARCH$(0,d,1)$ processes,  with $n = 2,000$
      observations, considering $Z_0 \sim \mathcal{N}(0,1)$ (first row)
      and  $Z_0 \sim \mbox{GED}(1.5)$ (second row).    (a) and (d) show the
      time series $\{x_t\}_{t=1}^{n}$;  (b) and (e) show the conditional variance of
      $\{x_t\}_{t=1}^{n}$;   (c) and (f)  show the logarithm of the conditional variance.}\label{fiegsim}
  \end{figure}

  \begin{ex}
     Figure \ref{fiegsim} presents samples from  FIEGARCH$(0,d,1)$ processes,
    with $n = 2,000$ observations, considering  two  different underlying
    distributions.    To obtain these samples we consider Definition
    \ref{definitionfie} and two different distributions for $Z_0$.    For
this simulation we set   $d = 0.3578$, $\theta = -0.1661$, $\gamma = 0.2792$,
$\omega = -7.2247$ and   $\beta_1 = 0.6860$.  These are the parameter values of
the FIEGARCH model   fitted to the Bovespa index log-returns in Section
\ref{analysisObserved}.   Figures \ref{fiegsim} (a) - (c) consider $Z_0\sim
    \mathcal{N}(0,1)$ and show, respectively,  the  time series
    $\{x_t\}_{t=1}^{n}$,   the conditional variance
    $\{\sigma_t^2\}_{t=1}^n$ and  the logarithm of
    the conditional variance $\{\ln(\sigma_t^2)\}_{t=1}^n$.
Figures \ref{fiegsim} (d) - (e) show the same time series as in Figures
\ref{fiegsim} (a) - (c) when the distribution for  $Z_0$ is the Generalized
Error Distribution (GED), with tail-thickness parameter $\nu = 1.5$.
 \end{ex}

\begin{remark}
  Note that, in Definition \ref{definitionfie}, no conditions on the parameter
  $d$ are imposed.  Necessary and sufficient conditions on the parameter $d$, to
  guarantee the existence of the stochastic process
  $\{\ln(\sigma_t^2)\}_{t\in\mathds{Z}}$, satisfying \eqref{fieprocess}, are
  discussed in the sequel. Also notice that when $d=0$, we obtain the well known
  EGARCH process.
\end{remark}

 For practical purpose, it is important to observe that slightly different
definitions of FIEGARCH processes are found in the literature. Usually it is
easy to show that, under certain conditions, the different definitions are
equivalent to \eqref{fiegarch}.  For instance, \cite{ZIWA05} defines the
conditional variance of a FIEGARCH process through the equation
\[
\bigg[1-\sum_{j=1}^q \beta_j \mathcal{B}^j\bigg](1-\mathcal{B})^d\ln(\sigma_t^2) =
a + \sum_{i=0}^{p}[\psi_i|Z_{t-1-i}|+\gamma_i Z_{t-1-i}].
\]
This is the definition considered, for instance, in the software S-Plus (see
\cite{ZIWA05}) and it is equivalent to \eqref{fiegarch} whenever $d>0$, $a =
-\gamma\mathds{E}(|Z_{t}|)[1-\sum_{i=1}^p \alpha_i]$, $\psi_0 =
\gamma$, $\gamma_0 =\theta$, $\psi_i = -\gamma\alpha_{i}$ and $\gamma_i =
-\theta\alpha_{i}$ for all $1\leq i \leq p$.  This equivalence is mentioned in
\cite{PA07} and a detailed proof is provided in \cite{PR08}.  In \cite{RUVE08}
only the case $p=0$ and $q = 1$ is considered and
$\{\ln(\sigma_t^2)\}_{t\in\mathds{Z}}$ is defined as
\begin{equation}\label{ruiz}
  (1-\phi\mathcal{B})(1-\mathcal{B})^d\ln(\sigma_t^2)  =
  \omega^* +  \alpha\big[|Z_{t-1}| - \sqrt{2/\pi}\big] + \gamma^* Z_{t-1}, \quad \mbox{for all } t\in\mathds{Z},
\end{equation}
where $\{Z_t\}_{t\in\mathds{Z}}$ is a Gaussian white noise process with variance
equal to one.  This is
the definition considered, for instance, in the G@RCH package version 4.0 of
\cite{LP05}.  Notice that, by setting $\phi = \beta_1$, $\alpha = \gamma$ and
$\gamma^* = \theta$, \eqref{ruiz} is equivalent to \eqref{fieprocess} if and
only if the equality $\omega^* = (1-\beta)(1-\mathcal{B})^d\omega$ holds.

\begin{remark}
    We observe that  the theory presented here can  be easily adapted to
  a more general framework than \eqref{fieprocess} (which  uses the same notation as in
  \cite{BOMI96}) by considering
  \begin{equation}
    \ln(\sigma_t^2) = \omega_t + \sum_{k=0}^{\infty}\lambda_{k}g(Z_{t-1-k}):= \omega_t + \lambda(\mathcal{B})g(Z_{t-1}), \quad \mbox{ for all $t\in\mathds{Z}$},\label{fiegarchNelson}
  \end{equation}
  where  $\{\omega_t\}_{t\in \mathds{Z}}$ and  $\{\lambda_{k}\}_{k\in
    \mathds{N}}$ are real, nonstochastic, scalar sequences for which the process
  $\{\ln(\sigma_t^2)\}_{t\in\mathds{Z}}$ is well defined, $\{Z_t\}_{t\in
    \mathds{Z}}$ is white noise process with variance not necessarily equal to
  one and $g(\cdot)$ is any measurable function.   In particular, Theorems
  \ref{N1} and \ref{N2} below, which are stated and proved in \cite{NE91},
  assume that $\ln(\sigma_t^2)$ is given by \eqref{fiegarchNelson} (the notation
  was adapted to reflect the one used in this work), with
  $\{Z_t\}_{t\in\mathds{Z}}$ and $g(\cdot)$ as in Definition
  \ref{definitionfie}.  Although \eqref{fiegarchNelson} is more general than
  \eqref{fieprocess}, in practice the applicability of the model is somewhat
  limited given that the parameter estimation is far more complicated when
  compared to the model \eqref{fieprocess}.
\end{remark}

Notice that the function $g(\cdot)$ can be rewritten as
\begin{equation}
  g(Z_{t})=\left\{\begin{array}{cc}
      (\theta + \gamma)Z_t - \gamma\mathds{E}(|Z_t|), & Z_t\geq 0; \vspace{.2cm}\\
      (\theta - \gamma)Z_t - \gamma\mathds{E}(|Z_t|) ,& Z_t<0. \\
    \end{array} \right.\nonumber
\end{equation}
This expression clearly shows the asymmetry in response to positive and negative
returns.  Also, it is easy to see that $g(\cdot)$ is non-linear if $\theta \neq
0$ and the asymmetry is due to the values of $\theta \pm \gamma$.  While the
parameter $\theta$, also known in the literature as \emph{leverage parameter},
shows the return's sign effect, the parameter $\gamma$ denotes the return's
magnitude effect.  Therefore, the model is able to capture the fact that a
negative return usually results in higher volatility than a positive one.
Proposition \ref{Prop1} below  presents  the properties of the stochastic process
$\{g(Z_t)\}_{t\in\mathds{Z}}$.  Although the proof  is
straightforward,  these properties are extremely important to prove the
results stated in the sequel.

\begin{prop}\label{Prop1}
  Let $\{Z_t\}_{t \in \mathds{Z}}$ be a sequence of i.i.d.  random variables,
  with $\mathds{E}(|Z_0|) < \infty$.  Let $\{g(Z_t)\}_{t\in\mathds{Z}}$ be defined by
  \eqref{functiong} and assume that $\theta$ and $\gamma$ are not both equal to
  zero.  Then $\{g(Z_t)\}_{t\in\mathds{Z}}$ is a strictly stationary and ergodic
  process.  If $\mathds{E}(Z_0^2) <\infty$, then $\{g(Z_t)\}_{t\in\mathds{Z}}$
  is also weakly stationary with zero mean (therefore a white noise process) and
  variance $\sigma^2_g$ given by
  \begin{equation}\label{eq:sigmag}
    \sigma^2_g = \theta^2 + \gamma^2
    -[\gamma\mathds{E}(|Z_0|)]^2 + 2\, \theta \, \gamma \, \mathds{E}(Z_0|Z_0|).
  \end{equation}
\end{prop}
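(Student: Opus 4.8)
The plan is to split the statement into its two parts and to exploit that, for each fixed $t$, $g(Z_t)$ is a deterministic Borel function of the single innovation $Z_t$ (the subtracted constant $\gamma\mathds{E}(|Z_0|)$ being finite by the hypothesis $\mathds{E}(|Z_0|)<\infty$). For the strict stationarity and ergodicity, I would argue that applying the same measurable map $g$ coordinatewise to the i.i.d. sequence $\{Z_t\}_{t\in\mathds{Z}}$ produces random variables $g(Z_t)$ that are mutually independent (each depending on a distinct $Z_t$) and identically distributed; hence $\{g(Z_t)\}_{t\in\mathds{Z}}$ is itself an i.i.d. sequence. An i.i.d. sequence is strictly stationary, and by the Hewitt--Savage (Kolmogorov) zero--one law every shift-invariant event has probability $0$ or $1$, so the sequence is ergodic. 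The assumption that $\theta$ and $\gamma$ are not both zero is used only to guarantee that $g$ is non-degenerate.

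For the second part, I would add the moment hypothesis $\mathds{E}(Z_0^2)<\infty$ and invoke the standing assumptions $\mathds{E}(Z_0)=0$ and $\mathds{E}(Z_0^2)=1$. The mean follows immediately from linearity, $\mathds{E}(g(Z_t)) = \theta\mathds{E}(Z_t) + \gamma[\mathds{E}(|Z_t|)-\mathds{E}(|Z_t|)] = 0$, so the process is centered. Since the mean is zero, the variance equals $\mathds{E}(g(Z_t)^2)$, which I would obtain by writing $g(Z_t) = \theta Z_t + \gamma|Z_t| - \gamma\mathds{E}(|Z_0|)$, squaring, and taking expectations term by term. Using $|Z_t|^2 = Z_t^2$, $\mathds{E}(Z_t^2)=1$, and $\mathds{E}(Z_t)=0$, the terms linear in $Z_t$ vanish, the cross term contributes $2\theta\gamma\mathds{E}(Z_0|Z_0|)$, and the constant produces the $-[\gamma\mathds{E}(|Z_0|)]^2$ summand, giving exactly \eqref{eq:sigmag}; its finiteness is ensured by $\mathds{E}(Z_0^2)<\infty$.

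To finish, I would note that an i.i.d. sequence with finite second moment automatically has constant (here zero) mean, constant finite variance, and zero autocovariance at every nonzero lag by independence; it is therefore weakly stationary and uncorrelated, that is, a white noise process. I do not expect a substantive obstacle, since every claim reduces to elementary properties of coordinatewise transformations of i.i.d. variables; the only point demanding a little care is invoking the appropriate zero--one law to upgrade strict stationarity to ergodicity rather than asserting ergodicity without justification.
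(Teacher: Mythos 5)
Your proof is correct. Note that the paper itself does not prove this proposition: its ``proof'' is a citation to an external reference (the thesis \cite{PR08}), so there is no in-text argument to compare against. Your self-contained argument is the natural one and is, in fact, the same style of reasoning the paper uses elsewhere: in the proof of the theorem on $\{\ln(X_t^2)\}_{t\in\mathds{Z}}$ the authors deduce strict stationarity and ergodicity ``from the measurability \ldots and the i.i.d.\ property of $\{Z_t\}_{t\in\mathds{Z}}$ (see \cite{DU91})'', which is exactly your observation that a fixed Borel map applied coordinatewise to an i.i.d.\ sequence yields an i.i.d.\ (hence strictly stationary and ergodic) sequence. Your variance computation expands $\big(\theta Z_t + \gamma[|Z_t|-\mathds{E}(|Z_t|)]\big)^2$ correctly and recovers \eqref{eq:sigmag}, and the white-noise conclusion follows from independence as you say. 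The only point stated loosely is the zero--one law step: for the two-sided sequence, shift-invariant events are not literally tail events, so Kolmogorov's law does not apply verbatim; one either invokes Hewitt--Savage, or the standard approximation-by-cylinder-sets argument showing that invariant events of an i.i.d.\ sequence satisfy $P(A)=P(A)^2$, or ergodicity of the one-sided shift plus ergodicity of the natural extension. Any of these closes the gap, and all are textbook facts, so this is a matter of precision rather than a genuine flaw.
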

\begin{proof}
   See \cite{PR08}.
\end{proof}

 Theorem \ref{N1} below provides a criterion for stationarity and ergodicity of
EGARCH (FIEGARCH) processes.  As pointed out by \cite{NE91}, the stationarity
and ergodicity criterion in Theorem \ref{N1} is exactly the same as for a
general linear process with finite variance innovations.  Obviously, different
definitions of $\lambda(\cdot)$ in \eqref{fiegarchNelson} will lead to different
conditions for the criterion in Theorem \ref{N1} to hold.  In  \cite{NE91} it is
stated  that, in many applications, an ARMA process provides a parsimonious
parametrization for $\{\ln(\sigma_t^2)\}_{t\in \mathds{Z}}$.  In this case,
$\lambda(\cdot)$ is defined as $\lambda(z) = \alpha(z)[\beta(z)]^{-1}$, $|z|
\leq 1$, where $\alpha(\cdot)$ and $\beta(\cdot)$ are the polynomials given in
\eqref{alphabeta}, leading to an EGARCH$(p,q)$ process. For this model, the
criterion in Theorem \ref{N1} holds whenever the roots of $\beta(\cdot)$ are
outside the closed disk $\{z : |z| \leq 1\}$. We shall later discuss  the
condition for the criterion in Theorem \ref{N1} to hold when
$\{\ln(\sigma_t^2)\}_{t\in \mathds{Z}}$ is defined by \eqref{fieprocess},
leading to a FIEGARCH$(p,d,q)$ process.

\begin{thm}\label{N1}
  Define $\{\sigma_t^2\}_{t\in\mathds{Z}}$, $\{X_t\}_{t\in\mathds{Z}}$ and
  $\{Z_t\}_{t\in\mathds{Z}}$ by
  \begin{align}
    X_t &= \sigma_tZ_t; \qquad   Z_t  \sim i.i.d., \quad  \mathds{E}(Z_t) = 0 \quad
    \mbox{and} \quad  \mathrm{Var}(Z_t) = 1;\label{Neeq1}\\
    \ln(\sigma_t^2) &= \omega_t + \sum_{k=1}^{\infty}\lambda_{k}g(Z_{t-k}),
    \quad \lambda_1 = 1; \qquad g(Z_t) = \theta Z_t + \gamma\left[|Z_t| -
    \mathds{E}(|Z_t|)\right];\label{Neeq2}
  \end{align}
  where $\{\omega_t\}_{t\in \mathds{Z}}$ and $\{\lambda_{k}\}_{k\in
    \mathds{N}^*}$ are real, nonstochastic, scalar sequences, and assume that
  $\theta$ and $\gamma$ do not both equal zero. Then
  $\{e^{-\omega_t}\sigma_t^2\}_{t\in\mathds{Z}}$,
  $\{e^{-\omega_t/2}X_t\}_{t\in\mathds{Z}}$ and $\{\ln(\sigma_t^2) -
  \omega_t\}_{t\in\mathds{Z}}$ are strictly stationary and ergodic and
  $\{\ln(\sigma_t^2) - \omega_t\}_{t\in\mathds{Z}}$ is covariance stationary if
  and only if $\sum_{k=1}^\infty \lambda_k^2 < \infty$.  If $\sum_{k=1}^\infty
  \lambda_k^2 = \infty$, then $|\ln(\sigma_t^2) - \omega_t| = \infty$ almost
  surely. If $\sum_{k=1}^\infty \lambda_k^2 < \infty$, then for $k>0$,
  $\mathrm{Cov}\big(Z_{t-k}, \ln(\sigma_t^2)\big) = \lambda_k[\theta + \gamma
  \mathds{E}(Z_t|Z_t|)]$, and $\mathrm{Cov}\big(\ln(\sigma_t^2),
  \ln(\sigma_{t-k}^2)\big) = \mathrm{Var}\big(g(Z_t)\big)\sum_{j =
    1}^\infty\lambda_j\lambda_{j+k}$.
\end{thm}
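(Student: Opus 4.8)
The plan is to exploit the fact that, with $g(\cdot)$ as in \eqref{functiong}, the sequence $\{g(Z_t)\}_{t\in\mathds{Z}}$ is i.i.d.\ (being a fixed measurable function of an i.i.d.\ input), so that
\[
S_t := \ln(\sigma_t^2)-\omega_t = \sum_{k=1}^\infty \lambda_k\, g(Z_{t-k})
\]
is a one-sided linear functional of an i.i.d.\ sequence, and then to read off all four conclusions from the classical theory of sums of independent random variables together with the invariance of ergodicity under shift-commuting measurable maps. First I would invoke Proposition \ref{Prop1} to record that $\{g(Z_t)\}_{t\in\mathds{Z}}$ is strictly stationary and ergodic with $\mathds{E}(g(Z_0))=0$, and that under $\mathds{E}(Z_0^2)<\infty$ it is a white noise with variance $\sigma_g^2$ as in \eqref{eq:sigmag}. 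The summands $\lambda_k g(Z_{t-k})$, $k\ge 1$, are independent and mean zero, so convergence of the defining series is governed by Kolmogorov's theory: when $\mathds{E}(Z_0^2)<\infty$ and $\sum_k \lambda_k^2<\infty$ one has $\sum_k \mathrm{Var}(\lambda_k g(Z_{t-k})) = \sigma_g^2 \sum_k \lambda_k^2 < \infty$, whence the series converges almost surely and in $L^2$; when $\sum_k \lambda_k^2 = \infty$, the three-series theorem together with the Kolmogorov zero--one law shows that the event on which the series converges has probability zero.

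Next I would establish strict stationarity and ergodicity at the level of measurable functionals, rather than term by term. The map sending a sequence $\mathbf{z}$ to $\big(\sum_{k\ge1}\lambda_k g(z_{t-k})\big)_{t\in\mathds{Z}}$ on sequence space commutes with the coordinate shift, and the law of $\{Z_t\}_{t\in\mathds{Z}}$ is shift-invariant and ergodic because the $Z_t$ are i.i.d. Since a shift-commuting measurable image of a stationary ergodic process is again stationary and ergodic, applying this to $S_t$, to $e^{-\omega_t}\sigma_t^2 = \exp(S_t)$, and to $e^{-\omega_t/2}X_t = \exp(S_t/2)\,Z_t$ (the last being a functional of $\{Z_s\}_{s\le t}$) yields strict stationarity and ergodicity of all three processes, irrespective of whether the series converges to a finite value.

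For the covariance part, when $\sum_k \lambda_k^2<\infty$ the process $S_t$ is a mean-zero $L^2$ linear process, so $\mathrm{Var}(S_t)=\sigma_g^2\sum_k \lambda_k^2$ is finite and independent of $t$ while $\mathrm{Cov}(S_t,S_{t-k})$ depends only on $k$, giving covariance stationarity; conversely, $\sum_k \lambda_k^2=\infty$ forces infinite variance, so $S_t$ is not covariance stationary. The two covariance formulas then follow by expanding and using independence together with $\mathds{E}(g(Z_0))=0$: only the $k$-th term of $S_t$ involves $Z_{t-k}$, so $\mathrm{Cov}(Z_{t-k},\ln(\sigma_t^2)) = \lambda_k\,\mathds{E}(Z_{t-k}g(Z_{t-k})) = \lambda_k[\theta+\gamma\,\mathds{E}(Z_t|Z_t|)]$ after using $\mathds{E}(Z_0^2)=1$ and $\mathds{E}(Z_0)=0$; and in $\mathrm{Cov}(S_t,S_{t-k})=\sum_{i,m}\lambda_i\lambda_m\,\mathrm{Cov}(g(Z_{t-i}),g(Z_{t-k-m}))$ only the aligned indices $i=m+k$ survive, yielding $\mathrm{Var}(g(Z_t))\sum_{j=1}^\infty \lambda_j\lambda_{j+k}$.

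The hard part will be the divergence case, namely showing that $\sum_k\lambda_k^2=\infty$ forces $|S_t|=\infty$ almost surely rather than a mere failure of convergence or a bounded oscillation of the partial sums. I expect to handle this by combining the three-series theorem with a symmetrization and truncation argument, using that for independent summands the partial sums of $S_t$ cannot remain bounded on a set of positive probability once the truncated variances diverge; this gives $\limsup_n\big|\sum_{k=1}^n \lambda_k g(Z_{t-k})\big|=\infty$ almost surely, which is exactly the claimed degeneracy and also certifies the failure of covariance stationarity.
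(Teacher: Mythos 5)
The first thing to note is that the paper does not prove this theorem at all: it is Nelson's result, imported verbatim, and the paper's entire proof is the citation ``See theorem 2.1 in \cite{NE91}''. Your proposal is therefore a genuinely different route simply by virtue of being a proof; in substance it reconstructs the classical argument behind Nelson's theorem. Your decomposition is the right one: $\{g(Z_t)\}_{t\in\mathds{Z}}$ is i.i.d.\ with mean zero and finite variance (here $\mathrm{Var}(Z_t)=1$ is assumed, so no extra moment hypothesis is needed); Kolmogorov's one-series theorem gives a.s.\ and $L^2$ convergence when $\sum_k\lambda_k^2<\infty$; strict stationarity and ergodicity of all three processes follow because each is the image of the i.i.d.\ (hence stationary and ergodic) sequence $\{Z_t\}_{t\in\mathds{Z}}$ under a fixed measurable map that commutes with the shift; and your index-alignment computation for the two covariance formulas is exactly right, including $\mathds{E}(Z_0\,g(Z_0))=\theta+\gamma\,\mathds{E}(Z_0|Z_0|)$. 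What the citation buys the paper is brevity for a result that is not its own; what your argument buys is self-containedness and a clear view of where each hypothesis enters.

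Two points need attention before your sketch of the divergence case is complete. First, to pass from ``the three-series test fails'' to ``$\limsup_n\big|\sum_{k\leq n}\lambda_k g(Z_{t-k})\big|=\infty$ a.s.'', you need the zero--one law (note that $\{\sup_n|S_n|<\infty\}$ is a tail event, since altering finitely many summands shifts all late partial sums by a fixed finite amount) together with L\'evy's maximal inequality, which shows that for \emph{symmetric} independent summands almost surely bounded partial sums already converge almost surely; this is precisely the lemma that rules out the ``bounded oscillation'' scenario you flag, and it is why your symmetrization step is not optional. Second, your truncation argument requires $\mathrm{Var}(g(Z_0))>0$, and this does not follow from ``$\theta$ and $\gamma$ not both zero'': if $Z_0$ is Rademacher and $\theta=0$, $\gamma\neq0$, then $g(Z_0)=0$ a.s., so $\sum_k\lambda_k^2=\infty$ is compatible with $\ln(\sigma_t^2)-\omega_t\equiv 0$ and the divergence claim fails. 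Nondegeneracy of $g(Z_0)$ is thus an implicit hypothesis of Nelson's statement; your proof has the virtue of making it visible, but you should state it explicitly.
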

\begin{proof}
   See  theorem 2.1 in \cite{NE91}.
\end{proof}

 Theorem \ref{N2} shows the existence of the $r$th moment  for the
random variables $X_t$ and $\sigma_t^2$, defined by \eqref{Neeq1}-\eqref{Neeq2},
when $\sum_{k=1}^\infty \lambda_k ^2 < \infty$ and the distribution of $Z_0$ is
the Generalized Error Distribution (GED).

\begin{thm}\label{N2}
  Define $\{\sigma_t^2, X_t\}_{t\in\mathds{Z}}$ by \eqref{Neeq1}-\eqref{Neeq2},
  and assume that $\theta$ and $\gamma$ do not both equal zero.  Let
  $\{Z_t\}_{t\in\mathds{Z}}$ be i.i.d. GED with mean zero, variance one, and
  tail-thickness parameter $\nu > 1$, and let $\sum_{k=1}^\infty \lambda_k ^2 <
  \infty$. Then $\{e^{-\omega_t}\sigma_t^2\}_{t\in\mathds{Z}}$ and
  $\{e^{-\omega_t/2}X_t\}_{t\in\mathds{Z}}$ possess finite, time-invariant
  moments of arbitrary order. Further, if $0 <r < \infty$, conditioning
  information at time 0 drops out of the forecast $r$th moments of
  $e^{-\omega_t}\sigma_t^2$ and $e^{-\omega_t/2}X_t$, as $t\to \infty$:
  \begin{align*}
    \underset{t\to\infty}{\mathrm{plim}} &\big[ \mathds{E}(e^{-r\omega_t}\sigma_t^{2r}\big|Z_0,Z_{-1},Z_{-2}, \cdots) - \mathds{E}(e^{-r\omega_t}\sigma_t^{2r})\big] = 0 \quad \mbox{and}\\
    \underset{t\to\infty}{\mathrm{plim}} &\big[ \mathds{E}(e^{-r\omega_t/2}|X_t|^{r}\big|Z_0,Z_{-1},Z_{-2}, \cdots) - \mathds{E}(e^{-r\omega_t/2}|X_t|^{r})\big] = 0,\\
  \end{align*}
  where $\mathrm{plim}$ denotes the limit in probability.
\end{thm}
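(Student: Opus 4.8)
The plan is to exploit the multiplicative structure that the logarithmic specification \eqref{Neeq2} imposes on $\sigma_t^2$. Writing $M(s) := \mathds{E}\big[e^{sg(Z_0)}\big]$ for the moment generating function of a single factor, observe that
\begin{equation*}
  e^{-r\omega_t}\sigma_t^{2r} = \prod_{k=1}^{\infty} e^{r\lambda_k g(Z_{t-k})},
\end{equation*}
so that, since the $Z_{t-k}$ are i.i.d., the desired moment should equal the infinite product $\prod_{k=1}^{\infty} M(r\lambda_k)$. First I would make this rigorous: the partial products $Y_n := \prod_{k=1}^{n} e^{r\lambda_k g(Z_{t-k})}$ converge almost surely (their logarithm is an $L^2$-convergent series because $\sum_k \lambda_k^2 < \infty$ and $\mathrm{Var}(g(Z_0)) = \sigma_g^2 < \infty$ by Proposition \ref{Prop1}), while $\mathds{E}[Y_n] = \prod_{k=1}^{n} M(r\lambda_k)$ by independence. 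A uniform-integrability argument, bounding $\mathds{E}[Y_n^{1+\varepsilon}] = \prod_{k=1}^n M\big((1+\varepsilon)r\lambda_k\big)$ for small $\varepsilon > 0$, then upgrades almost-sure convergence to $L^1$ convergence and yields $\mathds{E}\big[e^{-r\omega_t}\sigma_t^{2r}\big] = \prod_{k=1}^{\infty} M(r\lambda_k)$.

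The crux is to show this product is finite for every $r$, and here the hypotheses $\nu > 1$ and $\sum_k\lambda_k^2 < \infty$ enter decisively. Since the GED density decays like $e^{-c|z|^{\nu}}$ and $g(Z) = \theta Z + \gamma(|Z| - \mathds{E}|Z|)$ grows only linearly in $|Z|$, the super-linear decay for $\nu > 1$ forces $M(s) < \infty$ for \emph{all} $s \in \mathds{R}$; this is exactly where $\nu > 1$ is indispensable, since for $\nu \leq 1$ the integral defining $M(s)$ diverges for large $|s|$, precluding moments of arbitrary order (recall $\lambda_1 = 1$, so $r\lambda_1 = r$ is unbounded). Given finiteness everywhere, I would Taylor-expand at the origin, using $M(0) = 1$, $M'(0) = \mathds{E}[g(Z_0)] = 0$ and $M''(0) = \sigma_g^2$, to get $\ln M(s) \sim \tfrac{1}{2}\sigma_g^2 s^2$ as $s \to 0$. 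Because $\lambda_k \to 0$, the tail of $\sum_k \ln M(r\lambda_k)$ behaves like $\tfrac{1}{2} r^2\sigma_g^2 \sum_k \lambda_k^2 < \infty$, so the product converges and the moment is finite; time-invariance is immediate because the law of $\prod_k e^{r\lambda_k g(Z_{t-k})}$ does not depend on $t$. For $X_t$, I would use the independence of $Z_t$ from $\sigma_t$ (which is $\sigma(Z_{t-1},Z_{t-2},\dots)$-measurable) to factor $\mathds{E}\big[e^{-r\omega_t/2}|X_t|^r\big] = \mathds{E}\big[(e^{-\omega_t}\sigma_t^2)^{r/2}\big]\,\mathds{E}\big[|Z_t|^r\big]$, both factors being finite by the above and by the existence of all GED absolute moments.

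For the ``conditioning drops out'' statement I would split the product at the time origin. Conditioning on $\mathcal{F}_0 = \sigma(Z_0,Z_{-1},\dots)$, the factors with $k \geq t$ are $\mathcal{F}_0$-measurable while those with $k < t$ are independent of $\mathcal{F}_0$, giving
\begin{equation*}
  \mathds{E}\big[e^{-r\omega_t}\sigma_t^{2r}\,\big|\,\mathcal{F}_0\big] = \Bigg(\prod_{k=1}^{t-1} M(r\lambda_k)\Bigg)\, R_t, \qquad R_t := \exp\Bigg(\sum_{j=0}^{\infty} r\lambda_{t+j}\, g(Z_{-j})\Bigg).
\end{equation*}
As $t\to\infty$ the deterministic factor converges to the full product $\prod_{k=1}^{\infty} M(r\lambda_k) = \mathds{E}[e^{-r\omega_t}\sigma_t^{2r}]$, so it remains to prove $R_t \to 1$ in probability. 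Its exponent has mean zero and variance $r^2\sigma_g^2\sum_{k=t}^{\infty}\lambda_k^2 \to 0$, hence tends to $0$ in $L^2$ and thus in probability; the continuous mapping theorem gives $R_t \to 1$, and Slutsky's lemma finishes the $\sigma_t^2$ claim, with the $X_t$ claim following by the same splitting together with the independent factor $|Z_t|^r$. The main obstacle is the first step: justifying the interchange of expectation and infinite product and, above all, the control of $M(s)$ over all of $\mathds{R}$ that the GED tail with $\nu > 1$ provides; once $M$ is shown finite everywhere with quadratic behavior at $0$, the remaining convergence statements reduce to standard $L^2$ and uniform-integrability arguments.
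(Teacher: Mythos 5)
Your proof is correct, but there is nothing in the paper to compare it against: Theorem \ref{N2} is stated as a quotation of Nelson's result, and the paper's entire ``proof'' is the citation ``See theorem 2.2 in \cite{NE91}''. Your argument is, in substance, a faithful self-contained reconstruction of Nelson's original one, and every key step is sound: the factorization $e^{-r\omega_t}\sigma_t^{2r}=\prod_{k\geq 1}e^{r\lambda_k g(Z_{t-k})}$; the observation that for $\nu>1$ the GED density decays like $e^{-c|z|^{\nu}}$, so $M(s)=\mathds{E}\big[e^{s g(Z_0)}\big]<\infty$ for \emph{every} real $s$ (indeed the only place $\nu>1$ is essential, and correctly flagged as such since $\lambda_1=1$ forces evaluation of $M$ at arbitrarily large arguments); the expansion $\ln M(s)=\tfrac{1}{2}\sigma_g^2 s^2+o(s^2)$ combined with $\sum_k\lambda_k^2<\infty$ and $\lambda_k\to 0$, which makes $\prod_k M(r\lambda_k)$ converge; the uniform-integrability bound $\mathds{E}[Y_n^{1+\varepsilon}]=\prod_{k\leq n}M\big((1+\varepsilon)r\lambda_k\big)$ to justify $\mathds{E}\big[e^{-r\omega_t}\sigma_t^{2r}\big]=\prod_k M(r\lambda_k)$ (the right tool here, since the factors are not $\geq 1$ pointwise and monotone convergence is unavailable); and the split of the conditional expectation into the deterministic prefix $\prod_{k=1}^{t-1}M(r\lambda_k)$ times the $\mathcal{F}_0$-measurable tail $R_t$, whose log has variance $r^2\sigma_g^2\sum_{k\geq t}\lambda_k^2\to 0$, so that $R_t\to 1$ in probability and Slutsky's lemma closes the plim claims. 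Two points deserve to be made explicit in a final write-up, though neither is a gap: the almost-sure convergence of $\sum_k \lambda_k g(Z_{t-k})$ follows from Kolmogorov's one-series theorem for independent mean-zero summands (you infer it from $L^2$ convergence, which for independent summands is indeed equivalent), and the identity $\mathds{E}[A_t R_t\mid\mathcal{F}_0]=R_t\,\mathds{E}[A_t]$ for the split product requires integrability of $A_t R_t$, which your first part already supplies.
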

\begin{proof}
   See  theorem 2.2  in \cite{NE91}.
\end{proof}

From now on, let $\lambda(\cdot)$ be the polynomial defined by
\begin{equation}\label{lambdapoly}
  \lambda(z)=\frac{\alpha(z)}{\beta(z)}(1-z)^{-d}
  :=\sum_{k=0}^{\infty}\lambda_{d,k}z^k, \quad \mbox{ for all } |z| < 1,
\end{equation}
where $\alpha(\cdot)$ and $\beta(\cdot)$ are defined in \eqref{alphabeta}. Since
it is assumed that $\beta(\cdot)$ has no roots in the closed disk $\{z:|z|\leq
1\}$, and also $\alpha(\cdot)$ and $\beta(\cdot)$ have no common roots, the
function $\lambda(z)$ is analytic in the open disc $\{z:|z|<1\}$ ( if $d \leq
0$, in the closed disk $\{z:|z|\leq 1\}$).  Therefore, it has a unique power
series representation and  \eqref{fieprocess} can be rewritten,
equivalently, as
\begin{equation}\label{bol}
  \ln(\sigma_t^2) = \omega +\sum_{k=0}^\infty \lambda_{d,k}g(Z_{t-1-k}) = \omega + \lambda(\mathcal{B})g(Z_{t-1}) , \quad \mbox{for all } \, t\in\mathds{Z}.
\end{equation}
Notice that, with this definition we obtain a particular case of parametrization
\eqref{fiegarchNelson}.

Theorem \ref{convorder} below gives the convergence order of the coefficients
$\lambda_{d,k}$, as $k$ goes to infinity.  This  theorem is important for
two reasons.  First, it provides an approximation for $\lambda_{d,k}$, as $k\to
\infty$,  and this  result plays an important role when choosing the
truncation point in the series representation for simulation purposes.
Second, and most important, the asymptotic representation provided in this
theorem plays the key role to establish the necessary condition for square summability
of $\{\lambda_{d,k}\}_{k\in\mathds{N}}$.  More specifically,  from Theorem
\ref{convorder} one concludes that $\{\lambda_{d,k}\}_{k\in\mathds{N}} \in
\ell^2$ if and only if $d < 0.5$ and $\{\lambda_{d,k}\}_{k\in\mathds{N}} \in
\ell^1$ whenever $d < 0$.

\begin{thm}\label{convorder}
  Let $\lambda(\cdot)$ be the polynomial defined by \eqref{lambdapoly}. Then,
  for all $k \in \mathds{N}$, the coefficients $\lambda_{d,k}$ satisfy \small
  \begin{equation}\label{conv}
    \lambda_{d,k} \sim  \frac{1}{\Gamma(d)\,k^{1-d}}\frac{\alpha(1)}{\beta(1)},
    \quad \mbox{ as  $k\rightarrow \infty$}.
  \end{equation}
  Consequently, $\lambda_{d,k} = O(k^{d-1})$, as $k$ goes to infinity.
\end{thm}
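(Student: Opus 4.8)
The plan is to factor $\lambda(z)$ into the rational part $\alpha(z)/\beta(z)$, whose Taylor coefficients decay geometrically, and the fractional part $(1-z)^{-d}$, whose coefficients $\pi_{d,k}$ have the polynomial decay recalled in the Remark, and then to analyze the resulting Cauchy product. First I would invoke the standing hypotheses on $\alpha(\cdot)$ and $\beta(\cdot)$ from \eqref{alphabeta}: since $\beta(\cdot)$ has no zeros in $\{z:|z|\le 1\}$ and shares no root with $\alpha(\cdot)$, the function $\alpha(z)/\beta(z)$ is analytic on some disk $\{z:|z|<R\}$ with $R>1$. Hence it admits a power series expansion $\alpha(z)/\beta(z)=\sum_{k=0}^{\infty}c_k z^k$ whose coefficients satisfy $|c_k|\le C\rho^{k}$ for constants $C>0$ and $\rho\in(0,1)$; in particular $\sum_{k=0}^{\infty}|c_k|<\infty$, and evaluating at $z=1$ (which lies strictly inside the disk of convergence, and where $\beta(1)\neq 0$) gives $\sum_{k=0}^{\infty}c_k=\alpha(1)/\beta(1)$.

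Next, by the uniqueness of the power series representation of $\lambda(\cdot)$ together with the expansion \eqref{pik}, the Cauchy product yields
\[
  \lambda_{d,k}=\sum_{j=0}^{k}c_j\,\pi_{d,k-j}, \quad \mbox{for all } k\in\mathds{N}.
\]
Dividing by $\pi_{d,k}$, the whole statement reduces to establishing that
\[
  \frac{\lambda_{d,k}}{\pi_{d,k}}=\sum_{j=0}^{k}c_j\,\frac{\pi_{d,k-j}}{\pi_{d,k}}\longrightarrow \sum_{j=0}^{\infty}c_j=\frac{\alpha(1)}{\beta(1)}, \quad \mbox{as } k\to\infty.
\]
Indeed, once this limit is known, the asymptotic $\pi_{d,k}\sim[\Gamma(d)]^{-1}k^{d-1}$ from the Remark immediately gives $\lambda_{d,k}\sim[\Gamma(d)]^{-1}k^{d-1}\,\alpha(1)/\beta(1)$, which is precisely \eqref{conv}, and the bound $\lambda_{d,k}=O(k^{d-1})$ follows at once.

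The \emph{crux}, and the only place I expect real difficulty, is the interchange of limit and summation in the display above, because the number of summands grows with $k$. For each fixed $j$ the Remark's asymptotic forces $\pi_{d,k-j}/\pi_{d,k}\to 1$, so each summand tends to $c_j$; the issue is to pass to the limit under the sum. I would split it at $j=\lfloor k/2\rfloor$. On the head $0\le j\le\lfloor k/2\rfloor$ one has $k-j\ge k/2$, so by comparing the two regularly varying sequences the ratio $\pi_{d,k-j}/\pi_{d,k}$ is bounded by a constant independent of $j$ and $k$; since $\sum_j|c_j|<\infty$, dominated convergence then sends this partial sum to $\sum_{j=0}^{\infty}c_j$. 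On the tail $j>\lfloor k/2\rfloor$ the coefficients are geometrically small, $|c_j|\le C\rho^{k/2}$, whereas $\sum_{j>\lfloor k/2\rfloor}\pi_{d,k-j}=\sum_{m=0}^{\lceil k/2\rceil}\pi_{d,m}$ grows at most polynomially in $k$ and $1/\pi_{d,k}=O(k^{1-d})$; hence the tail contribution is $O(k^{a}\rho^{k/2})$ for some fixed $a$, which vanishes as $k\to\infty$. Combining the head and tail estimates yields the required limit and therefore \eqref{conv}.
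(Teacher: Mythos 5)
Your proposal is correct and takes essentially the same route as the paper: both factor $\lambda(z)$ into the rational part $\alpha(z)/\beta(z)$, whose coefficients decay geometrically and sum to $\alpha(1)/\beta(1)$, and the fractional part $(1-z)^{-d}$ with $\pi_{d,k}\sim k^{d-1}/\Gamma(d)$, and then show that the Cauchy-product coefficients satisfy $\lambda_{d,k}/\pi_{d,k}\to\alpha(1)/\beta(1)$. The only differences are bookkeeping and rigor: the paper keeps $\alpha(\cdot)$ and $\beta(\cdot)^{-1}$ as separate factors (a triple convolution) and passes to the limit through informal ``$\sim$'' manipulations, whereas your split at $j=\lfloor k/2\rfloor$, with dominated convergence on the head and a geometric-times-polynomial bound on the tail, makes that same interchange of limit and summation precise.
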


\begin{proof}
  Denote $\beta(z)^{-1}$ by $f(z)$. Since $\beta(\cdot)$ has no roots in the
  closed disk $\{z:|z|\leq1\}$, one has
  \begin{equation}\label{betapoly}
    \beta(z)^{-1}:= f(z)=\sum_{k=0}^{\infty}f_kz^k,\quad \mbox{ where } f_k =
    \dfrac{f^{(k)}(0)}{k!}, \, \mbox{ for all } k \in \mathds{N}.
  \end{equation}
  From expressions \eqref{pik}, \eqref{lambdapoly} and \eqref{betapoly} it
  follows that
  \begin{equation}\label{new}
    \lambda(z) =\sum_{k=0}^{\infty}\bigg[\sum_{i=0}^{\min\{p,k\}}\!\!\!(-\alpha_{i})
    \sum_{j=0}^{k-i}\pi_{d,k-i-j}f_j\bigg]z^k.
  \end{equation}
  From \eqref{new}, one has
  \[
  \lambda_{d,k} =
  \sum_{i=0}^{\min\{p,k\}}\!\!\!(-\alpha_{i})\sum_{j=0}^{k-i}\pi_{d,k-i-j}f_j,
  \quad \mbox{for all } k\in \mathds{N}.
  \]
  In particular, $\lambda_{d,k}=\displaystyle\sum_{i=0}^{p}(-\alpha_{i})
  \sum_{j=0}^{k-i}\pi_{d,j}f_{k-i-j}, \ \mbox{ for all }k>p$.

  Moreover, since $f_k\rightarrow 0$, as $k \rightarrow \infty$, it follows that
  for all $\varepsilon>0$, there exists $k_0>0$, such that, for a given $m>0$
  and for all $k>k_0$, $|\pi_{d,j}f_{k-i-j}|<\frac{\varepsilon}{m},$ for all
  $0\leq j\leq m$ and $0\leq i\leq p$.  Hence, for $k$ sufficiently large,
  \[
  \lambda_{d,k} \sim\sum_{i=0}^{p}
  (-\alpha_{i})\!\!\sum_{j=m+1}^{k-i}\pi_{d,j}f_{k-i-j}.
  \]
  Notice that, since $\pi_{d,k} \sim \frac{1}{\Gamma(d)\, k^{1-d}}$, as $k
  \rightarrow \infty$, one can choose $m_0$ such that $\pi_{d,k}\sim\pi_{d,k-i}
  \sim \pi_{d,j}$, for all $m_0< m+1\leq j \leq k-i$ and $0\leq i \leq
  p$. Consequently,
  \[
  \lambda_{d,k} \sim
  \pi_{d,k}\sum_{i=0}^{p}(-\alpha_{i})\!\!\!\!\!\!\sum_{j=0}^{k-i-(m+1)}\!\!\!\!\!\!f_{j}
  \sim\pi_{d,k}\sum_{i=0}^{p}(-\alpha_{i})\sum_{j=0}^{\infty}f_{j}.
  \]
  However, $\sum_{j=0}^{\infty}f_{j} = f(1) = \dfrac{1}{\beta(1)}$ and
  $\pi_{d,k} \sim \frac{1}{\Gamma(d)\, k^{1-d}}$, as $k\to \infty$. So, we have
  \begin{equation*}
    \lambda_{d,k} \sim \pi_{d,k}\,\frac{\alpha(1)}{\beta(1)} \sim \frac{1}{\Gamma(d)\,
      k^{1-d}}\frac{\alpha(1)}{\beta(1)}.
  \end{equation*}
  It follows that $\lambda_{d,k}\to 0$ and $\lambda_{d,k}k^{1-d}\to
  \frac{1}{\Gamma(d)\, }\frac{\alpha(1)}{\beta(1)}$, as $k \to \infty$. Hence,
  $\lambda_{d,k} = O(k^{d-1})$, as $k \to\infty$, which concludes the proof.
\end{proof}

Proposition \ref{coefficientsfiegarch} presents a recurrence formula for
calculating the coefficients $\lambda_{d,k}$, for all $k\in\mathds{N}$. This
formula is used to generate the FIEGARCH time series in the simulation study
presented in Section \ref{simulationsection}.

\begin{prop}\label{coefficientsfiegarch}
  Let $\lambda(\cdot)$ be the polynomial defined by \eqref{lambdapoly}. The
  coefficients $\lambda_{d,k}$, for all $k \in \mathds{N}$, are given by
  \begin{equation}\label{coefflambda}
    \lambda_{d,0} = 1 \quad \mbox{ and} \quad \lambda_{d,k}= -\alpha_k^* +
    \sum_{i=0}^{k-1}\lambda_i \sum_{j=0}^{k-i} \beta_j^*\delta_{d,k-i-j},\ \mbox{ for all }\ k\geq
    1,
  \end{equation}
  where the coefficients $\delta_{d,k}$, for all $k\in\mathds{N}$, are given in
  \eqref{binomialexp} and
  \begin{equation}\label{def} \alpha_m^* := \left\{
      \begin{array}{ccc} \alpha_m, & \mbox{ if} &0
        \leq m \leq p;\vspace{.3cm}\\
        0, & \mbox{ if} & m > p
      \end{array} \right. \quad \mbox{ and}\quad \beta_m^* := \left\{
      \begin{array}{ccc} \beta_m, & \mbox{ if} & 0\leq
        m \leq q;\vspace{.3cm}\\
        0, & \mbox{ if} & m > q.
      \end{array} \right.
  \end{equation}
\end{prop}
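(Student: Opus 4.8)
The plan is to clear denominators in the defining relation \eqref{lambdapoly} and then match coefficients of like powers of $z$. Multiplying both sides of $\lambda(z)=\alpha(z)[\beta(z)]^{-1}(1-z)^{-d}$ by $\beta(z)(1-z)^d$ gives the identity
\[
\alpha(z)=\beta(z)(1-z)^d\,\lambda(z),\qquad |z|<1,
\]
where $\lambda(\cdot)$ has the unique power series expansion $\sum_{k\geq 0}\lambda_{d,k}z^k$ guaranteed by the analyticity observation preceding the statement. Each factor on the right has a known expansion: by \eqref{alphabeta} and \eqref{def} one has $\beta(z)=\sum_{j=0}^{q}(-\beta_j^*)z^j$ (recall $\beta_0^*=\beta_0=-1$), while $(1-z)^d=\sum_{m\geq 0}\delta_{d,m}z^m$ by \eqref{binomialexp}. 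The left-hand side is the finite sum $\alpha(z)=\sum_{i=0}^{p}(-\alpha_i^*)z^i$, so its coefficient of $z^k$ is exactly $-\alpha_k^*$, the convention \eqref{def} automatically annihilating the terms with $k>p$.

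First I would form the Cauchy product $\beta(z)(1-z)^d=:\sum_{n\geq 0}p_n z^n$ and record the single fact that drives the whole argument, namely
\[
p_0=(-\beta_0^*)\delta_{d,0}=1.
\]
More generally $p_n=-\sum_{j=0}^{n}\beta_j^*\delta_{d,n-j}$, the upper limit $n$ being harmless since $\beta_j^*=0$ for $j>q$. Convolving once more with $\lambda(z)$ and extracting the coefficient of $z^k$ then yields
\[
-\alpha_k^*=\sum_{i=0}^{k}\lambda_{d,i}\,p_{k-i}=\lambda_{d,k}\,p_0+\sum_{i=0}^{k-1}\lambda_{d,i}\,p_{k-i},
\]
where the top index $i=k$ has been split off precisely because $p_0=1$ makes its coefficient equal to $\lambda_{d,k}$.

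Solving for $\lambda_{d,k}$ and substituting $-p_{k-i}=\sum_{j=0}^{k-i}\beta_j^*\delta_{d,k-i-j}$ turns the displayed equation directly into the claimed recurrence \eqref{coefflambda}. The base case $\lambda_{d,0}=1$ follows either by evaluating $\lambda(0)=\alpha(0)/\beta(0)=1$, or by noting that the $k=0$ instance of the recurrence reads $\lambda_{d,0}=-\alpha_0^*=1$ with an empty inner sum.

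The computation is entirely a matter of bookkeeping, so the only real obstacle is sign discipline: one must keep straight that the stored coefficients $\alpha_j^*,\beta_j^*$ carry the values $\alpha_j,\beta_j$ (with $\alpha_0^*=\beta_0^*=-1$) whereas the polynomials $\alpha(\cdot)$ and $\beta(\cdot)$ enter with the opposite sign, and that the truncation conventions in \eqref{def} are exactly what permit the finite polynomial products to be written as formally infinite convolutions. Verifying $p_0=1$ is the linchpin that lets $\lambda_{d,k}$ be isolated; everything else is a routine coefficient comparison.
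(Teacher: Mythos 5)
Your proposal is correct and follows essentially the same route as the paper's own proof: clear denominators to get $\alpha(z)=\beta(z)(1-z)^d\lambda(z)$, expand $\beta(z)(1-z)^d$ as a Cauchy product whose constant term $(-\beta_0^*)\delta_{d,0}=1$ lets you split off the $i=k$ term, and match coefficients of $z^k$ against $-\alpha_k^*$. The only cosmetic difference is that you name the intermediate coefficients $p_n$ and invoke uniqueness of the power series explicitly, while the paper carries the double sum along inline; the substance is identical.
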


\begin{proof}
  Let $\lambda(\cdot)$ be defined by \eqref{lambdapoly}. Consequently,
  \begin{equation}\label{equality}
    \alpha(z)=\beta(z)(1-z)^d\sum_{k=0}^{\infty}\lambda_{d,k}z^k.
  \end{equation}
  By defining $\beta_k^*$ as in expression \eqref{def}, for all
  $k\in\mathds{N}$, and upon considering expression \eqref{binomialexp},
  observing that $\delta_{d,0} = -1 = \beta_0$, the right hand side of
  expression \eqref{equality} can be rewritten as
  \begin{align}\label{formula2}
    \beta(z)(1-z)^d\sum_{k=0}^{\infty}\lambda_{d,k}z^k
    &=\bigg[\sum_{k=0}^{\infty}\bigg(\sum_{j=0}^{k}-\beta_{j}^*\delta_{d,k-j}\bigg)z^k\bigg]\sum_{k=0}^{\infty}\lambda_{d,k}z^k
    =
    \sum_{k=0}^{\infty}\bigg[\sum_{i=0}^{k}\lambda_{d,i}\bigg(-\sum_{j=0}^{k-i}\beta_{j}^*\delta_{d,k-i-j}\bigg)\bigg]z^k\nonumber\\
    &=\sum_{k=0}^{\infty}\bigg[\lambda_{d,k}-\sum_{i=0}^{k-1}\lambda_{d,i}\sum_{j=0}^{k-i}\beta_{j}^*\delta_{d,k-i-j}\bigg]z^k.
  \end{align}

  Now, by setting $\alpha_k^*$ as in expression \eqref{def}, for all
  $k\in\mathds{N}$, from expression \eqref{formula2} one concludes that the
  equality \eqref{equality} holds if and only if,
  \[
  -\alpha_0^* = \lambda_{d,0} \quad \mbox{ and} \quad -\alpha_k^* =
  \lambda_{d,k}-\sum_{i=0}^{k-1}\lambda_{d,i}\sum_{j=0}^{k-i}
  \delta_{d,k-i-j}\beta_{j}^*, \quad \mbox{ for all } k\geq 1.
  \]
  Therefore, expression \eqref{coefflambda} holds.  It is easy to see that by
  replacing the coefficients $\lambda_{d,k}$, given by \eqref{coefflambda}, in
  the expression \eqref{formula2}, for all $k\in \mathds{N}$, we get
  $\sum_{k=0}^{\infty}(-\alpha_k^*)z^k=\alpha(z)$, which completes the proof.
\end{proof}

 The applicability of Theorem \ref{N1} to long memory models was briefly
mentioned (without going into details) in \cite{NE91}.  Corollary
\ref{strictstatfiegarch} below is a direct application of Theorem
\ref{convorder} and provides a simple condition for the criterion in Theorem
\ref{N1} to hold when $\{\ln(\sigma_t^2)\}_{t\in \mathds{Z}}$ is defined by
\eqref{fieprocess}, which leads to a long memory model whenever $d>0$.

\begin{corollary}\label{strictstatfiegarch}
   Let $\{X_t\}_{t \in \mathds{Z}}$ be a \emph{FIEGARCH}$(p,d,q)$ process,
  given in Definition \ref{definitionfie}.  If $d<0.5$,
  $\{\ln(\sigma_t^2)\}_{t\in\mathds{Z}}$ is stationary (weakly and strictly),
  ergodic and the random variable $\ln(\sigma_t^2)$ is almost surely finite, for
  all $t \in \mathds{Z}$.  Moreover, $\{X_t\}_{t \in \mathds{Z}}$ and
  $\{\sigma_t^2\}_{t\in\mathds{Z}}$ are strictly stationary and ergodic
  processes.
\end{corollary}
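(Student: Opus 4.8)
The plan is to recognize the FIEGARCH process of Definition \ref{definitionfie} as a special case of Nelson's parametrization \eqref{fiegarchNelson} and then invoke Theorem \ref{N1}. Concretely, \eqref{bol} already rewrites \eqref{fieprocess} as $\ln(\sigma_t^2) = \omega + \sum_{k=0}^\infty \lambda_{d,k}\, g(Z_{t-1-k})$, so I would set $\omega_t \equiv \omega$ (a constant sequence) and $\lambda_k := \lambda_{d,k-1}$ for $k \geq 1$ to bring it into the form \eqref{Neeq2}. The normalization $\lambda_1 = 1$ required in Theorem \ref{N1} is then automatic, since $\lambda_1 = \lambda_{d,0} = 1$ by \eqref{coefflambda}, and the reindexing gives $\sum_{k=1}^\infty \lambda_k^2 = \sum_{k=0}^\infty \lambda_{d,k}^2$, so the two square-summability conditions coincide.

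The key analytic input is Theorem \ref{convorder}. From the asymptotic equivalence \eqref{conv}, $\lambda_{d,k} \sim c\, k^{d-1}$ with $c = [\Gamma(d)]^{-1}\alpha(1)/\beta(1)$, so that $\lambda_{d,k}^2 \sim c^2 k^{2(d-1)}$ and the series $\sum_k \lambda_{d,k}^2$ behaves like $\sum_k k^{2d-2}$, which converges precisely when $2 - 2d > 1$, i.e. when $d < 0.5$. Hence $d < 0.5$ implies $\{\lambda_{d,k}\}_{k \in \mathds{N}} \in \ell^2$, equivalently $\sum_{k=1}^\infty \lambda_k^2 < \infty$ in Nelson's notation. (Only this ``if'' direction is needed for the corollary.)

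With square-summability established, I would apply Theorem \ref{N1} directly. It yields that $\{\ln(\sigma_t^2) - \omega_t\}$, $\{e^{-\omega_t}\sigma_t^2\}$ and $\{e^{-\omega_t/2}X_t\}$ are strictly stationary and ergodic, that $\{\ln(\sigma_t^2) - \omega_t\}$ is covariance stationary, and, via the contrapositive of the statement that $\sum_k \lambda_k^2 = \infty$ forces $|\ln(\sigma_t^2) - \omega_t| = \infty$ almost surely, that $\ln(\sigma_t^2) - \omega_t$ is almost surely finite. Because $\omega_t \equiv \omega$ is a deterministic constant, the remaining step is purely formal: $\ln(\sigma_t^2) = (\ln(\sigma_t^2) - \omega) + \omega$ is obtained by adding a fixed constant, which preserves strict and weak stationarity, ergodicity and almost sure finiteness; similarly $\sigma_t^2 = e^{\omega}\,(e^{-\omega}\sigma_t^2)$ and $X_t = e^{\omega/2}\,(e^{-\omega/2}X_t)$ are obtained by multiplying by a fixed positive constant, which preserves strict stationarity and ergodicity. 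This delivers every assertion of the corollary.

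There is no deep obstacle here; the work is essentially bookkeeping, since Theorems \ref{N1} and \ref{convorder} supply the substantive content. The one point requiring care is the passage from the weighted and shifted processes furnished by Theorem \ref{N1} to $\{\ln(\sigma_t^2)\}$, $\{\sigma_t^2\}$ and $\{X_t\}$ themselves, which is exactly where the constancy of $\omega_t$ is used: for a genuinely time-varying $\omega_t$ this transfer would fail, so I would record explicitly that $\omega_t \equiv \omega$ and that applying a fixed measurable map coordinatewise preserves the relevant stochastic properties.
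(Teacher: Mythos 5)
Your proposal is correct and follows essentially the same route as the paper: invoke Theorem \ref{convorder} to get $\sum_{k}\lambda_{d,k}^2<\infty$ when $d<0.5$, then apply Theorem \ref{N1} with $\omega_t:=\omega$ and $\lambda_k:=\lambda_{d,k-1}$. Your additional bookkeeping (checking $\lambda_1=\lambda_{d,0}=1$ and noting that adding the constant $\omega$, resp.\ multiplying by $e^{\omega}$ or $e^{\omega/2}$, preserves stationarity, ergodicity and almost sure finiteness) is left implicit in the paper's proof but is exactly the right justification.
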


\begin{proof}
  Let $\{\lambda_{d,k}\}_{k\in\mathds{N}}$ be defined by \eqref{lambdapoly} and
  rewrite \eqref{fieprocess} as \eqref{bol}.  Observe that, by Theorem
  \ref{convorder}, the condition $d<0.5$ implies that $\sum_{k=0}^\infty
  \lambda_{d,k}^2 < \infty$.  Therefore, the results follow from Theorem
  \ref{N1} by taking $\omega_t := \omega$, for all $t\in\mathds{Z}$, and
  $\lambda_k := \lambda_{d,k-1}$, for all $k \geq 1$.
\end{proof}

 The square summability of $\{\lambda_{d,k}\}_{k\in\mathds{N}}$ implies that
the process $\{\ln(\sigma_t^2)\}_{t\in\mathds{Z}}$ is stationary (weakly and
strictly), ergodic and the random variable $\ln(\sigma_t^2)$ is almost surely
finite, for all $t \in \mathds{Z}$ (see Theorem \ref{N1}).  Now,  since
$\{g(Z_t)\}_{t\in\mathds{Z}}$ is a white noise (Proposition \ref{Prop1}), it
follows immediately that $\{\ln(\sigma_t^2)\}_{t\in\mathds{Z}}$ is an
ARFIMA$(q,d,p)$ process (for details on ARFIMA processes see, for instance,
\cite{BRDA91}, \cite{LO08}).  This result is very useful, not only for
forecasting purposes (see Section \ref{forecastingsection}) but also, to
conclude the following properties

\begin{itemize}
\item[{\bf P1:}] if $d <0.5$, the autocorrelation function of the process
  $\{\ln(\sigma_t^2)\}_{t\in\mathds{Z}}$ is such that
  \[
  \rho_{\ln(\sigma^2_t)}(h)\sim ch^{2d-1}, \quad \mbox{as } \ h \to \infty,
  \]
  where $c\neq 0$, and the spectral density function of the process
  $\{\ln(\sigma_t^2)\}_{t\in\mathds{Z}}$ is such that
  \[
  f_{\ln(\sigma^2_t)}(\lambda)=\frac{\sigma_g^2}{2\pi}
  \frac{|\alpha(e^{-i\lambda})|^2}{|\beta(e^{-i\lambda})|^2}|1-e^{-i\lambda}|^{-2d
  }\sim \frac{\sigma_g^2}{2\pi}\left[
    \frac{\alpha(1)}{\beta(1)}\right]^2\!\lambda^{-2d}, \quad \mbox{as} \
  \lambda \to 0,
  \]
  where $\sigma_g^2 = \mathrm{Var}\big(g(Z_t)\big)$ is given in \eqref{eq:sigmag};

\item[{\bf P2:}] if $d \in (-1, 0.5)$ and $\alpha(z)\neq 0$, for $|z|\leq 1$,
  the process $\{\ln(\sigma_t^2)\}_{t\in\mathds{Z}}$ is invertible,  that
  is,
  \[
  \lim_{m\rightarrow \infty} \mathds{E}\bigg(\bigg|\sum_{k=0}^m
  \tilde\lambda_{d,k}\big[\ln(\sigma_{t-k}^2) - \omega -
  g(Z_{t-1})\big]\bigg|^{r}\bigg) = 0, \quad \mbox{for all} \quad 0 <r \leq 2,
  \]
  where $\displaystyle\sum_{k=0}^{\infty}\tilde\lambda_{d,k}z^k = \tilde \lambda(z) :=
  \lambda^{-1}(z) = \dfrac{\beta(z)}{\alpha(z)}(1-z)^d, \quad |z|<1.$
\end{itemize}

\begin{remark}
   The proof of {\bf P1} can be found in \cite{BRDA91}, theorem 13.2.2.
  Regarding {\bf P2},  in the literature one usually find that an
  ARFIMA$(p,d,q)$ process is invertible for $|d| < 0.5$ (see, for instance,
  \cite{BRDA91}, theorem 13.2.2).  However, \cite{BL85} already proved that this
  range can be extended to $d\in(-1, 0.5)$, for an ARFIMA$(0,d,0)$ and, more
  recently, \cite{BOPA07} show that this result actually holds for any
  ARFIMA$(p,d,q)$.
\end{remark}

Corollary \ref{strictstatfiegarch} shows that $\{X_t\}_{t\in\mathds{Z}}$ and
$\{\sigma^2_t\}_{t\in\mathds{Z}}$ are strictly stationary and ergodic processes.
However, as mentioned in \cite{NE91}, this does not imply weakly stationarity
when the random variable $Z_t$, for $t \in \mathds{Z}$, is such that either its
mean or its variance is not finite.  Theorem \ref{N2} considers the GED function
and proves the existence of the moment of order $r>0$, for the random variables
$X_t$ and $\sigma_t^2$, for all $t \in \mathds{Z}$, when the process
$\{\ln(\sigma_t^2)\}_{t\in\mathds{Z}}$ is defined in terms of a square summable
sequence of coefficients.   Corollary \ref{estacionar} below applies the
result of Theorem \ref{convorder} to state a simple condition so that Theorem
\ref{N2} holds for FIEGARCH$(p,d,q)$ processes.

\begin{corollary}\label{estacionar}
  Let $\{X_t\}_{t \in \mathds{Z}}$ be a \emph{FIEGARCH}$(p,d,q)$ process, given
  in Definition \ref{definitionfie}.  Assume that $\theta$ and $\gamma$ are not
  both equal to zero and that $\{Z_t\}_{t \in \mathds{Z}}$ is a sequence of
  i.i.d. \emph{GED}$(v)$ random variables, with $v>1$, zero mean and variance
  equal to one.  If $d<0.5$, then $\mathds{E}(X_t^r)<\infty$ and
  $\mathds{E}(\sigma_t^{2r})<\infty$, for all $t\in \mathds{Z}$ and $r>0$.
\end{corollary}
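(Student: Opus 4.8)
The plan is to derive Corollary \ref{estacionar} as a direct consequence of Theorem \ref{N2}, mirroring the argument already used in the proof of Corollary \ref{strictstatfiegarch}. The first step is to recognize that the FIEGARCH$(p,d,q)$ process of Definition \ref{definitionfie} is a particular case of the Nelson parametrization \eqref{Neeq2}. Concretely, I would invoke the power series representation \eqref{lambdapoly} of $\lambda(\cdot)$ (valid since $\beta(\cdot)$ has no roots in the closed unit disk and $\alpha(\cdot)$, $\beta(\cdot)$ share no roots) to rewrite \eqref{fieprocess} in the form \eqref{bol}, namely $\ln(\sigma_t^2) = \omega + \sum_{k=0}^\infty \lambda_{d,k}\,g(Z_{t-1-k})$. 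Matching this against \eqref{Neeq2} amounts to setting $\omega_t := \omega$ for all $t\in\mathds{Z}$ (a fixed scalar, hence trivially nonstochastic) and $\lambda_k := \lambda_{d,k-1}$ for all $k\geq 1$, so that Nelson's normalization $\lambda_1 = 1$ matches $\lambda_{d,0} = 1$ from \eqref{coefflambda}.

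The second step is to verify the square-summability hypothesis $\sum_{k=1}^\infty \lambda_k^2 < \infty$ of Theorem \ref{N2}. This is precisely where Theorem \ref{convorder} does the work: it yields $\lambda_{d,k} = O(k^{d-1})$ as $k\to\infty$, so that $\lambda_{d,k}^2 = O(k^{2d-2})$, and $\sum_k k^{2d-2}$ converges exactly when $2d-2<-1$, i.e.\ when $d<0.5$. This is the same conclusion already recorded in the discussion preceding Theorem \ref{convorder}, that $\{\lambda_{d,k}\}_{k\in\mathds{N}}\in\ell^2$ if and only if $d<0.5$. Combined with the standing assumptions — $\theta$ and $\gamma$ not both zero and $\{Z_t\}_{t\in\mathds{Z}}$ i.i.d.\ GED$(v)$ with $v>1$, zero mean and unit variance — every premise of Theorem \ref{N2} is then met.

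The third and final step is to translate the conclusion of Theorem \ref{N2} back to $\{\sigma_t^2\}_{t\in\mathds{Z}}$ and $\{X_t\}_{t\in\mathds{Z}}$ themselves. Theorem \ref{N2} asserts that $\{e^{-\omega_t}\sigma_t^2\}_{t\in\mathds{Z}}$ and $\{e^{-\omega_t/2}X_t\}_{t\in\mathds{Z}}$ possess finite moments of arbitrary order. Since here $\omega_t\equiv\omega$ is a fixed real constant, the factors $e^{r\omega}$ and $e^{r\omega/2}$ are finite positive constants that may be pulled out of the expectations, giving $\mathds{E}(\sigma_t^{2r}) = e^{r\omega}\,\mathds{E}\big((e^{-\omega}\sigma_t^2)^r\big) < \infty$ and $\mathds{E}(|X_t|^{r}) = e^{r\omega/2}\,\mathds{E}\big((e^{-\omega/2}|X_t|)^r\big) < \infty$ for every $r>0$ and every $t\in\mathds{Z}$ (the statement's $\mathds{E}(X_t^r)$ being understood as the $r$th absolute moment, in line with the second display of Theorem \ref{N2}). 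This is exactly the assertion.

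I do not anticipate any genuine obstacle: the corollary is essentially a bookkeeping step that feeds the tail estimate of Theorem \ref{convorder} into the moment-existence result of Theorem \ref{N2}. The only two points requiring mild care are the index shift $\lambda_k = \lambda_{d,k-1}$ needed to align the two parametrizations, and the observation that a constant $\omega_t$ renders the exponential prefactors in Theorem \ref{N2} harmless; neither affects the substance of the argument.
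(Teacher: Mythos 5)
Your proposal is correct and follows essentially the same route as the paper's own proof: rewrite \eqref{fieprocess} as \eqref{bol}, set $\omega_t:=\omega$ and $\lambda_k:=\lambda_{d,k-1}$, use Theorem \ref{convorder} to get $\sum_{k\geq 1}\lambda_k^2<\infty$ when $d<0.5$, and apply Theorem \ref{N2}. The paper states the final step simply as "the results follow," whereas you spell out the harmless constant prefactors $e^{r\omega}$ and $e^{r\omega/2}$; this is a welcome but inessential elaboration.
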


\begin{proof}
  Let $\{\lambda_{d,k}\}_{k\in\mathds{N}}$ be defined by \eqref{lambdapoly} and
  rewrite \eqref{fieprocess} as \eqref{bol}.  Define $\omega_t := \omega$, for
  all $t\in\mathds{Z}$, and $\lambda_k := \lambda_{d,k-1}$, for all $k\geq1$.
  Observe that, from Theorem \ref{convorder}, $d<0.5$ implies
  $\sum_{k=1}^{\infty}\lambda_{k}^2<\infty$.  Therefore,  the assumptions of
  Theorem \ref{N2} hold and the results follow.
\end{proof}

 As a consequence of Corollary \ref{estacionar}, if $d<0.5$ and $\{Z_t\}_{t
  \in \mathds{Z}}$ is a sequence of i.i.d. GED$(v)$ random variables, with
$v>1$, zero mean and variance equal to one, then $\mathds{E}(X_t^4) < \infty$
(consequently,  $\mathds{E}(X_t^3) < \infty$) and both, the asymmetry ($A_X$) and
kurtosis ($K_X$) measures of $\{X_t\}_{t\in \mathds{Z}}$ exist.  Now, since $
\mathds{E}(X_t^r) = \mathds{E}(\sigma_t^r)\mathds{E}(Z_t^r)$, for all $r>0$ (it
follows from the independence of $\sigma_t$ and $Z_t$), $\mathds{E}(X_t) = 0$
and $\mathds{E}(Z_t^2) = 1$, the measures $A_X$ and $K_X$ can be rewritten as
\begin{equation}\label{ex32}
  A_X :=
  \frac{\mathds{E}(X_t^3)}{\big[\mathds{E}(X_t^2)\big]^{3/2}}=\frac{\mathds{E}(\sigma_t^3)\mathds{E}(Z_t^3)}{\big[\mathds{E}(\sigma_t^2)\big]^{3/2}} \quad \mbox{and} \quad K_X :=
  \frac{\mathds{E}(X_t^4)}{\big[\mathds{E}(X_t^2)\big]^2}=\frac{\mathds{E}(\sigma_t^4)\mathds{E}(Z_t^4)}{\big[\mathds{E}(\sigma_t^2)\big]^2},
  \quad \mbox{for all } t\in\mathds{Z}.
\end{equation}

 An expression for $K_X$ (as a function of the FIEGARCH model parameters)
was already given in \cite{RUVE08} by assuming that $\{Z_t\}_{t\in\mathds{Z}}$
is a Gaussian white noise with variance equal to one, $d>0$, $p=0$, $q = 1$ and by
defining $\{\ln(\sigma_t^2)\}_{t\in\mathds{Z}}$ through expression \eqref{ruiz}.
According, with that definition, it can be shown that $K_X$ can be written as
\[
K_X=3\,
\frac{\prod_{j=1}^{\infty}\mathds{E}\big(\exp\{2\lambda_{j}g(Z_{t-j})\}\big)}{\Big[\prod_{j=1}^{\infty}\mathds{E}\big(\exp\{
  \lambda_{j}g(Z_{t-j})\}\big)\Big]^2}, \quad \mbox{with }
\left\{ \begin{array}{l}
    g(Z_{t}) = \theta Z_{t} +  \gamma[|Z_{t}| - \sqrt{2/\pi}], \quad t\in\mathds{Z}\\
    \lambda_j = \displaystyle\sum_{i=0}^{j-1}
    \frac{\Gamma(i+d)}{\Gamma(i+1)\Gamma(d)}\beta^{j-i-1}, \quad j \in
    \mathds{N}^* \mbox{ and } d > 0.
  \end{array}
\right.
\]
In Proposition \ref{kurtosisAsymmetry} bellow we consider stationary
FIEGARCH$(p,d,q)$ processes (therefore,  $d < 0.5$) with
$\{\ln(\sigma_t^2)\}_{t\in\mathds{Z}}$ defined by \eqref{fieprocess} and show
that a similar expression holds for any $p,q \geq 0$.  We do not impose that
$\{Z_t\}_{t\in\mathds{Z}}$ is a Gaussian white noise since Corollary
\ref{strictstatfiegarch} shows that the asymmetry and kurtosis measures exist
for a larger class of FIEGARCH models.

\begin{prop}\label{kurtosisAsymmetry}
  Let $\{X_t\}_{t \in \mathds{Z}}$ be a stationary \emph{FIEGARCH}$(p,d,q)$
  process, given in Definition \ref{definitionfie}.  If
  $\mathds{E}(X_0^3)<\infty$, the asymmetry measure of $\{X_t\}_{t \in
    \mathds{Z}}$ is given by
  \[
  A_X=\mathds{E}(Z_0^3)\, \frac{\prod_{k=0}^{\infty}\mathds{E}\Big(\!\exp\Big{\{}
    \frac{3}{2}\lambda_{d,k}g(Z_{0})\Big{\}}\Big)}{\big[\prod_{k=0}^{\infty}\mathds{E}\big(\!\exp\{
    \lambda_{d,k}g(Z_{0})\}\big)\big]^{3/2}}
  \]
  and, if $\mathds{E}(X_0^4)<\infty$, the kurtosis measure of $\{X_t\}_{t \in
    \mathds{Z}}$ is given by
  \[
  K_X=\mathds{E}(Z_0^4)\,
  \frac{\prod_{k=0}^{\infty}\mathds{E}(\exp\{
    2\lambda_{d,k}g(Z_{0})\})}{\big[\prod_{k=0}^{\infty}\mathds{E}\big(\!\exp\{
    \lambda_{d,k}g(Z_{0})\}\big)\big]^2},
  \]
  where $\lambda_{d,k}$ are given in \eqref{lambdapoly} and $g(\cdot)$ is
  defined by \eqref{functiong}.
\end{prop}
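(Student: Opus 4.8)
The plan is to exploit identity \eqref{ex32}, which already reduces the asymmetry and kurtosis measures to the moments $\mathds{E}(\sigma_t^2)$, $\mathds{E}(\sigma_t^3)$ and $\mathds{E}(\sigma_t^4)$ together with $\mathds{E}(Z_0^3)$ and $\mathds{E}(Z_0^4)$. Thus the whole argument amounts to obtaining a closed expression for $\mathds{E}(\sigma_t^r)$, for $r\in\{2,3,4\}$, from the linear representation \eqref{bol} of $\ln(\sigma_t^2)$.

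First I would write, for any $r>0$,
\[
\sigma_t^r = \exp\left\{\frac{r}{2}\ln(\sigma_t^2)\right\} = e^{r\omega/2}\exp\left\{\frac{r}{2}\sum_{k=0}^\infty \lambda_{d,k}\,g(Z_{t-1-k})\right\},
\]
using \eqref{bol}. Since $\{Z_t\}_{t\in\mathds{Z}}$ is i.i.d., the variables $\{g(Z_{t-1-k})\}_{k\geq 0}$ are independent and identically distributed, so the exponential of the sum factors as a product and, taking expectations,
\[
\mathds{E}(\sigma_t^r) = e^{r\omega/2}\prod_{k=0}^\infty \mathds{E}\left(\exp\left\{\frac{r}{2}\lambda_{d,k}\,g(Z_0)\right\}\right).
\]
Inserting this for $r=3$ and $r=2$ into the expression for $A_X$ in \eqref{ex32}, the factor $e^{3\omega/2}$ coming from $\mathds{E}(\sigma_t^3)$ cancels against the factor arising from $[\mathds{E}(\sigma_t^2)]^{3/2}$, leaving exactly the ratio of infinite products in the statement times $\mathds{E}(Z_0^3)$; likewise, inserting $r=4$ and $r=2$ into the expression for $K_X$, the factor $e^{2\omega}$ cancels and one recovers the stated formula for $K_X$.

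The one step requiring genuine care---and the one I expect to be the main obstacle---is the interchange of the expectation with the \emph{infinite} product, that is, establishing that $\prod_{k=0}^\infty c_k$, with $c_k := \mathds{E}(\exp\{\frac{r}{2}\lambda_{d,k}g(Z_0)\})$, converges and equals $e^{-r\omega/2}\mathds{E}(\sigma_t^r)$. For a finite truncation $S_n := \sum_{k=0}^n\lambda_{d,k}g(Z_{t-1-k})$, independence gives $\mathds{E}(\exp\{\frac{r}{2}S_n\}) = P_n$, where $P_n := \prod_{k=0}^n c_k$. Because $g(Z_0)$ has zero mean (Proposition \ref{Prop1}), Jensen's inequality yields $c_k\geq 1$, so $\{P_n\}$ is nondecreasing. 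Writing $S := \sum_{k=0}^\infty \lambda_{d,k}g(Z_{t-1-k}) = S_n + R_n$ with $R_n$ independent of $S_n$ and $\mathds{E}(\exp\{\frac{r}{2}R_n\})\geq 1$, one obtains $P_n\leq \mathds{E}(\exp\{\frac{r}{2}S\}) = e^{-r\omega/2}\mathds{E}(\sigma_t^r)$, which is finite exactly by the hypothesis $\mathds{E}(X_0^3)<\infty$ (for $A_X$) or $\mathds{E}(X_0^4)<\infty$ (for $K_X$); hence $P_n\uparrow P_\infty\leq \mathds{E}(\exp\{\frac{r}{2}S\})$. For the reverse inequality, $d<0.5$ together with Theorem \ref{convorder} gives $\{\lambda_{d,k}\}_{k\in\mathds{N}}\in\ell^2$, so $S_n\to S$ almost surely (independent, mean-zero summands with summable variances) and therefore $\exp\{\frac{r}{2}S_n\}\to\exp\{\frac{r}{2}S\}$ almost surely; Fatou's lemma then gives $\mathds{E}(\exp\{\frac{r}{2}S\})\leq\liminf_n P_n = P_\infty$. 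Combining the two bounds yields $P_\infty = \mathds{E}(\exp\{\frac{r}{2}S\})$, which is precisely the identity used in the previous paragraph and completes the argument.
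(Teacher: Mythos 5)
Your proof is correct and follows essentially the same route as the paper's: both reduce $A_X$ and $K_X$ through \eqref{ex32} to the factorization $\mathds{E}(\sigma_0^r) = e^{r\omega/2}\prod_{k=0}^{\infty}\mathds{E}\big(\exp\{\tfrac{r}{2}\lambda_{d,k}g(Z_0)\}\big)$, obtained from the i.i.d. property of $\{g(Z_t)\}_{t\in\mathds{Z}}$, and then substitute it into the moment expressions. The only difference is one of rigor, not of route: the paper asserts this infinite-product identity in a single line, whereas you justify the passage from finite to infinite products (monotonicity via Jensen, the tail-independence bound, and Fatou's lemma), which fills in a step the paper leaves implicit.
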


\begin{proof}
   Let $\{X_t\}_{t \in \mathds{Z}}$ be any stationary FIEGARCH$(p,d,q)$
  process and $\lambda(\cdot)$ be the polynomial defined by \eqref{lambdapoly}.
  Notice that, since $\{g(Z_t)\}_{t\in\mathds{Z}}$ is a sequence of
  i.i.d. random variables, from \eqref{fieprocess} it follows that
  \begin{equation}\label{ex3}
    \mathds{E}(\sigma_0^r) =
    e^{\frac{r\omega}{2}}\prod_{k=0}^{\infty}\mathds{E}\Big(\!\exp\Big\{\frac{r}{2}
    \lambda_{d,k}g(Z_{0})\Big\}\Big),  \quad \mbox{for all } r > 0.
  \end{equation}
  From the fact that $\sigma_t$ and $Z_t$ are independent random variables one has
  \begin{equation*}
    \mathds{E}(|X_t|^r) = \mathds{E}(|X_0|^r) =
    \mathds{E}(|Z_0|^r)  \mathds{E}(|\sigma_0|^r), \quad \mbox{for all } t\in\mathds{Z}
    \mbox{ and } r>0.
  \end{equation*}
  Thus, given $r >0$, $\mathds{E}(X_0^r)<\infty$ if and only if
  $\mathds{E}(\sigma_0^r)$ and $\mathds{E}(Z_0^r)$ are both finite. Therefore,
  if $\mathds{E}(X_0^3)<\infty$ (analogously, $\mathds{E}(X_0^4)<\infty$), the
  asymmetry (analogously, the kurtosis) measure exists, and expression
  \eqref{ex3} converges, for any $r\leq 3$ (analogously, $r\leq 4$).  Upon
  replacing \eqref{ex3} in \eqref{ex32} we conclude the proof.
\end{proof}

\begin{ex}
  Figure \ref{kurtosis} shows the theoretical value of the kurtosis measure, as
  a function of the parameter $d$, for any FIEGARCH$(0,d,1)$ process, with
  Gaussian noise and parameters $\theta = -0.1661$, $\gamma = 0.2792$, $\omega =
  -7.2247$ and (a) $\beta_1 = 0.6860$ (b) $\beta_1 = - 0.6860$.  The parameter
  values considered in Figure \ref{kurtosis} (a) are the same ones (except for
  $d$) considered in Figure \ref{fiegsim} (a).  For the specific model
  considered in Figure \ref{fiegsim}, $d = 0.3578$ and the theoretical value of
  the kurtosis measure is 5.6733. The sample kurtosis value of the simulated
  time series presented in Figure \ref{fiegsim} (a) is 5.3197, which is very
  close to the theoretical one.  It is easy to see that, while in Figure
  \ref{kurtosis} (a) the kurtosis values increase exponentially as $d$
  increases, in Figure \ref{kurtosis} (b) the kurtosis values decrease for $-0.5
  \leq d < 0.3$ and increase for $0.3 \leq d \leq 0.5$.

  \begin{figure}[!ht]
    \centering \mbox{
      \subfigure[$\beta_1 = 0.6860$]{
        \includegraphics[ width = 0.32\textwidth]{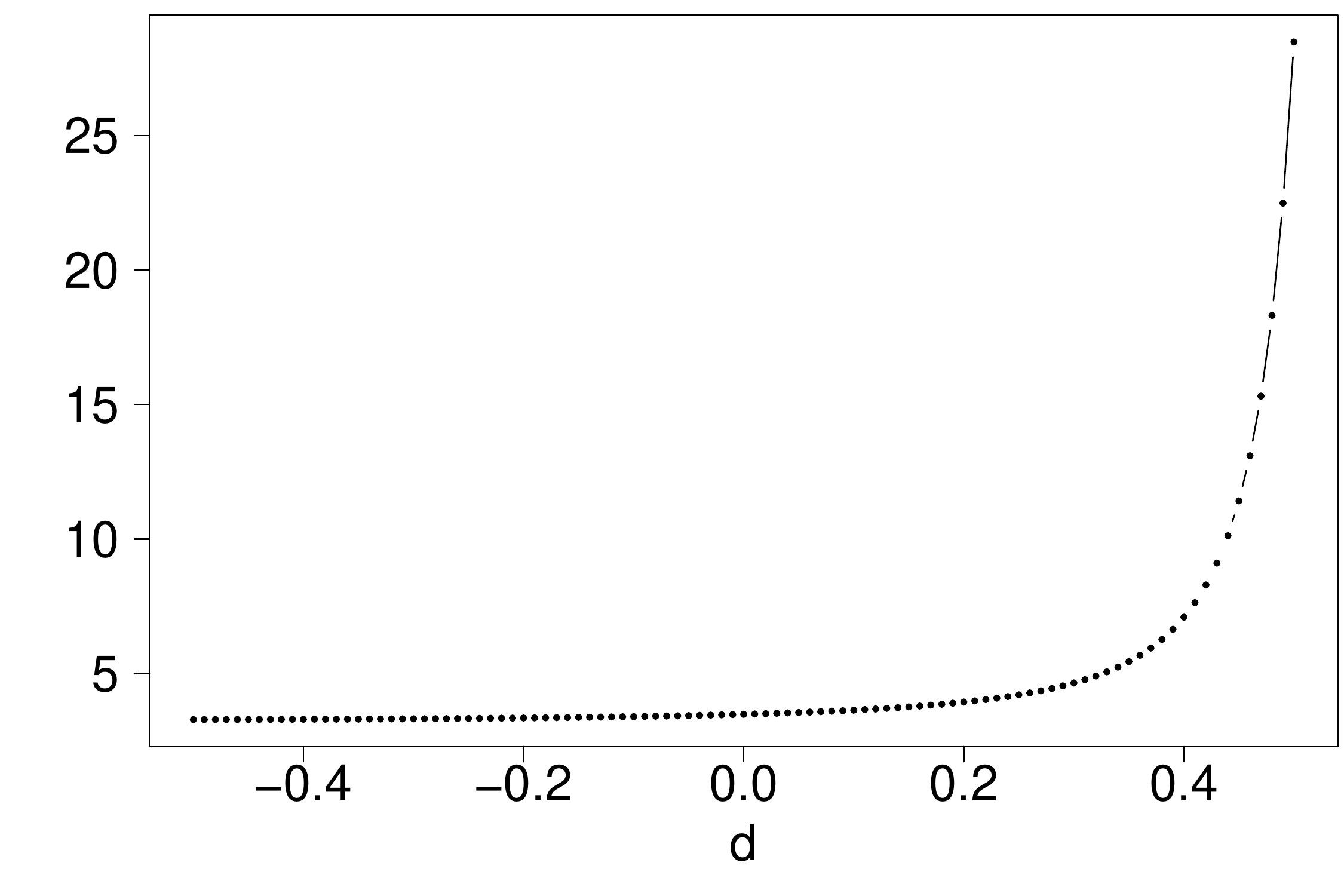}} \hspace{1cm}
      \subfigure[$\beta_1 = -0.6860$]
      {\includegraphics[width = 0.32\textwidth]{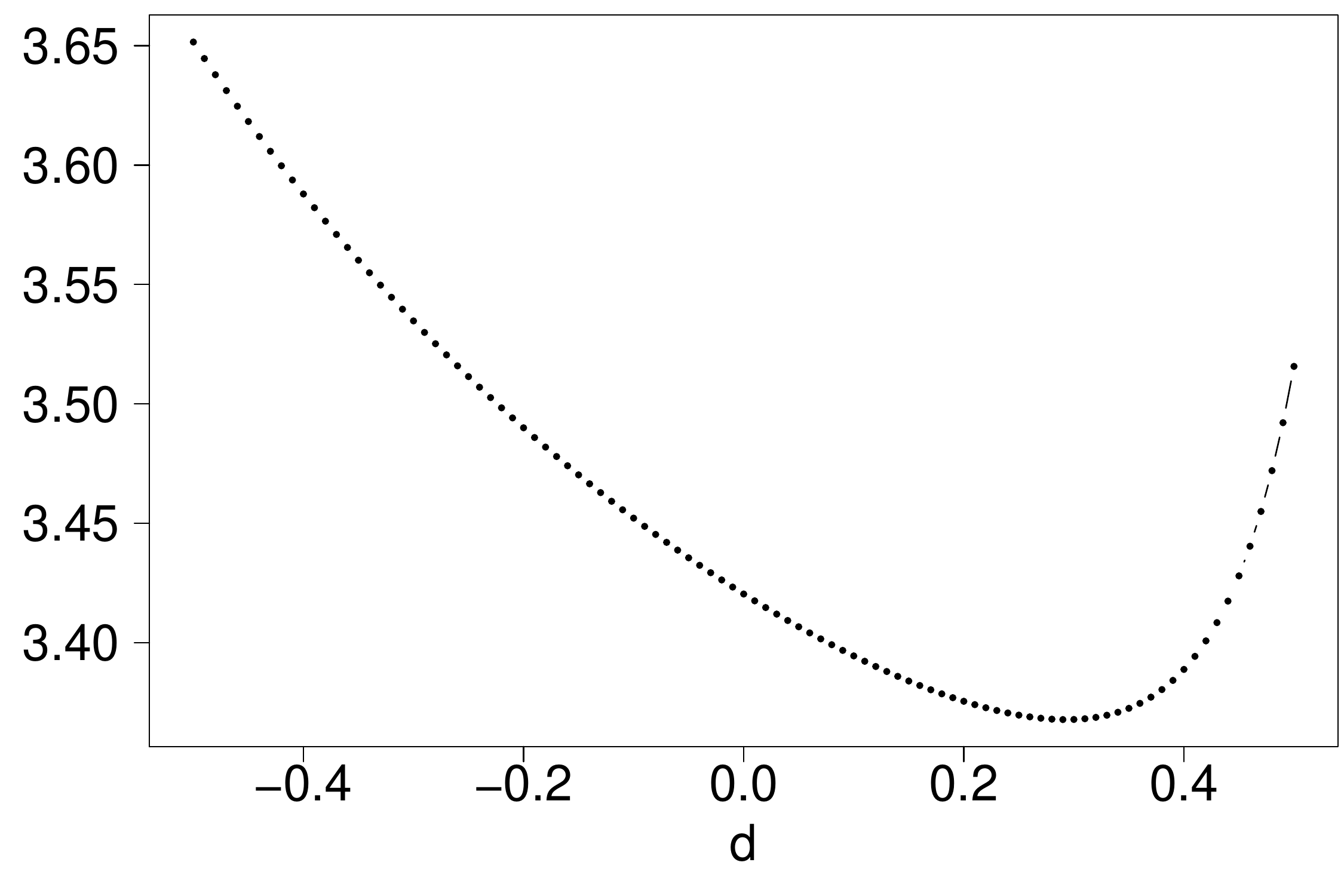}}
    }
    \caption{(a) Kurtosis measure of a FIEGARCH$(0,d,1)$ process as a function
      of the parameter $d$ with $\theta = -0.1661$, $\gamma = 0.2792$, $\omega =
      -7.2247$ and $\beta_1 = 0.6860$; (b) Kurtosis measure of a
      FIEGARCH$(0,d,1)$ process with the same parameters as in (a) but with
      $\beta_1$ replaced by $\beta_1 = - 0.6860$. }\label{kurtosis}
  \end{figure}
\end{ex}

 Although $\{\ln(\sigma_t^2)\}_{t\in\mathds{Z}}$ is an ARFIMA process, in
practice  it cannot be directly observed and frequently, knowing its
characteristics may not be sufficient for model identification and estimation
purposes.  On the other hand, $\{X_t\}_{t \in \mathds{Z}}$ is an observable
process and so is $\{\ln(X_t^2)\}_{t\in\mathds{Z}}$.  By noticing that, from
\eqref{generalproc}, one can rewrite
\[
\ln(X_t^2) = \ln(\sigma_t^2) + \ln(Z_t^2), \quad \mbox{for all} \quad
t\in\mathds{Z},
\]
and now it is clear that the properties of $\{\ln(\sigma_t^2)\}_{t\in\mathds{Z}}$ are
useful to characterize the process $\{\ln(X_t^2)\}_{t\in\mathds{Z}}$.  This
approach was already considered in the literature for parameter estimation
purposes.  For instance,  \cite{PEZA08} and \cite{HUEA05} consider models
such that $X_t$ can be written as in \eqref{generalproc}, but $\sigma_t$ can
have a more general definition than \eqref{fieprocess}.  While \cite{PEZA08}
consider maximum likelihood and Whittle's method of estimation in the class of
exponential volatility models, especially the EGARCH ones, \cite{HUEA05}
consider different semiparametric estimators of the memory parameter in general
signal plus noise models.  In both cases, to obtain an estimator by Whittle's
method, the authors consider the spectral density function of
$\{\ln(X_t^2)\}_{t\in\mathds{Z}}$.

In what follows, we focus our attention in the case where $X_t$ can be written
as in \eqref{generalproc}, and $\sigma_t$ is defined through the expression
\eqref{fieprocess} and we present some properties of the process
$\{\ln(X_t^2)\}_{t\in\mathds{Z}}$. In particular, we show that, under mild
conditions, this process also has an ARFIMA representation.  To the best of our
knowledge no formal proofs of these results are given in the literature of
FIEGARCH$(p,d,q)$ processes, especially the ARFIMA$(q,d,0)$ representation of
$\{\ln(X_t^2)\}_{t\in\mathds{Z}}$.

\begin{thm}\label{arfimafieg}
  Let $\{X_t\}_{t \in \mathds{Z}}$ be a \emph{FIEGARCH}$(p,d,q)$ process, given
  in Definition \ref{definitionfie}.  If $\mathds{E}([\ln(Z_0^2)]^2)<\infty$ and
  $d<0.5$, then the process $\{\ln(X_t^2)\}_{t\in\mathds{Z}}$ is well defined
  and it is stationary (weakly and strictly) and ergodic. Moreover, the
  autocovariance function of $\{\ln(X_t^2)\}_{t\in\mathds{Z}}$ is given by
  \begin{equation}\label{covlnx}
    \mbox{\large$\gamma$}_{\ln(\mbox{\tiny$X$}^2)}(h) =
    \sigma^2_g\sum_{k=0}^\infty\lambda_{d,k}\lambda_{d, k+|h|} + \mbox{\rm Var}\big(\!\ln(Z_{t}^2)\big)\mathbb{I}_{\{0\}}(h) +
    \lambda_{d,|h|-1}\mathcal{K}{\hspace{1pt}}\mathbb{I}_{\mathds{Z}^*}(h), \quad \mbox{for all } h\in\mathds{Z}.
  \end{equation}
  where $\sigma_g^2$ is given in \eqref{eq:sigmag} and
  $\mathcal{K}{\hspace{1pt}} = \mbox{\rm Cov}\big( g(Z_{0}),\ln(Z_{0}^2)\big)$.
\end{thm}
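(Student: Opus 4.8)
The plan is to exploit the additive decomposition $\ln(X_t^2)=\ln(\sigma_t^2)+\ln(Z_t^2)$, which follows immediately from \eqref{generalproc}. First I would establish that the process is well defined and inherits the qualitative properties of its two summands. By Corollary \ref{strictstatfiegarch}, the hypothesis $d<0.5$ guarantees that $\{\ln(\sigma_t^2)\}_{t\in\mathds{Z}}$ is strictly stationary, ergodic and almost surely finite, since square summability of $\{\lambda_{d,k}\}_{k\in\mathds{N}}$ holds by Theorem \ref{convorder}. The hypothesis $\mathds{E}([\ln(Z_0^2)]^2)<\infty$ forces $\ln(Z_0^2)$ to be finite almost surely, so $Z_0\neq 0$ almost surely and $\ln(X_t^2)$ is well defined. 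Both $\ln(\sigma_t^2)$ and $\ln(Z_t^2)$ are time-invariant measurable functions of the single i.i.d. sequence $\{Z_s\}_{s\in\mathds{Z}}$ (the former of $\{Z_s\}_{s\le t-1}$, the latter of $Z_t$); hence so is their sum, and strict stationarity and ergodicity of $\{\ln(X_t^2)\}_{t\in\mathds{Z}}$ follow from the standard fact that a measurable, shift-commuting functional of a stationary ergodic sequence is itself stationary and ergodic.

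For weak stationarity I would verify finite second moments. By Theorem \ref{N1} the centered process $\{\ln(\sigma_t^2)-\omega\}_{t\in\mathds{Z}}$ is covariance stationary with $\mathrm{Var}(\ln(\sigma_t^2))=\sigma_g^2\sum_{k=0}^\infty\lambda_{d,k}^2<\infty$, where $\sigma_g^2=\mathrm{Var}(g(Z_0))$ is given in \eqref{eq:sigmag}; the variance of $\ln(Z_t^2)$ is finite by assumption. Minkowski's inequality then gives $\mathds{E}([\ln(X_t^2)]^2)<\infty$, and since the law of $\ln(X_t^2)$ does not depend on $t$, the mean and all covariances depend only on the lag.

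The core computation is the autocovariance \eqref{covlnx}. I would expand $\gamma_{\ln(X^2)}(h)=\mathrm{Cov}(\ln(X_t^2),\ln(X_{t+h}^2))$ into four covariance terms using the decomposition. The term $\mathrm{Cov}(\ln(\sigma_t^2),\ln(\sigma_{t+h}^2))$ is evaluated by writing each factor as a series in $\{g(Z_s)\}_{s\in\mathds{Z}}$ and using that this is a zero-mean white noise with variance $\sigma_g^2$ (Proposition \ref{Prop1}); only pairs of innovations carrying the same time index contribute, which produces $\sigma_g^2\sum_{k=0}^\infty\lambda_{d,k}\lambda_{d,k+|h|}$, the interchange of summation and expectation being justified by square summability of $\{\lambda_{d,k}\}_{k\in\mathds{N}}$. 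The term $\mathrm{Cov}(\ln(Z_t^2),\ln(Z_{t+h}^2))$ vanishes for $h\neq 0$ by independence of the $Z_t$, contributing $\mathrm{Var}(\ln(Z_t^2))\mathbb{I}_{\{0\}}(h)$.

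The delicate part is the two cross terms, and this is where I expect the main bookkeeping obstacle. Since $\sigma_t$ is $\mathcal{F}_{t-1}$-measurable and $Z_t$ is independent of $\mathcal{F}_{t-1}$, the term $\mathrm{Cov}(\ln(\sigma_t^2),\ln(Z_{t+h}^2))$ is nonzero only when the innovation $Z_{t+h}$ enters the expansion of $\ln(\sigma_t^2)$, that is only for $h<0$, where matching $t-1-k=t+h$ isolates the single coefficient $\lambda_{d,|h|-1}$ and yields $\lambda_{d,|h|-1}\,\mathrm{Cov}(g(Z_0),\ln(Z_0^2))=\lambda_{d,|h|-1}\mathcal{K}$. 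Symmetrically, $\mathrm{Cov}(\ln(Z_t^2),\ln(\sigma_{t+h}^2))$ is nonzero only for $h>0$, again equal to $\lambda_{d,|h|-1}\mathcal{K}$, while for $h=0$ both cross terms vanish by the independence of $\sigma_t$ and $Z_t$. Combining the two cross terms therefore gives $\lambda_{d,|h|-1}\mathcal{K}\,\mathbb{I}_{\mathds{Z}^*}(h)$, and summing the four contributions reproduces \eqref{covlnx}. The main care needed throughout is the index accounting in the cross terms, namely correctly identifying that exactly one survives according to the sign of $h$ and that the surviving coefficient is $\lambda_{d,|h|-1}$.
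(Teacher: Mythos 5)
Your proof is correct and follows essentially the same route as the paper's: the same decomposition $\ln(X_t^2)=\ln(\sigma_t^2)+\ln(Z_t^2)$, the same appeals to Corollary \ref{strictstatfiegarch}, Proposition \ref{Prop1} and Theorem \ref{N1} for well-definedness, stationarity, ergodicity and the $\ln(\sigma_t^2)$ covariances, and the same four-term covariance expansion in which exactly one cross term survives according to the sign of $h$, contributing $\lambda_{d,|h|-1}\mathcal{K}\,\mathbb{I}_{\mathds{Z}^*}(h)$. Your direct index-matching for the cross terms is just the explicit form of the paper's symmetry substitution $u=t+h$, so there is no substantive difference.
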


\begin{proof}
  Assume that $\mathds{E}([\ln(Z_0^2)]^2) < \infty$ and $d<0.5$.  Let
  $\{\lambda_{d,k}\}_{k\in\mathds{Z}}$ be given by \eqref{lambdapoly} and
  rewrite \eqref{fieprocess} as \eqref{bol}.

  Observe that $\mathds{E}([\ln(Z_0^2)]^2) < \infty$ implies
  $\mathds{E}(|\ln(Z_0^2)|) < \infty$ and thus $|\ln(Z_t^2)|$ is finite with
  probability one, for all $t\in\mathds{Z}$.  Since $d<0.5$, it follows that
  $\ln(\sigma_t^2)$ is finite with probability one, for all $t\in\mathds{Z}$
  (see Corollary \ref{strictstatfiegarch}).  Therefore, $\ln(X_t^2)$ is finite with
  probability one, for all $t\in\mathds{Z}$, and hence the stochastic process
  $\{\ln(X_t^2)\}_{t\in\mathds{Z}}$ is well defined.  The strict stationarity
  and ergodicity of $\{\ln(X_t^2)\}_{t\in\mathds{Z}}$ follow immediately from
  the measurability of $\ln(Z_t^2) + \ln(\sigma_t^2)$ and the i.i.d. property of
  $\{Z_t\}_{t \in \mathds{Z}}$ (see \cite{DU91}).  To prove that
  $\{\ln(X_t^2)\}_{t\in\mathds{Z}}$ is also weakly stationary notice that
  $\mathds{E}([\ln(Z_0^2)]^2) < \infty$ implies $\mbox{\rm Var}(\ln(Z_t^2)) <
  \infty$, $d< 0 .5$ implies $\mbox{\rm Var}(\ln(\sigma_t^2)) < \infty$
  (see Corollary \ref{strictstatfiegarch}) and the independence of $\{Z_t\}_{t \in
    \mathds{Z}}$ implies that $ \ln(Z_t^2)$ and $\ln(\sigma_t^2)$ are
  independent random variables.  Hence,
  \[
  \mbox{\rm Var}(\ln(X_t^2)) = \mbox{\rm Var}(\ln(\sigma_t^2)) + \mbox{\rm
    Var}(\ln(Z_t^2)) < \infty, \ \ \mbox{ for all } \ \ t\in \mathds{Z}.
  \]

   To complete the proof it remains to show that the autocovariance
  function $\mbox{\large$\gamma$}_{\ln(\mbox{\tiny$X$}^2)}(h)$, for all
  $h\in\mathds{Z}$, is given by expression \eqref{covlnx}.  From the definition
  of $\ln(X_t^2)$, it follows that
  \begin{align}
    \mbox{\rm Cov}\big(\!\ln(X_{t+h}^2),\ln(X_t^2)\big) = & \ \mbox{\rm
      Cov}\big(\!\ln(\sigma_{t+h}^2),\ln(\sigma_t^2)\big)
    + \mbox{\rm Cov}\big(\!\ln(Z_{t+h}^2),\ln(Z_t^2)\big)\nonumber\\
    &+\, \mbox{\rm Cov}\big(\!\ln(\sigma_{t+h}^2),\ln(Z_t^2)\big)+\mbox{\rm
      Cov}\big(\!\ln(Z_{t+h}^2),\ln(\sigma_t^2)\big). \label{covariance}
  \end{align}
   Theorem \ref{N1} shows that
  \[
  \mbox{\rm Cov}\big(\!\ln(\sigma_{t+h}^2),\ln(\sigma_t^2)\big) =
  \sigma^2_g\sum_{k=0}^\infty\lambda_{d,k}\lambda_{d, k+|h|}, \quad \mbox{ for
    all } h \in\mathds{Z}.
  \]
  From the independence of the random variables $\ln(Z_{t}^2)$, for all
  $t\in\mathds{Z}$, and from expression \eqref{bol}, we have
  \[
  \mbox{\rm Cov}\big(\!\ln(Z_{t+h}^2),\ln(Z_{t}^2)\big)\! =\left\{
    \begin{array}{r}
      0, \quad \quad \mbox{ if } h\neq0;\vspace{0.15cm}\\
      \hspace{-3pt} \mbox{\rm Var}\big(\ln(Z_{t}^2)\big), \mbox{ if } h=0\phantom{,}\\
    \end{array} \right.
  \mbox{ and \ }
  \mbox{\rm Cov}\hspace{-1pt}\big(\hspace{-1pt}\ln(\sigma_{t+h}^2),\ln(Z_t^2)\big)=\left\{\begin{array}{r}
      \hspace{-3pt}\lambda_{d,h-1}\mathcal{K}{\hspace{1pt}}, \mbox{ if } h>0;\vspace{0.15cm}\\
      0, \quad \quad \mbox{if } h \leq 0.
    \end{array}\right.
  \]
  where $\mathcal{K}{\hspace{1pt}} = \mbox{\rm Cov}\big(
  g(Z_{0}),\ln(Z_{0}^2)\big)$. Since $\mbox{\rm
    Cov}\big(\ln(Z_{t+h}^2),\ln(\sigma_t^2)\big) = \mbox{\rm
    Cov}\big(\ln(\sigma_{u-h}^2), \ln(Z_{u}^2)\big)$, with $u =t+h$, one
  concludes that
  \[
  \mbox{\rm Cov}\big(\!\ln(\sigma_{t+h}^2),\ln(Z_t^2)\big)+\mbox{\rm
    Cov}\big(\!\ln(Z_{t+h}^2),\ln(\sigma_t^2)\big) = \mbox{\rm
    Cov}\big(\ln(\sigma_{t+|h|}^2),\ln(Z_t^2)\big) =
  \lambda_{d,|h|-1}\mathcal{K}{\hspace{1pt}}\mathbb{I}_{\mathds{Z}^*}(h).
  \]
  By replacing these results on expression \eqref{covariance} we conclude that
  the autocovariance function of $\{\ln(X_t^2)\}_{t\in\mathds{Z}}$ is given by
  \eqref{covlnx}.
\end{proof}

  \begin{figure}[!ht]
    \centering
    \mbox{
      \subfigure[$\mbox{\large$\gamma$}_{\ln(\mbox{\tiny$X$}^2)}(\cdot)$]
      {\includegraphics[width = 0.32\textwidth]{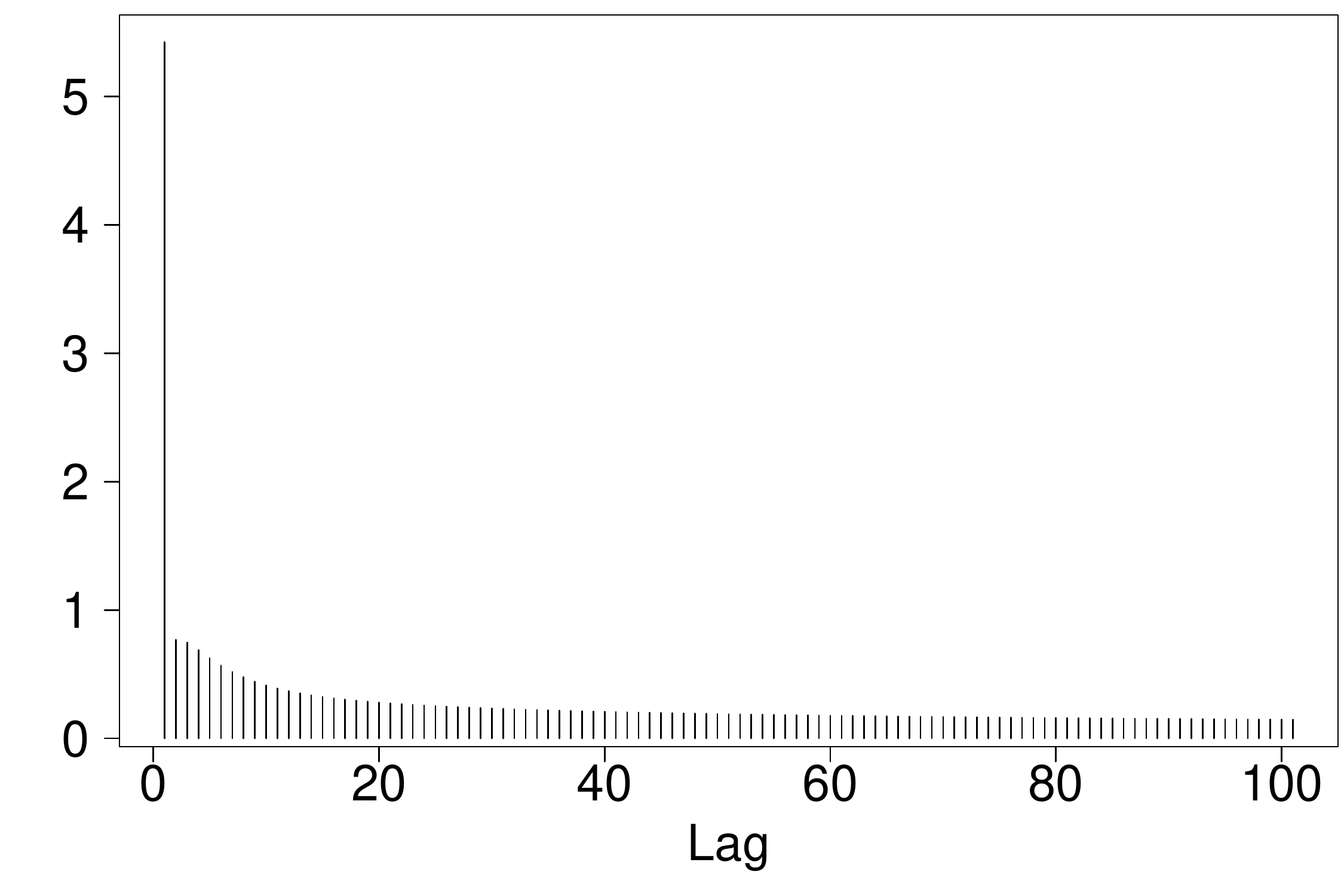}}
      \subfigure[$\mbox{\large$\hat{\gamma}$}_{\ln(\mbox{\tiny$X$}^2)}(\cdot)$]
      {\includegraphics[width = 0.32\textwidth]{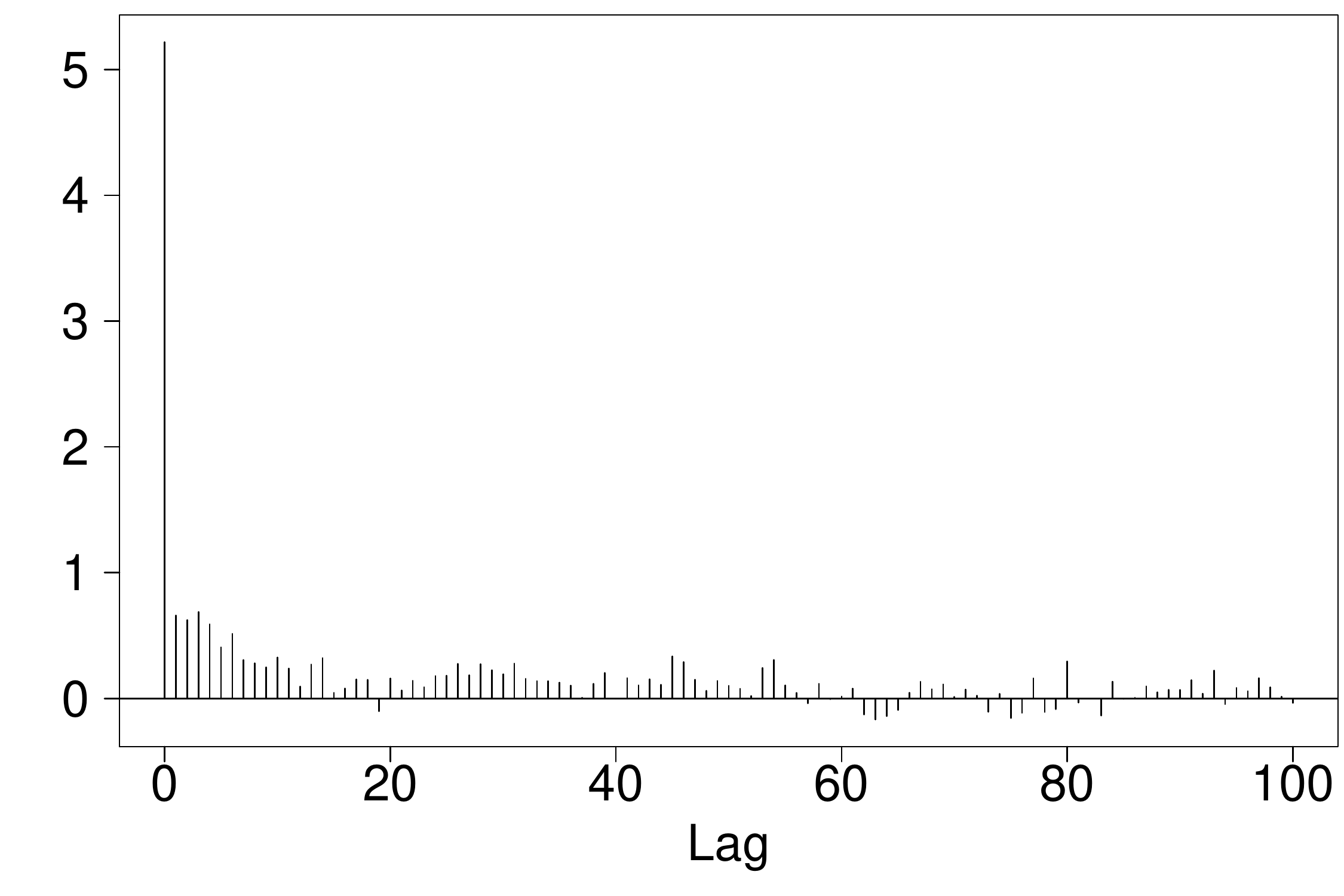}}
      \subfigure[$\mbox{\large$\hat{\gamma}$}_{\ln(r^2)}(\cdot)$]
      {\includegraphics[ width =  0.32\textwidth]{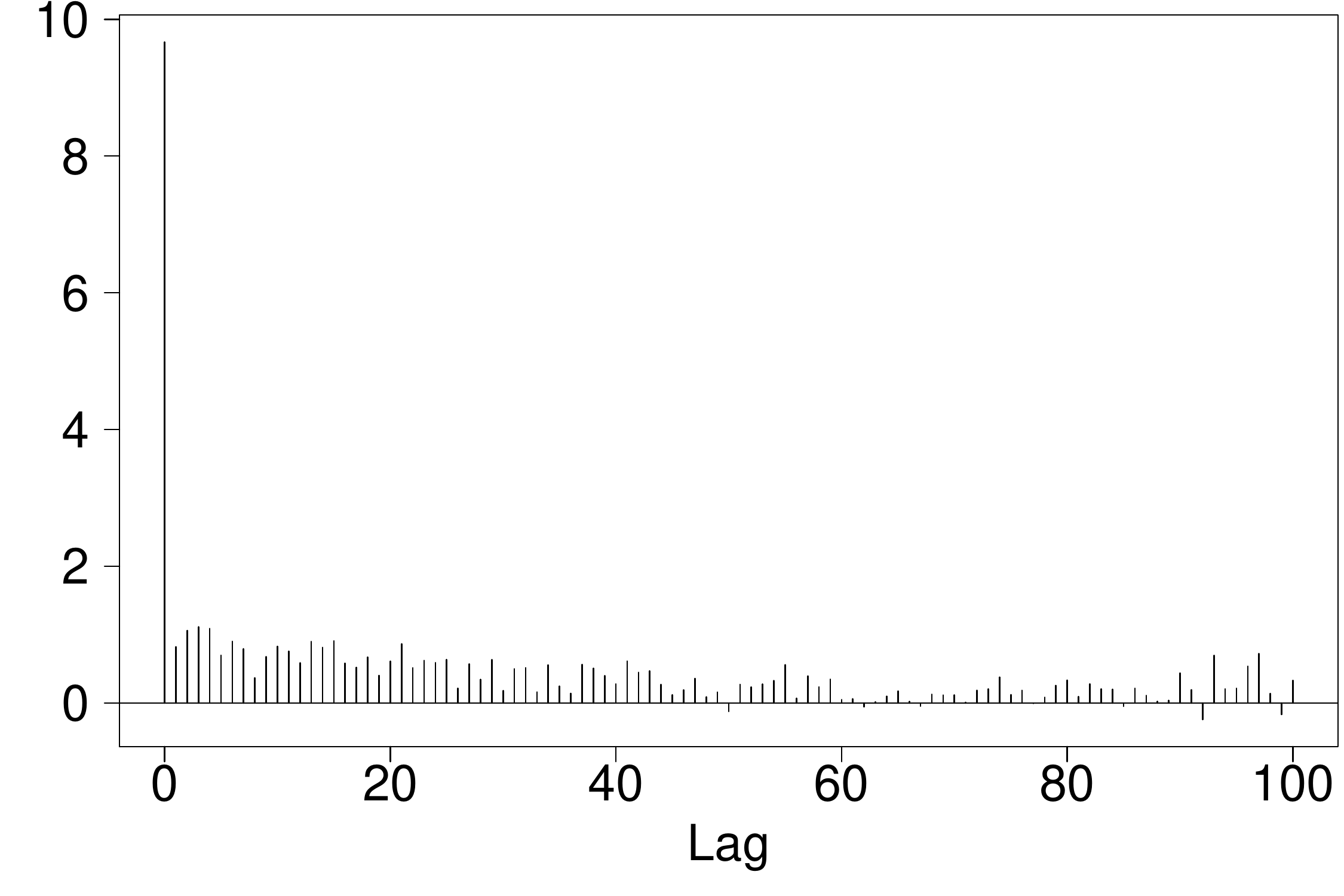}} }
    \caption{(a) Theoretical autocovariance function of the process
      $\{\ln(X_t^2)\}_{t\in\mathds{Z}}$, where $\{X_t\}_{t \in \mathds{Z}}$ is a
      FIEGARCH$(0,d,1)$ process and (b) sample autocovariance function of a time
      series $\{\ln(x_t^2)\}_{t=1}^{2000}$ derived from that FIEGARCH$(0,d,1)$
      process; (c) sample autocovariance function of the time series
      $\{\ln(r_t^2)\}_{t=1}^{1717}$, where $\{r_t\}_{t=1}^{1717}$ is the Bovespa
      index log-returns time series.} \label{covfie}
  \end{figure}

\begin{ex}
  Figure \ref{covfie} (a) presents the theoretical autocovariance function of
  the process $\{\ln(X_t^2)\}_{t\in\mathds{Z}}$, where $\{X_t\}_{t \in
    \mathds{Z}}$ is a FIEGARCH$(0,d,1)$ process, with the same parameter values
  considered in Figures \ref{fiegsim} and \ref{spectral}. Figure \ref{covfie}
  (b) shows the sample autocovariance function of the time series
  $\{\ln(x_t^2)\}_{t=1}^{2000}$, where $\{x_t\}_{t=1}^{2000}$ is the simulated
  time series presented in Figure \ref{fiegsim}. Figure \ref{covfie} (c)
  presents the sample autocovariance function of the time series
  $\{\ln(r_t^2)\}_{t=1}^n$, where $\{r_t\}_{t=1}^n$ is the Bovespa index
  log-returns time series (see Section \ref{analysisObserved}).  By comparing
  the three graphs in Figure \ref{covfie}, one concludes that all three
  functions present a similar behavior.  Since the sample autocovariance
  function is an estimator of the theoretical autocovariance function, it is
  expected that their graphics will have the same behavior. The similarity
  between the decay in the graphs in Figure \ref{covfie} (b) and (c) indicates
  that a FIEGARCH model seems appropriate for fitting the Bovespa index
  log-returns time series.

\end{ex}

\begin{ex}\label{example1}
 Theorem \ref{arfimafieg} provides the expression for the autocorrelation
function of $\{\ln(X_t^2)\}_{t\in\mathds{Z}}$.   The spectral density
function of the process $\{\ln(X_t^2)\}_{t\in\mathds{Z}}$ is given by (see
\cite{HUEA05})
\begin{align*}
  f_{\ln(\mbox{\tiny$X$}_t^2)}(\lambda) &= f_{\ln(\sigma_t^2)}(\lambda) + \frac{\mathcal{K}{\hspace{1pt}}}{\pi}\mathds{R}e\big(e^{-\mathfrak{i}
    \lambda}\Lambda(\lambda)\big) + f_{\ln(Z_t^2)}(\lambda) \\
    & = \frac{\sigma_g^2}{2\pi}
  \frac{|\alpha(e^{-i\lambda})|^2}{|\beta(e^{-i\lambda})|^2}|1-e^{-i\lambda}|^{-2d
  } + \frac{\mathcal{K}{\hspace{1pt}}}{\pi}\mathds{R}e\big(e^{-\mathfrak{i}
    \lambda}\Lambda(\lambda)\big)  + \frac{1}{2\pi}\mbox{Var}(\ln(Z_0^2)),   \quad \mbox{for all } \lambda \in
  [0,\pi],
\end{align*}
where  $\sigma_g^2 = \mathrm{Var}\big(g(Z_t)\big)$ is given in \eqref{eq:sigmag}, $\mathcal{K}{\hspace{1pt}} = \mbox{\rm Cov}\big(g(Z_0),\ln(Z_0^2)\big)$,
$\mathds{R}e(z)$ is the real part of $z$ and $\Lambda(z) :=
\lambda(e^{-\mathfrak{i} z})$.   As an illustration, Figure \ref{spectral} (a) shows the spectral density
  function of the process $\{\ln(X_t^2)\}_{t\in\mathds{Z}}$, where $\{X_t\}_{t
    \in \mathds{Z}}$ is any FIEGARCH$(0,d,1)$ process with $d = 0.3578$, $\theta
  = -0.1661$, $\gamma = 0.2792$, $\omega = -7.2247$ and $\beta_1 =
  0.6860$, assuming $Z_0 \sim
      \mathcal{N}(0,1)$ (dashed line) and $Z_0 \sim \mbox{GED}(1.5)$ (continuous
      line).   The corresponding values of  $\sigma_g^2$,
    $\mathcal{K}$ and $\mbox{Var}(\!\ln(Z_0^2))$,  used in the computation
    of $f_{\ln(\mbox{\tiny$X$}_t^2)}(\cdot)$,  are given in Table \ref{values}.
   Figure \ref{spectral} (b) shows the periodogram function of the time
  series $\{\ln(x_t^2)\}_{t=1}^{2000}$, where $\{x_t\}_{t=1}^{2000}$ is the
  simulated time series presented in Figure \ref{fiegsim} (a), with $Z_0 \sim \mathcal{N}(0,1)$. Figure
  \ref{spectral} (c) shows the periodogram function of the time series
  $\{\ln(r_t^2)\}_{t=1}^{1717}$, where $\{r_t\}_{t=1}^{1717}$ is the Bovespa
  index log-returns time series.

  \begin{table}[!ht]
    \centering
    \renewcommand{\arraystretch}{1.2}
  \caption{Theoretical values for the expectation and variance of functions of
    $Z_0$ and the corresponding values of   $\sigma_g^2$ e
    $\mathcal{K}$ considering the Gaussian and the Generalized Error distribution
    functions.  In both cases  $\theta = -0.1661$ and  $\gamma
    = 0.2792$.}\label{values} \vspace{0.2cm}
  {\footnotesize
\begin{tabular*}{1\textwidth}{@{\extracolsep{\fill}} ccccccccc}
\hline
Distribution && $\mathds{E}(|Z_0|)$  &
$\mathds{E}(|Z_0|\ln(Z_0^2))$ & $\mathds{E}(\ln(Z_0^2))$ &
$\mbox{Var}(\ln(Z_0^2))$ &&  $\sigma_g^2$ & $\mathcal{K}$\\
\cline{1-1} \cline{3-6} \cline{8-9}
$\mathcal{N}(0,1)$ && 0.7979 & 0.0925  &-1.2704 & 4.9348  && 0.0559
& 0.3088\\
GED$(1.5)$ &&  0.7674  & 0.0975& -1.4545 & 5.4469 && 0.0596 & 0.3389\\
\hline
\end{tabular*}}
\end{table}

    \begin{figure}[!htb]
    \centering
    \mbox{
      \subfigure[$f_{\ln(\mbox{\tiny$X$}_t^2)}(\cdot)$]
      {\includegraphics[ width =  0.32\textwidth]{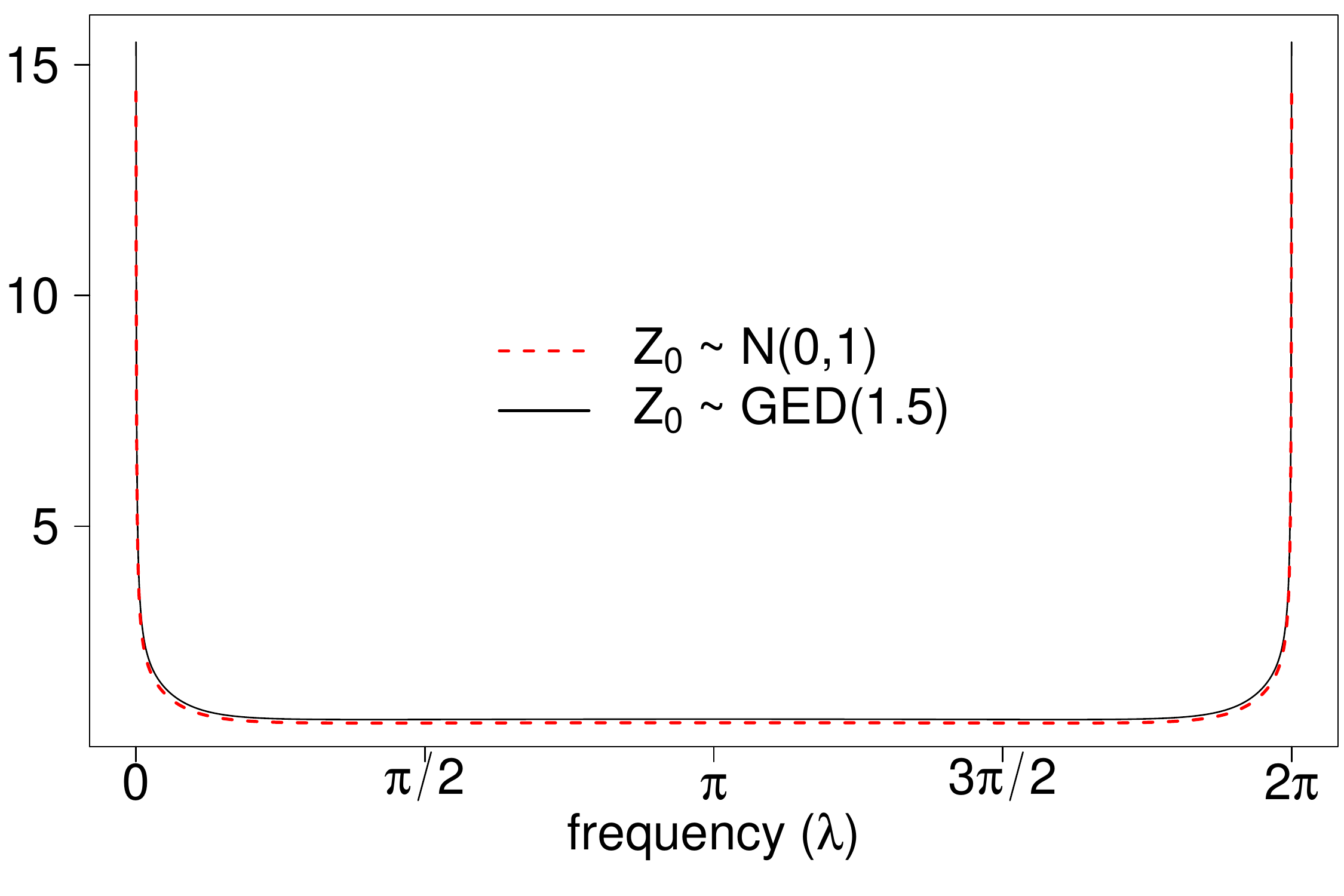}}
      \subfigure[$I_{\ln(\mbox{\tiny$X$}_t^2)}(\cdot)$]
      {\includegraphics[width = 0.32\textwidth]{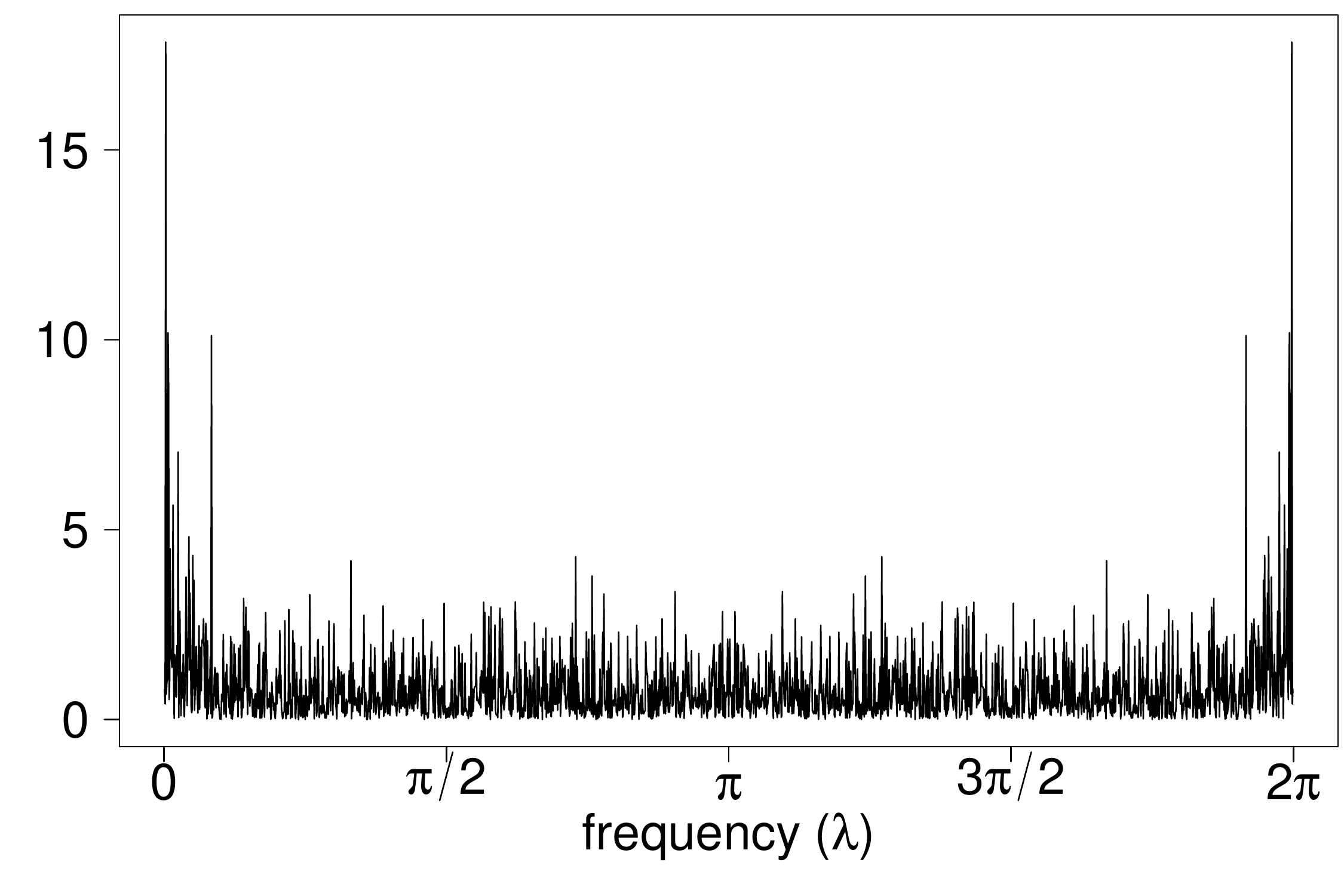}}
      \subfigure[$I_{\ln(r_t^2)}(\cdot)$]
      {\includegraphics[width = 0.32\textwidth]{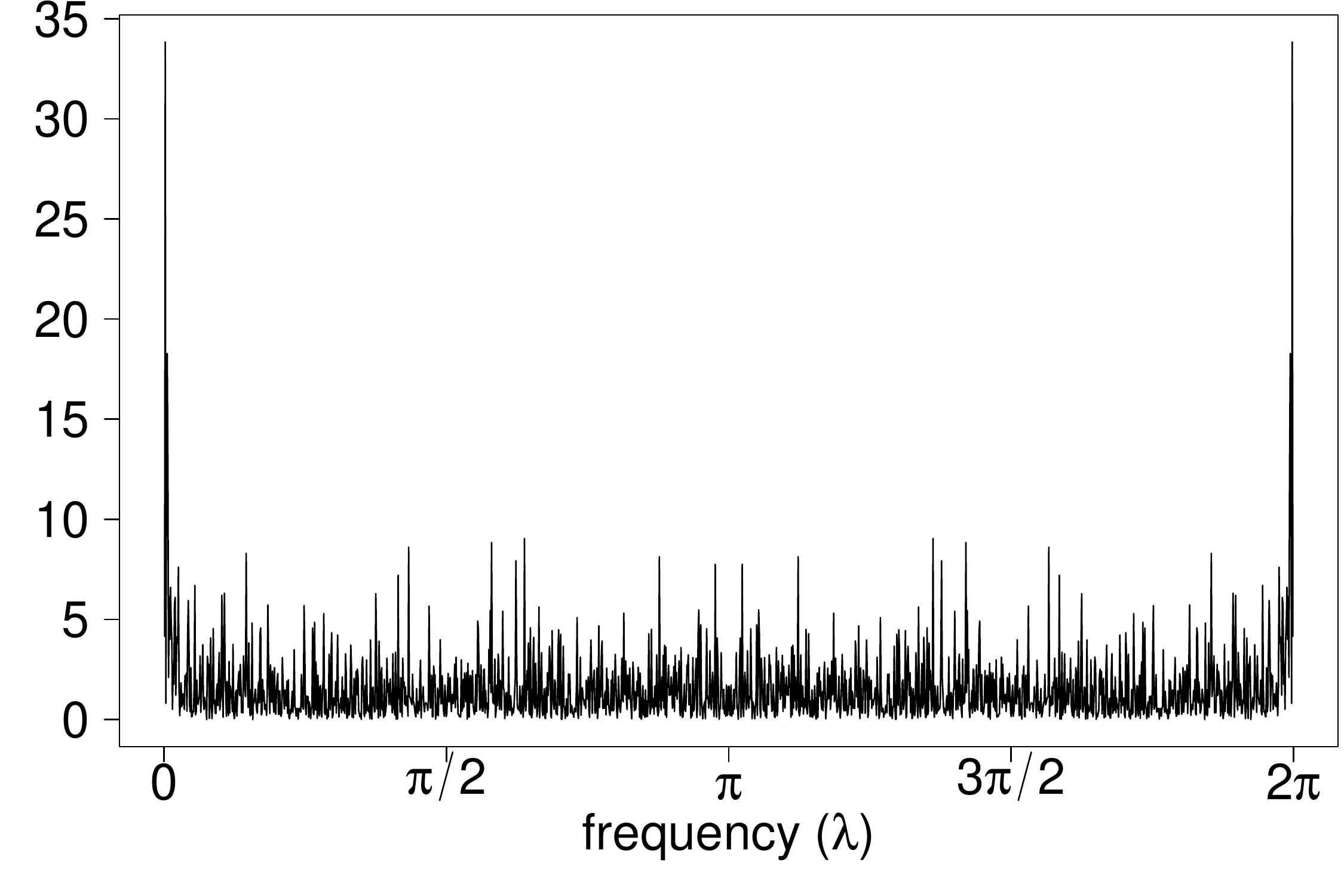}}
    }
    \caption{(a) Theoretical spectral density function of the process
      $\{\ln(X_t^2)\}_{t \in \mathds{Z}}$, where $\{X_t\}_{t \in \mathds{Z}}$ is
      a FIEGARCH$(0,d,1)$ process with $d = 0.3578$, $\theta = -0.1661$, $\gamma
      = 0.2792$, $\omega = -7.2247$ and  $\beta_1 = 0.6860$, assuming $Z_0 \sim
      \mathcal{N}(0,1)$ (dashed line) and $Z_0 \sim \mbox{GED}(1.5)$ (continuous
      line);  (b) periodogram
      function related to a time series $\{x_t\}_{t=1}^{2000}$ derived from this
      FIEGARCH$(0,d,1)$ process with $Z_0 \sim
      \mathcal{N}(0,1)$; (c) the periodogram function related to the
      time series $\{\ln(r_t^2)\}_{t=1}^{1717}$, where $\{r_t\}_{t=1}^{1717}$ is
      the Bovespa index log-returns time series.} \label{spectral}
  \end{figure}

 Figure \ref{spectral} (a) indicates that the probability
  distribution of $Z_0$ may not be evident from the periodogram function given
  the similarity between the graphs of $f_{\ln(\mbox{\tiny$X$}_t^2)}(\cdot)$.
  The   small difference on   the values of  $f_{\ln(\mbox{\tiny$X$}_t^2)}(\cdot)$
  for $Z_0 \sim \mathcal{N}(0,1)$ and  $Z_0 \sim \mbox{GED}(1.5)$ is explained
  by the fact that  the values of
$\sigma_g^2$, $\mathcal{K}$ and  $f_{\ln(Z_t^2)}(\lambda)$
are relatively close for $Z_0 \sim \mathcal{N}(0,1)$ and  $Z_0 \sim
\mbox{GED}(1.5)$ as shown in Table \ref{values}.  Moreover,
  one observes that the graphs in Figure \ref{spectral} (b) and (c) present
  similar behavior, indicating that a FIEGARCH model may be adequate to fit the
  data.      On the other hand,    Figure \ref{spectral} (a)  shows evidence that
  the underlying probability  distribution of  $\{r_t\}_{t=1}^{1717}$ may not be
  the same as   $\{x_t\}_{t=1}^{2,000}$.    In fact,  we  apply the two-sample
  Kolmogorov-Smirnov test to verify  the  hypothesis that    $I_{\ln(\mbox{\tiny$X$}_t^2)}(\cdot)$  and
  $I_{\ln(r_t^2)}(\cdot)$  have the same  probability distribution.  We also
  apply  the test to the standardized versions of   $I_{\ln(\mbox{\tiny$X$}_t^2)}(\cdot)$  and
  $I_{\ln(r_t^2)}(\cdot)$ (that is, we subtracted the sample mean and divided by
  the sample standard deviation).   In the first case the test rejects  the null
  hypothesis  ($\alpha = 0.05$,  test statistic  = 0.2208,
  $\mbox{p-value} < 2.2\times 10^{-16}$).  In the second case (standardized version) the
  test did not reject the null hypothesis ($\alpha = 0.05$,  test statistic  =
  0.0285, $\mbox{p-value}  =  0.8472$).

  To further  investigate whether the correct probability distribution of $Z_0$ can be
  identified through the periodogram function we consider the same time series
  $\{x_t\}_{t=1}^{2000}$ as in Figure \ref{spectral} (b) and perform the
  Kolmogorov-Smirnov hypothesis test as  described in  \cite{BRDA91}, pages 339
  - 342,  considering both cases  $Z_0 \sim   \mathcal{N}(0,1)$ and $Z_0 \sim
  \mbox{GED}(1.5)$.   Recall that
\begin{itemize}
\item   the null hypothesis of the test is that $\ln(X_t^2)$ has spectral
  density function   $f_{\ln(\mbox{\tiny$X$}_t^2)}(\cdot)$;

\item  the testing procedure consists on ploting the Kolmogorov-Smirnov boundaries
\[
y = \frac{x-1}{m-1} \pm k_\alpha(m-1)^{-1/2},  \quad  1 \leq x  \leq m,  \quad
k_\alpha = \left\{\begin{array}{lcl}
    1.36, & \mbox{if} & \alpha  = 0.05;\\
    1.63, & \mbox{if} & \alpha  = 0.01;
  \end{array}
  \right.
\]
and the function $C(x)$ defined as
  \[
  C(x) = \left\{
    \begin{array}{lcl}
      0, & \mbox{if}& x < 1;\\
      Y_i, & \mbox{if}&  i \leq x < i+1, \quad \mbox{for }  i\in\{1, \cdots, m\};\\
      1,  & \mbox{if}& x \geq m;\\
    \end{array}
    \right.
  \]
with $Y_0 := 0$,  $Y_m := 1$ and
\[
  Y_i := \Bigg[\sum_{k=1}^i\frac{I_{\ln(\mbox{\tiny$X$}_t^2)}(\omega_k)}{f_{\ln(\mbox{\tiny$X$}_t^2)}(\omega_k)}\Bigg]\Bigg[
  \sum_{k=1}^m\frac{I_{\ln(\mbox{\tiny$X$}_t^2)}(\omega_k)}{f_{\ln(\mbox{\tiny$X$}_t^2)}(\omega_k)}\Bigg]^{-1}\!\!\!\!\!\!\!,
\quad  \quad \mbox{with} \quad \omega_k = \frac{2k\pi}{n}, \,\,
k\in\{1,\cdots, m\},
\]
where $m$ is the integer part of  $(n-1)/2$ and   $n$ is the time series sample
 size;

\item the null hypothesis is rejected if $C(\cdot)$ exits the boundaries for
some $ 1 \leq x  \leq m$.
  \end{itemize}

    \begin{figure}[!htb]
    \centering
    \mbox{
      \subfigure[$C_{1}(x)$]
      {\includegraphics[ width =  0.3\textwidth]{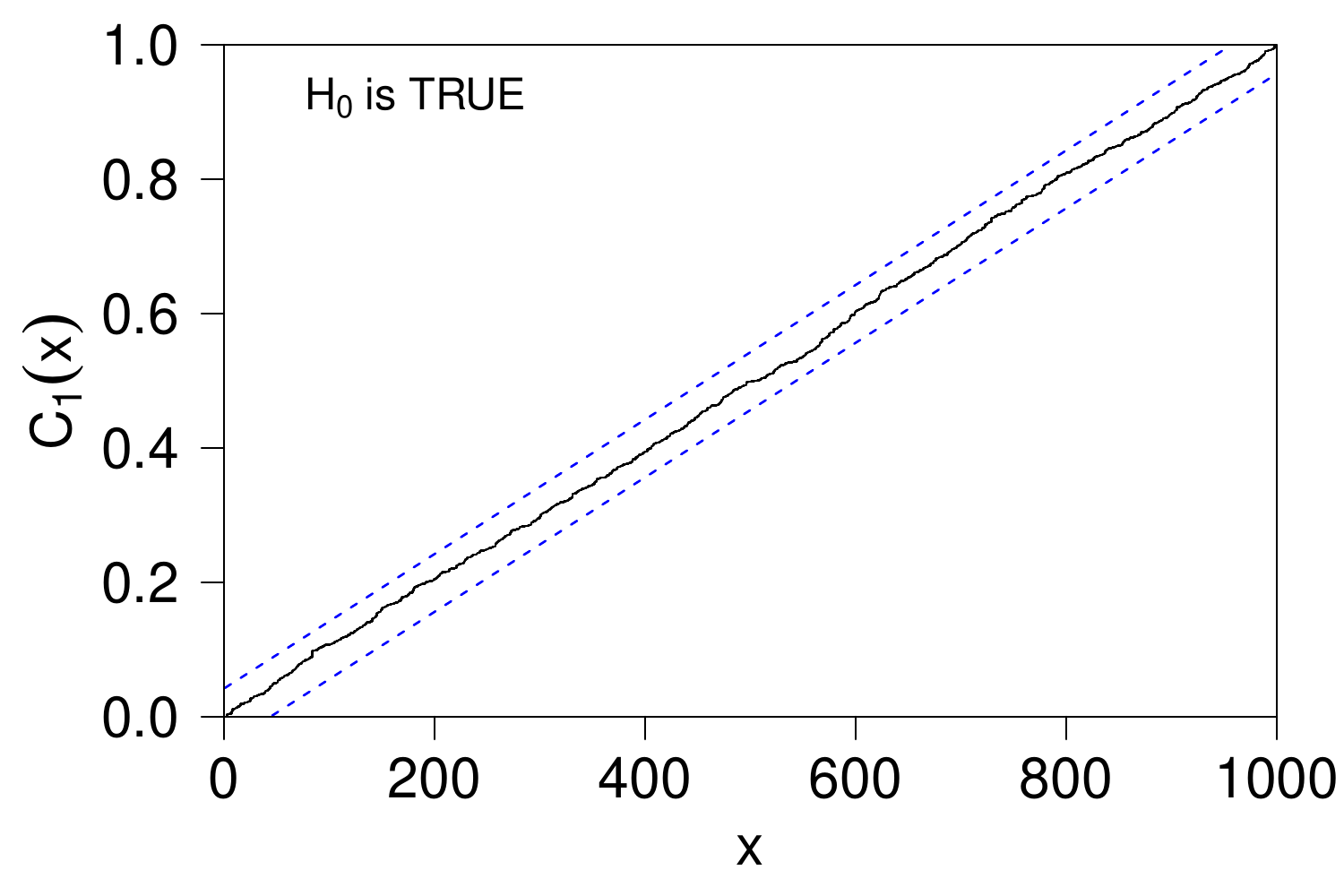}}
      \subfigure[$C_2(x)$]
      {\includegraphics[width = 0.3\textwidth]{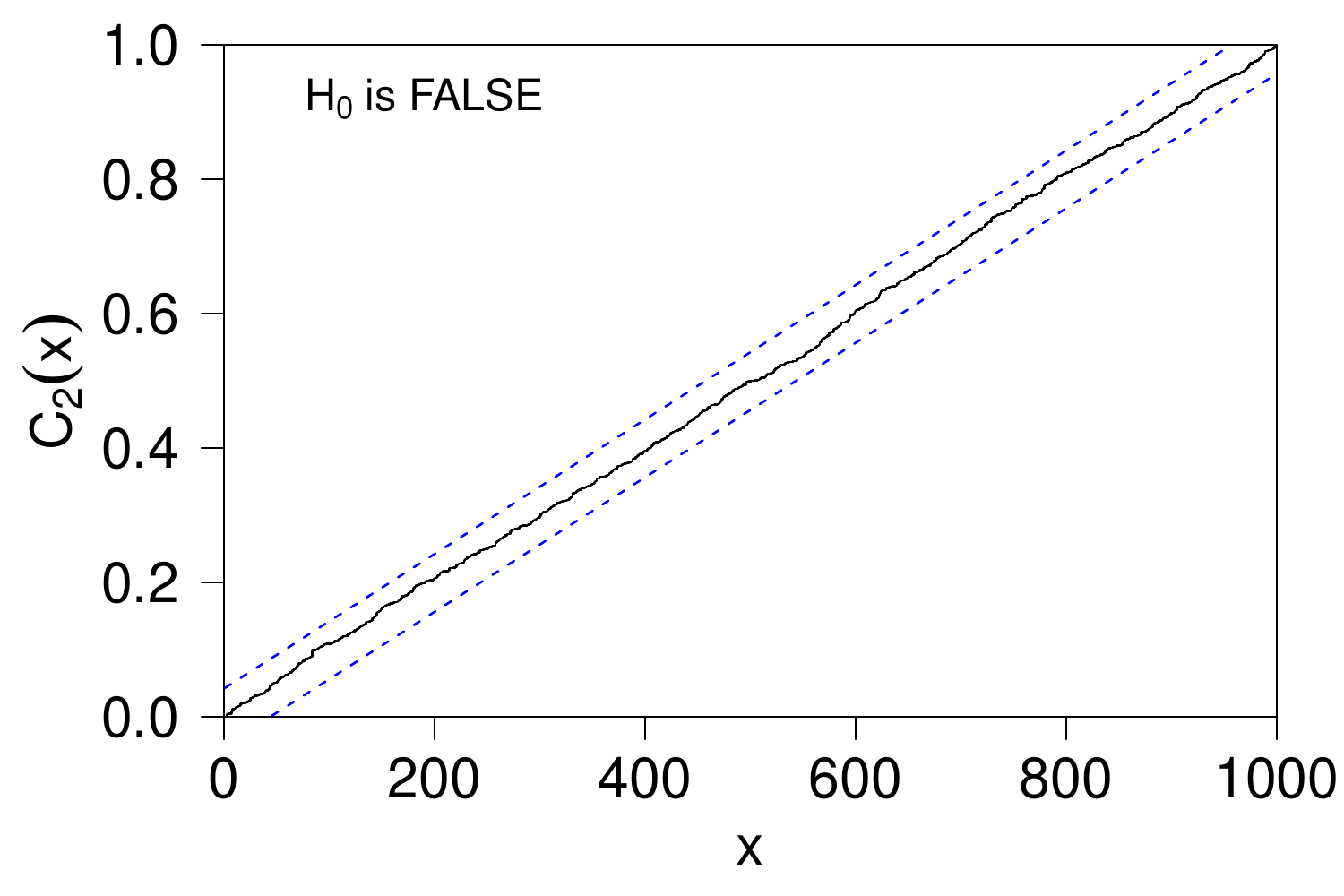}}
      \subfigure[$C_1(x) - C_2(x)$]
      {\includegraphics[width = 0.3\textwidth]{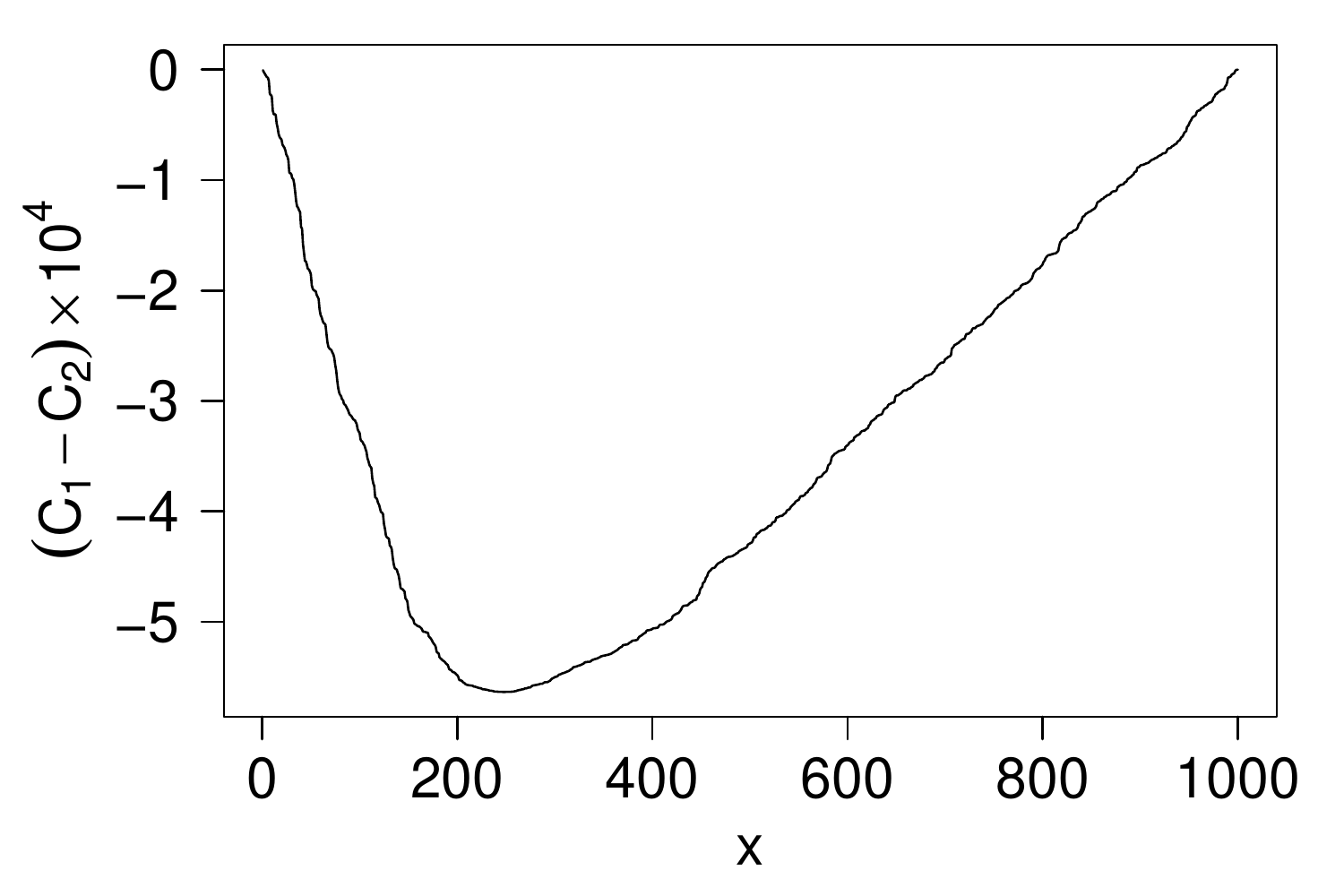}}
    }
    \caption{Function $C(x)$ and the Kolmogorov-Smirnov boundaries, with $\alpha =
      0.05$ (dashed
      line),   when  $\{x_t\}_{t =1}^{2,000}$ is a time series   derived from   a
      FIEGARCH$(0,d,1)$ process with  $Z_0 \sim
      \mathcal{N}(0,1)$   and  $f_{\ln(\mbox{\tiny$X$}_t^2)}(\cdot)$ is the
      theoretical spectral density function  of a FIEGARCH$(0,d,1)$ process with   (a)  $Z_0 \sim \mathcal{N}(0,1)$ (therefore,  the null hypothesis
      is true); (b) $Z_0 \sim  \mbox{GED}(1.5)$ (therefore,  the null hypothesis
      is false).  In all cases,    $d = 0.3578$, $\theta = -0.1661$, $\gamma
      = 0.2792$, $\omega = -7.2247$ and  $\beta_1 = 0.6860$.    On (c),  the
      difference between the values of $C(x)$ (multiplied by $10^4$)
      assuming, respectively,   $Z_0 \sim \mathcal{N}(0,1)$  and $Z_0 \sim \mbox{GED}(1.5)$.} \label{cpgram}
  \end{figure}

  The results of the tests are given in Figure \ref{cpgram}, where $C_1(\cdot)$
  and $C_2(\cdot)$ denote the values of $C(\cdot)$ obtained, respectively,
when     assuming $Z_0 \sim \mathcal{N}(0,1)$ and $Z_0 \sim \mbox{GED}(1.5)$.  From
  Figures \ref{cpgram} (a) and (b) one concludes that the Kolmogorov-Smirnov
  test does not reject the null hypothesis in both cases.  This result was
  expected given the small difference between the values of
  $f_{\ln(\mbox{\tiny$X$}_t^2)}(\cdot)$,  shown in Figure \ref{spectral} (a).  In
  fact,  by comparing Figures \ref{cpgram} (a) and (b), one observes no visible difference
  between those graphs.    Figure \ref{cpgram} (c) confirms that the difference
  is too small to be noticed since $|C_1(x) - C_2(x)| < 6\times
  10^{-4}$, for all $0 \leq x \leq 1000$.   This shows that, for  the   FIEGARCH
  process considered in Example \ref{example1},  the correct probability
  distribution of $Z_0$   cannot be   identified through the periodogram
  function, given that   the   Kolmogorov-Smirnov   hypothesis test failed to
  reject the null hypothesis when it was false.

\end{ex}

To conclude this section we present the following theorem  which shows that, under mild conditions,
$\{\ln(X_t^2)\}_{t\in\mathds{Z}}$ is an ARFIMA$(q,d,0)$ process with correlated
innovations.     This results is very useful in model identification and parameter estimation since the
literature of ARFIMA models is well developed (see \cite{LO08} and references
therein).

\begin{thm}
  Let $\{X_t\}_{t \in \mathds{Z}}$ be a \emph{FIEGARCH}$(p,d,q)$ process, given
  in Definition \ref{definitionfie}.  Suppose $|d|< 0.5$ and
  $\mathds{E}([\ln(Z_0^2)]^2)<\infty$. Then  $\{\ln(X_t^2)\}_{t\in\mathds{Z}}$
  is an \emph{ARFIMA}$(q,d,0)$ process given by
  \[
  \beta(\mathcal{B})(1-\mathcal{B})^{d}(\ln(X_t^2)-\omega)=\varepsilon_t,\quad
  \mbox{ for all } t\in\mathds{Z},
  \]
  where $\{\varepsilon_t\}_{t\in\mathds{Z}}$ is a stochastic process with zero
  mean and autocovariance function $\mbox{\large$\gamma$}_{\varepsilon}(\cdot)$
  given by
  \begin{equation}\label{varARFIMA}
    \mbox{\large$\gamma$}_{\varepsilon}(h) =  \left\{
      \begin{array}{ccc}
        \hspace{-3pt} \sigma_g^2\hspace{-2pt}\displaystyle\sum_{i=|h|}^{p}\hspace{-2pt}\alpha_i\alpha_{i-|h|} +
        \mathcal{K}{\hspace{1pt}}\hspace{-1pt}\displaystyle\sum_{i=0}^{p}\hspace{-1pt}\alpha_i\phi_{i+|h|+1} +
        \mathcal{K}{\hspace{1pt}}\hspace{-7pt}\displaystyle\sum_{i=|h|-1}^{p}\hspace{-6pt}\alpha_i\phi_{i-|h|+1} +
        \sigma_{\ell}^2\displaystyle\sum_{i=|h|}^{\infty}\phi_i\phi_{i-|h|} ,  & \mbox{if} & 0\leq |h|\leq p;\\
        \mathcal{K}{\hspace{1pt}}\alpha_p\phi_1 + \sigma^2_{\ell}\displaystyle\sum_{i=p+1}^{\infty}\phi_i\phi_{i-(p+1)} ,  & \mbox{if} & |h| =   p+1;\\
        \sigma^2_{\ell}\displaystyle\sum_{i=|h|}^{\infty}\phi_i\phi_{i-|h|}, & \mbox{if} & |h| > p+1,
      \end{array}
    \right.
  \end{equation}
  with $\{\phi_k\}_{k\in\mathds{N}}$ defined by
  \begin{equation}\label{phibeta}
    \phi(z) :=\beta(z)(1-z)^d = \sum_{k=0}^{\infty}\phi_kz^k, \quad \mbox{for } |z| < 1,
  \end{equation}
  $\sigma_g^2$ given in \eqref{eq:sigmag}, $\mathcal{K}{\hspace{1pt}} =
  \mbox{\rm Cov}(g(Z_0),\ln(Z_0^2))$, $\{\alpha_i\}_{i=0}^p$ given in
  \eqref{alphabeta} and $\sigma^2_{\ell} := \mbox{\rm Var}(\ln(Z_0^2))$.
\end{thm}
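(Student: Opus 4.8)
The plan is to apply the operator $\beta(\mathcal{B})(1-\mathcal{B})^{d}$ directly to the additive decomposition of $\ln(X_t^2)$ and read off the innovation in closed form. From $X_t=\sigma_t Z_t$ together with \eqref{bol} one has $\ln(X_t^2)-\omega=\lambda(\mathcal{B})g(Z_{t-1})+\ln(Z_t^2)$, so by linearity
\[
\beta(\mathcal{B})(1-\mathcal{B})^{d}\big(\ln(X_t^2)-\omega\big)=\beta(\mathcal{B})(1-\mathcal{B})^{d}\lambda(\mathcal{B})g(Z_{t-1})+\beta(\mathcal{B})(1-\mathcal{B})^{d}\ln(Z_t^2).
\]
The first summand collapses via the algebraic identity $\beta(z)(1-z)^{d}\lambda(z)=\alpha(z)$, which is immediate from \eqref{lambdapoly}, and therefore equals $\alpha(\mathcal{B})g(Z_{t-1})$. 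Writing $\phi(z):=\beta(z)(1-z)^{d}=\sum_{k\ge0}\phi_k z^k$ as in \eqref{phibeta}, the second summand equals $\phi(\mathcal{B})\ln(Z_t^2)$. Hence I would \emph{define} $\varepsilon_t:=\alpha(\mathcal{B})g(Z_{t-1})+\phi(\mathcal{B})\ln(Z_t^2)=\sum_{i=0}^{p}(-\alpha_i)g(Z_{t-1-i})+\sum_{k=0}^{\infty}\phi_k\ln(Z_{t-k}^2)$, which by construction satisfies the stated ARFIMA$(q,d,0)$ equation.

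Next I would verify that $\varepsilon_t$ is a well-defined, zero-mean, weakly stationary process. Since $\phi_k\sim\beta(1)\delta_{d,k}$ with $\delta_{d,k}=O(k^{-d-1})$, the hypothesis $|d|<0.5$ gives $\sum_k\phi_k^2<\infty$; together with $\mathds{E}([\ln(Z_0^2)]^2)<\infty$ this ensures $\phi(\mathcal{B})\ln(Z_t^2)$ converges in $L^2$, while the $g$-part is a finite sum. For the mean, $\mathds{E}(g(Z_0))=0$ by Proposition \ref{Prop1}, and $\mathds{E}\big(\phi(\mathcal{B})\ln(Z_t^2)\big)=\mathds{E}(\ln(Z_0^2))\,\phi(1)$ with $\phi(1)=\beta(1)(1-1)^{d}=0$ for $d>0$; in the antipersistent range $d\in(-\tfrac12,0)$ one differences the mean-centered process $\ln(X_t^2)-\omega-\mathds{E}(\ln(Z_0^2))$, the residual constant being annihilated in the same manner.

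The core of the argument is the autocovariance computation. Because $\{g(Z_t)\}_{t\in\mathds{Z}}$ is white noise with variance $\sigma_g^2$ (Proposition \ref{Prop1} and \eqref{eq:sigmag}), $\{\ln(Z_t^2)\}_{t\in\mathds{Z}}$ is i.i.d.\ with variance $\sigma_\ell^2$, and $g(Z_s)$ is correlated with $\ln(Z_r^2)$ only when $s=r$, with covariance $\mathcal{K}$, I would expand $\mbox{\large$\gamma$}_{\varepsilon}(h)=\mbox{\rm Cov}(\varepsilon_{t+h},\varepsilon_t)$ into four pieces. The ``$gg$'' piece contributes $\sigma_g^2\sum_i\alpha_i\alpha_{i-|h|}$, the products $(-\alpha_i)(-\alpha_{i-|h|})$ recombining to $\alpha_i\alpha_{i-|h|}$, and is supported on $0\le|h|\le p$; the ``$\ell\ell$'' piece contributes the tail $\sigma_\ell^2\sum_i\phi_i\phi_{i-|h|}$ for every $h$; and the two mixed ``$g\ell$'' pieces produce the $\mathcal{K}$-weighted sums, obtained by matching the $Z$-index $t-1-i$ carried by the $g$-terms with the indices $t+h-k$ and $t-k$ carried by the $\ln(Z^2)$-terms. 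Collecting the admissible ranges of $i$ and $k$ for which these index matches occur should deliver the three regimes $0\le|h|\le p$, $|h|=p+1$, and $|h|>p+1$ of \eqref{varARFIMA}.

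The step I expect to be the main obstacle is precisely this last bookkeeping: tracking the admissible summation ranges and keeping the two distinct mixed contributions separate, namely $\mbox{\rm Cov}\big(\alpha(\mathcal{B})g(Z_{t+h-1}),\phi(\mathcal{B})\ln(Z_t^2)\big)$ and $\mbox{\rm Cov}\big(\phi(\mathcal{B})\ln(Z_{t+h}^2),\alpha(\mathcal{B})g(Z_{t-1})\big)$. The finite support of $\alpha(\cdot)$ interacting with the infinite support of $\phi(\cdot)$ is what generates the piecewise structure and decides which cross terms survive at each lag, so the care lies in verifying that the surviving $\mathcal{K}$-terms assemble exactly into the coefficients stated in \eqref{varARFIMA} rather than leaving stray contributions at large $|h|$.
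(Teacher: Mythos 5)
Your proposal takes essentially the same route as the paper's proof: you define the innovation $\varepsilon_t=\alpha(\mathcal{B})g(Z_{t-1})+\phi(\mathcal{B})\ln(Z_t^2)$ exactly as the paper does, via the identity $\beta(z)(1-z)^d\lambda(z)=\alpha(z)$, and you expand $\mathrm{Cov}(\varepsilon_{t+h},\varepsilon_t)$ into the same four pieces (the $g$-$g$ term supported on $|h|\leq p$, the two mixed $\mathcal{K}$-weighted terms, and the $\ell$-$\ell$ term present at all lags), which is precisely how the paper arrives at the three regimes of \eqref{varARFIMA}. The index bookkeeping you defer is carried out in the paper and works exactly as you anticipate; the only step the paper makes explicit that you omit is the finiteness of $\mathcal{K}=\mathrm{Cov}(g(Z_0),\ln(Z_0^2))$ (immediate from Cauchy--Schwarz, since $\mathds{E}(Z_0^2)=1$ and $\mathds{E}([\ln(Z_0^2)]^2)<\infty$), while your explicit centering in the antipersistent range $d\in(-1/2,0)$ is, if anything, more careful than the paper's treatment, which only verifies the annihilation of constants for $d>0$.
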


\begin{proof}
  Let $\{X_t\}_{t \in \mathds{Z}}$ be a FIEGARCH process.  From expressions
  \eqref{generalproc} and \eqref{fieprocess} we have
  \[
  \beta(\mathcal{B})(1-\mathcal{B})^d(\ln(X_t^2) -\omega) =\varepsilon_t, \quad
  \mbox{ for all } t\in\mathds{Z},
  \]
  where
  \begin{equation*}
    \varepsilon_t = \alpha(\mathcal{B})g(Z_{t-1})+\beta(\mathcal{B})(1-\mathcal{B})^d\ln(Z_t^2),\ \mbox{ for all }
    t\in\mathds{Z}.
  \end{equation*}
  In particular, if $d > 0$, we have
  $\beta(\mathcal{B})(1-\mathcal{B})^d\omega=0$ and
  $\beta(\mathcal{B})(1-\mathcal{B})^d\ln(X_t^2) =\varepsilon_t$, for all
  $t\in\mathds{Z}$.

  Now, suppose that $\mathds{E}([\ln(Z_0^2)]^2)<\infty$.  Since $\{Z_t\}_{t \in
    \mathds{Z}}$ is a sequence of i.i.d. random variables and $0 \leq
  |\mathds{E}(\ln(Z_0^2))| \leq \mathds{E}(|\ln(Z_0^2)|) \leq
  [\mathds{E}([\ln(Z_0^2)]^2)]^{1/2}$, one concludes that
  $\mathds{E}(\ln(Z_t^2)) = \mathds{E}(\ln(Z_0^2)) < \infty$, for all
  $t\in\mathds{Z}$.  Therefore,
  $\beta(\mathcal{B})(1-\mathcal{B})^d\mathds{E}(\ln(Z_t^2))=0$ and
  $\alpha(\mathcal{B})\mathds{E}(g(Z_{t-1}))=0$.  Consequently,
  $\mathds{E}(\varepsilon_t)=0$, for all $t\in\mathds{Z}$.

  Let $\phi(\cdot)$ be defined by expression \eqref{phibeta}.  Assume, for the
  moment, that $\mbox{\rm Var}(\varepsilon_t^2)<\infty$, for all
  $t\in\mathds{Z}$.  It follows that
  \begin{align}
    \mbox{\rm Cov}(\varepsilon_{t+h},\varepsilon_t) = & \, \mbox{\rm
      Cov}\big(\alpha(\mathcal{B})g(Z_{t+h-1}),\alpha(\mathcal{B})g(Z_{t-1})\big)+
    \mbox{\rm Cov}\big(\phi(\mathcal{B})\ln(Z_{t+h}^2),\phi(\mathcal{B})\ln(Z_{t}^2)\big)\nonumber\\
    &+\, \mbox{\rm
      Cov}\big(\alpha(\mathcal{B})g(Z_{t+h-1}),\phi(\mathcal{B})\ln(Z_{t}^2)\big)+
    \mbox{\rm
      Cov}\big(\phi(\mathcal{B})\ln(Z_{t+h}^2),\alpha(\mathcal{B})g(Z_{t-1})\big).\label{covep}
  \end{align}
  Since $\{g(Z_t)\}_{t\in\mathds{Z}}$ is a white noise process we have
  \[
  \mbox{\rm
    Cov}\big(\alpha(\mathcal{B})g(Z_{t+h-1}),\alpha(\mathcal{B})g(Z_{t-1})\big)
  =\left\{
    \begin{array}{ccc}
      \!\!\displaystyle\mbox{\rm Var}\big(g(Z_0)\big)\sum_{i=|h|}^{p}\alpha_i\alpha_{i-|h|},
      &\!\!\mbox{if} & |h| \leq p; \nonumber\\
      0, & \!\! \mbox{if} &\!\! |h| > p,
    \end{array} \right.
  \]
  which does not depend on $t\in\mathds{Z}$. From the independence of the random
  variables $Z_t$, for all $t \in \mathds{Z}$, one has
  \[
  \mbox{\rm
    Cov}\big(\alpha(\mathcal{B})g(Z_{t+h-1}),\phi(\mathcal{B})\ln(Z_{t}^2)\big)
  =\left\{\begin{array}{ccc}
      \!\!\mathcal{K}{\hspace{1pt}}\displaystyle\sum_{i=0}^{p}\ \alpha_i\phi_{i-h+1},&\mbox{if}& h<1;\vspace{.1cm}\nonumber\\
      \!\!\mathcal{K}{\hspace{1pt}}\!\!\displaystyle\sum_{i=h-1}^{p}\alpha_{i}\phi_{i-h+1},&
      \mbox{if} & 1\leq h\leq p+1;\vspace{.1cm}\nonumber \\ 0, & \mbox{if} & \
      h>p+1
    \end{array} \right.
  \]
  and
  \[
  \mbox{\rm
    Cov}\big(\phi(\mathcal{B})\ln(Z_{t+h}^2),\alpha(\mathcal{B})g(Z_{t-1})\big)
  =\left\{\begin{array}{ccc} 0, & \mbox{if} &
      h< -(p+1);\vspace{.1cm}\nonumber\\
      \mathcal{K}{\hspace{1pt}}\!\!\!\!\displaystyle\sum_{i=|h|-1}^{p}\!\!\!\!\alpha_i\phi_{i+h+1}, &\mbox{ if} &  -(p+1)\leq h\leq -1;\vspace{.1cm}\nonumber\\
      \mathcal{K}{\hspace{1pt}}\displaystyle\sum_{i=0}^{p}\alpha_{i}\phi_{i+h+1},
      &\mbox{ if} & h> -1,
    \end{array} \right.
  \]
  where $\mathcal{K}{\hspace{1pt}}:= \mbox{\rm Cov}\big(g(Z_0),\ln(Z_0^2)\big)$
  does not depend on $t\in\mathds{Z}$.  Also, from the independence of the
  random variables $\ln(Z_t^2)$, for all $t\in\mathds{Z}$, we have
  \[
  \mbox{\rm
    Cov}\big(\phi(\mathcal{B})\ln(Z_{t+h}^2),\phi(\mathcal{B})\ln(Z_{t}^2)\big)
  = \mbox{\rm Var}\big(\ln(Z_0^2)\big)\sum_{i=|h|}^{\infty}\phi_i\phi_{i-|h|},\quad
  \mbox{for all } h\in\mathds{Z}, \vspace{-0.4\baselineskip}
  \]
  which does not depend on $t\in\mathds{Z}$.

  Therefore, all four terms in expression \eqref{covep} do not depend on
  $t\in\mathds{Z}$ and expression \eqref{varARFIMA} holds.  Now, to validate
  expression \eqref{varARFIMA} we only need to show that $\mbox{\rm
    Var}(\varepsilon_t)<\infty$, for all $t\in\mathds{Z}$.  Notice that, since
  $\mathds{E}(\varepsilon_t) =0$, it follows that $\mathds{E}(\varepsilon_t^2) =
  \mbox{\rm Var}(\varepsilon_t) = \mbox{\large$\gamma$}_{\varepsilon}(0)$.  Upon
  replacing $h=0$ in \eqref{covep} one obtains
  \[
  \mbox{\large$\gamma$}_{\varepsilon}(0) = \mbox{\rm
    Var}\big(g(Z_0)\big)\displaystyle\sum_{i=0}^{p}\alpha_i^2 +
  2\mathcal{K}{\hspace{1pt}}\displaystyle\sum_{i=0}^{p}\alpha_i\phi_{i+1} +
  \mbox{\rm Var}\big(\ln(Z_0^2)\big)\displaystyle\sum_{i=0}^{\infty}\phi_i^2.
  \]
  By hypothesis, $\mathds{E}([\ln(Z_0^2)]^2)<\infty$ and $d\in (-0.5,0.5)$.  It
  follows that $\mbox{\rm Var}(\ln(Z_0^2))<\infty$ and
  $\sum_{i=0}^{\infty}\phi_i^2<\infty$.  We also know that $\mbox{\rm
    Var}(g(Z_0))<\infty$.  In order to show that $\mathcal{K}{\hspace{1pt}} <
  \infty$, notice that $\mathcal{K}{\hspace{1pt}}:=\mbox{\rm
    Cov}(g(Z_0),\ln(Z_0^2)) =\mathds{E}(g(Z_0)\ln(Z_0^2))$ and,
  since $\mathds{E}( Z_0^2)=1$ and $\mbox{\rm Var}(\ln(Z_0^2))<\infty$, from
  H\"older's inequality, we have $\mathds{E}(|Z_0|) < \infty$ and
  $\mathds{E}(\ln(Z_0^2)) < \infty$.  Then  from \eqref{functiong} it follows
  that
  \[
  \mathds{E}\big(g(Z_0)\ln(Z_0^2)\big) =\theta\, \mathds{E}\big(
  Z_0\ln(Z_0^2)\big) +\gamma \mathds{E}\big( |Z_0|\ln(Z_0^2)\big) - c,
  \]
  where $c :=\gamma\, \mathds{E}( |Z_0|)\mathds{E}(\ln(Z_0^2))<\infty$.  By
  using the fact that $2ab \leq a^2 + b^2$, for all $a,b\in\mathds{R}$, one
  concludes that
  \[
  \big|\mathds{E}(Z_t\ln(Z_t^2))\big| \leq
  \frac{1}{2}\left[\mathds{E}(Z_t^2) + \mathds{E}(\ln(Z_t^2))\right]<\infty
  \ \mbox{ and } \ \big|\mathds{E}(|Z_t|\ln(Z_t^2))\big| \leq
  \frac{1}{2}\left[\mathds{E}(Z_t^2) +
  \mathds{E}(\ln(Z_t^2))\right]<\infty.
  \]
  Hence $\mathds{E}(g(Z_0)\ln(Z_0^2)) <\infty$ and, consequently,
  $\mbox{\rm Cov}(g(Z_0),\ln(Z_0^2)) <\infty$ and
  $\mbox{\large$\gamma$}_{\varepsilon}(0)<\infty$.  Therefore, the result
  follows.
\end{proof}

\section{Forecasting}\label{forecastingsection}

Let $\{X_t\}_{t \in \mathds{Z}}$ be a FIEGARCH$(p,d,q)$ process, given in
Definition \ref{definitionfie}, and $\{x_t\}_{t=1}^{n}$ a time series obtained
from this process.  In this section, we prove that $\{X_t\}_{t \in \mathds{Z}}$
is a martingale difference with respect to the filtration
$\{\mathcal{F}_{t}\}_{t\in\mathds{Z}}$, where $\mathcal{F}_{t} :=
\sigma(\{Z_s\}_{s\leq t})$, and we provide the $h$-step ahead forecast for the
process $\{X_t\}_{t \in \mathds{Z}}$. Since the process
$\{\ln(\sigma_t^2)\}_{t\in\mathds{Z}}$, defined by \eqref{fieprocess}, has an
ARFIMA$(q,d,p)$ representation, the $h$-step ahead forecasting for this process
and its mean square error value can be easily obtained (for instance, see
\cite{LO08} and \cite{RELO99}).  This fact is used to provide an $h$-step ahead
forecast for $\{\ln(X_t^2)\}_{t\in\mathds{Z}}$ and the mean square error of
forecasting.  We also consider the fact that $\mathds{E}(X_t^2) =
\mathds{E}(\sigma_t^2)$, for all $t\in\mathds{Z}$, to provide an $h$-step ahead
forecast for both processes, $\{X_{t}^2\}_{t\in\mathds{Z}}$ and
$\{\sigma_{t}^2\}_{t\in\mathds{Z}}$, based on the predictions obtained from the
process $\{\ln(\sigma_t^2)\}_{t\in\mathds{Z}}$.  The notation used in this
section is introduced below.

\begin{remark}
  Let $Y_t$, for $t\in\mathds{Z}$, denote any random variable defined here. In
  the sequel we consider the following notation:\vspace{-0.2\baselineskip}
  \begin{itemize}
  \item we use the symbol ``\^{}'' to denote the $h$-ahead step forecast defined
    in terms of the conditional expectation, that is, $\hat Y_{t+h} =
    \mathds{E}(Y_{t+h}|\mathcal{F}_{t})$.  Notice that this is the best linear
    predictor in terms of mean square error value.  The symbols ``\~{}'' and
    ``\v{}'' are used to denote alternative estimators (e.g. $\tilde Y_{t+h}$
    and $\check Y_{t+h}$);

  \item for simplicity of notation, for the $h$-step ahead forecast of
    $\ln(Y_{t+h})$, we write $\hat{\ln}(Y_{t+h})$ instead of
    $\widehat{\ln(Y_{t+h})}$ (analogously for ``\~{}'' and ``\v{}'');

  \item we follow the approach usually considered in the literature and denote
    the $h$-ahead step forecast $Y_{t+h}^2$ as $\hat Y_{t+h}^2$ instead of
    $\widehat{Y_{t+h}^2}$.  If necessary, to avoid confusion, we will denote the
    square of $\hat Y_{t+h}$ as $(\hat Y_{t+h})^2$ (analogously for ``\~{}'' and
    ``\v{}'').
  \end{itemize}
\end{remark}

The following lemma shows that a FIEGARCH$(p,d,q)$ process is a martingale
difference with respect to $\{\mathcal{F}_{t}\}_{t\in\mathds{Z}}$. This result
is useful in the proof of Lemma \ref{predictors} that presents the $h$-step
ahead forecast of $X_{n+h}$, for a fixed value of $n\in\mathds{Z}$ and all
$h\geq 1$, and the $1$-step ahead forecast of $X_{n+1}^2$, given
$\mathcal{F}_{n}$.

\begin{lemma}\label{fiemd}
  Let $\{X_t\}_{t \in \mathds{Z}}$ be a \emph{FIEGARCH}$(p,d,q)$ process, given
  in Definition \ref{definitionfie} and $\mathcal{F}_{t} :=
  \sigma(\{Z_s\}_{s\leq t})$.  Then  the process $\{X_t\}_{t \in \mathds{Z}}$ is
  a martingale difference with respect to
  $\{\mathcal{F}_{t}\}_{t\in\mathds{Z}}$.
\end{lemma}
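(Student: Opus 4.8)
The plan is to verify directly the three defining properties of a martingale difference sequence with respect to $\{\mathcal{F}_t\}_{t\in\mathds{Z}}$: that $X_t$ is $\mathcal{F}_t$-measurable (adaptedness), that $\mathds{E}(|X_t|)<\infty$ (integrability), and that $\mathds{E}(X_t\mid\mathcal{F}_{t-1})=0$. The crucial structural observation, which drives all three, is that $\sigma_t$ is $\mathcal{F}_{t-1}$-measurable. Indeed, rewriting \eqref{fieprocess} as \eqref{bol} gives $\ln(\sigma_t^2)=\omega+\sum_{k=0}^{\infty}\lambda_{d,k}g(Z_{t-1-k})$, which is a measurable function of $\{Z_s:s\leq t-1\}$ alone. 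Since $d<0.5$, this series converges almost surely by Corollary \ref{strictstatfiegarch}, so $\sigma_t=\exp\{\tfrac{1}{2}\ln(\sigma_t^2)\}$ is a well-defined, strictly positive, $\mathcal{F}_{t-1}$-measurable random variable. Because $\mathcal{F}_{t-1}\subseteq\mathcal{F}_t$ and $Z_t$ is $\mathcal{F}_t$-measurable, the product $X_t=\sigma_tZ_t$ is $\mathcal{F}_t$-measurable, which settles adaptedness.

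For integrability, I would exploit that $\sigma_t$ depends only on $\{Z_s:s\leq t-1\}$ and is therefore independent of $Z_t$, so that $\mathds{E}(|X_t|)=\mathds{E}(\sigma_t)\,\mathds{E}(|Z_t|)$. Since $\mathrm{Var}(Z_t)=1$, Jensen's (or Cauchy--Schwarz) inequality gives $\mathds{E}(|Z_t|)\leq[\mathds{E}(Z_t^2)]^{1/2}=1<\infty$, and the finiteness of $\mathds{E}(\sigma_t)$ follows from the moment conditions guaranteeing stationarity (Corollary \ref{estacionar}), so that $\mathds{E}(|X_t|)<\infty$.

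The final computation is then a one-liner: since $\sigma_t$ is $\mathcal{F}_{t-1}$-measurable, the ``taking out what is known'' property of conditional expectation yields $\mathds{E}(X_t\mid\mathcal{F}_{t-1})=\sigma_t\,\mathds{E}(Z_t\mid\mathcal{F}_{t-1})$; and because $Z_t$ is independent of $\mathcal{F}_{t-1}=\sigma(\{Z_s:s\leq t-1\})$, one has $\mathds{E}(Z_t\mid\mathcal{F}_{t-1})=\mathds{E}(Z_t)=0$, whence $\mathds{E}(X_t\mid\mathcal{F}_{t-1})=0$ for all $t\in\mathds{Z}$, as required.

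I expect the substantive part of the argument to lie not in this last computation but in the measurability and integrability setup: one must justify that the infinite series defining $\ln(\sigma_t^2)$ converges almost surely, so that $\sigma_t$ is a genuine $\mathcal{F}_{t-1}$-measurable random variable, and that $\mathds{E}(\sigma_t)$ is finite. Both points are exactly what Corollaries \ref{strictstatfiegarch} and \ref{estacionar} supply, so the main obstacle is essentially bookkeeping in invoking those results correctly; the martingale difference identity itself is immediate once the $\mathcal{F}_{t-1}$-measurability of $\sigma_t$ and the independence and zero-mean property of $Z_t$ are in place.
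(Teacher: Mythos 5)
Your proposal is correct and takes essentially the same route as the paper: the paper's entire proof consists of your final computation, namely that $\sigma_t$ is $\mathcal{F}_{t-1}$-measurable, so $\mathds{E}(X_t\mid\mathcal{F}_{t-1})=\sigma_t\,\mathds{E}(Z_t\mid\mathcal{F}_{t-1})=0$. One caveat on the scaffolding you add: the convergence and integrability steps invoke Corollary \ref{strictstatfiegarch} and Corollary \ref{estacionar}, whose hypotheses ($d<0.5$, and GED$(\nu)$ noise with $\nu>1$, respectively) are not among the hypotheses of the lemma, so strictly speaking your argument proves the statement under extra assumptions, whereas the paper sidesteps the issue entirely by arguing formally and never addressing $\mathds{E}(|X_t|)<\infty$ at all.
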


\begin{proof}
  From definition, $\sigma_t$ is a $\mathcal{F}_{t-1}$-measurable
  function. Moreover, for all $t\in\mathds{Z}$,
  $\mathds{E}(X_t)=\mathds{E}(\mathds{E}(X_t|\mathcal{F}_{t-1}))$ and
  $\mathds{E}(X_t|\mathcal{F}_{t-1})=\mathds{E}(\sigma_tZ_t|\mathcal{F}_{t-1})=\sigma_t\mathds{E}(Z_t|\mathcal{F}_{t-1})=0.$
  Therefore, the process $\{X_t\}_{t \in \mathds{Z}}$ is a martingale difference
  with respect to $\{\mathcal{F}_t\}_{t\in\mathds{Z}}$.
\end{proof}

\begin{lemma}\label{predictors}
  Let $\{X_t\}_{t \in \mathds{Z}}$ be a stationary \emph{FIEGARCH}$(p,d,q)$
  process, given by Definition \ref{definitionfie}. Then, for any fixed
  $n\in\mathds{Z}$, the $h$-step ahead forecast of $X_{n+h}$, for all $h>0$ and
  the $1$-step ahead forecast of $X_{n+1}^2$, given $\mathcal{F}_{n}$, are,
  respectively, $\hat{X}_{n+h} = 0$ and $\hat{X}_{n+1}^2 =\sigma_{n+1}^2$.
\end{lemma}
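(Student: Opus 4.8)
The plan is to establish the two forecasts separately, in both cases reducing the conditional expectation to elementary facts about the innovations $\{Z_t\}_{t\in\mathds{Z}}$, and exploiting the martingale difference property already proved in Lemma~\ref{fiemd} together with the fact that $\sigma_t$ is $\mathcal{F}_{t-1}$-measurable (which follows from \eqref{bol}, since $\ln(\sigma_t^2)$ is a measurable function of $\{Z_s\}_{s\leq t-1}$).

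For the first claim, $\hat{X}_{n+h} = \mathds{E}(X_{n+h}\,|\,\mathcal{F}_n)$, I would invoke the tower property of conditional expectation. Because $h>0$ gives $n\leq n+h-1$, we have the inclusion $\mathcal{F}_n\subseteq\mathcal{F}_{n+h-1}$, and hence
\[
\mathds{E}(X_{n+h}\,|\,\mathcal{F}_n)=\mathds{E}\big(\mathds{E}(X_{n+h}\,|\,\mathcal{F}_{n+h-1})\,\big|\,\mathcal{F}_n\big).
\]
By Lemma~\ref{fiemd}, $\{X_t\}_{t\in\mathds{Z}}$ is a martingale difference with respect to $\{\mathcal{F}_t\}_{t\in\mathds{Z}}$, so the inner conditional expectation is zero almost surely, and therefore $\hat{X}_{n+h}=0$ for every $h>0$.

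For the second claim I would write $X_{n+1}^2=\sigma_{n+1}^2 Z_{n+1}^2$ and observe that $\sigma_{n+1}^2$ is $\mathcal{F}_n$-measurable (again from \eqref{bol}), so it can be pulled out of the conditional expectation:
\[
\hat{X}_{n+1}^2=\mathds{E}(\sigma_{n+1}^2 Z_{n+1}^2\,|\,\mathcal{F}_n)=\sigma_{n+1}^2\,\mathds{E}(Z_{n+1}^2\,|\,\mathcal{F}_n).
\]
Since $Z_{n+1}$ is independent of $\mathcal{F}_n=\sigma(\{Z_s\}_{s\leq n})$ by the i.i.d.\ assumption, and $\mathds{E}(Z_{n+1}^2)=\mathrm{Var}(Z_{n+1})=1$, the conditional expectation collapses to $1$, giving $\hat{X}_{n+1}^2=\sigma_{n+1}^2$.

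I do not expect a genuine obstacle here; the argument is essentially bookkeeping of which innovations each $\sigma$-field contains. The only points demanding care are the inclusion $\mathcal{F}_n\subseteq\mathcal{F}_{n+h-1}$, which holds precisely because $h\geq 1$, and the $\mathcal{F}_n$-measurability of $\sigma_{n+1}^2$ together with the independence of $Z_{n+1}$ from $\mathcal{F}_n$; both follow directly from Definition~\ref{definitionfie} and the expansion \eqref{bol}, and stationarity guarantees that the relevant quantities are almost surely finite so that the conditional expectations are well defined.
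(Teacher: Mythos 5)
Your proposal is correct and follows essentially the same route as the paper: the first claim via the martingale difference property of Lemma~\ref{fiemd} (the paper leaves the tower-property step implicit, which you rightly spell out), and the second claim via the $\mathcal{F}_n$-measurability of $\sigma_{n+1}^2$ and independence of $Z_{n+1}$, which the paper compresses into ``from definition.'' The only content in the paper's proof not in yours is a closing remark that, when $\mathds{E}(X_t^4)<\infty$, the predictor $\sigma_{n+1}^2$ is optimal in the mean square error sense.
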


\begin{proof}
  From Lemma \ref{fiemd}, a FIEGARCH$(p,d,q)$ process is a martingale
  difference.  It follows that
  $\hat{X}_{n+h}=\mathds{E}(X_{n+h}|\mathcal{F}_{n}) =0$, for all $h>0$.  From
  definition, $\mathds{E}(X_{n+1}^2|\mathcal{F}_n) = \sigma_{n+1}^2$. Therefore,
  the $1$-step ahead forecast of $X_{n+1}^2$, given $\mathcal{F}_{n}$, is
  $\sigma_{n+1}^2$.  Moreover, if $\mathds{E}(X_{t}^4)<\infty$, for all
  $t\in\mathds{Z}$, then  this is the best forecast value in mean square error
  sense.
\end{proof}

To obtain the $h$-step ahead forecast for $X_{n+h}^2$, notice that $\sigma_t$
and $Z_t$ are independent and so are $\sigma_t^2$ and $Z_t^2$, for all
$t\in\mathds{Z}$.  Moreover, $\mathds{E}(Z_{n+h}^2|\mathcal{F}_n) =
\mathds{E}(Z_{n+h}^2) = 1$, for all $h>0$. It follows that
\[
\hat{X}_{n+h}^2 := \mathds{E}(X_{n+h}^2|\mathcal{F}_{n}) =
\mathds{E}(\sigma_{n+h}^2|\mathcal{F}_{n}) := \hat{\sigma}_{n+h}^2, \quad \mbox{
  for all} \quad h>0.
\]
While for ARCH/GARCH models, $\mathds{E}(\sigma_{n+h}^2|\mathcal{F}_{t})$ can be
easily calculated, for FIEGARCH processes, what is easy to derive is the
expression for the $h$-step ahead forecast for the process
$\{\ln(\sigma_t^2)\}_{t\in\mathds{Z}}$, for any $h>1$.  The expressions for
$\hat{\ln}(\sigma_{n+h}^2) := \mathds{E}(\ln(\sigma_{n+h}^2)|\mathcal{F}_{t})$
and for the mean square error of forecast are given in Proposition
\ref{hstepaheadX}.  We shall use this result to discuss the properties of the
predictor obtained by considering $\check \sigma_{n+h}^2 :=
\exp\{\hat{\ln}(\sigma_{n+h}^2)\}$, for all $h>0$.

\begin{prop}\label{hstepaheadX}
  Let $\{X_t\}_{t \in \mathds{Z}}$ be a \emph{FIEGARCH}$(p,d,q)$ process, given
  by Definition \ref{definitionfie}.  Then the $h$-step ahead forecast
  $\hat{\ln}(\sigma_{n+h}^2)$ of $\ln(\sigma_{n+h}^2)$, given $\mathcal{F}_{n} =
  \sigma(\{Z_t\}_{t\leq n})$, $n\in\mathds{N}$, is given by
  \begin{equation}
    \hat{\ln}(\sigma_{n+h}^2) =\omega + \sum_{k=0}^{\infty}\lambda_{d,k+h-1}\,g(Z_{n-k}),\
    \mbox{ for all } h>0.\label{exp1}
  \end{equation}
  Moreover, the mean square error forecast is equal to zero, if $h = 1$, and it
  is given by
  \begin{equation}\label{exp03}
    \mathds{E}\big(\big[\ln(\sigma_{n+h}^2)- \hat\ln(\sigma_{n+h}^2)\big]^2\big)
    =\sigma^2_g\sum_{k=0}^{h-2}\lambda_{d,k}^2, \quad \mbox{if } h \geq 2,
  \end{equation}
  where $\sigma^2_g := \mathds{E}([g(Z_0)]^2)$ is given in \eqref{eq:sigmag}.
\end{prop}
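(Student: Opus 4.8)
The plan is to exploit the explicit series representation \eqref{bol}, namely $\ln(\sigma_t^2) = \omega + \sum_{k=0}^\infty \lambda_{d,k}g(Z_{t-1-k})$, together with the fact (Proposition \ref{Prop1}) that $\{g(Z_t)\}_{t\in\mathds{Z}}$ is an i.i.d.\ (hence white noise) sequence that is $\mathcal{F}_t$-measurable. The key observation is that $\ln(\sigma_{n+h}^2)$ decomposes into a part whose innovations $g(Z_{n+h-1-k})$ are $\mathcal{F}_n$-measurable (those with $n+h-1-k \le n$, i.e.\ $k \ge h-1$) and a part involving future innovations $g(Z_{n+h-1}),\dots,g(Z_{n+1})$ (those with $0\le k \le h-2$), which are independent of $\mathcal{F}_n$.

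For the forecast formula \eqref{exp1}, I would write
\begin{equation*}
\ln(\sigma_{n+h}^2) = \omega + \sum_{k=0}^{h-2}\lambda_{d,k}\,g(Z_{n+h-1-k}) + \sum_{k=h-1}^{\infty}\lambda_{d,k}\,g(Z_{n+h-1-k}),
\end{equation*}
then apply $\mathds{E}(\,\cdot\mid\mathcal{F}_n)$. Each term in the first (finite) sum has index $n+h-1-k$ strictly greater than $n$, so by independence and $\mathds{E}(g(Z_t))=0$ its conditional expectation vanishes. Each term in the second sum is $\mathcal{F}_n$-measurable and survives. Reindexing the surviving sum by $j=k-(h-1)$ yields $\sum_{j=0}^\infty \lambda_{d,j+h-1}\,g(Z_{n-j})$, which is exactly \eqref{exp1}. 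For $h=1$ the empty first sum gives $\hat{\ln}(\sigma_{n+1}^2)=\ln(\sigma_{n+1}^2)$, consistent with $\sigma_{n+1}^2$ being $\mathcal{F}_n$-measurable.

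For the mean square error \eqref{exp03}, I would note that the forecast error is precisely the discarded finite sum of future innovations,
\begin{equation*}
\ln(\sigma_{n+h}^2) - \hat{\ln}(\sigma_{n+h}^2) = \sum_{k=0}^{h-2}\lambda_{d,k}\,g(Z_{n+h-1-k}).
\end{equation*}
Since these $g(Z_{n+h-1-k})$ are uncorrelated (indeed independent) with common variance $\sigma_g^2$, squaring and taking expectations gives $\sigma_g^2\sum_{k=0}^{h-2}\lambda_{d,k}^2$, which is the claimed expression; for $h=1$ the sum is empty and the error is zero. The underlying finiteness requires $\sum_k \lambda_{d,k}^2 < \infty$, which by Theorem \ref{convorder} holds under stationarity ($d<0.5$), so all the series manipulations and the interchange of expectation with the infinite sum are justified.

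The main obstacle, and the only genuinely delicate point, is the rigorous justification of passing the conditional expectation through the infinite series and confirming that the tail sum $\sum_{k\ge h-1}\lambda_{d,k}g(Z_{n+h-1-k})$ converges in $L^2$ and is $\mathcal{F}_n$-measurable; this rests on square-summability of $\{\lambda_{d,k}\}$ from Theorem \ref{convorder} together with dominated/monotone convergence for conditional expectations. Everything else is a clean bookkeeping argument separating past from future innovations, so I expect the proof to be short once the summability is invoked.
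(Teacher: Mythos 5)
Your proposal is correct and follows essentially the same route as the paper's proof: split the series \eqref{bol} for $\ln(\sigma_{n+h}^2)$ into future innovations (which have zero conditional expectation given $\mathcal{F}_n$) and past innovations (which are $\mathcal{F}_n$-measurable), identify the forecast error with the discarded finite sum, and compute its variance from the i.i.d.\ structure of $\{g(Z_t)\}_{t\in\mathds{Z}}$. The only difference is that you explicitly flag the $L^2$/measurability justification for passing the conditional expectation through the infinite sum, a point the paper applies term by term without comment; this is a welcome refinement but not a different argument.
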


\begin{proof}
  Let
  $\hat\ln(\sigma_{n+h}^2):=\mathds{E}(\ln(\sigma_{n+h}^2)|\mathcal{F}_{n})$.
  Note that $\mathds{E}(g(Z_{t})|\mathcal{F}_{n})=\mathds{E}(g(Z_t))=0$, for all
  $t>n$, and $\mathds{E}(g(Z_{t})|\mathcal{F}_{n})=g(Z_t)$, for all $t\leq
  n$. By \eqref{bol}, one has
  \[
  \hat\ln(\sigma_{n+h}^2) = \omega
  +\sum_{k=0}^{\infty}\lambda_{d,k}\,\mathds{E}(g(Z_{n+h-1-k})|\mathcal{F}_{n})
  = \omega \ +\!  \sum_{k= h-1}^{\infty}\!\!\!\lambda_{d,k}\,g(Z_{n+h-1-k}),
  \]
  and expression \eqref{exp1} follows.

  Since $\ln(\sigma_{n+h}^2)$ is a function of $\{g(Z_s)\}_{s\leq{n+h-1}}$ and
  $\{g(Z_t)\}_{t\in\mathds{Z}}$ is a sequence of i.i.d. random variables with
  zero mean and variance $\sigma^2_g:= \mathds{E}([g(Z_0)]^2)$, we conclude that
  \[
  \mathds{E}\big(\big[\ln(\sigma_{n+h}^2)- \hat\ln(\sigma_{n+h}^2)\big]^2\big)=
  \mathds{E}\bigg(\bigg[\sum_{k=0}^{h-2}\lambda_{d,k}\,g(Z_{n+h-1-k})\bigg]^2\bigg)=\sigma^2_g\sum_{k=0}^{h-2}\lambda_{d,k}^2,
  \quad \mbox{if } h \geq 2,
  \]
  and zero if $h=1$.
\end{proof}

In practice, $\mathds{E}(\sigma_{n+h}^2|\mathcal{F}_{t})$ cannot be easily
calculated for FIEGARCH models and thus, a common approach is to predict
$\sigma_{n+h}^2$ through the relation $\check{\sigma}_{n+h}^2 :=
\exp\{\hat{\ln}(\sigma_{n+h}^2)\}$, with $ \hat{\ln}(\sigma_{n+h}^2)$ defined by
\eqref{exp1}, for all $h>0$.  As a consequence, a $h$-step ahead forecast for
$X_{n+h}^2$ is defined as $\check{X}_{n+h}^2 := \check{\sigma}_{n+h}^2$ and a
naive estimator for $\ln(X_{n+h}^2)$ is obtained by letting
\begin{equation}\label{forecastlx}
  \check{\ln}(X_{n+h}^2) := \ln(\check X_{n+h}^2) = \ln(\check \sigma_{n+h}^2) =
  \hat{\ln}(\sigma_{n+h}^2), \quad \mbox{ for all} \quad h>0.
\end{equation}
From expressions \eqref{generalproc} and \eqref{forecastlx}, it is obvious that
$\check \ln(X_{n+h}^2)$ is a biased estimator for $\ln(X_{n+h}^2)$, whenever
$\mathds{E}(\ln(Z_{n+h}^2)) \neq 0$. Proposition \ref{msfeXsq} gives the mean
square error forecast for the $h$-step ahead forecast of $\ln(X_{n+h}^2)$,
defined through expression \eqref{forecastlx}.

\begin{prop}\label{msfeXsq}
  Let $\check\ln(X_{n+h}^2)$, for all $h>0$, be the $h$-step ahead forecast of
  $\ln(X_{n+h}^2)$, given the filtration $\mathcal{F}_{n} =
  \sigma(\{Z_s\}_{s\leq n})$, defined by expression \eqref{forecastlx}. Then
  the mean square error forecast is given by
  \[
  \mathds{E}\big(\big[\ln(X_{n+h}^2)- \check \ln(X_{n+h}^2)\big]^2\big) =
  \sigma^2_g\sum_{k=0}^{h-2}\lambda_{d,k}^2 +
  \mathds{E}\big(\big[\ln(Z_{n+h}^2)\big]^2\big), \quad \mbox{where $\sigma^2_g
    := \mathds{E}\big([g(Z_0)]^2\big)$}.
  \]
\end{prop}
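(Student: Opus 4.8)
The plan is to reduce the problem to the forecast-error computation already carried out in Proposition \ref{hstepaheadX}. Writing $\ln(X_t^2) = \ln(\sigma_t^2) + \ln(Z_t^2)$ and recalling that, by the definition \eqref{forecastlx}, $\check\ln(X_{n+h}^2) = \hat\ln(\sigma_{n+h}^2)$, I would first decompose the forecast error as
\[
\ln(X_{n+h}^2) - \check\ln(X_{n+h}^2) = \big[\ln(\sigma_{n+h}^2) - \hat\ln(\sigma_{n+h}^2)\big] + \ln(Z_{n+h}^2).
\]
Substituting the series representation \eqref{bol} for $\ln(\sigma_{n+h}^2)$ and expression \eqref{exp1} for $\hat\ln(\sigma_{n+h}^2)$, the bracketed term collapses to the finite sum $\sum_{k=0}^{h-2}\lambda_{d,k}\,g(Z_{n+h-1-k})$, which involves only the future innovations $g(Z_{n+1}),\dots,g(Z_{n+h-1})$ --- precisely the quantity whose second moment was evaluated in the proof of Proposition \ref{hstepaheadX}.

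Next I would square this decomposition and take expectations, obtaining three contributions. The expectation of the squared innovation sum equals $\sigma_g^2\sum_{k=0}^{h-2}\lambda_{d,k}^2$ by the same white-noise orthogonality argument used in Proposition \ref{hstepaheadX}, since the $g(Z_t)$ are i.i.d.\ with zero mean and variance $\sigma_g^2$ by Proposition \ref{Prop1}. The expectation of the final term $[\ln(Z_{n+h}^2)]^2$ is exactly $\mathds{E}([\ln(Z_{n+h}^2)]^2)$, the second summand in the claimed formula. It therefore only remains to dispose of the cross term.

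The one point requiring care --- and the only genuine obstacle --- is showing that the cross term vanishes. Here I would observe that the indices $n+h-1-k$, for $0 \le k \le h-2$, all lie in $\{n+1,\dots,n+h-1\}$ and hence are strictly smaller than $n+h$. Since $\{Z_t\}_{t\in\mathds{Z}}$ is i.i.d., each $g(Z_{n+h-1-k})$ is independent of $Z_{n+h}$, and so of $\ln(Z_{n+h}^2)$; combining this with $\mathds{E}(g(Z_t))=0$ (Proposition \ref{Prop1}), every cross expectation factors as $\mathds{E}(g(Z_{n+h-1-k}))\,\mathds{E}(\ln(Z_{n+h}^2)) = 0$. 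This forces the cross term to vanish and yields the stated formula. Throughout, finiteness of the moments, so that the mean square error and the cross expectations are well defined, rests on the standing assumption $\mathds{E}([\ln(Z_0^2)]^2)<\infty$ already used in Theorem \ref{arfimafieg}.
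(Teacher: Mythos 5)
Your proposal is correct and follows essentially the same route as the paper: both identify $\check\ln(X_{n+h}^2)$ with $\hat\ln(\sigma_{n+h}^2)$, write the forecast error as $\sum_{k=0}^{h-2}\lambda_{d,k}\,g(Z_{n+h-1-k})+\ln(Z_{n+h}^2)$ via Proposition \ref{hstepaheadX}, and expand the square using the i.i.d.\ property of $\{Z_t\}_{t\in\mathds{Z}}$. The only difference is that you spell out explicitly the vanishing of the cross term, which the paper compresses into ``the proposition follows immediately.''
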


\begin{proof}
  By expression \eqref{forecastlx}, $\check{\ln}(X_{n+h}^2) :=
  \hat{\ln}(\sigma_{n+h}^2)$, for all $h>0$.  Thus, from expression
  \eqref{generalproc} and from Proposition \ref{hstepaheadX}, we have
  \begin{align}
    \mathds{E}\big(\big[\hspace{-1pt}\ln(X_{n+h}^2)\hspace{-1pt}-\check\ln(X_{n+h}^2)\big]^2\big)
    &=
    \mathds{E}\big(\big[\hspace{-1pt}\ln(X_{n+h}^2)-\ln(\hat{\sigma}_{n+h}^2)\big]^2\big)=
    \mathds{E}\big(\big[\hspace{-1pt}\ln(\sigma_{n+h}^2)+\ln(Z_{n+h}^2)-\ln(\hat{\sigma}_{n+h}^2)\big]^2\big)\nonumber\\
    &=\mathds{E}\bigg(\bigg[\sum_{k=0}^{h-2}\lambda_{d,k}\,g(Z_{n+h-1-k})+\ln(Z_{n+h}^2)\bigg]^2\bigg).\,\label{exp3}
  \end{align}
  By expanding the right hand side of expression \eqref{exp3} and using the fact
  that $\{Z_t\}_{t \in \mathds{Z}}$ is a sequence of i.i.d. random variables,
  the proposition follows immediately.
\end{proof}

\begin{remark}
  If the values of $X_t$ and $\sigma_t$ are known only for $t\in \{1,\cdots,n\}$
  then  the $h$-step ahead forecast $\hat\ln(\hat{\sigma}_{n+h}^2)$ of
  $\ln(\sigma_{n+h}^2)$, is approximated by
  \[
  \hat\ln(\sigma_{n+h}^2) \simeq \omega +
  \sum_{k=0}^{n-1}\lambda_{d,k+h-1}\,g(Z_{n-k}), \quad \mbox{for all } h>0,
  \]
  and, by definition, the same approximation follows for $\check
  \ln(X_{n+h}^2)$.  It is easy to see that, in this case, the mean square error
  of forecast values for the processes $\{\ln(\sigma_t^2)\}_{t\in\mathds{Z}}$
  and $\{\ln(X_t^2)\}_{t\in\mathds{Z}}$ are given, respectively, by
  \begin{align*}
    \mathds{E}\big(\big[\ln(\sigma_{n+h}^2)-\hat\ln(\sigma_{n+h}^2)\big]^2\big)
    &= \sigma^2_g\bigg(\, \sum_{k=0}^{h-2}\lambda_{d,k}^2\
    +\hspace{-5pt}\sum_{k=n+h-1}^{\infty}\hspace{-10pt}\lambda_{d,k}^2\bigg),
    \quad
    \mbox{ and}\\
    \quad \displaystyle
    \mathds{E}\big(\big[\ln(X_{n+h}^2)-\check\ln(X_{n+h}^2)\big]^2\big) & =
    \sigma^2_g\bigg(\, \sum_{k=0}^{h-2}\lambda_{d,k}^2\
    +\hspace{-5pt}\sum_{k=n+h-1}^{\infty}\hspace{-10pt}\lambda_{d,k}^2\bigg) +
    \mathds{E}(\left[\ln(Z_{n+h}^2)\right]^2,\quad \mbox{ for all } h>0.
  \end{align*}

\end{remark}

From Jensen's inequality, one concludes that
\[
\check{\sigma}_{n+h}^2 := \exp\{\hat\ln(\sigma_{n+h}^2)\} =
\exp\{\mathds{E}(\ln(\sigma_{n+h}^2)|\mathcal{F}_{t})\} \leq
\mathds{E}(\sigma_{n+h}^2|\mathcal{F}_{n}) :=\hat \sigma_{n+h}^2, \quad \mbox{
  for all } h>0,
\]
so that $\mathds{E}(\check{\sigma}_{n+h}^2 - \sigma_{n+h}^2) =
\mathds{E}(\mathds{E}( \check{\sigma}_{n+h} ^2 -
\sigma_{t+h}^2|\mathcal{F}_{n})) = \mathds{E}( \check{\sigma}_{n+h} ^2 -
\hat\sigma_{n+h}^2) \leq 0$, for all $h>0$.  In fact, from \eqref{conv} and
\eqref{exp1}, we have
\begin{equation}\label{sigmacheckconvergence}
  \check{\sigma}_{n+h}^2:=\exp\{\hat{\ln}(\sigma_{n+h}^2)\} = \exp\bigg\{\omega +
  \sum_{k=0}^{\infty}\lambda_{d,k+h-1}\,g(Z_{n-k})\bigg\}\overset{h\to
    \infty}{-\!\!\!\longrightarrow} e^{\omega} = \exp\{\mathds{E}(\ln(\sigma_{0}^2))\} .
\end{equation}

Another $h$-step ahead predictor for $\sigma_{n+h}^2$ can be defined as follows.
Consider an order 2 Taylor's expansion of the exponential function and write
\begin{align}
  \sigma_{n+h}^2 \, = & \,
  \exp\big{\{}\mathds{E}(\ln(\sigma_{n+h}^2)|\mathcal{F}_{n})\big{\}} +
  \left[\ln(\sigma_{n+h}^2) -   \mathds{E}(\ln(\sigma_{n+h}^2)|\mathcal{F}_{n})\right]\exp\big{\{}\mathds{E}(\ln(\sigma_{n+h}^2)|\mathcal{F}_{n})\big{\}} \nonumber\\
  & \, + \frac{1}{2}\left[\ln(\sigma_{n+h}^2) -
  \mathds{E}(\ln(\sigma_{n+h}^2)|\mathcal{F}_{n})\right]^2\exp\big{\{}\mathds{E}(\ln(\sigma_{n+h}^2)|\mathcal{F}_{n})\big{\}}
  + R_{n+h}, \quad \mbox{ for all } h>0, \label{sigmaTaylor}
\end{align}
From expression \eqref{sigmaTaylor}, a natural choice is to define a $h$-step
ahead predictor for $\sigma_{n+h}^2$ as
\begin{align}\label{sigmatilde}
  \tilde \sigma_{n+h}^2 :=
  \exp\big{\{}\mathds{E}(\ln(\sigma_{n+h}^2)|\mathcal{F}_{n})\big{\}} +
  \frac{1}{2}\mathds{E}(\left[\ln(\sigma_{n+h}^2) -
  \mathds{E}(\ln(\sigma_{n+h}^2)|\mathcal{F}_{n})\right]^2)\exp\big{\{}\mathds{E}(\ln(\sigma_{n+h}^2)|\mathcal{F}_{n})\big{\}},
\end{align}
for all $h>0$.

From expressions \eqref{exp1}, \eqref{exp03}  and \eqref{sigmatilde} one concludes that $\check
\sigma_{n+h}^2$ and $\tilde \sigma_{n+h}^2$ are related through the equation
\begin{equation}\label{relation}
  \tilde \sigma_{n+h}^2  = \left\{
    \begin{array}{ccc}
      \displaystyle  \exp\{\hat\ln(\sigma_{n+h}^2)\} = \check{\sigma}_{n+h}^2, & \mbox{if} & h =1;\vspace{0.2cm}\\
      \displaystyle \exp\{\hat\ln(\sigma_{n+h}^2)\}\bigg( 1
      +\frac{1}{2}\sigma^2_g\sum_{k=0}^{h-2}\lambda_{d,k}^2\bigg) =  \check{\sigma}_{n+h}^2\bigg( 1
      +\frac{1}{2}\sigma^2_g\sum_{k=0}^{h-2}\lambda_{d,k}^2\bigg), & \mbox{if } &  h > 1.
    \end{array}
  \right.
\end{equation}

Since $\sigma_{t+1}$ is a $\mathcal{F}_{t}$-measurable random variable, for all
$t\in\mathds{Z}$, we have $\mathds{E}(\tilde{\sigma}_{n+1} ^2 - \sigma_{n+1}^2)
= \mathds{E}(\check{\sigma}_{n+1} ^2 - \sigma_{n+1}^2) = 0$.  From equation
\eqref{sigmaTaylor}, we easily conclude that, for all $h> 1$,
\[
\mathds{E}(\tilde{\sigma}_{n+h} ^2 - \sigma_{n+h}^2) = -\mathds{E}(R_{n+h})\quad
\mbox{ and} \quad \mathds{E}(\check{\sigma}_{n+h} ^2 - \sigma_{n+h}^2) = - \bigg(
1+\frac{1}{2}\sigma^2_g\sum_{k=0}^{h-2}\lambda_{d,k}^2\bigg)\mathds{E}(\check{\sigma}_{n+h}^2)  -\mathds{E}(R_{n+h}).\\
\]
Therefore, the relation between the bias for the estimators $\check
\sigma_{n+h}^2$ and $\tilde \sigma_{n+h}^2$ is given by
\[
\mathds{E}(\tilde{\sigma}_{n+h} ^2 - \sigma_{n+h}^2) =
\mathds{E}(\check{\sigma}_{n+h} ^2 - \sigma_{n+h}^2) +
\mathds{E}(\check{\sigma}_{n+h}^2)\bigg( 1
+\frac{1}{2}\sigma^2_g\sum_{k=0}^{h-2}\lambda_{d,k}^2\bigg), \quad \mbox{for all
} h > 1.
\]

In Section \ref{simulationsection} we analyze the performance of $\tilde
\sigma_{n+h}^2$ through a Monte Carlo simulation study.

\section{Simulation Study}\label{simulationsection}

In this section we present a Monte Carlo simulation study to analyze the
performance of quasi-likelihood estimator and also the forecasting on
FIEGARCH$(p,d,q)$ processes.     Six different models are considered and,
from now on,  they  shall be  referred to as model M$i$, for $ i\in
\{1,\cdots,6\}$.  For all models we assume that the distribution of  $Z_0$ is
the Generalized Error Distribution (GED) with  tail-thickness parameter $\nu =
1.5$ (since $\nu < 2$ the tails are heavier than the Gaussian distribution).   The set of parameters considered in this study is
the same as in \cite{PRLO1} and \cite{PRLO2}\footnote{\cite{PRLO1} present a Monte Carlo simulation study on risk
measures estimation in time series derived from FIEGARCH process.  \cite{PRLO2}
analyze a portfolio composed by stocks from the Brazilian market Bovespa. The
authors consider the econometric approach to estimate the risk measure VaR and
use FIEGARCH models to obtain the conditional variance of the time series.}, except for models M5 and M6 (see Table
\ref{simpar}).  While model M5 considers $d = 0.49$,  which is close to the
non-stationary region ($d\geq 0.5$), model M6 considers $p=1$ and $q = 0$.   For
comparison, we shall consider for model M6 the same parameter values as
in model M3 (obviously,  with the necessary adjustments regarding $\alpha_1$
and $\beta_1$).
 We also present here the $h$ step-ahead forecast, for $h \in \{1,\cdots, 50\}$, for
the conditional variance of simulated FIEGARCH processes.

\subsection{Data Generating Process}

To generate samples from FIEGARCH$(p,d,q)$ processes we proceed as described
in steps {\bf DGP1} - {\bf DGP3}  below.  Notice that, while step 1 only needs to be repeated for
each model, steps 2 and 3 must be repeated for each model and each replication.
 The parameters value consider in this simulation study   are given in
Table \ref{simpar}.  For each model we consider $re = 1,000$ replications,   with sample size
   $N = 5,050$.

\begin{table}[!htb]
  \renewcommand{\arraystretch}{1.1}
  \centering
  \caption{ Parameters value for the models. By definition, M1:=
    FIEGARCH$(2,d,1)$; M2 := FIEGARCH$(0,d,4)$; M3 := FIEGARCH$(0,d,1)$; M4 :=
    FIEGARCH$(0,d,1)$,  M5 := FIEGARCH$(1,d,1)$ and  M6 :=
    FIEGARCH$(1,d,0)$.  }   \label{simpar}\vspace{0.2cm}
  {\footnotesize
    \begin{tabular*}{1\textwidth}{@{\extracolsep{\fill}} ccccccccccc}
      \hline
      \multirow{2}{*}{Model}& \multicolumn{10}{c}{\phantom{\Big{|}} Parameter}\\
      \cline{2-11}
      & $d$\phantom{\Big{|}} & $\theta$ & $\gamma$ & $\omega$ & $\alpha_1$& $\alpha_2$  & $ \beta_1$ & $ \beta_2$ & $ \beta_3$ & $\beta_4$
      \\
      \hline
      \vspace{-0.2cm}\\
      M1  & 0.4495 & -0.1245 & 0.3662 &  -6.5769 & -1.1190 &-0.7619& -0.6195&-&-&-\\
      M2& 0.2391 & -0.0456 & 0.3963 & -6.6278 & -& - & \ 0.2289 & 0.1941 & 0.4737 & -0.4441\\
      M3&  0.4312 &  -0.1095  & 0.3376 & -6.6829 &- & -&\ 0.5454&-&-&-\\
      M4 &  0.3578 & -0.1661 &0.2792 & -7.2247 & - & -& \
      0.6860&-&-&-\\
      M5 &  0.4900 & -0.0215 & 0.3700 & -5.8927 & 0.1409 & -& \
      -0.1611&-&-&-\\
       M6&  0.4312 &  -0.1095  & 0.3376 & -6.6829 &0.5454 & -& -&-&-&-\vspace{0.1cm}\\
      \hline
    \end{tabular*}
  }
\end{table}

  \vspace{1\baselineskip}
\noindent {\bf DGP1:}     Apply  the  recurrence formula given in Proposition
\ref{coefficientsfiegarch},  to  obtain the  coefficients  of the
polynomial $\lambda(z) = \sum_{k = 0}^\infty\lambda_{d,k}z^k$,  defined by \eqref{lambdapoly}.   For this simulation study the infinite sum \eqref{lambdapoly} is truncated at $m
= 50,000$.  To select the truncation point $m$ we consider Theorem
\ref{convorder} and the results presented in Table \ref{lambdacoeffsim}.

   From  Theorem \ref{convorder},  we have,
\[
\lambda_{d,k} \sim \frac{1}{\Gamma(d)k^{1-d}}\frac{\alpha(1)}{\beta(1)}, \quad
\mbox{as } k\rightarrow \infty,
\]
and we conclude that $\lambda_{d,k}=o(k^d)$ and $\lambda_{d,k}=O(k^{d-1})$, as
$k$ goes to infinity.   However, the speed of the convergence varies from model to
model, as we show in Table \ref{lambdacoeffsim}.  For simplicity,  in this
table, let
$Q_1(\cdot)$ and $Q_2(\cdot)$ be defined as
\[
Q_1(k) := \frac{\lambda_{d,k}}{k^d} \quad \mbox{ and } \quad Q_2(k) :=
\lambda_{d,k}\bigg(\frac{1}{\Gamma(d)k^{1-d}}\frac{\alpha(1)}{\beta(1)}\bigg)^{-1},
\quad \mbox{for all } k > 0.
\]

Table \ref{lambdacoeffsim} presents the values of the coefficients
$\lambda_{d,k}$, given in Proposition \ref{coefficientsfiegarch}, for $k \in
\{0; 10;$ 100; 1,000; 5,000; 10,000; 20,000; 50,000; 100,000$\}$, for each
simulated model M$i$, $i \in \{1,\cdots,6\}$. Note that, for $k\geq$ 5,000, the
coefficient values decrease slowly.  We also report in Table
\ref{lambdacoeffsim} $Q_1(k)$ and $Q_2(k)$ values for the correspondent
$\lambda_{d,k}$ value.  Note that, for $k \in \{$10,000; 50,000; 100,000$\}$,
the value $Q_1(k)$ is very close to zero, for all models. Also notice that,
while $Q_2(k)$ converges to 1 faster for model M1 than for the other models.

\begin{table}[!htb]
      \renewcommand{\arraystretch}{1.1}
  \centering
  \caption{ Coefficients $\lambda_{d,k}$ and the quotients $Q_1(k)$
    and $Q_2(k)$, for different values of $k$, for all models.} \label{lambdacoeffsim} \vspace{0.2cm}
  {\footnotesize
    \begin{tabular*}{1\textwidth}{@{\extracolsep{\fill}} lccccccccc}
      \hline
      \phantom{\Big{|}}$k$  & 0 &	10&	100&	1,000&	5,000&	10,000&	25,000&	 50,000&	100,000\\
      \hline
      \vspace{-0.2cm}\\
      \multicolumn{10}{l}{M1 :=   FIEGARCH$(2,d,1)$}\\
      $\lambda_{d,k}$ &1&	0.26537&	0.07167&	0.02015&	0.00830&	0.00567&	 0.00342&	0.00234&	0.00160\\
      $Q_1(k)$&- &	0.09426&	0.00904&	0.00090&	0.00018&	0.00009&	0.00004&	 0.00002&	0.00001\\
      $Q_2(k)$&-&	1.04410&	1.00173&	1.00017&	1.00003&	1.00002&
      1.00001&	1.00000&	1.00000\vspace{0.1cm}\\

      \multicolumn{10}{l}{M2 :=   FIEGARCH$(0,d,4)$}\\
      $\lambda_{d,k}$ &1&	-0.09039&	0.01450&	0.00251&	0.00074&	0.00043&	 0.00022&	0.00013&	0.00008\\
      $Q_1(k)$&- &	-0.05212&	0.00482&	0.00048&	0.00010&	0.00005&	0.00002&	 0.00001&	0.00000\\
      $Q_2(k)$&-&	-1.08434&	1.00292&	1.00027&	1.00005&	1.00003&
      1.00001&	1.00001&	1.00000\vspace{0.1cm}\\

      \multicolumn{10}{l}{M3 :=   FIEGARCH$(0,d,1)$}\\
      $\lambda_{d,k}$ &1&	0.31434&	0.07844&	0.02106&	0.00843&	0.00568&	 0.00337&	0.00227&	0.00153\\
      $Q_1(k)$&- &	0.11647&	0.01077&	0.00107&	0.00021&	0.00011&	0.00004&	 0.00002&	0.00001\\
      $Q_2(k)$&-&	1.08789&	1.00576&	1.00056&	1.00011&	1.00006&
      1.00002&	1.00001&	1.00001\vspace{0.1cm}\\

      \multicolumn{10}{l}{M4 :=   FIEGARCH$(0,d,1)$}\\
      $\lambda_{d,k}$ &1&	0.36874&	0.06738&	0.01517&	0.00539&	0.00345&	 0.00192&	0.00123&	0.00079\\
      $Q_1(k)$&- &	0.16178&	0.01297&	0.00128&	0.00026&	0.00013&	0.00005&	 0.00003&	0.00001\\
      $Q_2(k)$&-&	1.26414&	1.01350&	1.00129&	1.00026&	1.00013&
      1.00005&	1.00003&	1.00001\vspace{0.1cm}\\

      \multicolumn{10}{l}{M5 :=   FIEGARCH$(1,d,1)$}\\
      $\lambda_{d,k}$ &1&	0.12291&	0.03897&	0.01207&	0.00531&	0.00373&	 0.00234&	0.00164&	0.00115\\
      $Q_1(k)$&-&	0.03977&	0.00408&	0.00041&	0.00008&	0.00004&	0.00002&	 0.00001&	0.00000\\
      $Q_2(k)$&-&	 0.97189&	0.99720&	0.99972&	0.99994&	0.99997&	0.99999&	 0.99999&	1.00000\vspace{0.1cm}\\

      \multicolumn{10}{l}{M6 :=   FIEGARCH$(1,d,0)$}\\
  $\lambda_{d,k}$  &  1	&0.05472&	0.01599&	0.00435&	0.00174&	0.00117&	0.00070&	0.00047&	0.00032\\
 $Q_1(k)$&-&	0.02027&	0.00219&	0.00022&	0.00004&	0.00002&	0.00001&	0.00000&	0.00000\\
  $Q_2(k)$&-&	0.91632&	0.99192&	0.99919&	0.99984&	0.99992&
  0.99997&	0.99998 &	0.99999\vspace{0.1cm}\\
  \hline
    \end{tabular*}
  }
\end{table}

  \vspace{1\baselineskip}
\noindent {\bf DGP2:}  Set    $Z_{0}\sim \mathrm{GED}(\nu)$,   with $\nu = 1.5$,  and obtain an
     i.i.d. sample $\{z_{t}\}_{t = -m}^N$.

  \vspace{1\baselineskip}
\noindent {\bf DGP3:}  By considering   Definition \ref{definitionfie} and the
equality in \eqref{lambdapoly},  the sample $\{x_t\}_{t=1}^n$ is obtained
through the relation
\[
\ln(\sigma_t^2) = \sum_{k=0}^m\lambda_{d,k}g(z_{t-1-k})  \quad \mbox{and} \quad x_t = \sigma_tz_t,  \quad \mbox{for all } t=1,\cdots, N.
\]

\begin{remark}
 For parameter  estimation and forecasting   we shall consider sub-samples from
these time series,   with size $n\in \{2,000; 5,000\}$.  The sub-samples of
size $n = 2,000$ correspond to the last 2,000 values of the generated time
series (after removing the last 50 values which are used only  to compare the
out-of-sample forecasting performance of the models).   The value $n=2,000$ is the approximated size of the observed time series considered in
\cite{PRLO2}. The value $n = 5,000$ was chosen to analyze the estimators
asymptotic properties.
\end{remark}

\subsection{Estimation Procedure}
In this study we consider the quasi-likelihood method to estimate the parameters
of FIEGARCH models for the simulated time series.  Given any time series
$\{x_t\}_{t=1}^{n}$, this method assumes that $X_{t}|\mathcal{F}_{t-1}$, for all
$t\in\mathds{Z}$, is normally distributed. The vector of unknown parameters is
denoted by
\[
\boldsymbol{\eta}=(d;\omega;\theta;\lambda;\alpha_1,\cdots,\alpha_p;\beta_1,\cdots,\beta_q)'\
\in\mathds{R}^{p+q+4}
\]
and the estimator $\hat{\boldsymbol{\eta}}$ of $\boldsymbol{\eta}$ is the value
that maximizes
\begin{equation}
  \ln(\ell(\boldsymbol\eta; x_1, \cdots, x_n))=-\frac{n}{2}\ln(2\pi)-\frac{1}{2}
  \sum_{t=1}^{n}\left[\ln(\sigma_t^2)+\frac{x_t^2}{\sigma_t^2}\right], \!.\label{pseudolike}
\end{equation}

Since the processes $\{x_t\}_{t<1}$ and $\{z_t\}_{t<1}$ are unknown, we need to
consider a set $I_0$ of initial conditions in order to start the recursion and
to obtain the random variable $\ln(\sigma_t^2)$, for $t \in \{1, \cdots,
n\}$. Then we use these estimated values to solve \eqref{pseudolike}.  For this
simulation study we assume, as initial conditions, $ g(z_t) = 0$, $ \sigma_t^2 =
\hat{\sigma}_X^2$ and $x_t := \sigma_tz_t = 0$, whenever $ t<1$, where
$\hat{\sigma}_X^2$ is the sample variance of $\{x_t\}_{t=1}^{n}$.  This is the
initial set suggested by \cite{BOMI96}.  The random variables $\ln(\sigma_t^2)$,
for $t \in \{1, \cdots, n\}$, are then estimated upon considering the set $I_0$
of initial conditions and the known values $\{x_t\}_{t=1}^{n}$. The infinite sum
in the polynomial $\lambda(\cdot)$ is truncated at $m = n$, where $n$ is the
available sample size.

 \subsection{Performance Measures}

For any model, let  $\hat{\eta}_k$ denotes the estimate of  $\eta$  in the
$k$-th replication, where $k \in \{1,\cdots,re\}$, $re = 1,000$ and  $\eta$ is
any vector parameter given in Table \ref{simpar}.   To access the performance of
quasi-likelihood  procedure we  calculate the
mean $\bar{\eta}_ i$, the standard deviation ($sd$), the bias ($bias$), the mean
absolute error ($mae$) and the mean square error ($mse$) values,
 defined by
\[
\bar{\eta} := \frac{1}{re}\sum_{k=1}^{re} \hat{\eta}_k, \hspace{5pt} sd :=
\sqrt{\frac{1}{re}\sum_{k=1}^{re}(\hat{\eta}_k - \bar{\eta})^ 2}, \hspace{5pt}
bias := \frac{1}{re}\sum_{k=1}^{re}e_k, \hspace{5pt} mae :=
\frac{1}{re}\sum_{k=1}^{re}|e_k|, \hspace{5pt} \mbox{and} \hspace{5pt} mse
:=\frac{1}{re}\sum_{k=1}^{re}e^2_k,
\]
where $e_k:=\hat{\eta}_k -\eta$, for $k \in \{1,\cdots,re\}$.

\subsection{Estimation Results}

 Table \ref{resultssim} summarizes the results on the parameter estimation
 procedure.  Figures \ref{figm1} - \ref{figm6} present the kernel distribution
 of the  parameter estimators for each considered model when
 $n\in\{2,000;5,000\}$. These  graphs help to illustrate the results presented
 in Table \ref{resultssim}.

\begin{figure}[!htb]
   \vspace{-0.2cm} \centering
  \includegraphics[width = 0.2\textwidth]{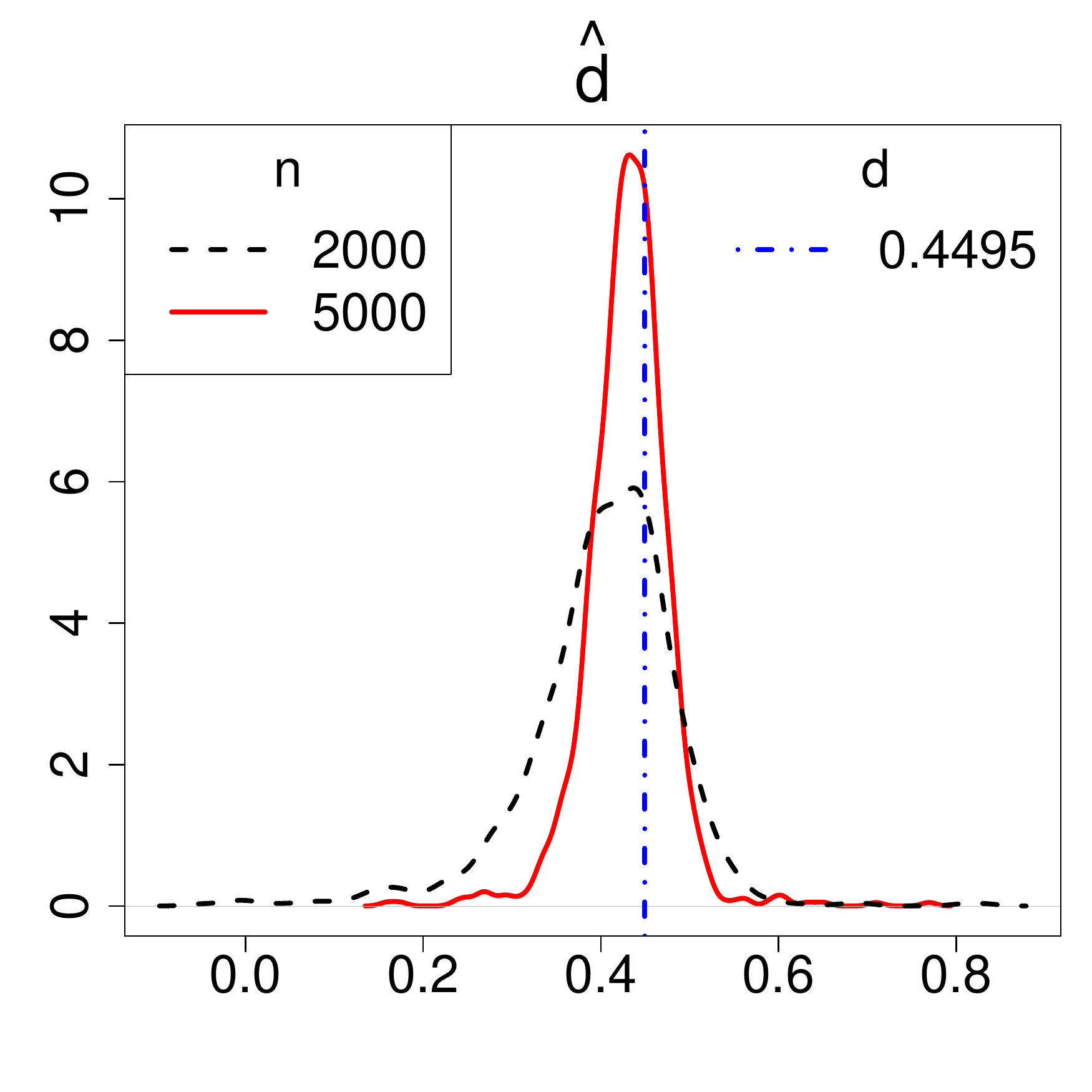}
  \includegraphics[width = 0.2\textwidth]{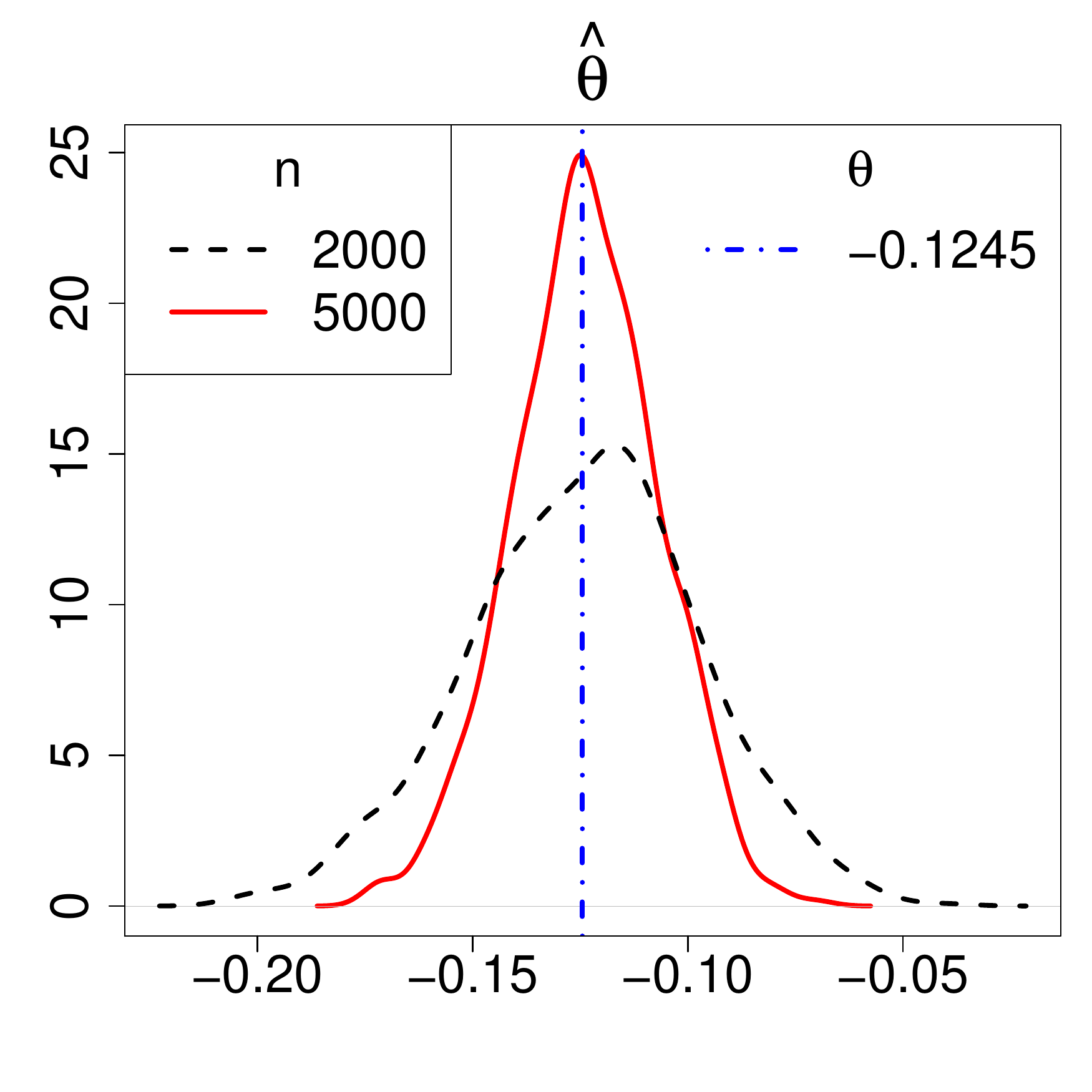}
  \includegraphics[width = 0.2\textwidth]{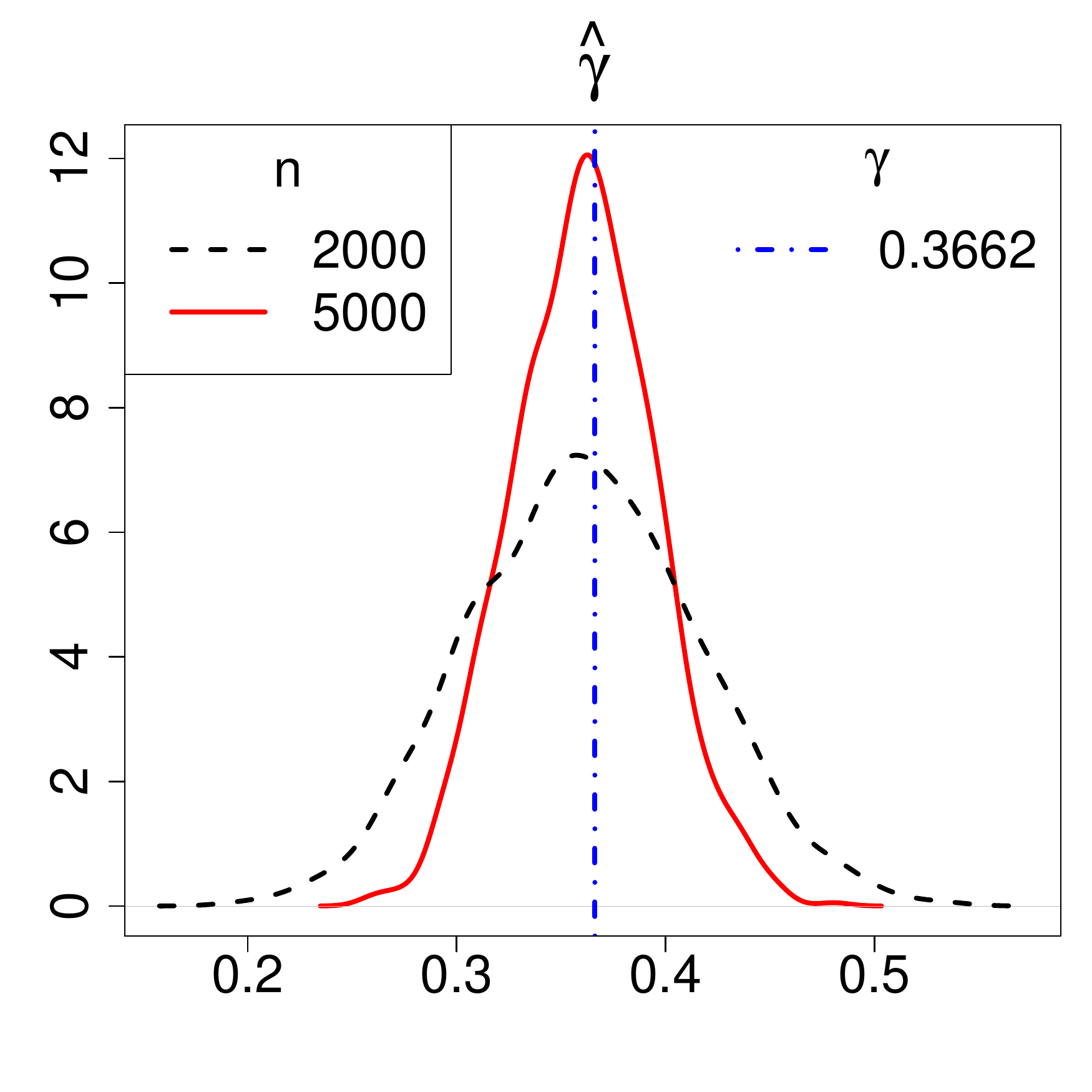}
  \includegraphics[width =   0.2\textwidth]{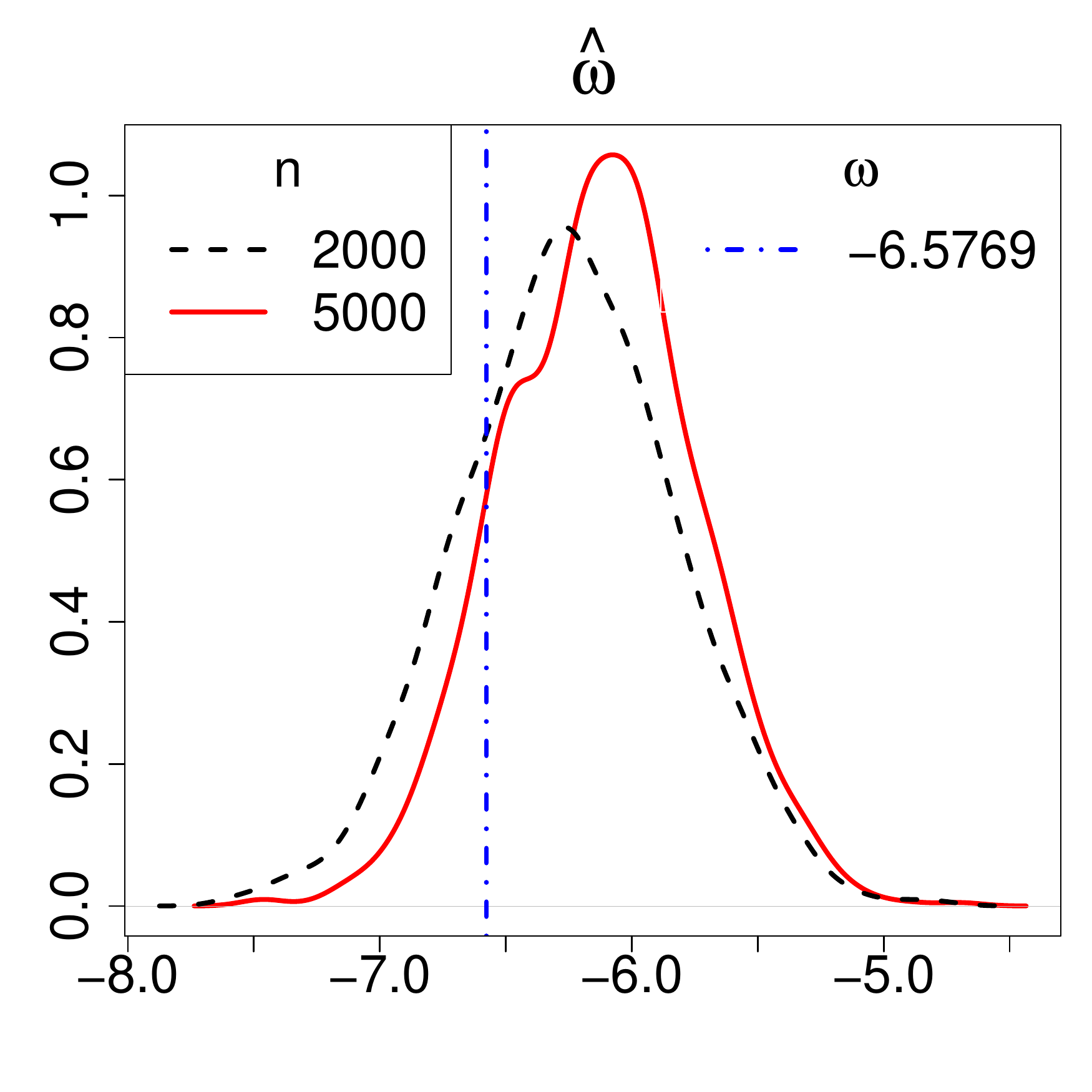}\\
  \includegraphics[width = 0.2\textwidth]{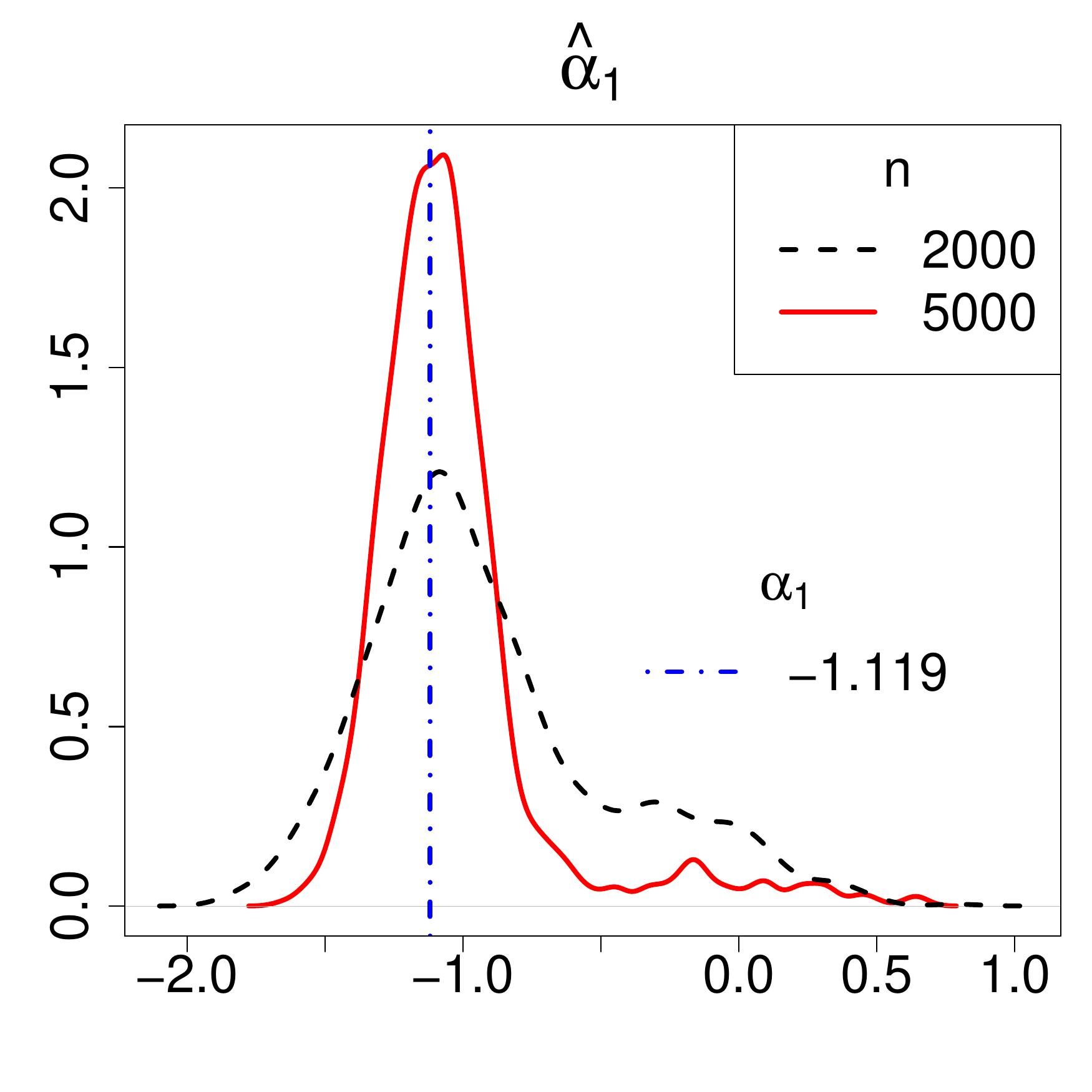}
  \includegraphics[width = 0.2\textwidth]{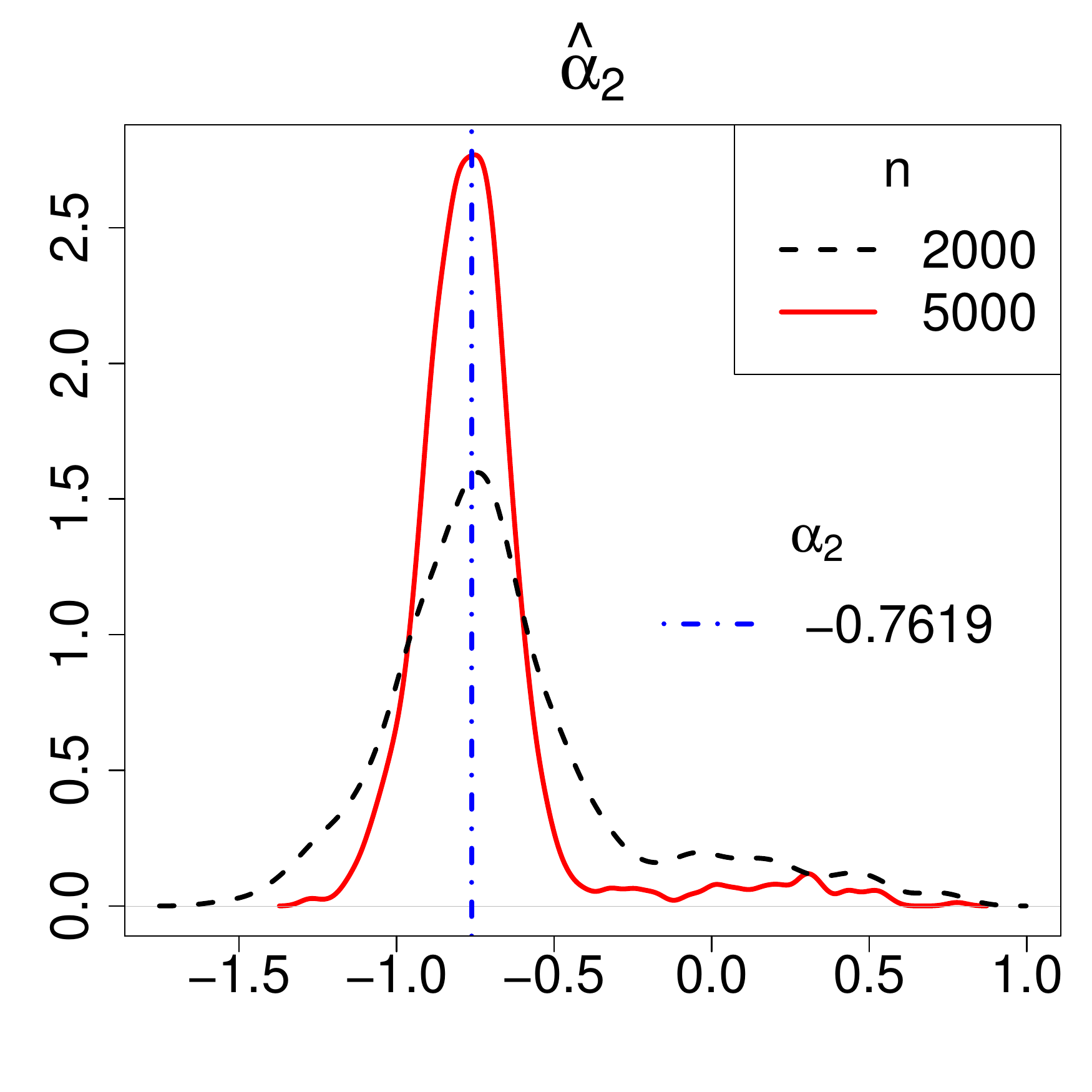}
  \includegraphics[width = 0.2\textwidth]{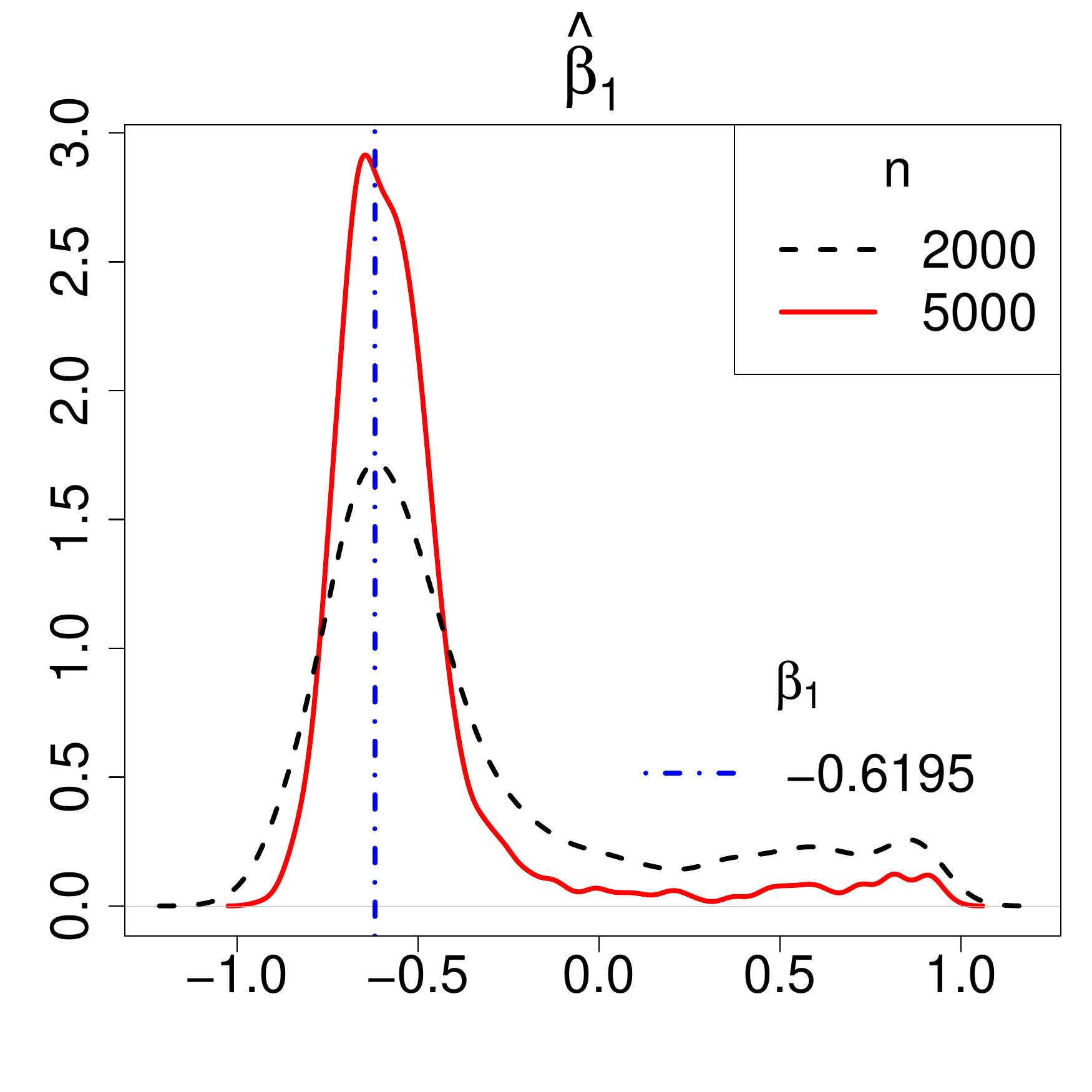}\vspace{-0.2cm}

   \caption{Kernel density function of the estimates for model M1, for $n \in
     \{2,000;\, 5,000\}$. }  \label{figm1}
 \end{figure}

 \begin{figure}[!htb]
  \vspace{-0.2cm} \centering
  \includegraphics[width = 0.2\textwidth]{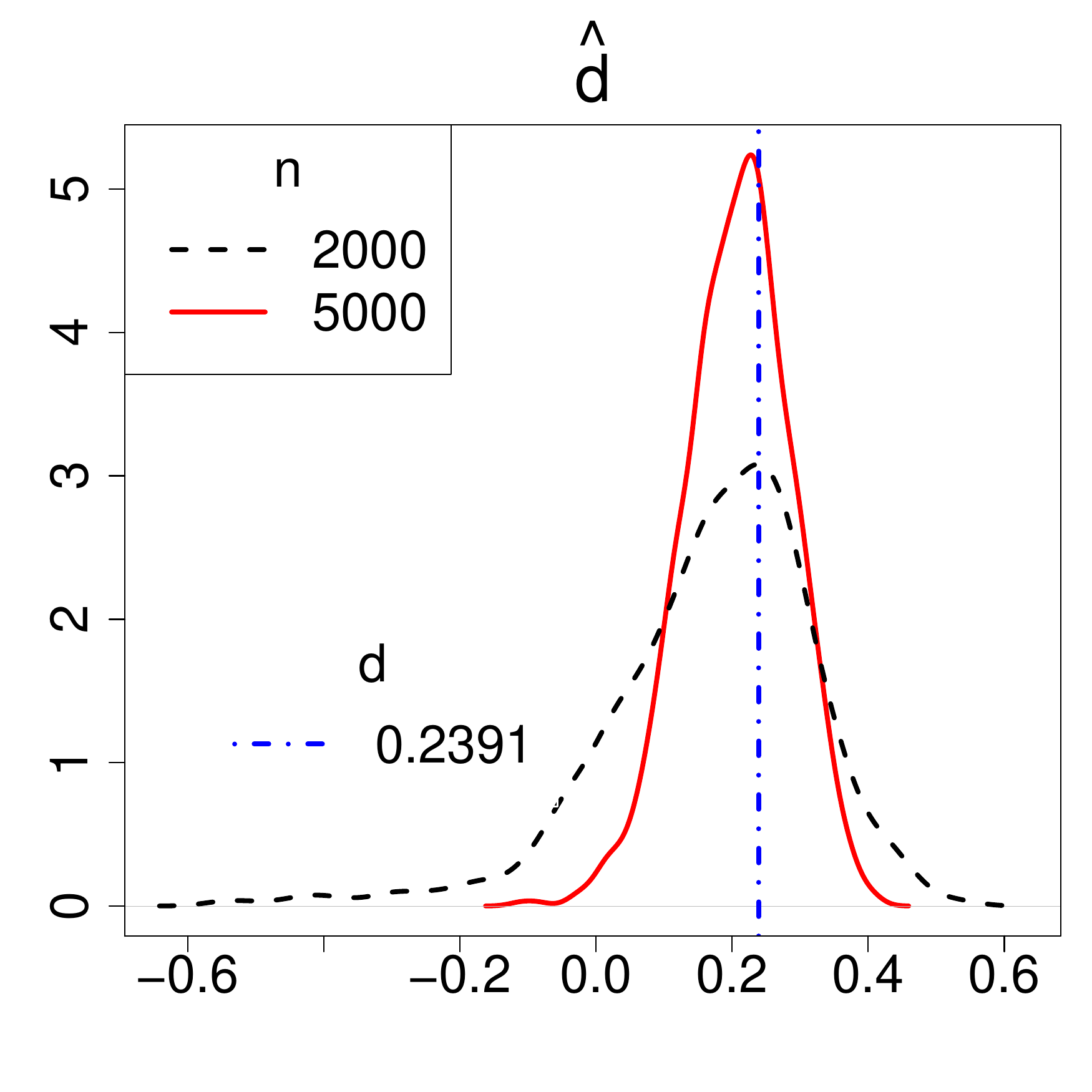}
  \includegraphics[width = 0.2\textwidth]{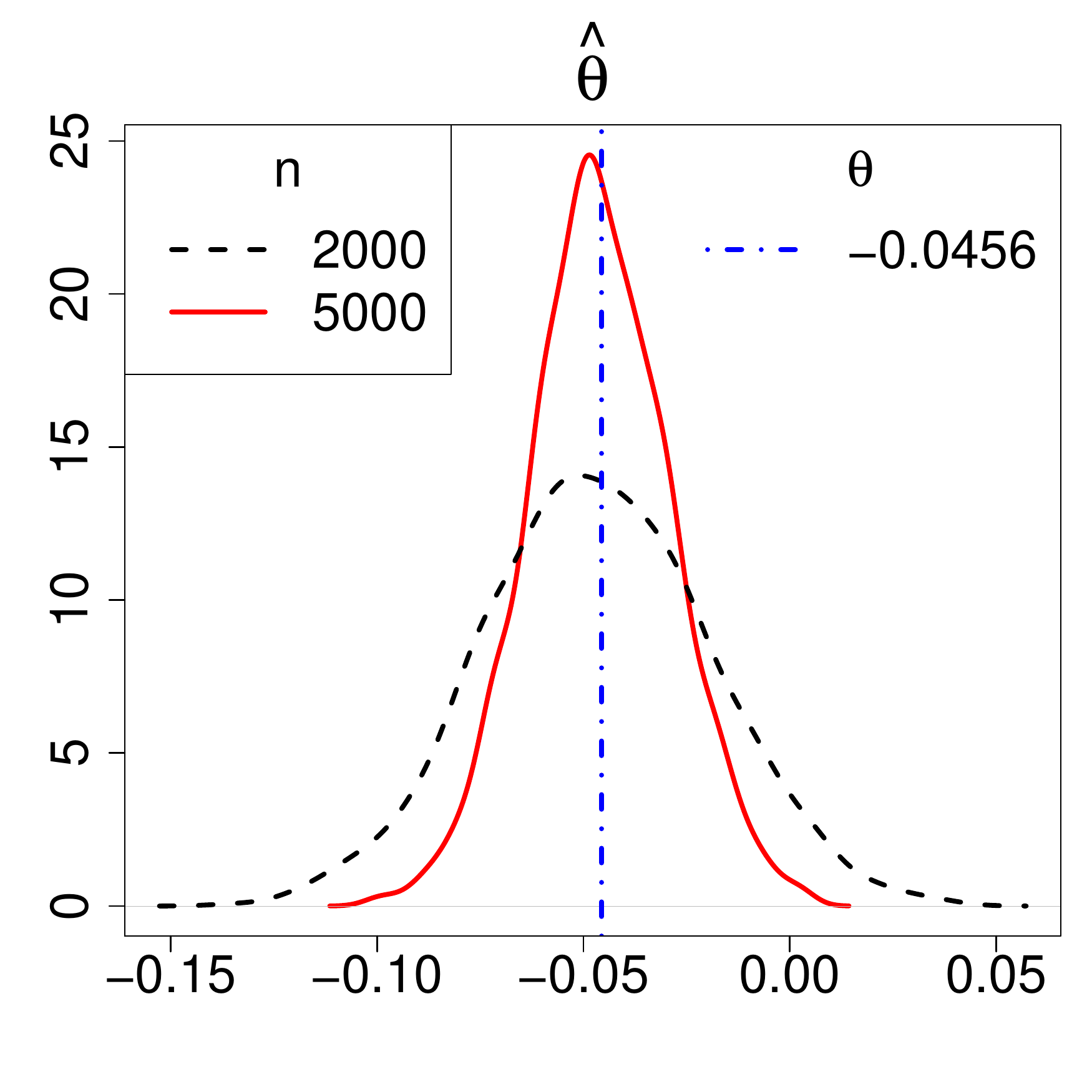}
  \includegraphics[width =0.2\textwidth]{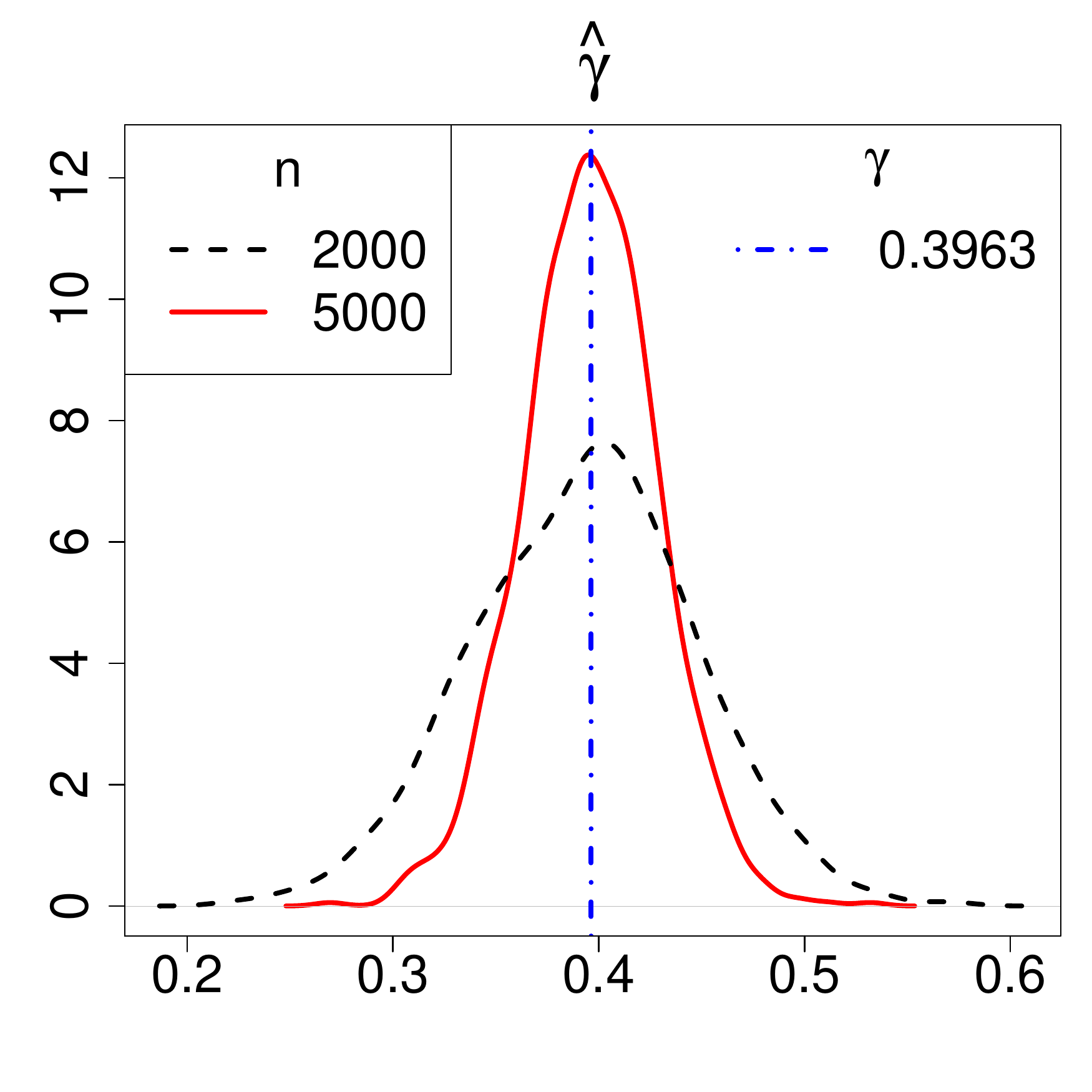}
  \includegraphics[width = 0.2\textwidth]{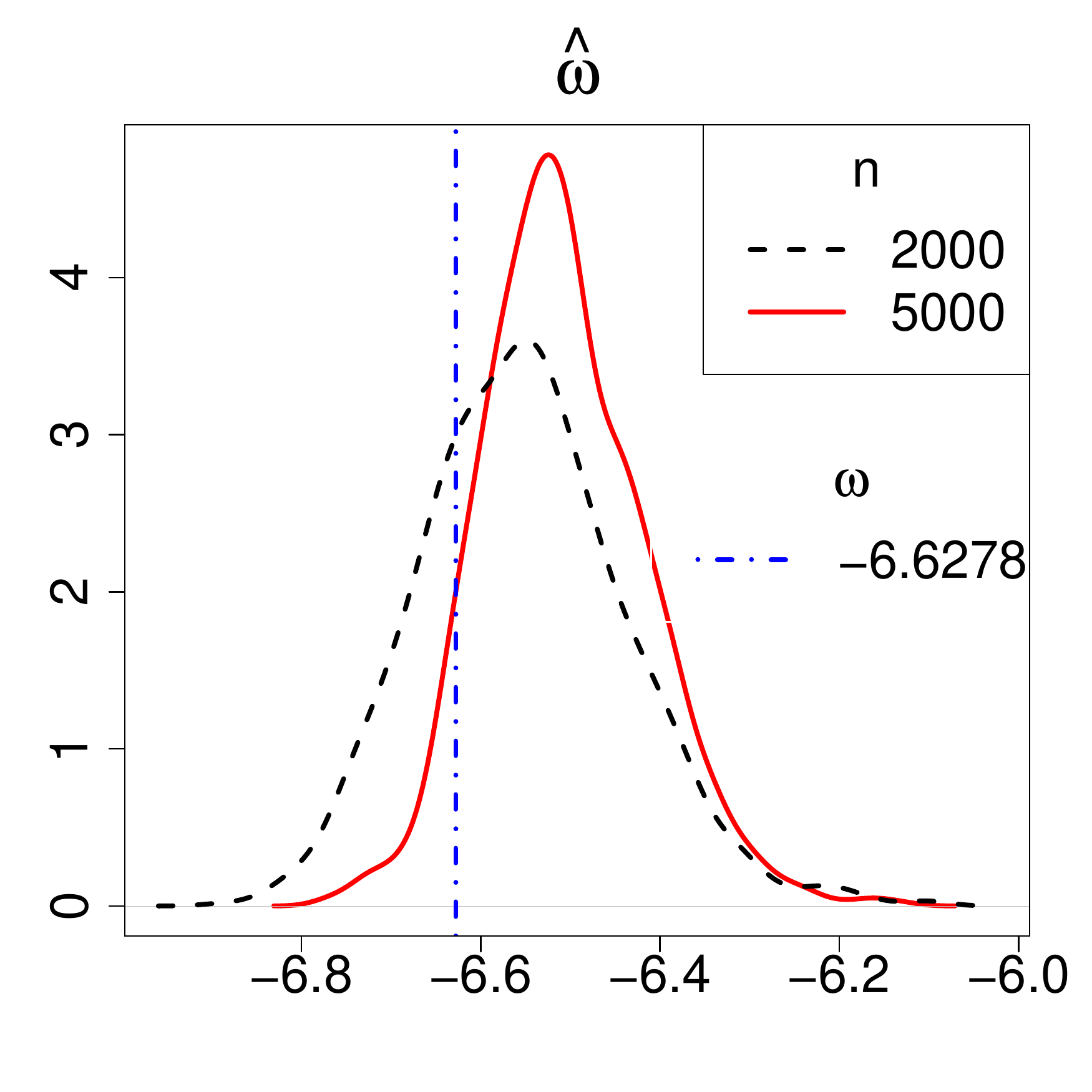} \\
  \includegraphics[width = 0.2\textwidth]{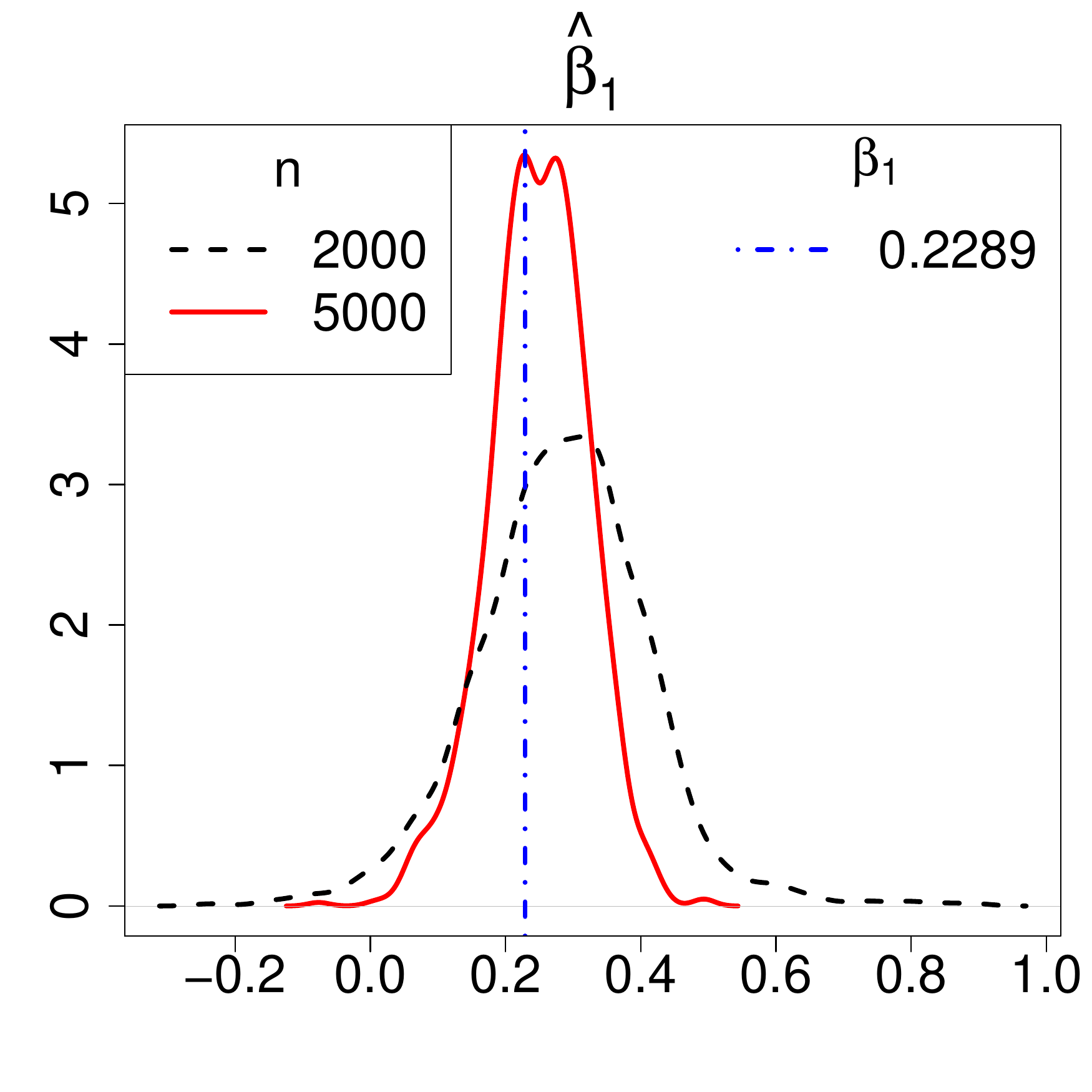}
  \includegraphics[width = 0.2\textwidth]{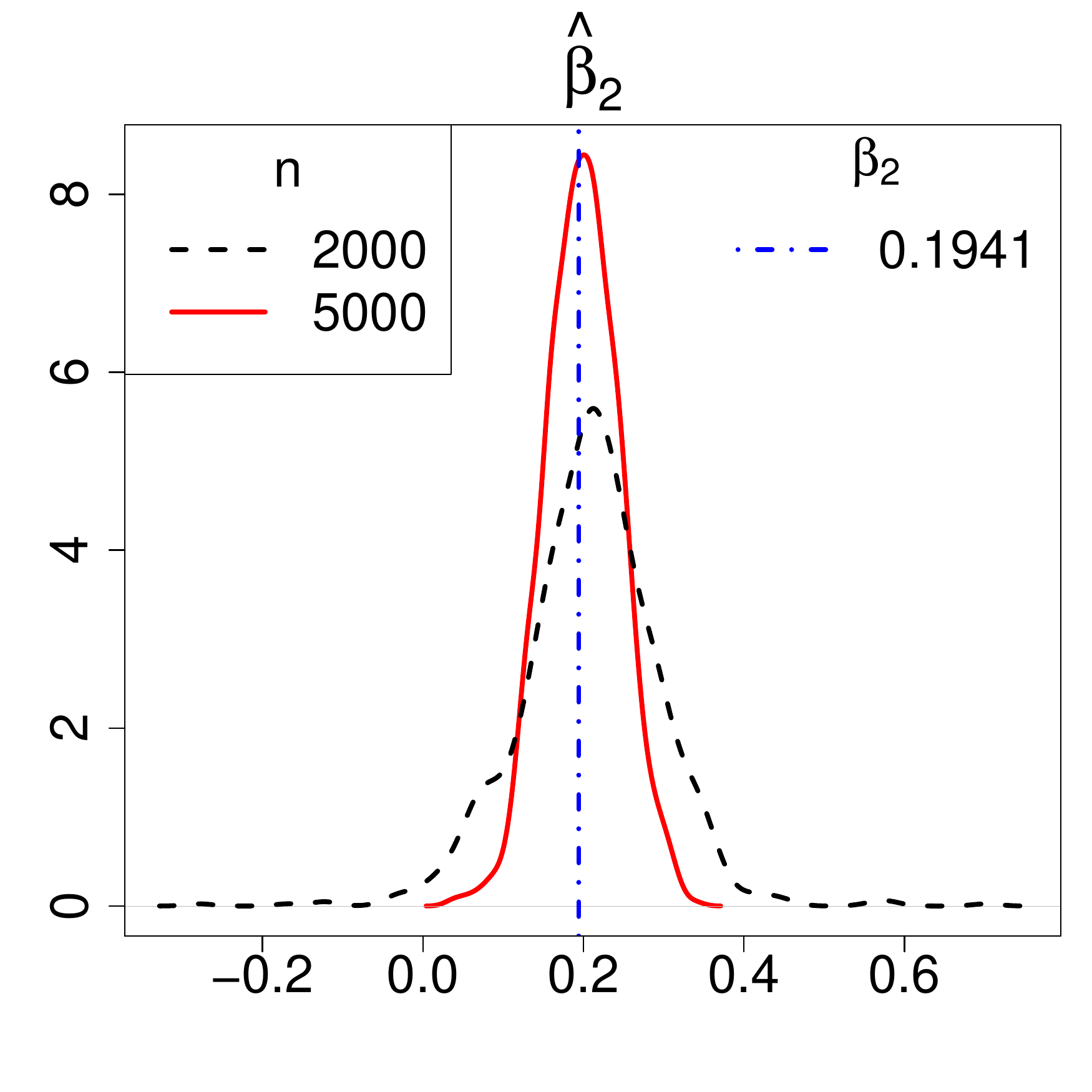}
  \includegraphics[width = 0.2\textwidth]{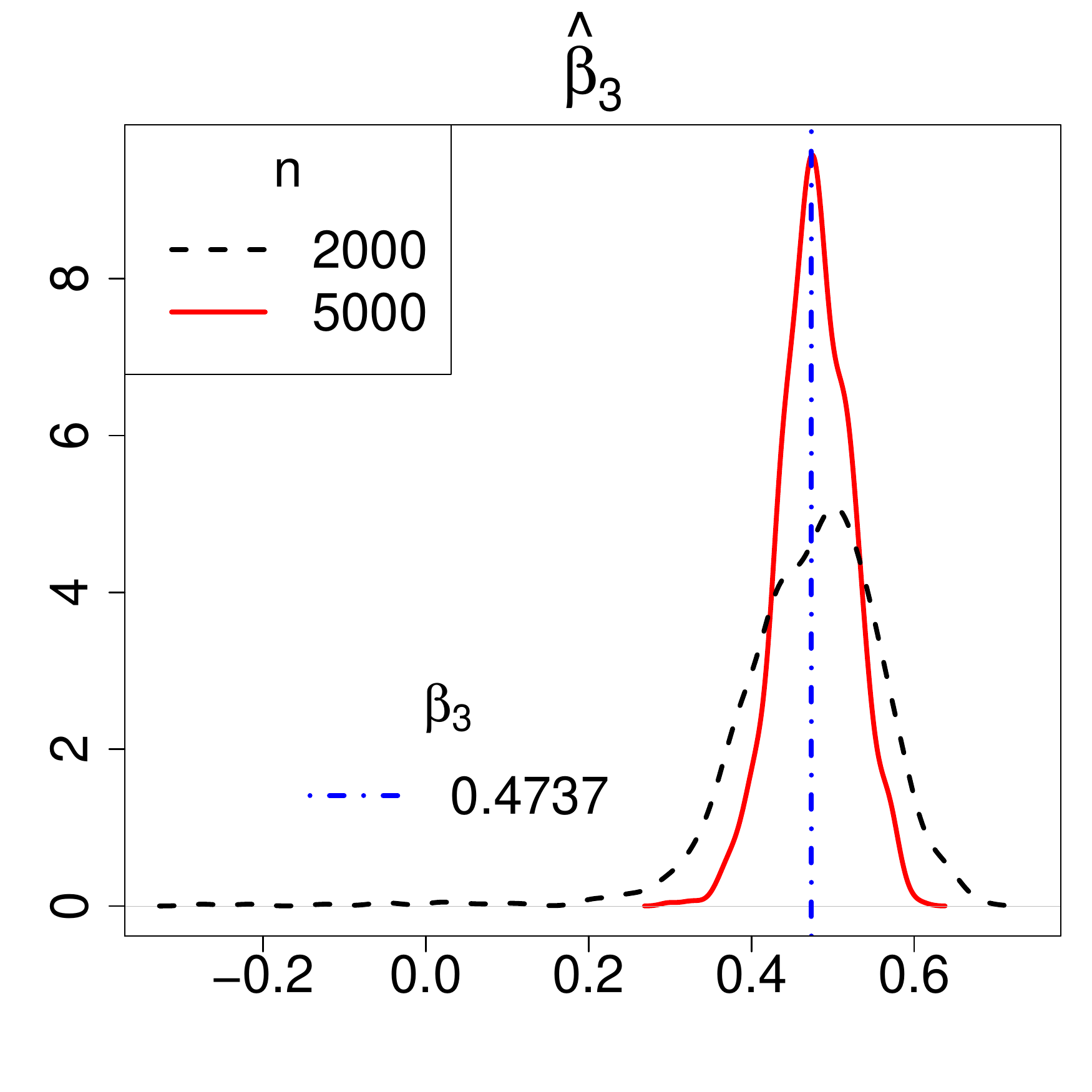}
  \includegraphics[width = 0.2\textwidth]{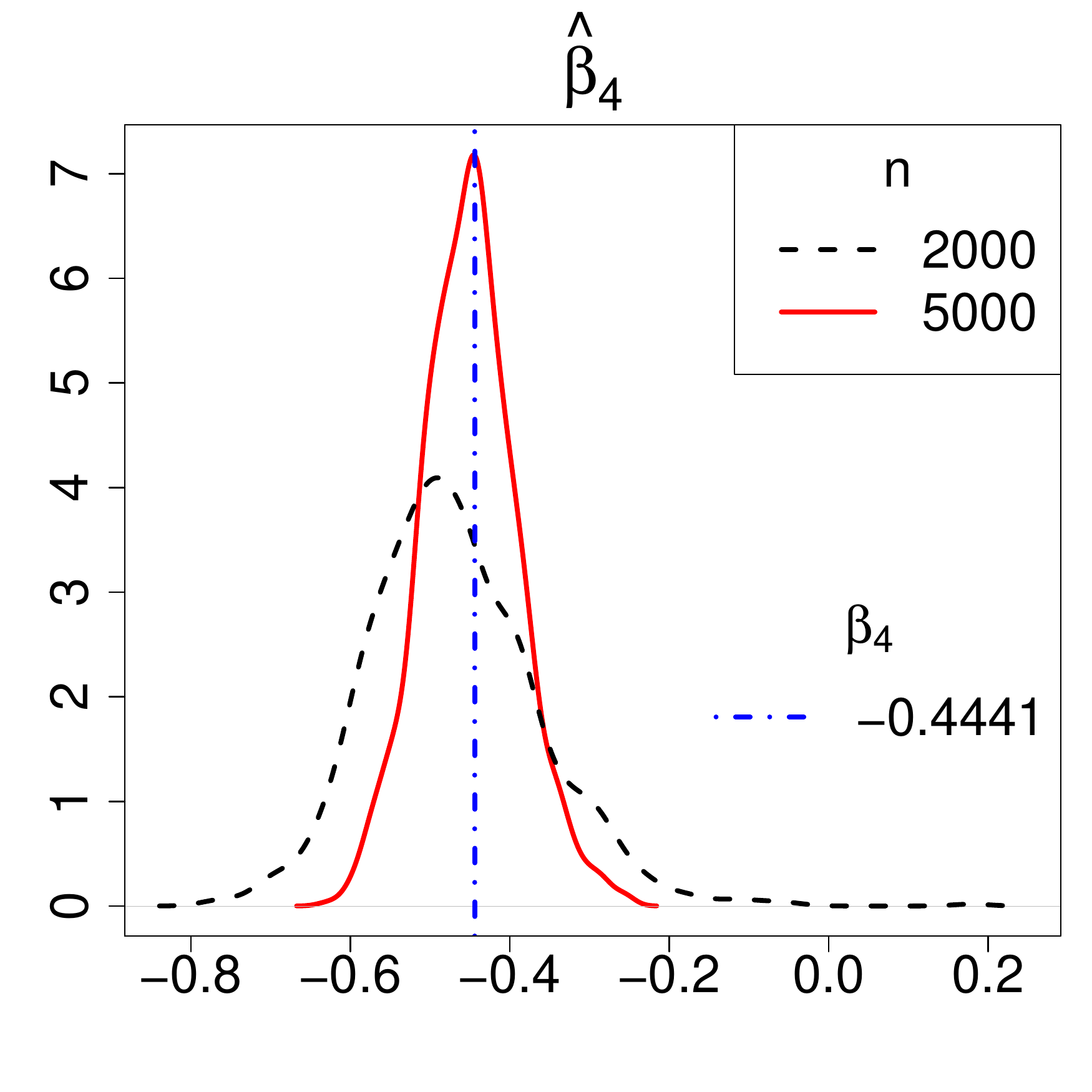}\vspace{-0.2cm}

   \caption{Kernel density function of the estimates for model M2, for $n \in
     \{2,000;\, 5,000\}$. }
 \end{figure}

 \begin{figure}[!htb]
  \vspace{-0.2cm} \centering
 \includegraphics[width = 0.2\textwidth]{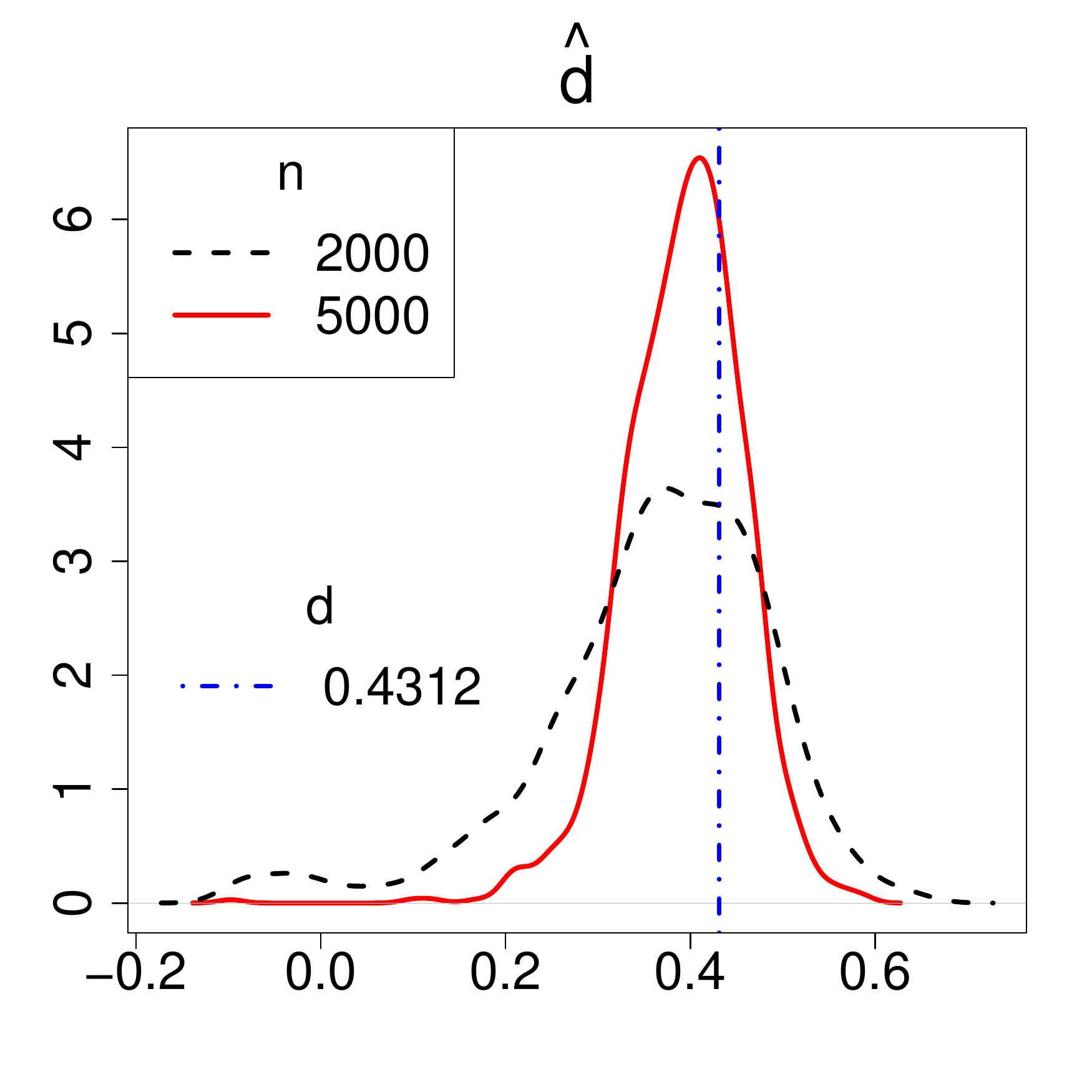}\!\!
  \includegraphics[width = 0.2\textwidth]{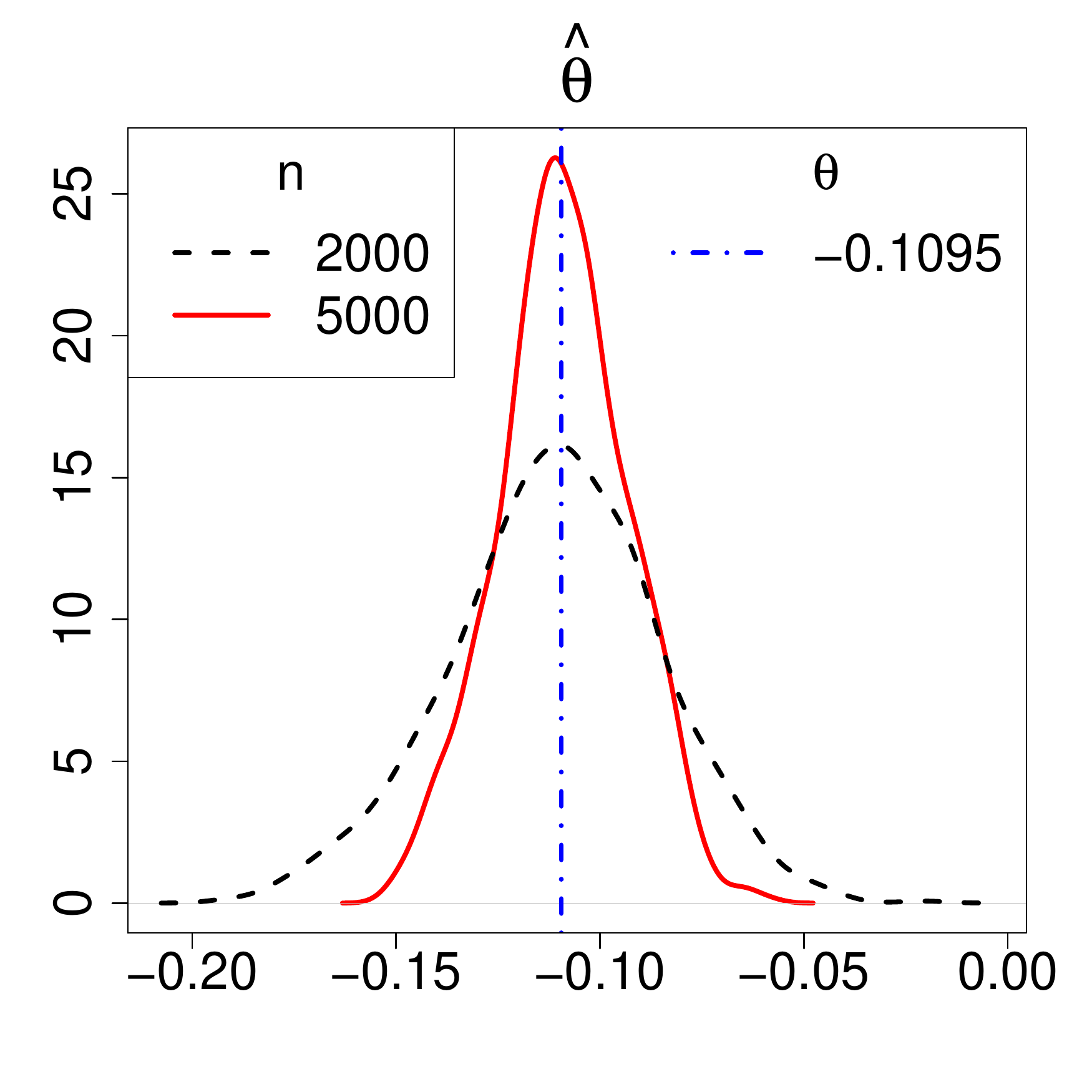}\!\!
  \includegraphics[width = 0.2\textwidth]{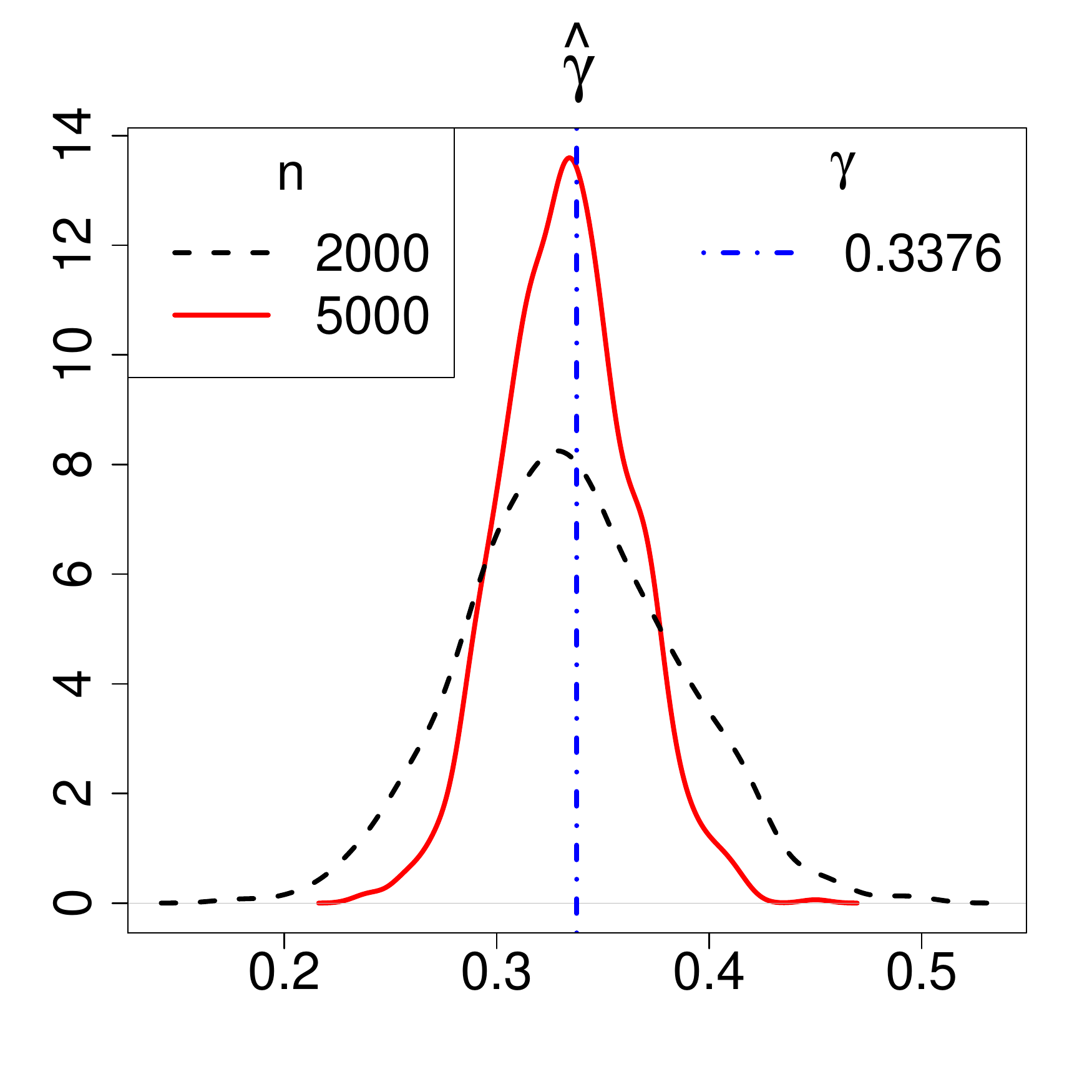}\!\!
  \includegraphics[width = 0.2\textwidth]{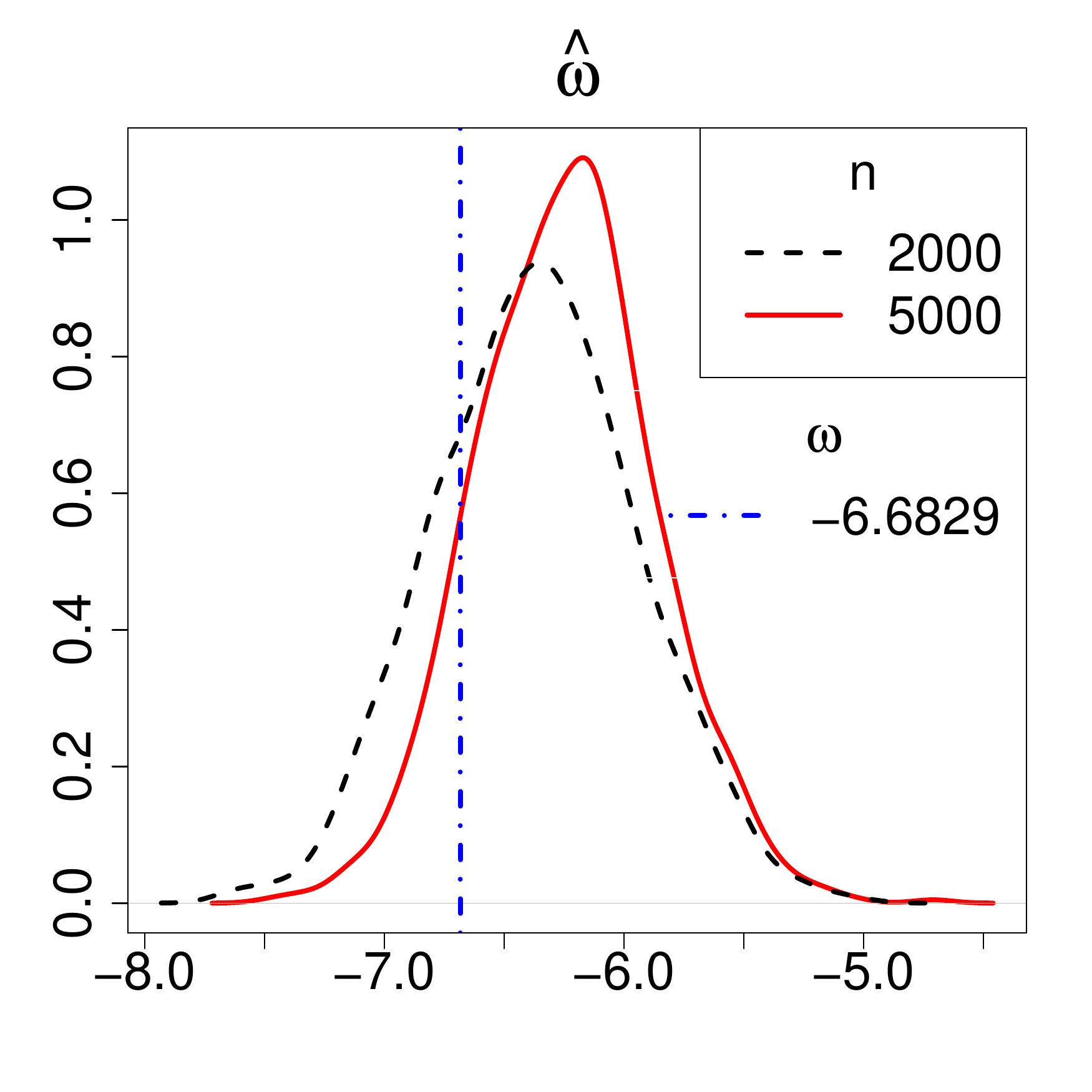}\!\!
  \includegraphics[width = 0.2\textwidth]{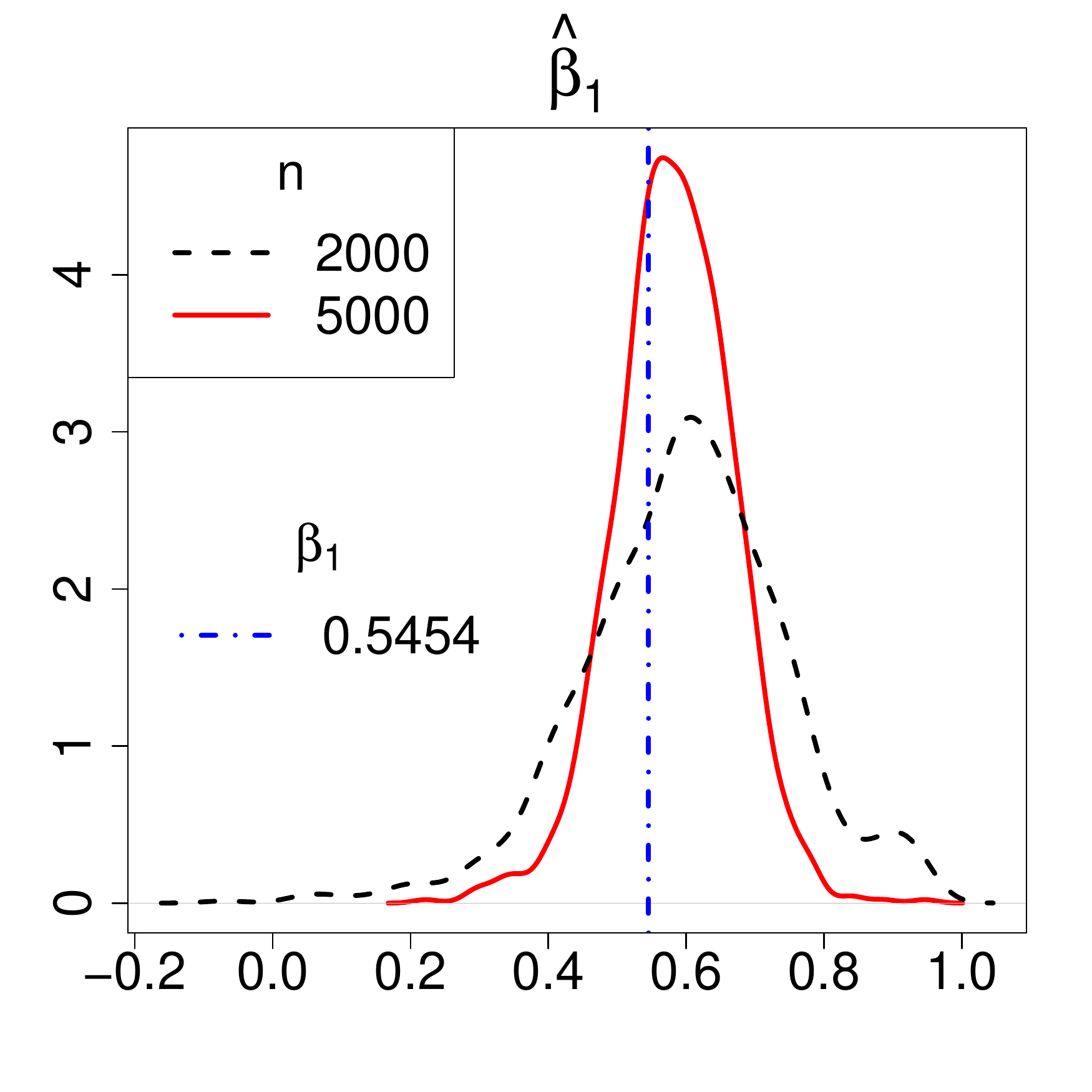}\vspace{-0.2cm}

   \caption{Kernel density function of the estimates for model M3, for $n \in
     \{2,000;\, 5,000\}$.}
 \end{figure}

 \begin{figure}[!htb]
  \vspace{-0.2cm} \centering
  \includegraphics[width = 0.2\textwidth]{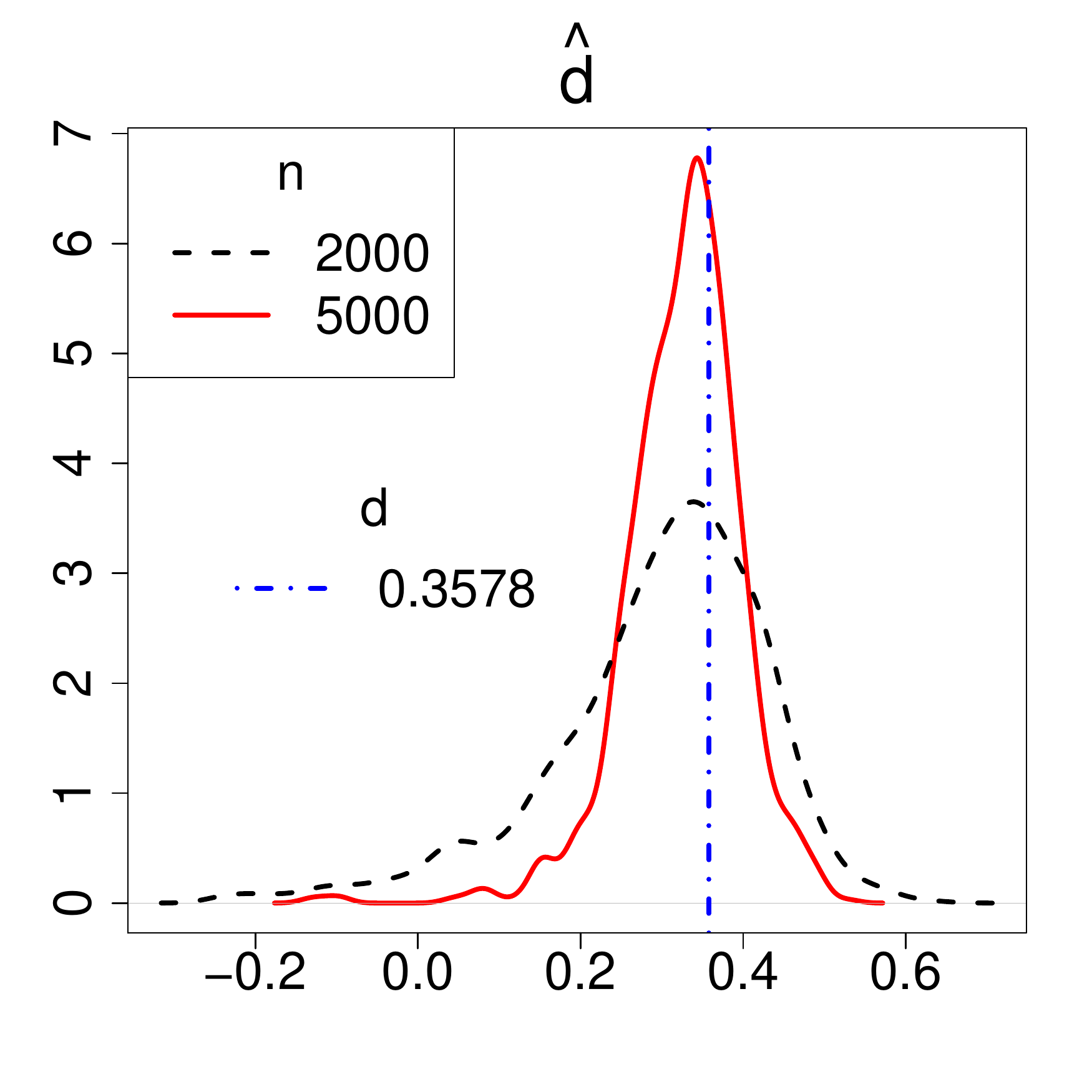}\!\!
  \includegraphics[width = 0.2\textwidth]{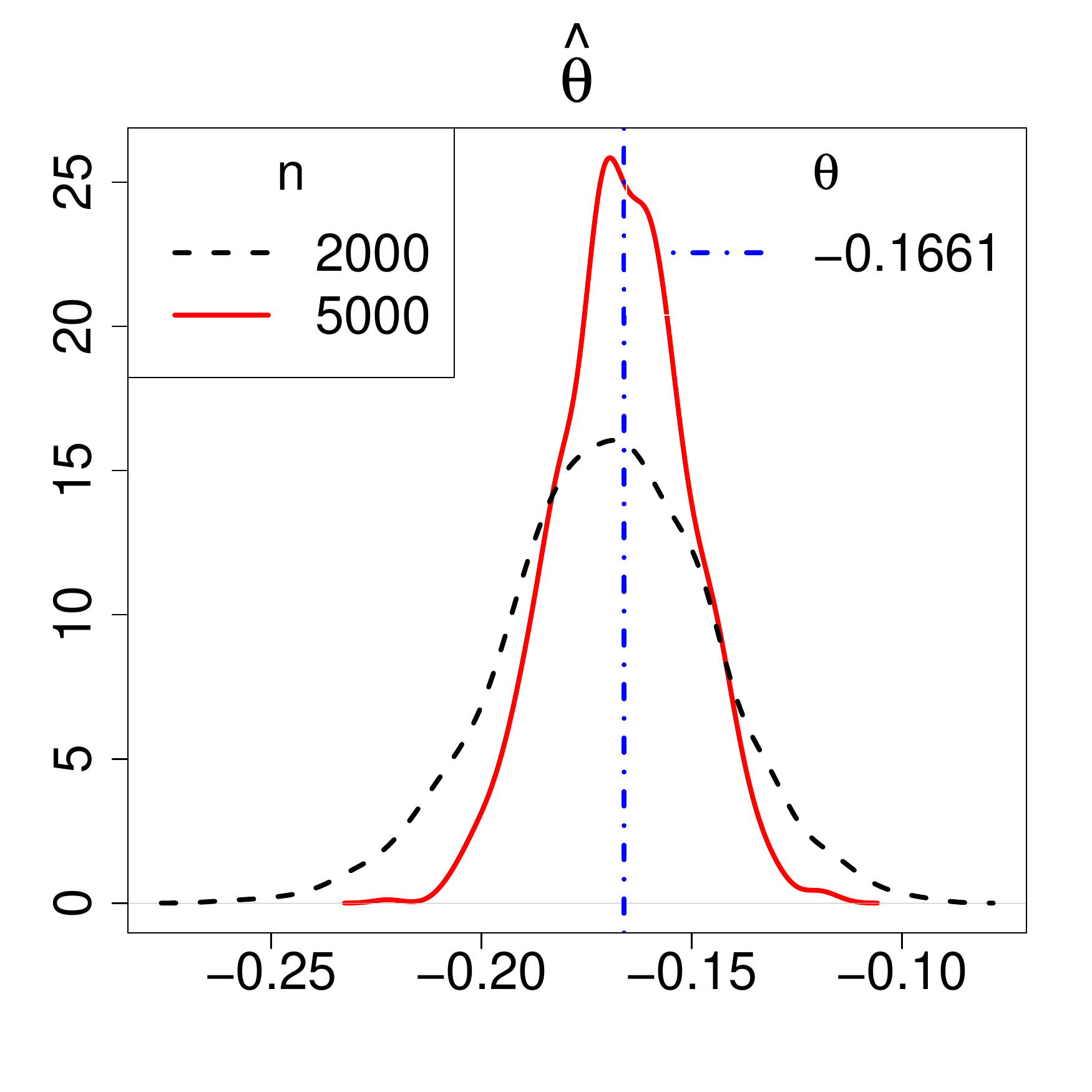}\!\!
  \includegraphics[width = 0.2\textwidth]{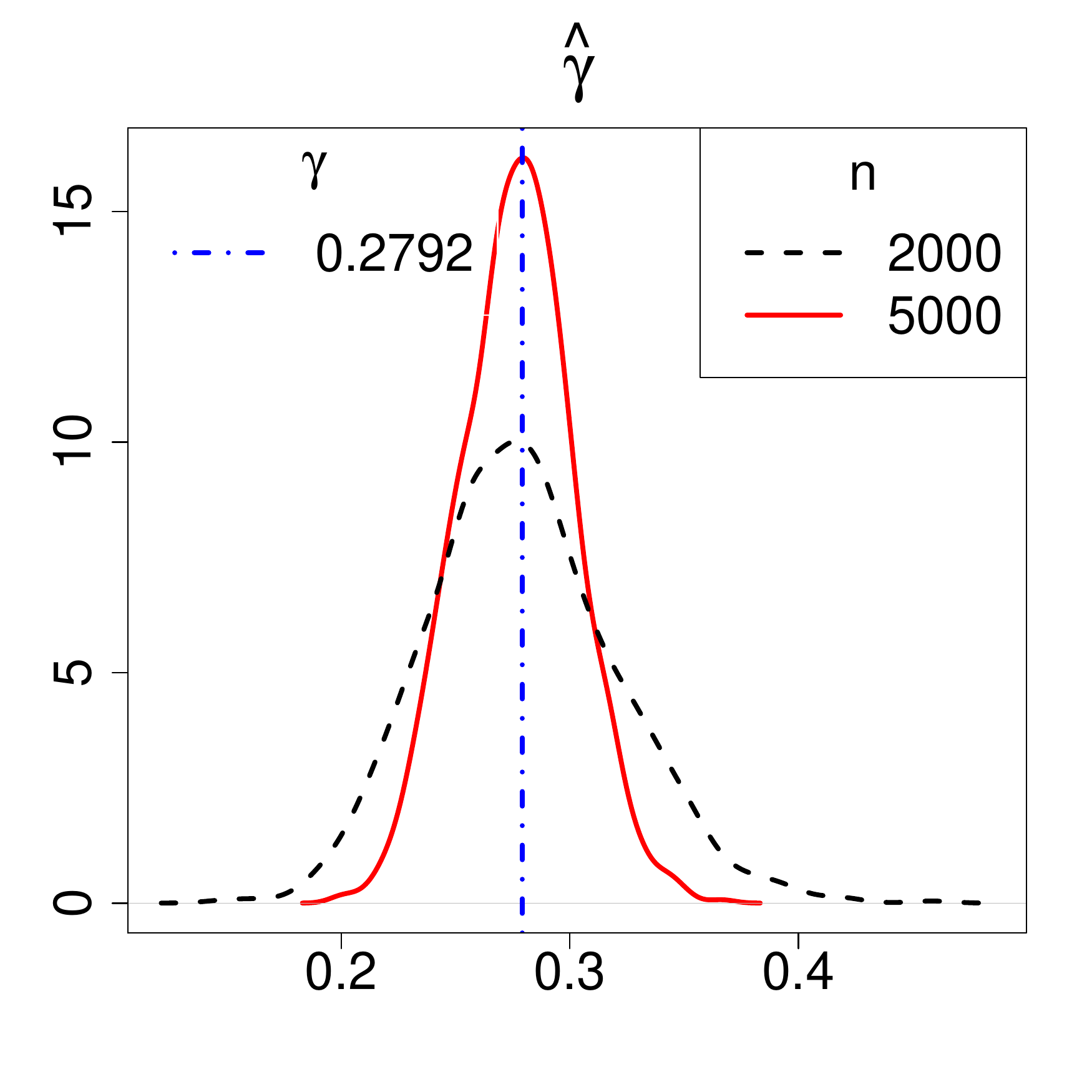}\!\!
  \includegraphics[width = 0.2\textwidth]{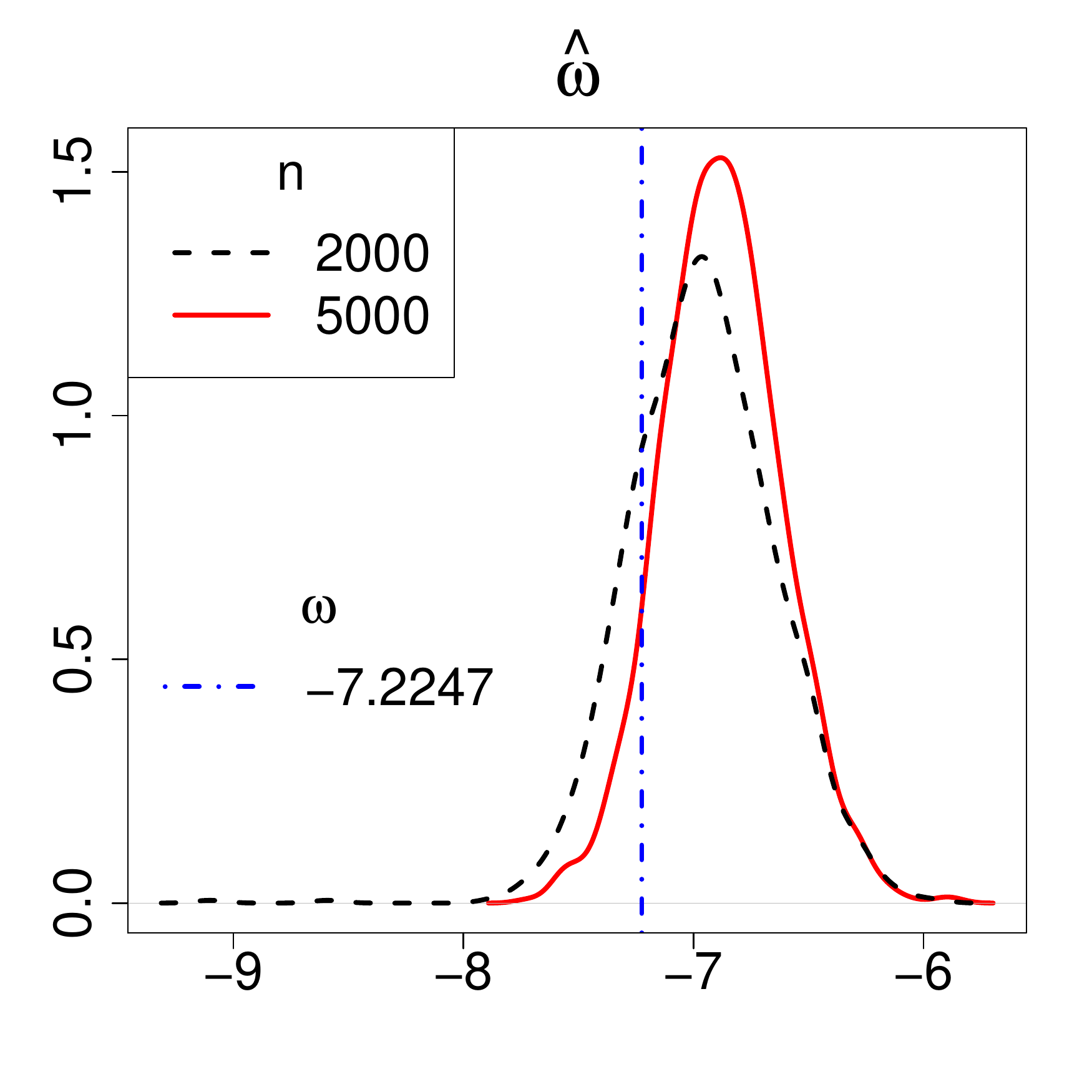}\!\!
  \includegraphics[width = 0.2\textwidth]{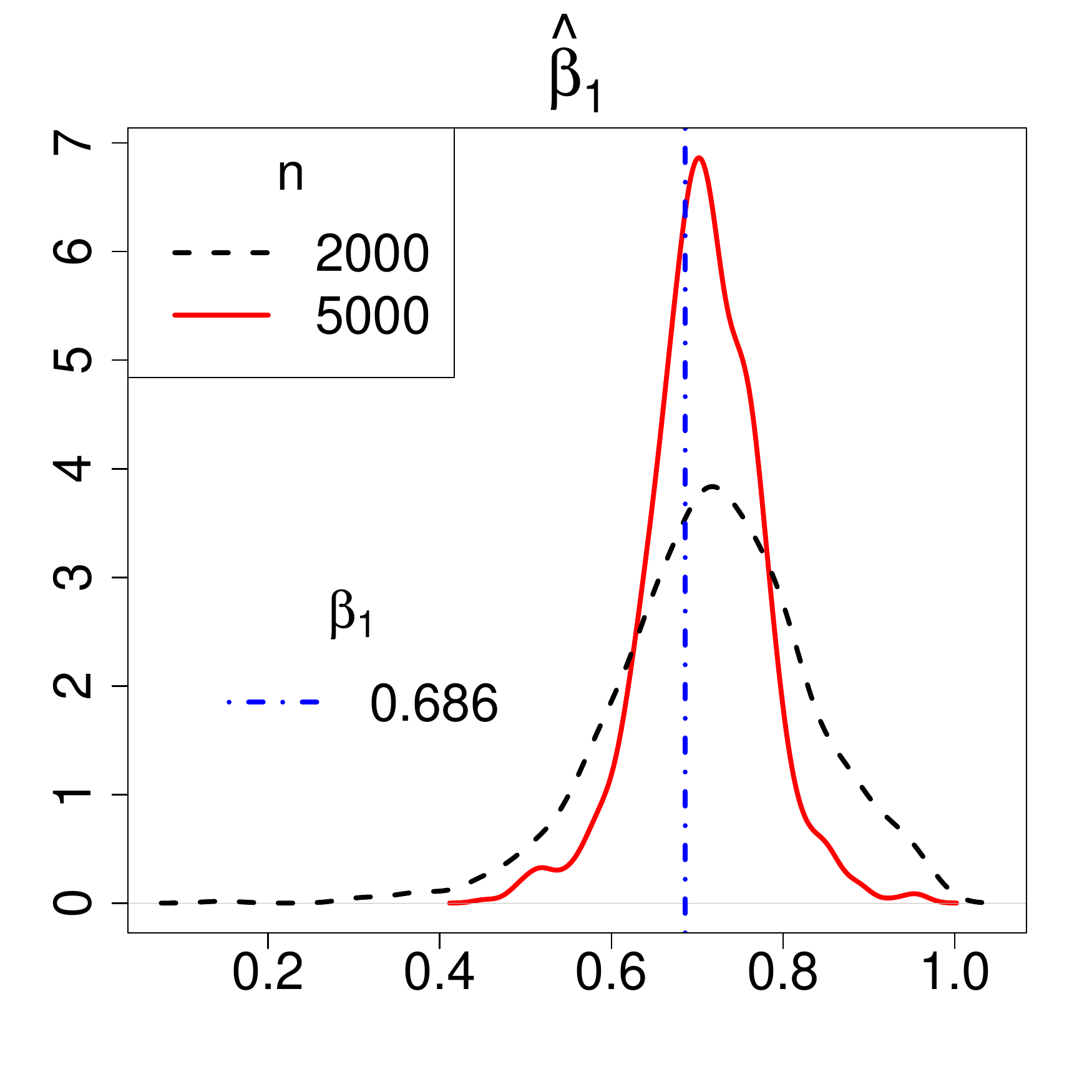}\vspace{-0.2cm}

   \caption{Kernel density function of the estimates for model M4, for $n \in
     \{2,000;\, 5,000\}$. }
 \end{figure}

 \begin{figure}[!htb]
  \vspace{-0.2cm} \centering
  \includegraphics[width = 0.2\textwidth]{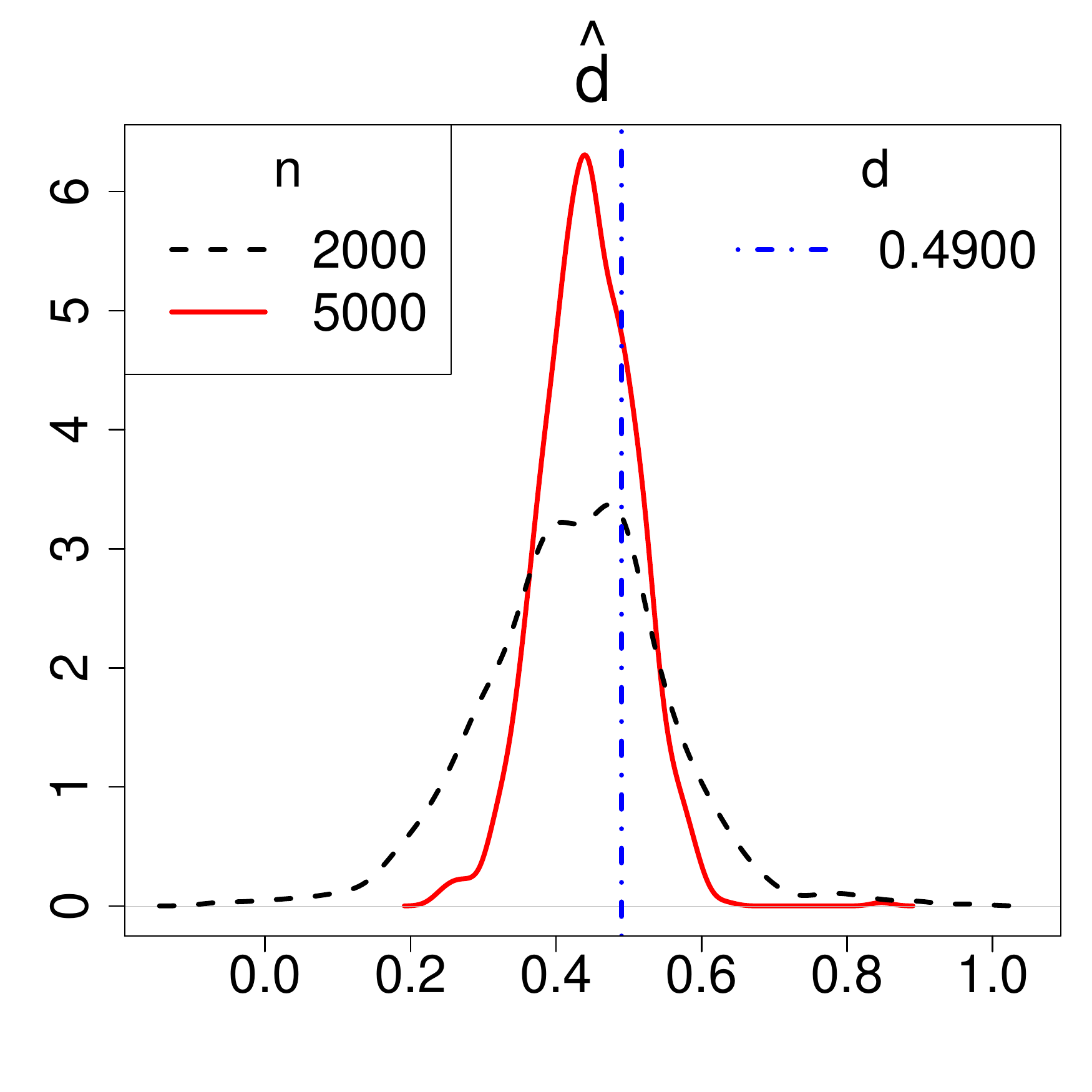}
  \includegraphics[width = 0.2\textwidth]{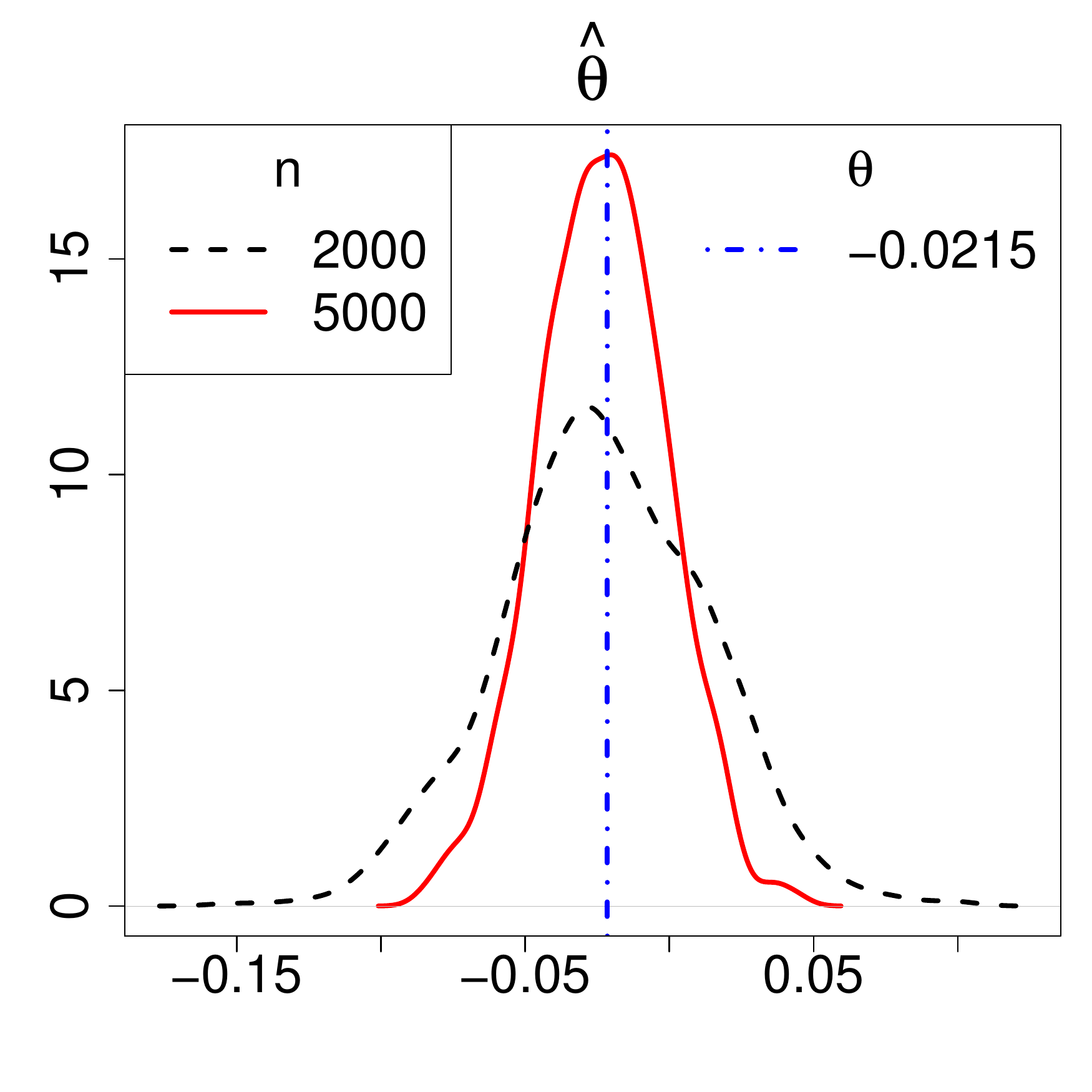}
  \includegraphics[width = 0.2\textwidth]{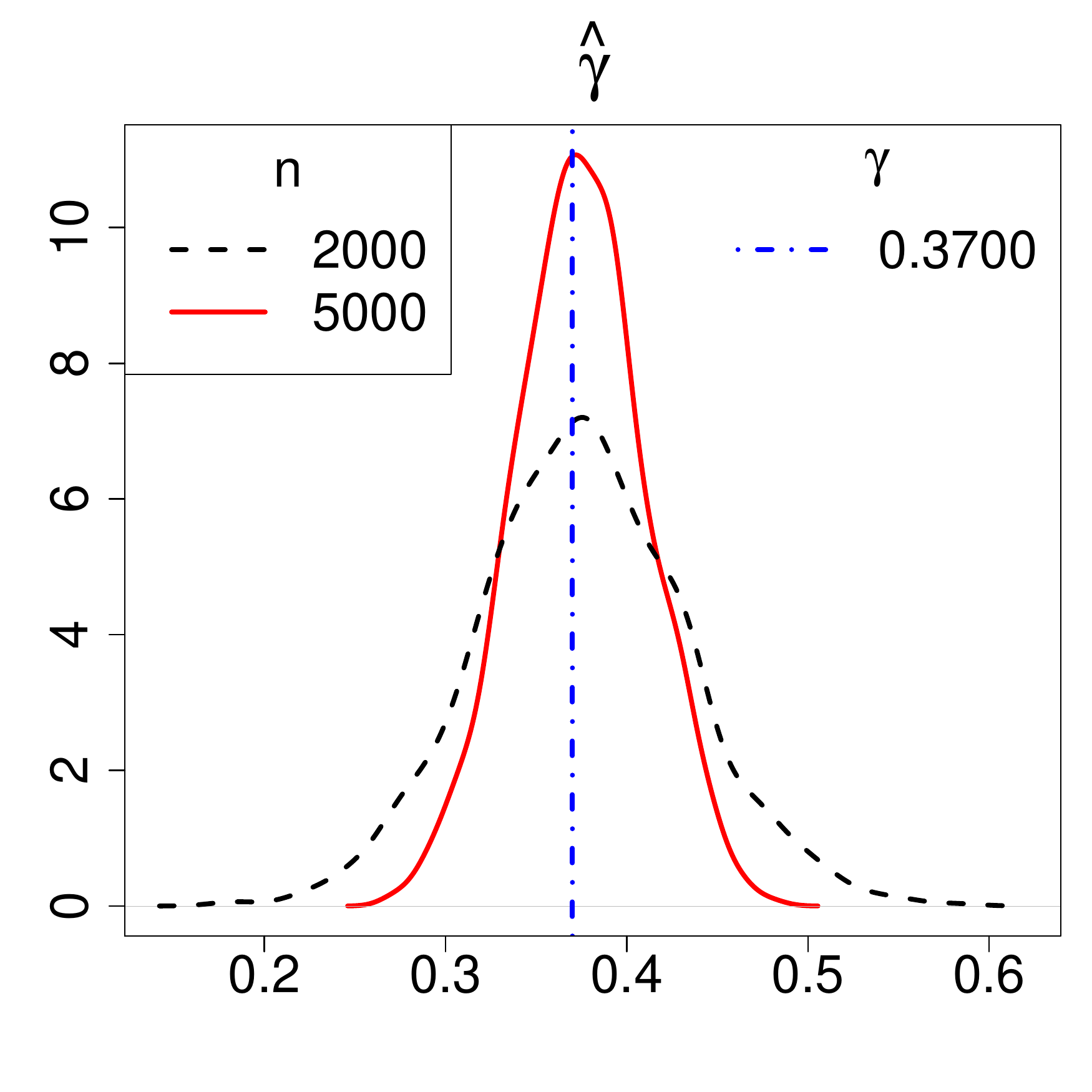}\\
  \includegraphics[width = 0.2\textwidth]{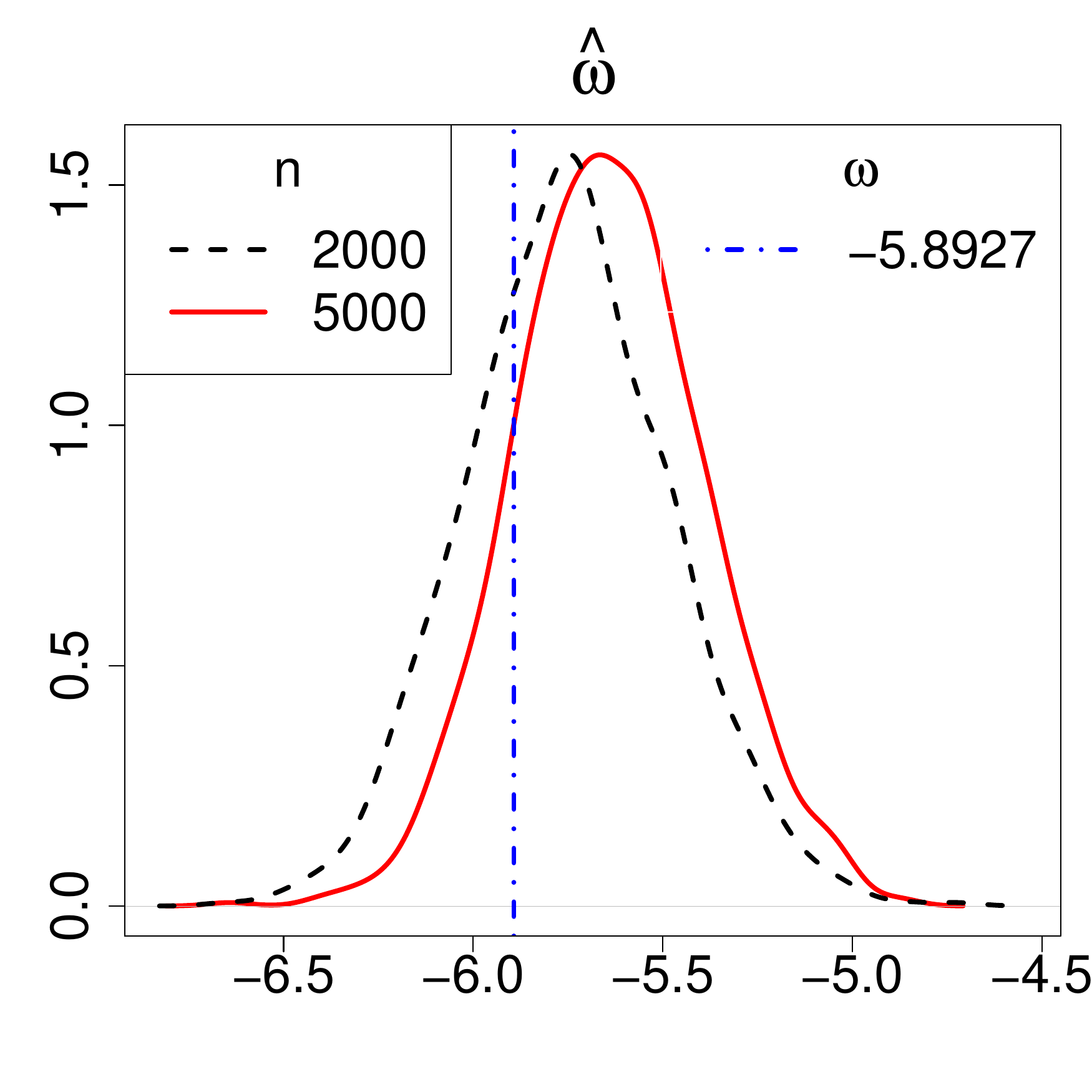}
  \includegraphics[width = 0.2\textwidth]{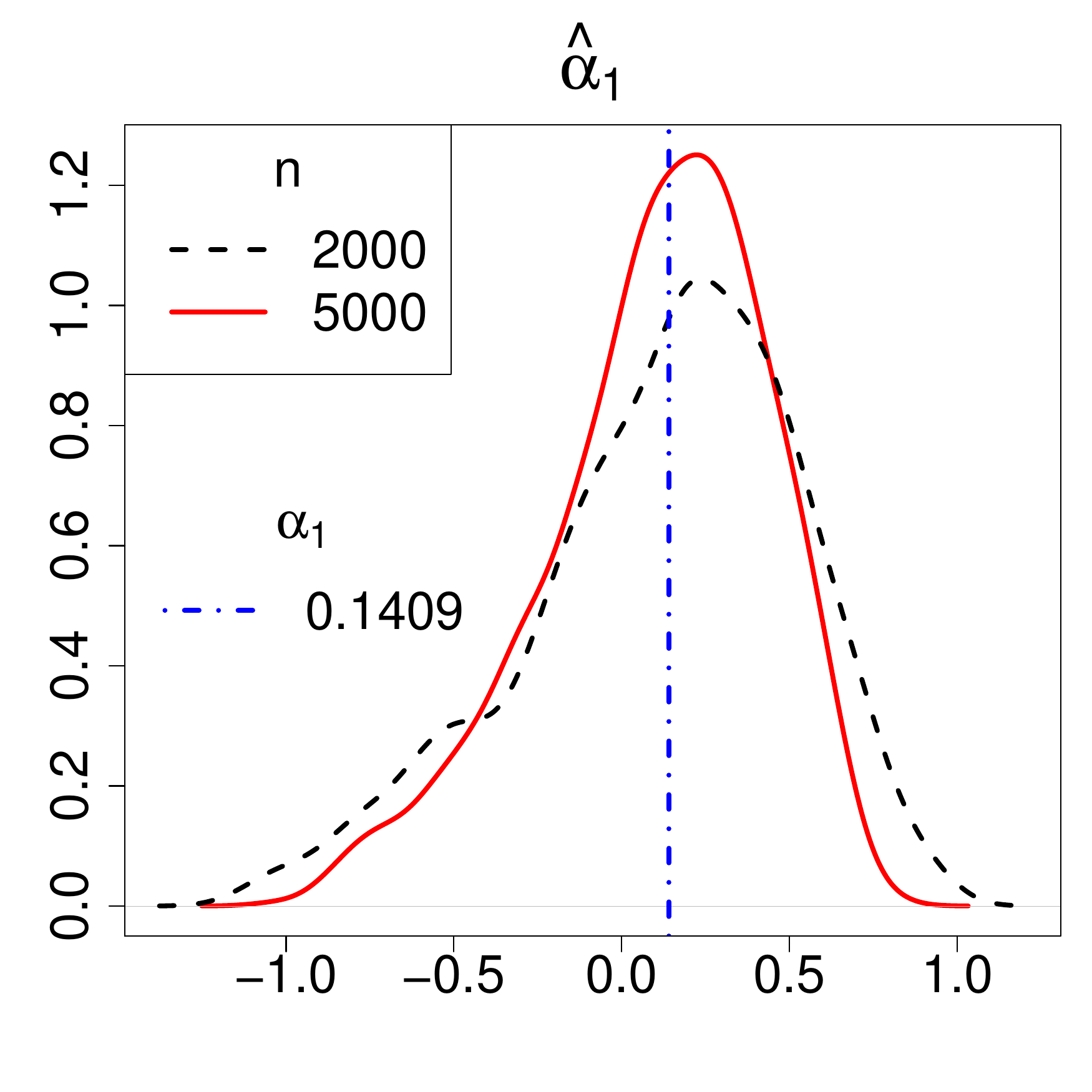}
  \includegraphics[width = 0.2\textwidth]{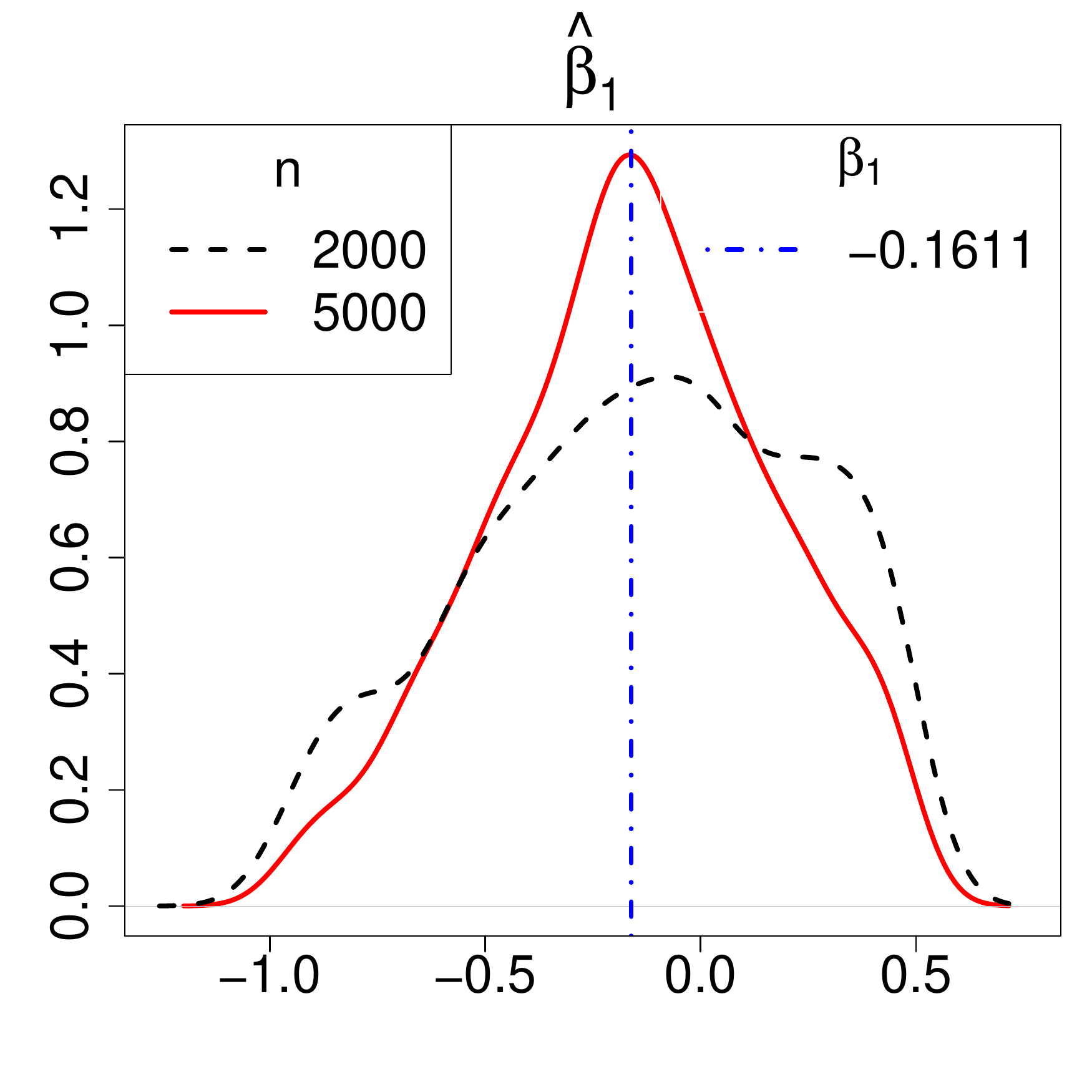}

   \caption{Kernel density function of the estimates for model M5, for $n \in
     \{2,000;\, 5,000\}$. }
 \end{figure}

 \begin{figure}[!htb]
  \vspace{-0.2cm} \centering
  \includegraphics[width = 0.2\textwidth]{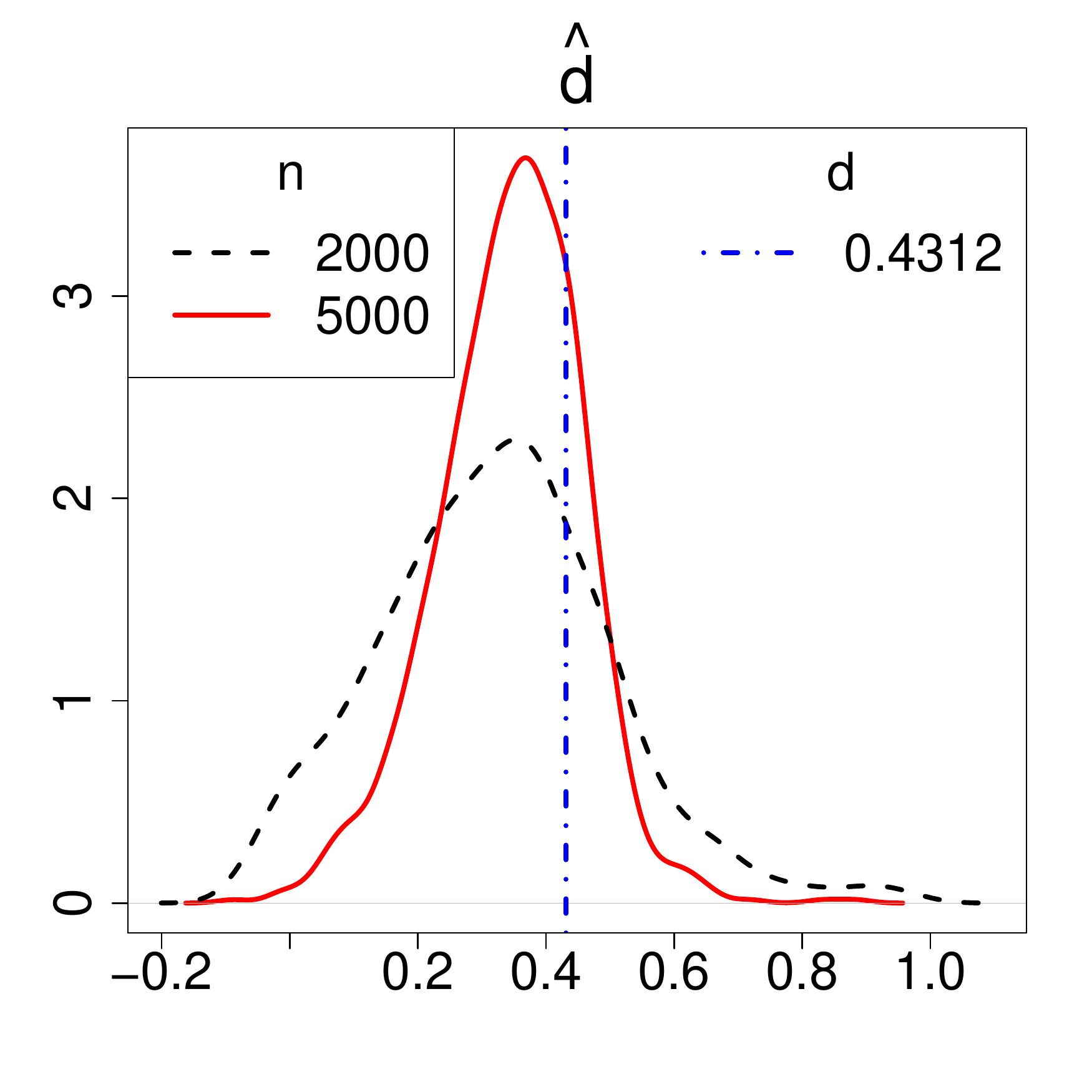}\!\!
  \includegraphics[width = 0.2\textwidth]{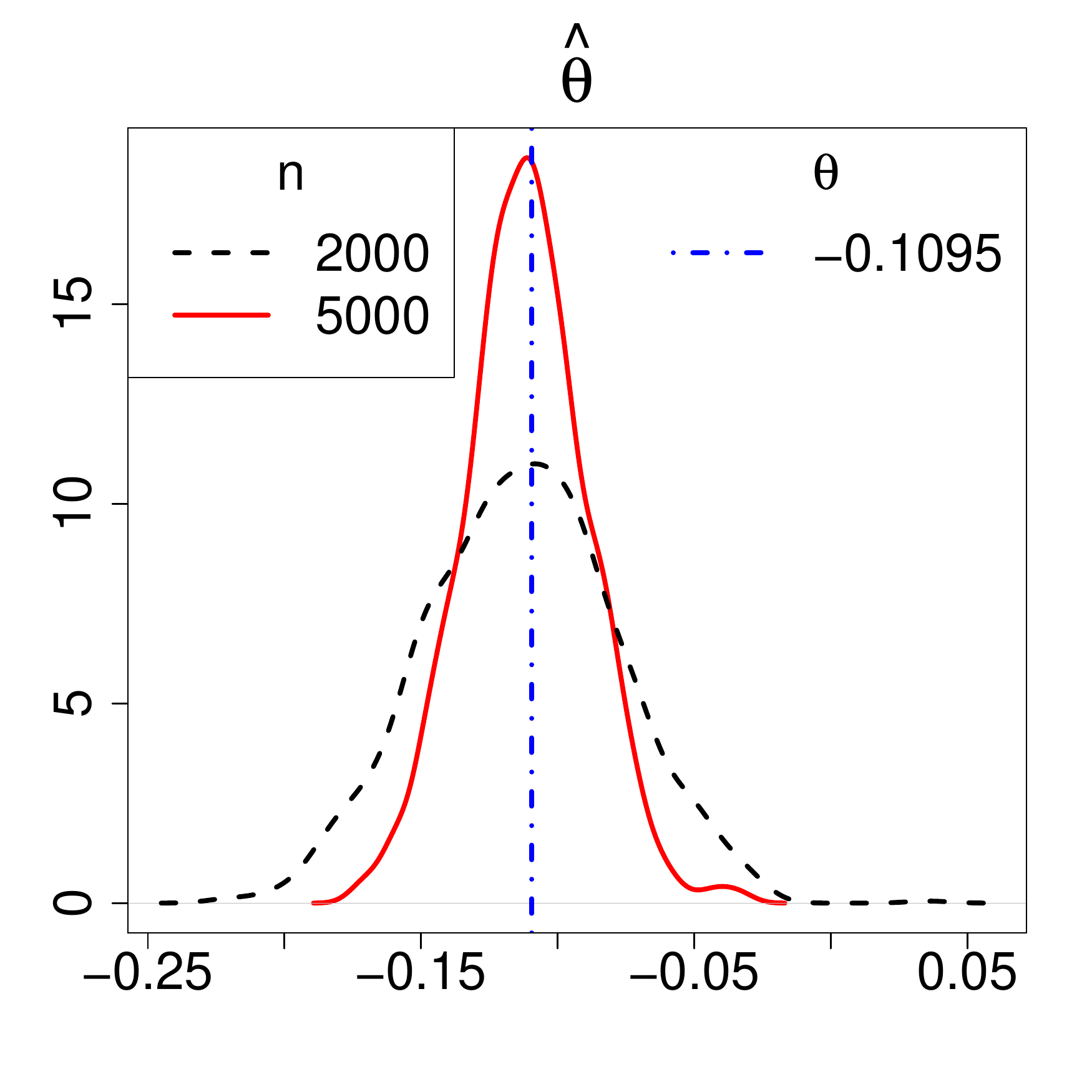}\!\!
  \includegraphics[width = 0.2\textwidth]{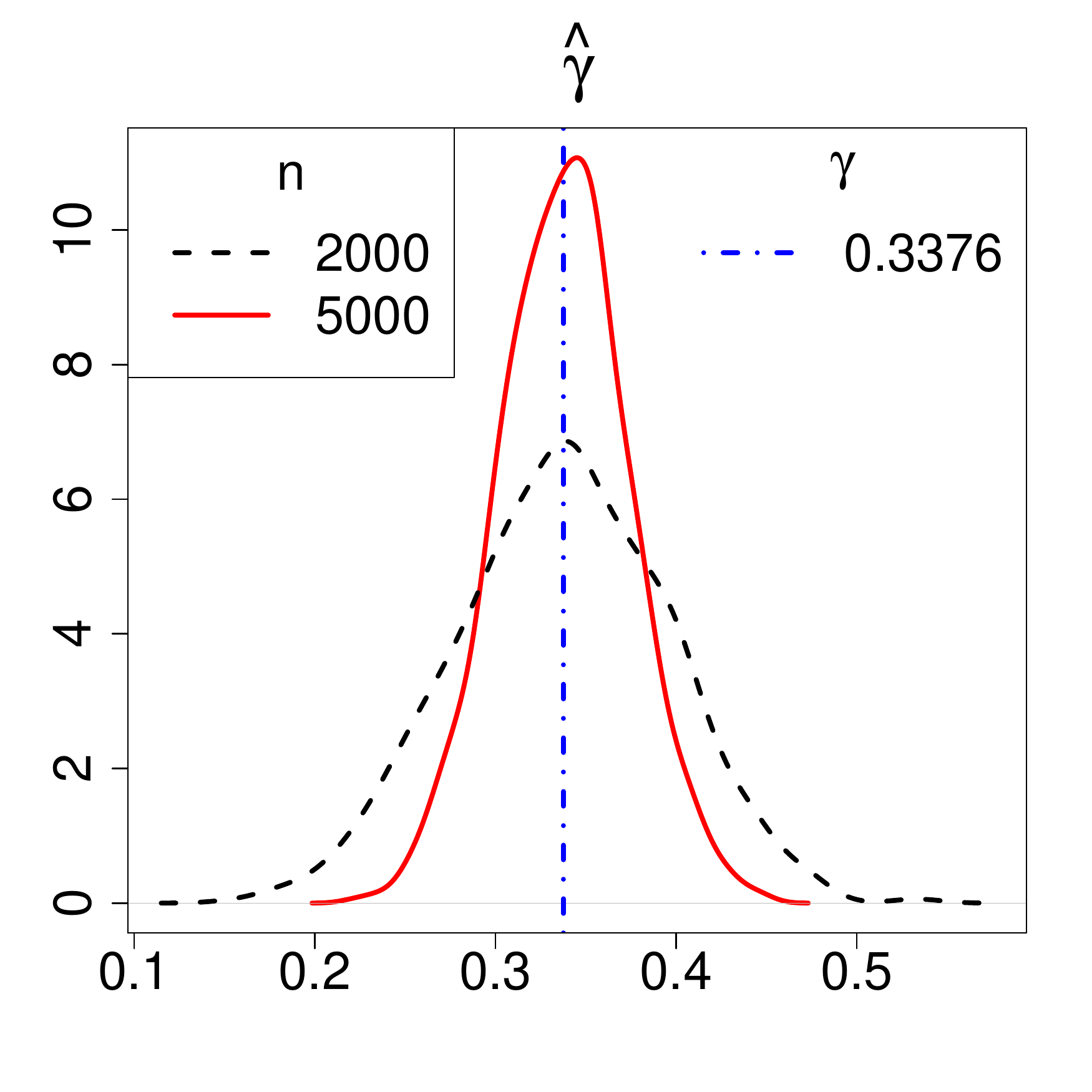}\!\!
  \includegraphics[width = 0.2\textwidth]{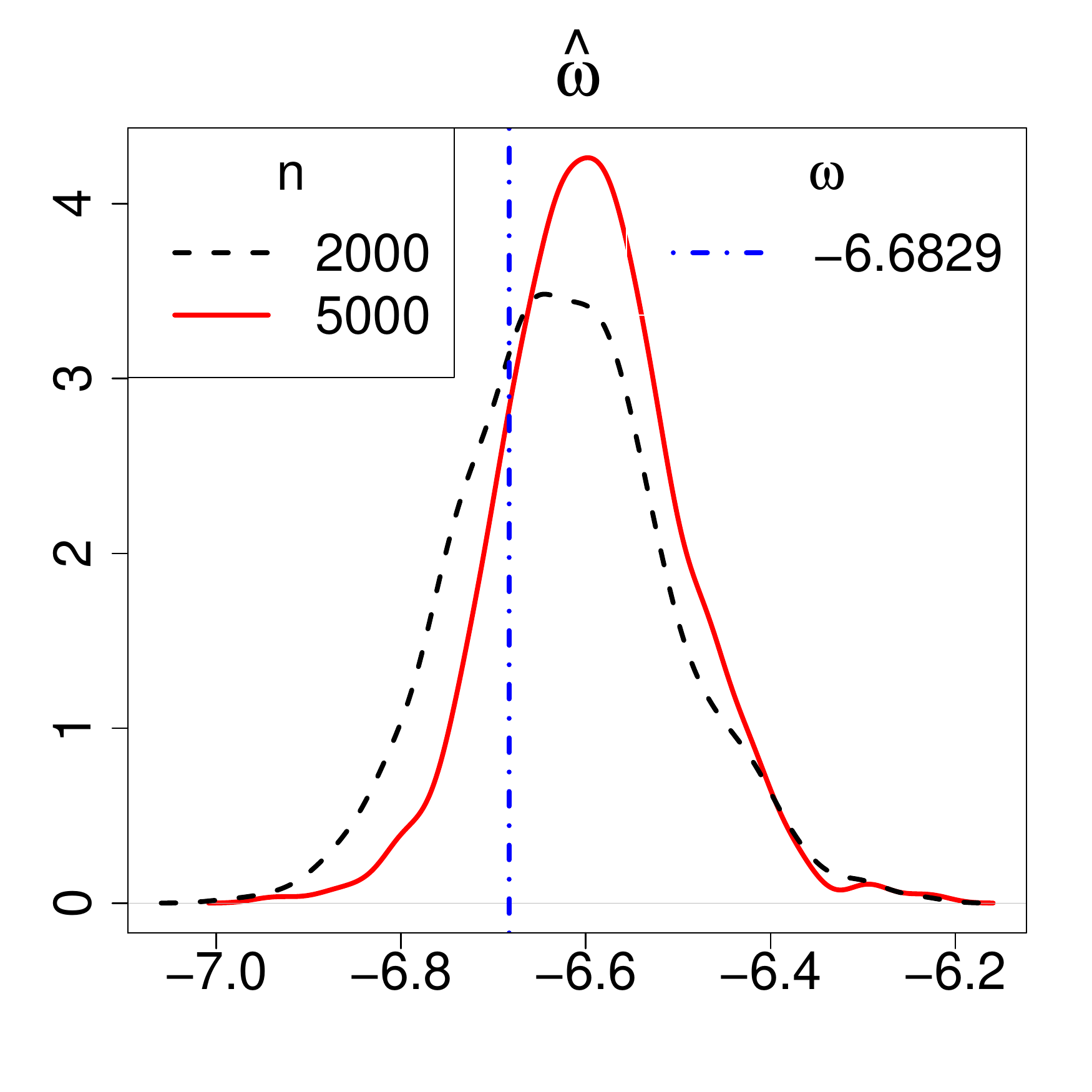}\!\!
  \includegraphics[width = 0.2\textwidth]{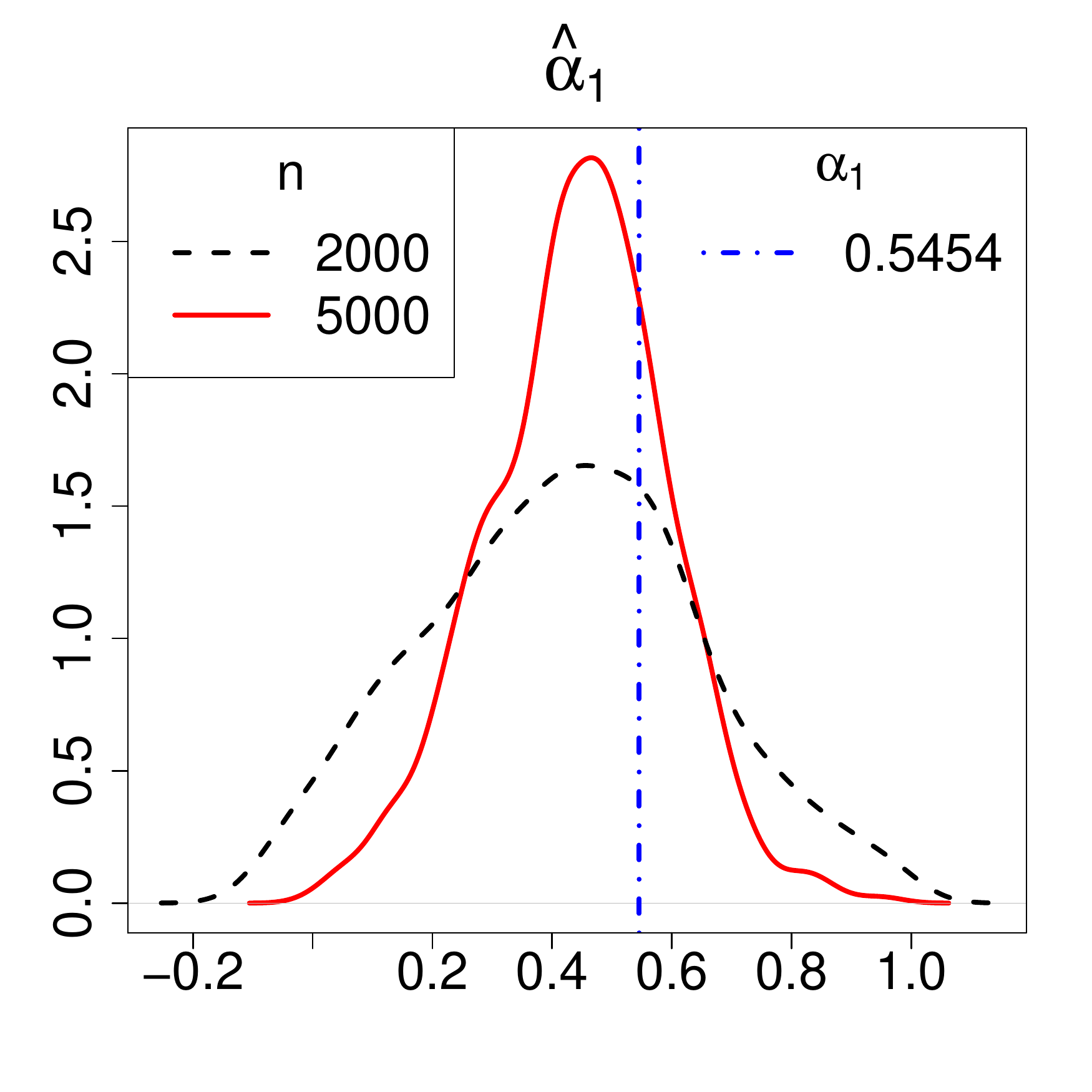}\vspace{-0.2cm}

   \caption{Kernel density function of the estimates for model M6, for $n \in
     \{2,000;\, 5,000\}$.}  \label{figm6}
 \end{figure}

By observing Figures \ref{figm1} - \ref{figm6}, it  is easy to see that, for most estimates, the density function is approximately
symmetric.  For some parameters, we notice the presence of possible outliers,
see for instance the graphs for the parameters $d$ (in particular, models  M2, M3 and
M4), $\alpha_i$ (model M2) and  $\beta_j$ (in particular,  models M1 and M2), with
$i\in \{1,2\}$ and $j\in \{1, 2,3,4\}$.  Although the graphs for
$n=2,000$ and $n=5,000$ are similar, one observes that, as expected, the
observations tend to concentrate closer to the mean when $n=5,000$.

\begin{table}[!htb]
   \centering
   \setlength{\tabcolsep}{2pt}
       \renewcommand{\arraystretch}{1.1}
  \caption{Estimation results for the simulated FIEGARCH models.} \label{resultssim}\vspace{0.2cm}
  {\footnotesize
    \begin{tabular*}{1\textwidth}{@{\extracolsep{\fill}}lrrrrrcrrrrr}
      \hline
      Sample Size ($n$)\phantom{\Big{|}} &     \multicolumn{5}{c}{$2,000$} &&
      \multicolumn{5}{c}{$5,000$}\\
      \cline{2-6}   \cline{8-12}
      Parameter ($\eta$)\phantom{\Big{|}}   &
      \multicolumn{1}{c}{$\bar{\eta}$}  &  \multicolumn{1}{c}{$sd$}  &
      \multicolumn{1}{c}{$bias$} &  \multicolumn{1}{c}{$mae$} &
      \multicolumn{1}{c}{$mse$} &&
      \multicolumn{1}{c}{$\bar{\eta}$}  &
      \multicolumn{1}{c}{$sd$}  & \multicolumn{1}{c}{$bias$} &
      \multicolumn{1}{c}{$mae$} & \multicolumn{1}{c}{$mse$}\\
      \hline
      \vspace{-0.2cm}\\

      \multicolumn{12}{l}{M1 :=   FIEGARCH$(2,d,1)$; \ \ $re = 1,000$}\vspace{0.1cm}\\

 \phantom{-}0.4495 ($d$) &
       0.4022&      0.0854&     -0.0473&      0.0688&      0.0095&&
       0.4309&      0.0468&     -0.0186&      0.0357&      0.0025\\
      -0.1245 ($\theta$) &
      -0.1240&      0.0266&      0.0005&      0.0213&      0.0007&&
      -0.1237&      0.0168&      0.0008&      0.0133&      0.0003\\
      \phantom{-}0.3662 ($\gamma$)&
       0.3612&      0.0543&     -0.0050&      0.0438&      0.0030&&
       0.3610&      0.0337&     -0.0052&      0.0271&      0.0012\\
       -6.5769 ($\omega$)&
      -6.2516&      0.4270&      0.3253&      0.4358&      0.2881&&
      -6.1284&      0.3830&      0.4485&      0.4930&      0.3479\\
       -1.1190 ($\alpha_1$)&
      -0.9067&      0.4519&      0.2123&      0.3567&      0.2492&&
      -1.0344&      0.3259&      0.0846&      0.2010&      0.1134\\
       -0.7619 ($\alpha_2$)&
      -0.6517&      0.4035&      0.1102&      0.2832&      0.1750&&
      -0.7281&      0.2623&      0.0338&      0.1534&      0.0700\\
       -0.6195 ($\beta_1$)&
      -0.3415&      0.4774&      0.2780&      0.3474&      0.3052&&
      -0.5052&      0.3214&      0.1143&      0.1764&      0.1164\vspace{0.1cm}\\

      \multicolumn{12}{l}{M2 :=   FIEGARCH$(0,d,4)$; \ \ $re = 1,000$}\vspace{0.1cm}\\

        \phantom{-}0.2391 ($d$)&
      0.1683&      0.1538&     -0.0708&      0.1216&      0.0287&&
      0.2077&      0.0767&     -0.0314&      0.0650&      0.0069\\
     -0.0456 ($\theta$)&
     -0.0469&      0.0275&     -0.0013&      0.0220&      0.0008&&
     -0.0461&      0.0169&     -0.0005&      0.0134&      0.0003\\
        \phantom{-}0.3963 ($\gamma$)&
      0.3931&      0.0536&     -0.0032&      0.0426&      0.0029&&
      0.3959&      0.0326&     -0.0004&      0.0256&      0.0011\\
     -6.6278 ($\omega$)&
     -6.5525&      0.1146&      0.0753&      0.1075&      0.0188&&
     -6.5077&      0.0905&      0.1201&      0.1253&      0.0226\\
      \phantom{-}0.2289 ($\beta_1$)&
      0.2841&      0.1284&      0.0552&      0.1083&      0.0195&&
      0.2488&      0.0721&      0.0199&      0.0593&      0.0056\\
      \phantom{-}0.1941 ($\beta_2$)&
      0.2078&      0.0865&      0.0137&      0.0657&      0.0077&&
      0.1990&      0.0456&      0.0049&      0.0367&      0.0021\\
      \phantom{-}0.4737 ($\beta_3$)&
      0.4710&      0.0935&     -0.0027&      0.0667&      0.0088&&
      0.4784&      0.0441&      0.0047&      0.0349&      0.0020\\
     -0.4441 ($\beta_4$)&
     -0.4704&      0.1063&     -0.0263&      0.0867&      0.0120&&
     -0.4500&      0.0592&     -0.0059&      0.0466&      0.0035\vspace{0.1cm}\\

     \multicolumn{12}{l}{M3 :=   FIEGARCH$(0,d,1)$; \ \ $re = 1,000$}\vspace{0.1cm}\\
            \phantom{-}0.4312 ($d$) &
      0.3606&      0.1268&     -0.0706&      0.1043&      0.0211&&
      0.3933&      0.0648&     -0.0379&      0.0569&      0.0056\\
     -0.1095 ($\theta$)&
     -0.1111&      0.0255&     -0.0016&      0.0201&      0.0007&&
     -0.1090&      0.0157&      0.0005&      0.0125&      0.0002\\
    \phantom{-}0.3376 ($\gamma$)  &
      0.3346&      0.0493&     -0.0030&      0.0394&      0.0024&&
      0.3331&      0.0300&     -0.0045&      0.0241&      0.0009\\
      -6.6829 ($\omega$)  &
     -6.3686&      0.4230&      0.3143&      0.4271&      0.2778&&
     -6.2413&      0.3715&      0.4416&      0.4814&      0.3330\\
      \phantom{-}0.5454 ($\beta_1$)&
      0.5976&      0.1472&      0.0522&      0.1231&      0.0244&&
      0.5822&      0.0851&      0.0368&      0.0731&      0.0086\vspace{0.2cm}\\

      \multicolumn{12}{l}{M4 :=   FIEGARCH$(0,d,1)$; \ \ $re = 1,000$}\vspace{0.1cm}\\
      \phantom{-}0.3578 ($d$) &
      0.2950&      0.1338&     -0.0628&      0.1056&      0.0218&&
      0.3258&      0.0721&     -0.0320&      0.0569&      0.0062\\
      -0.1661 ($\theta$)&
     -0.1702&      0.0248&     -0.0041&      0.0198&      0.0006&&
     -0.1666&      0.0156&     -0.0005&      0.0124&      0.0002\\
       \phantom{-}0.2792 ($\gamma$)&
      0.2793&      0.0415&      0.0001&      0.0326&      0.0017&&
      0.2769&      0.0248&     -0.0023&      0.0197&      0.0006\\
      -7.2247  ($\omega$)&
     -6.9615&      0.3122&      0.2632&      0.3284&      0.1667&&
     -6.8766&      0.2604&      0.3481&      0.3689&      0.1889\\
        \phantom{-}0.6860 ($\beta_1$)&
      0.7160&      0.1128&      0.0300&      0.0915&      0.0136&&
      0.7067&      0.0665&      0.0207&      0.0535&      0.0048\vspace{0.1cm}\\

      \multicolumn{12}{l}{M5 :=   FIEGARCH$(1,d,1)$; \ \ $re = 1,000$}\vspace{0.1cm}\\
          \phantom{-}0.4900 ($d$) &
      0.4258&      0.1273&     -0.0642&      0.1096&      0.0203&&
      0.4453&      0.0645&     -0.0447&      0.0629&      0.0062\\
    -0.0215 ($\theta$) &
     -0.0229&      0.0355&     -0.0014&      0.0282&      0.0013&&
     -0.0229&      0.0218&     -0.0014&      0.0175&      0.0005\\
       \phantom{-}0.3700 ($\gamma$)&
      0.3751&      0.0577&      0.0051&      0.0455&      0.0034&&
      0.3742&      0.0354&      0.0042&      0.0285&      0.0013\\
    -5.8927 ($\omega$) &
     -5.7507&      0.2688&      0.1420&      0.2415&      0.0924&&
     -5.6414&      0.2494&      0.2513&      0.2902&      0.1253\\
      \phantom{-}0.1409 ($\alpha_1$)&
      0.1152&      0.4082&     -0.0257&      0.3232&      0.1673&&
      0.1012&      0.3310&     -0.0397&      0.2613&      0.1111\\
     -0.1611 ($\beta_1$)&
     -0.1383&      0.3799&      0.0228&      0.3189&      0.1448&&
     -0.1581&      0.3213&      0.0030&      0.2579&      0.1032\vspace{0.1cm}\\

      \multicolumn{12}{l}{M6 :=   FIEGARCH$(1,d,0)$; \ \ $re = 1,000$}\vspace{0.1cm}\\
       \phantom{-}0.4312 ($d$)&
      0.3220&      0.1825&     -0.1092&      0.1706&      0.0452&&
      0.3449&      0.1135&     -0.0863&      0.1107&      0.0203\\
     -0.1095  ($\theta$)&
     -0.1132&      0.0351&     -0.0037&      0.0282&      0.0012&&
     -0.1114&      0.0222&     -0.0019&      0.0176&      0.0005\\
       \phantom{-}0.3376 ($\gamma$)&
      0.3368&      0.0585&     -0.0008&      0.0467&      0.0034&&
      0.3380&      0.0355&      0.0004&      0.0283&      0.0013\\
     -6.6829  ($\omega$) &
     -6.6233&      0.1144&      0.0596&      0.1014&      0.0166&&
     -6.5926&      0.0978&      0.0903&      0.1071&      0.0177\\
      \phantom{-}0.5454  ($\alpha_1$)&
      0.4189&      0.2297&     -0.1265&      0.2109&      0.0688&&
      0.4429&      0.1492&     -0.1025&      0.1428&      0.0328\vspace{0.1cm}\\
  \hline

    \end{tabular*} }
\end{table}

From Table \ref{resultssim} we conclude that, given the models complexity, the
quasi-likelihood method performs relatively well.  Since model M2 presents more
parameters than the other models, which implies a higher dimension maximization
problem, one would expect that the quasi-likelihood method would present the
worst performance in this case.  However,   in terms of $mae$ or  $mse$ values,   the estimation results for model M2
($p = 0$, $d = 0.2391$ and $q = 4$),   M3 ($p = 0$, $d =0.4312$ and $q = 1$),
 M4 ($ p= 0$, $d = 0.3578$ and $q = 1$)  and  M6 ($p = 1$, $d =0.4312$ and $q =
 0$)  are similar (except for the parameter $d$ in model M6)  and the quasi-likelihood method
performs better  for model M2  (except for the parameter $d$)  than
for models M1 ($p = 2$, $d = 0.4495$ and $q = 1$) and M5 ($p = 1$, $d = 0.49$
and $q = 1$).

Table \ref{resultssim}  also indicates that the quasi-likelihood procedure may perform better
for $p=0$ and $q > 0$ than for $p > 0$ and $q = 0$ (we shall investigate this in a future work). This conclusion is based on
the fact that  models M3 and M6 have the same parameter values (with the
necessary adjustments in $\alpha_1$ and $\beta_1$) and  all parameters, except $\omega$,  were better estimated in model
M3 than  M6.

By comparing the $mae$ and $mse$ values, given in
Table \ref{resultssim}, we conclude that the worst performance occurs for models
M1 and M5  (in particular, see the estimation results for $\omega$, $\alpha_i$ and $\beta_j$, $i
= 1,\cdots,p$ and $j = 1, \cdots, q$).   This outcome is explained by the fact that the parameter $d$ is very
close to the non-stationary region for model M5 and, for model M1, not only
$p=2$ but also $d = 0.4495$, which implies a more complex model with stronger
long-range dependence.  The small $bias$ values indicate that, for all
parameters, the mean estimated value is very close to the true value.  Although
for $n= 2,000$ the standard deviation of several estimates is high if compared
with the mean estimated value, as expected, the estimators performance improves
as the sample size increases.

 \subsection{Forecasting Procedure}

 To obtain the predicted values, for each  replication of model M$i$, with $i \in
\{1,\cdots, 6\}$, and each sub-sample  $\{x_t\}_{t=1}^n$, with $n\in\{2,000;
5,000\}$,  we repeat steps {\bf F1} - {\bf F5} below.

  \vspace{1\baselineskip}
\noindent {\bf F1:}   Replace the true parameters values  $\boldsymbol{\eta}=(d;\omega;\theta;\lambda;\alpha_1,\cdots,\alpha_p;\beta_1,\cdots,\beta_q)'$
  by the estimated ones, namely,  $\boldsymbol{\hat\eta}=(\hat
 d;\hat\omega;\hat\theta;\hat\lambda;\hat\alpha_1,\cdots,\hat\alpha_p;\hat\beta_1,\cdots,\hat\beta_q)'$,
  and use the  recurrence formula given in Proposition
  \ref{coefficientsfiegarch} to
calculate  the corresponding  coefficients $\{\hat \lambda_{d,k}\}_{k = 0}^{n+50}$.

  \vspace{1\baselineskip}
\noindent {\bf F2:}  Obtain  the time series  $\{z_t\}_{t=1}^n$ (which
corresponds to the residuals of the fitted model) and
$\{\sigma_t\}_{t=1}^n$. To do so, let  $g(z_t) = 0$, whenever $t<0$, and
calculate $\sigma_t$ and $z_t$ recursively as follows:
\[
 \sigma_1 = e^{\hat \omega 0.5};   \quad   z_1 = \frac{x_1}{\sigma_1}; \quad
\sigma_t  =\exp\bigg\{\frac{\hat \omega}{2} + \frac{1}{2}\sum_{k=0}^{n-1}\hat\lambda_{d,k}\left[\hat\theta
z_{t-1-k} + \hat\gamma (|z_{t-1-k}| - \sqrt{2/\pi})\right]\bigg\}  \quad
\mbox{ and  } \quad  z_t = \frac{x_t}{\sigma_t},
\]
for all $t=2,\cdots, n$.

  \vspace{1\baselineskip}
\noindent {\bf F3:} In expression \eqref{eq:sigmag},  replace $\mathds{E}(|Z_0|)$ and $\mathds{E}(Z_0|Z_0|)$
  by their   respective   sample estimates,   and  obtain an estimate $\hat\sigma_g^2$ for $\sigma_g^2$ given by
  \[
\hat \sigma^2_g = \hat\theta^2 + \hat\gamma^2 -\hat\gamma^2\left[\frac{1}{n}\sum_{t=1}^n|z_t|\right]^2 + 2
\, \hat\theta\,\hat\gamma \left[\frac{1}{n}\sum_{t=1}^nz_t|z_t|\right].
  \]

  \vspace{1\baselineskip}
\noindent {\bf F4:}  By considering  expressions \eqref{exp1}  and
\eqref{sigmatilde},  obtain
  the  predicted values $\{\tilde\sigma_{N+h}^2\}_{h=1}^{50}$,
\[
  \tilde \sigma_{N+1}^2  =  \check{\sigma}_{N+1}^2 \quad  \quad \mbox{and} \quad
  \quad
  \tilde \sigma_{N+h}^2  =     \check{\sigma}_{N+h}^2\bigg( 1
  +\frac{1}{2}\hat\sigma^2_g\sum_{k=0}^{h-2}\hat\lambda_{d,k}^2\bigg),  \quad
  \mbox{for all }   h > 1,
\]
where
\[
\check \sigma_{N+h}^2 = \exp\bigg\{\hat\omega +
    \sum_{k=0}^{n-1}\hat\lambda_{d,k+h-1}\left[ \hat \theta z_{n-k} +
    \hat\gamma (|z_{n-k}| - \hat{\mu}_{|z|}) \right] \bigg\}, \quad \mbox{for all }\,  h>0,
    \]
      with $\hat{\mu}_{|z|} := \frac{1}{n}\sum_{t=1}^n|z_t|$.

    \vspace{1\baselineskip}
\noindent {\bf F5:}    Based on the fact that  $\mathds{E}(X_{N+h}^2|\mathcal{F}_{N}) =
 \mathds{E}(\sigma_{N+h}^2|\mathcal{F}_{N})$,   set $\tilde X_{N+h}^2 : =
 \tilde\sigma_{N+h}^2$, for all $h>0$.

 \subsection{Forecasting Results}

  In what follows we discuss the simulation results related to forecasting based
 on the fitted FIEGARCH models.  To access the models forecast performance,
 during the generating process, we create 50 extra values for each simulated time
 series. Those values are used here to compare with the $h$-step ahead forecast,
 for $h \in \{1,\cdots, 50\}$.

 Table \ref{forecasting} presents the mean over $1,000$ simulated values of
 $\sigma_{N+h}^2$ and $X_{N+h}^2$ obtained from model M$i$,  for  each $i\in\{1,
 \cdots, 6\}$,  and the corresponding  $h$-step ahead predicted
 values   $\tilde{\sigma}_{N+h}^2:=\tilde{X}_{N+h}^2$,  for $h\in
 \{1,\cdots,5\}$,  forecasting origin  $N = 5,000$ and sub-samples  $n \in
 \{2,000; 5,000 \}$.  This table  also reports   the mean
 square error ($mse$) of forecast,   defined as
 \[
mse(Y_{N+h}) :=  \frac{1}{re}\sum_{k =1}^{re}\big(Y_{N+h}^{(k)} - \check{Y}_{N+h}^{(k)}(n)\big)^2,
\quad \mbox{for any }  \, \, h \in \{1,\cdots, 5\}  \, \,  \mbox{ and } \, \,
n \in \{2,000; 5,000\},
\]
where $re = 1,000$ is the number of replications,  $Y_{N+h}$ denotes the true
value of $\sigma_{N+h}^2$ (or  $X_{N+h}^2$) and $ \check{Y}_{N+h}^{(k)}(n)$ is
the predicted value obtained in the $k$-th replication, for $k\in\{1, \cdots, re\}$,   based on the model
fitted to the sub-sample with size $n$.   Notice that, due to the small
magnitude of the sample  means,  all values in Table
\ref{forecasting} are multiplied by 100.

\begin{table}[!ht]
  \centering
      \renewcommand{\arraystretch}{1.1}
\setlength{\tabcolsep}{2pt}
   \caption{Mean simulated values for $\sigma_{N+h}^2$ and $X_{N+h}^2$,
     obtained  from   model M$i$,    the
     corresponding mean predicted values $\tilde\sigma_{N+h}^2 =
     \tilde X_{N+h}^2$  and the  mean square error of
     forecast,   for  $h\in\{1,\cdots,5\}$ and  $i\in\{1,\cdots,6\}$.  The forecasting origin is  $N= 5,000$ and $n
     \in\{2,000; 5,000\}$ is the sub-sample size used to fit the models and to
     obtain the predicted values.   All
     values reported correspond to the calculated values multiplied by a scaling constant  (except $h$).  The
     scaling constant is   equal to $10^2$, for    $\sigma_{N+h}^2$,
     $X_{N+h}^2$ and $\tilde\sigma_{N+h}^2$, and to  $10^4$,  for the $mse$ values. } \label{forecasting}\vspace{0.2cm}
   {\footnotesize
     \begin{tabular*}{1\textwidth}{@{\extracolsep{\fill}}ccccccccccc}
       \hline
         \phantom{\Big{|}}$n$ &&&&\multicolumn{3}{c}{2,000}    &&\multicolumn{3}{c}{5,000}\\
         \cline{5-7}\cline{9-11}

           \phantom{\Big{|}}$\phantom{xx}h\phantom{xx}$ &
           \multicolumn{1}{c}{$\phantom{xx}\sigma_{N+h}^2\phantom{xx}$ } &
         \multicolumn{1}{c}{$\phantom{xx}X_{N+h}^2\phantom{xx}$ } &&
         \multicolumn{1}{c}{Predictor} &
         \multicolumn{1}{c}{$mse(\sigma_{N+h}^2)$} &
         \multicolumn{1}{c}{$mse(X_{N+h}^2)$ } &&
         \multicolumn{1}{c}{Predictor} &
         \multicolumn{1}{c}{$mse(\sigma_{N+h}^2)$} &
         \multicolumn{1}{c}{$mse(X_{N+h}^2)$} \\

   \hline
  \vspace{-0.1cm}\\
  \multicolumn{11}{l}{M1 :=   FIEGARCH$(2,d,1)$; \ \ $re = 1,000$}
  \vspace{0.1cm}\\
   1 &      0.1698&      0.1575&&      0.1652&      0.0010&      0.0993&&      0.1634&      0.0003&      0.0969\\
  2 &      0.1635&      0.1473&&      0.1640&      0.0038&      0.0900&&      0.1611&      0.0032&      0.0875\\
  3 &      0.1636&      0.1540&&      0.1655&      0.0078&      0.1122&&      0.1632&      0.0075&      0.1116\\
  4 &      0.1629&      0.1490&&      0.1662&      0.0122&      0.1117&&      0.1633&      0.0114&      0.1101\\
  5 &      0.1641&      0.1542&&      0.1665&      0.0147&      0.1906&&   0.1642&      0.0141&      0.1903\vspace{0.1cm}\\

       \multicolumn{11}{l}{M2 :=   FIEGARCH$(0,d,4)$; \ \ $re =
         1,000$}  \vspace{0.1cm}\\

        1 &      0.1387&      0.1284&&      0.1359&      0.0004&      0.0521&&      0.1369&      0.0002&      0.0515\\
  2 &      0.1383&      0.1246&&      0.1395&      0.0024&      0.0506&&      0.1398&      0.0021&      0.0501\\
  3 &      0.1357&      0.1299&&      0.1374&      0.0027&      0.0551&&      0.1381&      0.0024&      0.0547\\
  4 &      0.1378&      0.1276&&      0.1390&      0.0029&      0.0562&&      0.1399&      0.0028&      0.0559\\
  5 &      0.1356&      0.1253&&      0.1409&      0.0036&      0.0568&&      0.1414&      0.0034&      0.0570\vspace{0.1cm}\\

       \multicolumn{11}{l}{M3 :=   FIEGARCH$(0,d,1)$; \ \ $re =
         1,000$}  \vspace{0.1cm}\\

        1 &      0.1487&      0.1380&&      0.1452&      0.0007&      0.0833&&      0.1439&      0.0002&      0.0848\\
  2 &      0.1456&      0.1287&&      0.1459&      0.0026&      0.0681&&      0.1442&      0.0022&      0.0674\\
  3 &      0.1426&      0.1350&&      0.1466&      0.0052&      0.0773&&      0.1447&      0.0045&      0.0777\\
  4 &      0.1438&      0.1200&&      0.1473&      0.0075&      0.0619&&      0.1453&      0.0068&      0.0600\\
  5 &      0.1411&      0.1354&&      0.1479&      0.0076&      0.1316&&      0.1459&      0.0069&      0.1309\vspace{0.1cm}\\

       \multicolumn{11}{l}{M4 :=   FIEGARCH$(0,d,1)$; \ \ $re =
         1,000$}  \vspace{0.1cm}\\
  1 &      0.0932&      0.0894&&      0.0918&      0.0005&      0.0411&&      0.0910&      0.0002&      0.0416\\
  2 &      0.0905&      0.0809&&      0.0918&      0.0013&      0.0275&&      0.0908&      0.0010&      0.0270\\
  3 &      0.0885&      0.0810&&      0.0917&      0.0027&      0.0293&&      0.0907&      0.0022&      0.0291\\
  4 &      0.0886&      0.0764&&      0.0918&      0.0040&      0.0251&&      0.0908&      0.0036&      0.0242\\
  5 &      0.0876&      0.0831&&      0.0919&      0.0042&      0.0461&&   0.0909&      0.0037&      0.0456\vspace{0.1cm}\\

       \multicolumn{11}{l}{M5 :=   FIEGARCH$(1,d,1)$; \ \ $re =
         1,000$}  \vspace{0.1cm}\\
 1 &      0.2898&      0.2669&&      0.2808&      0.0012&      0.2096&&      0.2795&      0.0005&      0.2087\\
  2 &      0.2883&      0.2800&&      0.2833&      0.0069&      0.2489&&      0.2817&      0.0065&      0.2494\\
  3 &      0.2908&      0.2836&&      0.2844&      0.0081&      0.2452&&      0.2821&      0.0081&      0.2461\\
  4 &      0.2909&      0.2963&&      0.2847&      0.0077&      0.3178&&      0.2827&      0.0076&      0.3174\\
  5 &      0.2923&      0.2971&&      0.2852&      0.0097&      0.3695&&   0.2832&      0.0096&      0.3704\vspace{0.1cm}\\

     \multicolumn{11}{l}{M6 :=   FIEGARCH$(0,d,1)$; \ \ $re =
       1,000$}  \vspace{0.1cm}\\
 1 &      0.1271&      0.1143&&      0.1242&      0.0001&      0.0367&&      0.1247&      0.0001&      0.0367\\
  2 &      0.1260&      0.1140&&      0.1265&      0.0013&      0.0379&&      0.1265&      0.0013&      0.0377\\
  3 &      0.1259&      0.1228&&      0.1262&      0.0014&      0.0471&&      0.1263&      0.0013&      0.0471\\
  4 &      0.1284&      0.1188&&      0.1263&      0.0018&      0.0479&&      0.1264&      0.0017&      0.0477\\
  5 &      0.1261&      0.1192&&      0.1264&      0.0015&      0.0473&&  0.1265&      0.0014&      0.0474\\
       \hline
     \end{tabular*}}
 \end{table}

 From Table \ref{forecasting}  (see also Figure \ref{figforecast} below) conclude that,
\begin{itemize}
\item  when we consider $\sigma_{N+h}^2$,
the predicted values are relatively close to the simulated ones, which is indicated by the small $mse$
values,  for all models and any
$h\in\{1,\cdots,6\}$;

\item the $mse$ value increases as $h$ increases.  This
result is expected and it is theoretically explained in Proposition
\ref{hstepaheadX} which shows that
\begin{equation*}
    \mathds{E}\big(\big[\ln(\sigma_{n+h}^2)- \hat\ln(\sigma_{n+h}^2)\big]^2\big)
    =\sigma^2_g\sum_{k=0}^{h-2}\lambda_{d,k}^2 \overset{h \to
      \infty}{\longrightarrow} \sigma^2_g\sum_{k=0}^{\infty}\lambda_{d,k}^2,
  \end{equation*}
  where $\sigma^2_g := \mathds{E}([g(Z_0)]^2)$ is given in \eqref{eq:sigmag};

\item  when we consider  $X_{N+h}^2$, the $mse$ is usually high,  if compared to the mean simulated and mean predicted
values.  Therefore, we conclude that $\tilde
{X}_{n+h}^2:=\tilde{\sigma}_{n+h}^2$ is a poor estimator for $X_{n+h}^2$.  This
result is not a surprise since the main purpose of FIEGARCH models is to
estimate the logarithm of the conditinal variance of the process and not the
process itself;

\item  as expected, in all cases, the models' forecasting performance improves as $n$
increases. Notice, however, that the difference in the $mse$ values, from
$n=2,000$ to $n=5,000$, is small (recall that the values are multiplied by 100).
This is so because the coefficients
$\lambda_{d,k}$ converges to zero, as $k$ goes to infinity. Therefore, it is
expected that, for some $m\in\mathds{N}$ and any $M>0$, using the last $m$ or
the last $m+M$ known values to calculate the $h$-step ahead forecast value for
the process will not considerably change the results.
\end{itemize}

Figure \ref{figforecast} shows the mean taken over 1,000 replications for:
\begin{itemize}
\item  the simulated values  $\sigma_{N+h}^2$ and $X_{N+h}^2$ obtained from
  model M$i$, for each   $i\in\{1,\cdots,6\}$, $N = 5,000$ and $h\in\{1,\cdots,50\}$;

\item the  one-step ahead forecast  values    $\check\sigma_{N^*+1}^2 := \tilde\sigma_{N^*+1}^2$ (denoted in the graphs by
$\hat\sigma_{N+h-1}^2(1)$),   for $N^* = N+h$,  $N= 5,000$  and
$h\in\{1,\cdots,50\}$.   The predictor  $\check\sigma_{N}^2(1)$ is obtained
directly from the sub-sample  $\{x_t\}_{t=1}^{n}$, by following steps
\textbf{F1} -\textbf{F5} (this figure only reports the graphs for the  case
$n=5,000$).
The  remaining  predicted values $\{\check\sigma_{N+h-1}^2(1)\}_{h=2}^{50}$ are
calculated by updating the
forecasting origin from $N = 5,000$ to   $N^*= N +h -1 $, that is,
by  introducing the observations $\{X_{N+h}\}_{h=1}^{49}$, one at a time, and
following steps \textbf{F2} -\textbf{F5};

\item  the  $h$-step  ahead forecast   values  considering the
    predictors  $\tilde  \sigma_{N+h}^2$  and  $\check \sigma_{N+h}^2$
    (denoted in the graphs by   $\sigma_{N}^2(h)$).  These values are obtained
   by following steps \textbf{F1} -\textbf{F5}    with
    forecasting origin     $N = 5,000$ (without update).  For all graphs  the size of
    the sub-sample used for parameter estimation and forecasting  is  $n =
    5,000$.
  \end{itemize}
The dashed lines in  Figure \ref{figforecast} correspond to the limiting
constants $L_1(i)$ and $L_2(i)$, for  $i\in\{1,\cdots,6\}$,  described in the sequel.

\begin{figure}[!ht]
  \centering
  \mbox{
   \includegraphics[width = 0.5\textwidth]{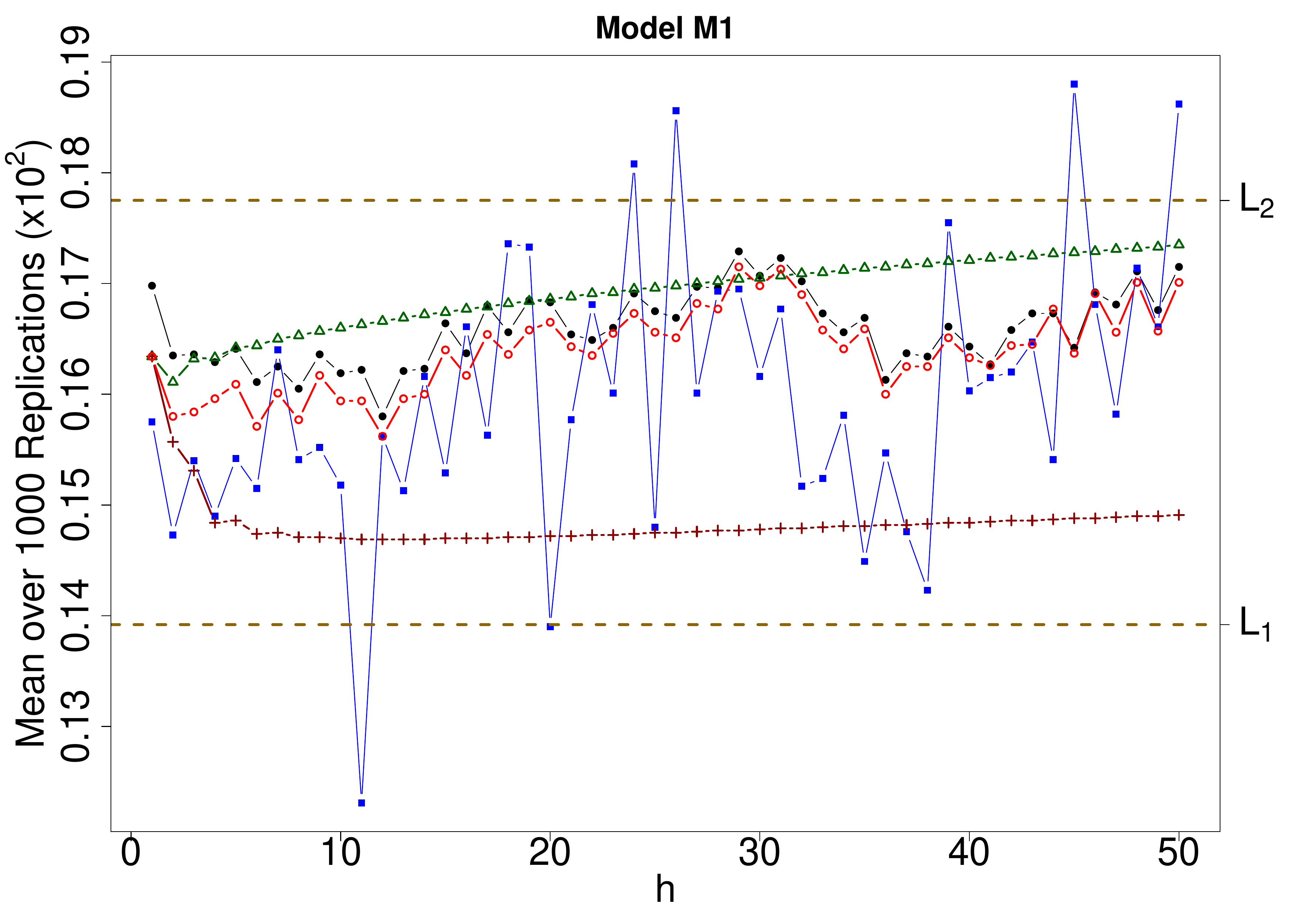}
   \includegraphics[width = 0.5\textwidth]{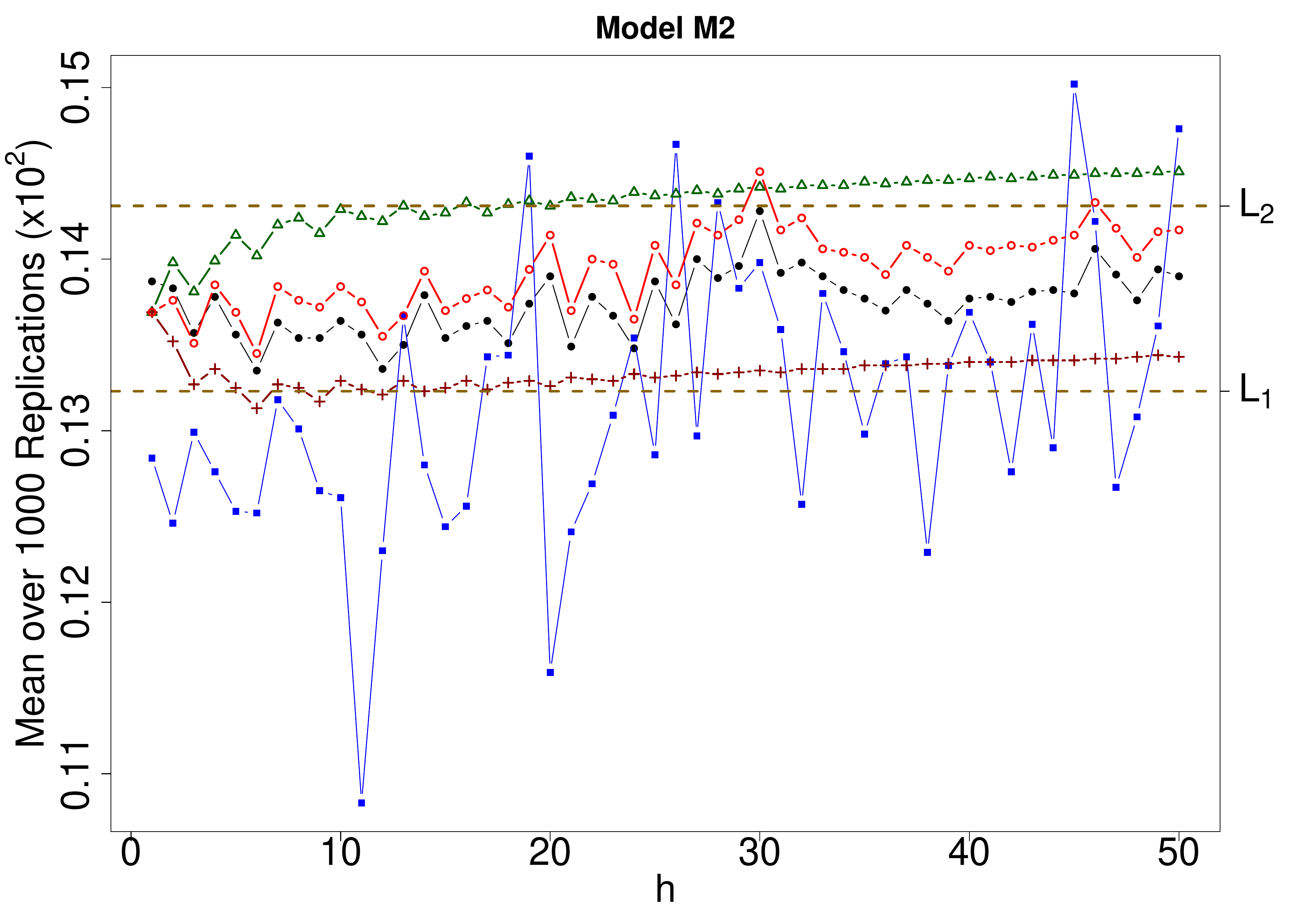}
 }
 \mbox{
    \includegraphics[width = 0.5\textwidth]{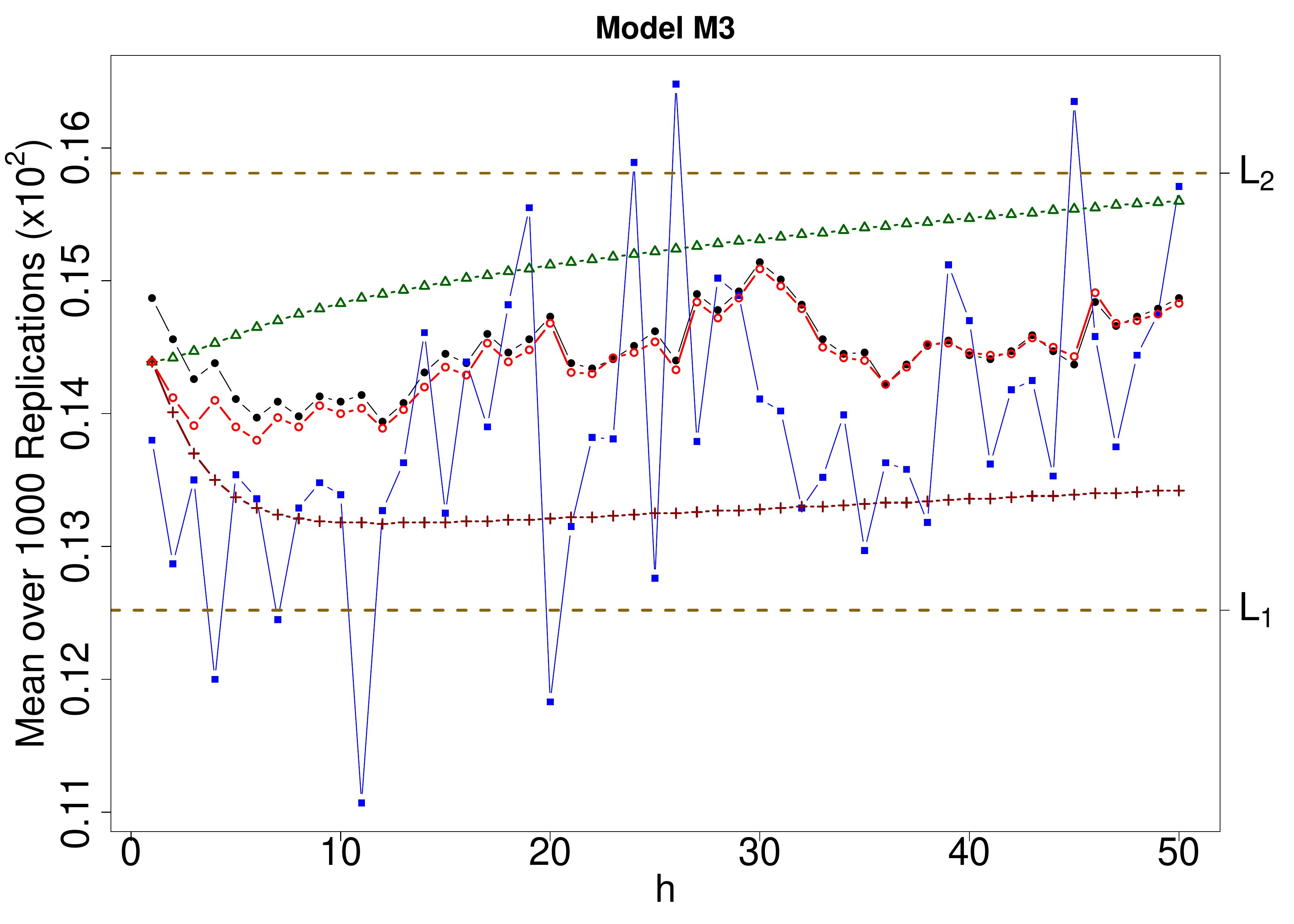}
    \includegraphics[width = 0.5\textwidth]{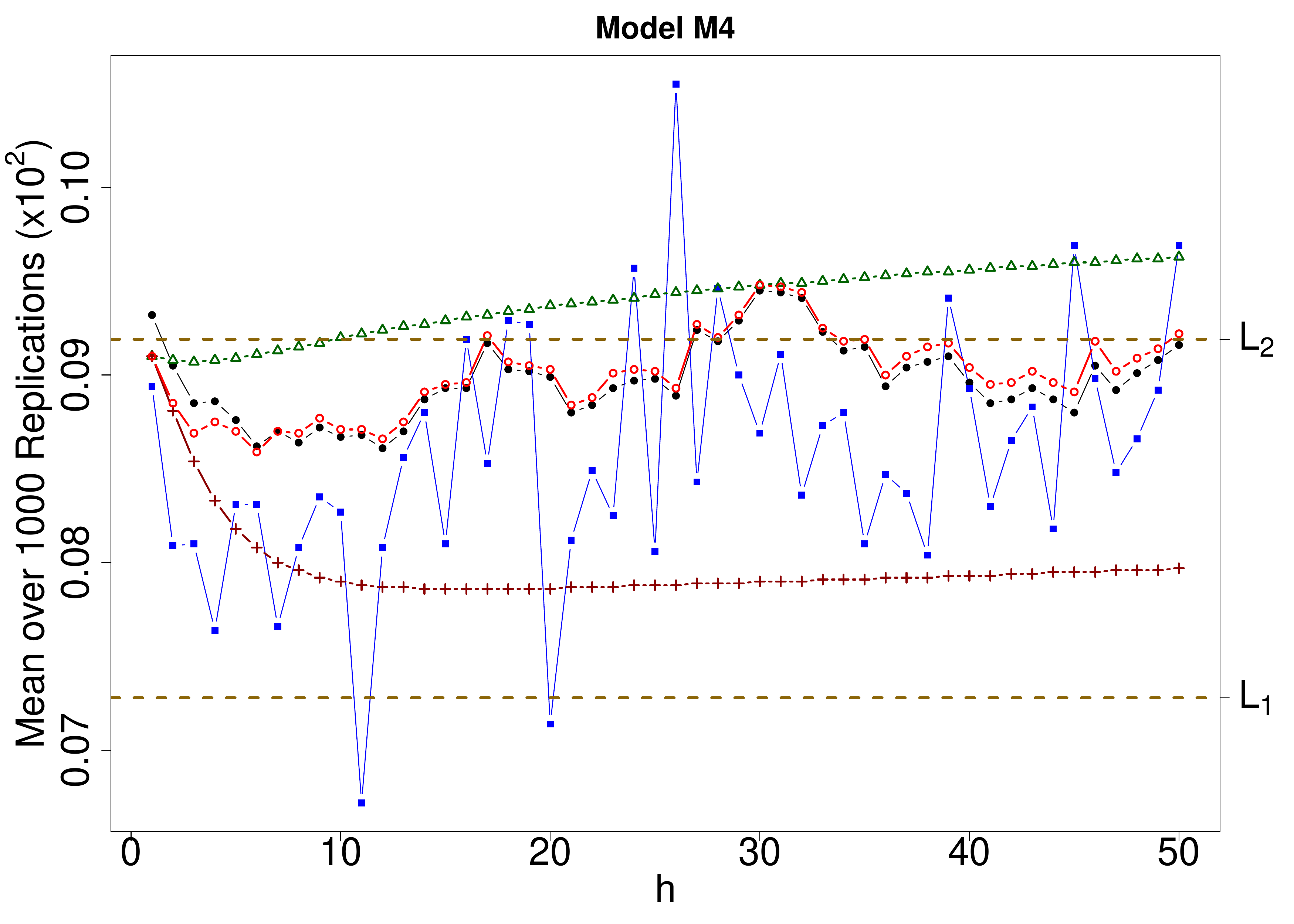}
      } \mbox{
    \includegraphics[width = 0.5\textwidth]{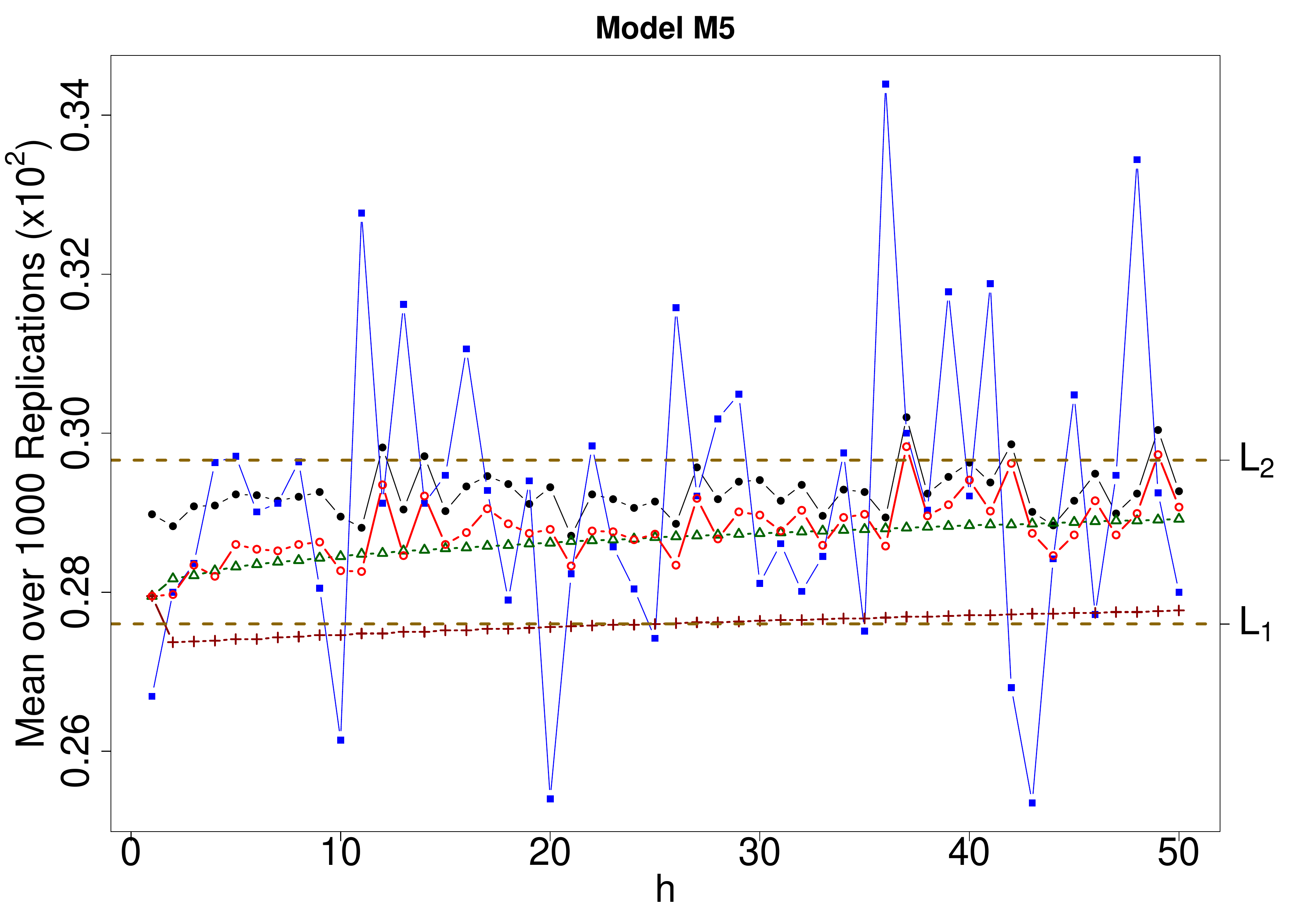}
    \includegraphics[width = 0.5\textwidth]{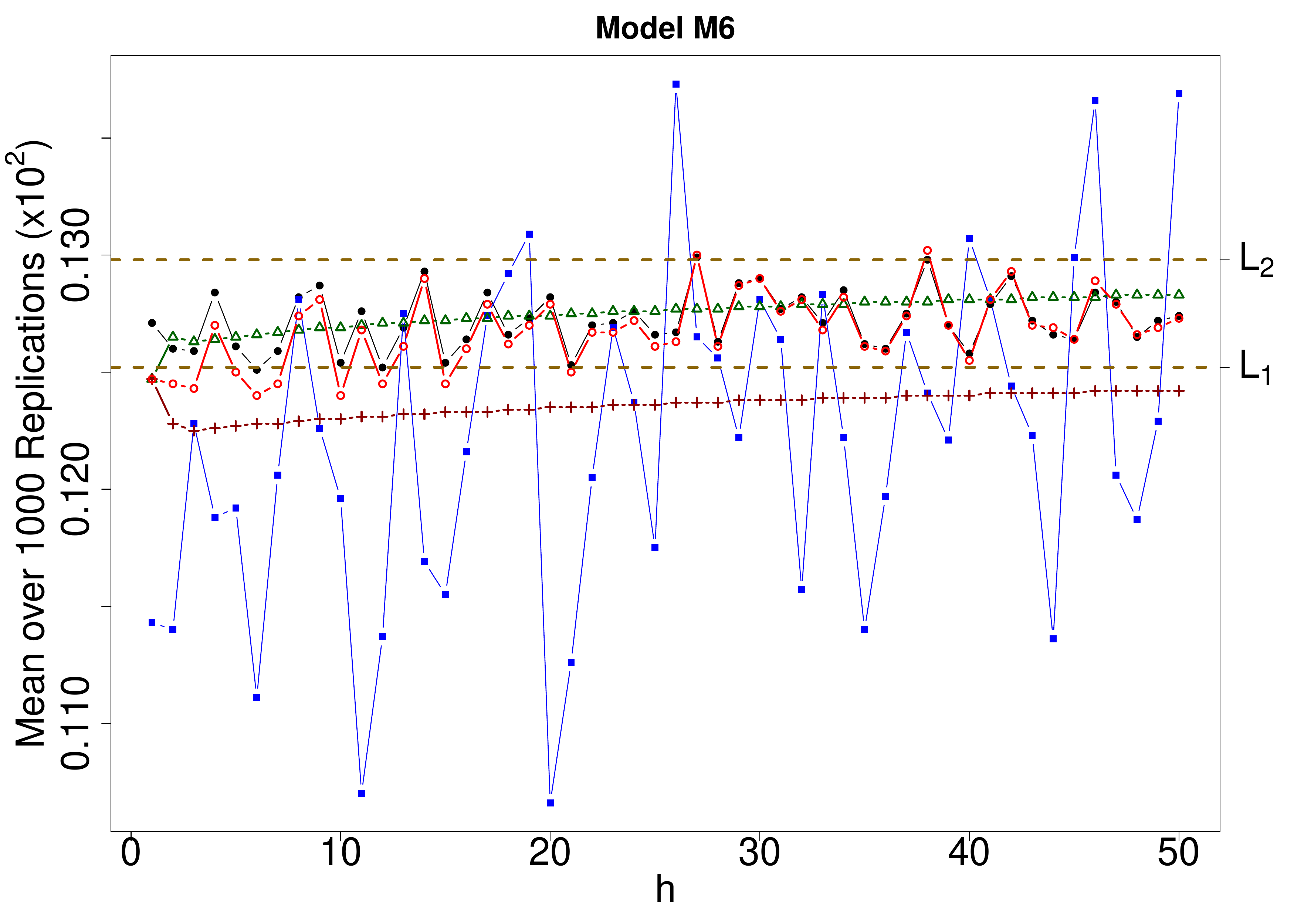}
  }\vspace{-1cm}
  \mbox{
    \includegraphics[width = 0.4\textwidth]{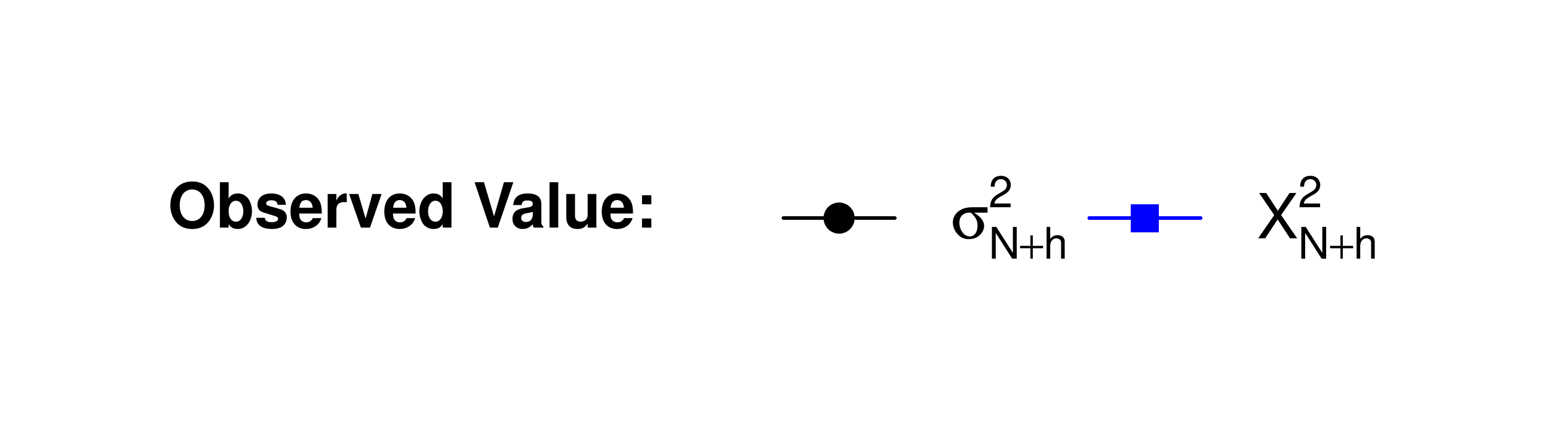}
  \includegraphics[width = 0.5\textwidth]{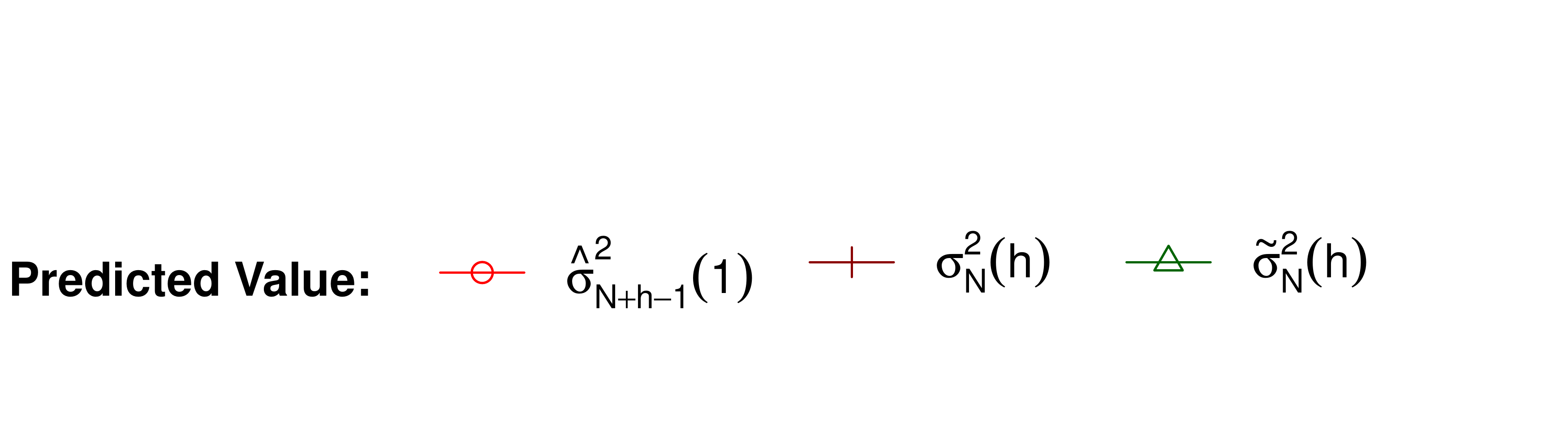}} \vspace{-0.6cm}\\
     \caption{For each model M$i$, $i\in\{1,\cdots,6\}$: the
    simulated values for $\sigma_{N+h}^2$; the  one-step ahead forecast
    $\check\sigma_{N^*+1}^2 := \tilde\sigma_{N^*+1}^2$ (denoted in the graphs by
    $\hat\sigma_{N+h-1}^2(1)$),   obtained by updating the forecasting origin to
    $N^*     = N +h - 1 $;  the  $h$-step  ahead forecast   values  considering the
    predictors  $\tilde     \sigma_{N+h}^2$ and  $\check \sigma_{N+h}^2$
    (denoted in the graphs by   $\sigma_{N}^2(h)$),   with  forecasting origin
    $N$.  For all models    $h\in\{1,\cdots,50\}$,  $N= 5,000$ and the size of
    the sub-sample used for parameter estimation and forecasting  is  $n =
    5,000$.  All values in the graphs correspond to the mean taken over 1,000 replications.
  }\label{figforecast}
\end{figure}

From Figure \ref{figforecast} we observe that,   for all models,  the means for
the one-step ahead  forecast values $\check\sigma_{N^*+1}^2$,
show the same behavior over the time as the means for the  true values
$\sigma_{N^*+1}^2$,
where $N^* = N+h-1$, $N = 5,000$ and $h\in\{1,\cdots,
50\}$.    As expected,  due to the error carried from the parameter estimation
(specially, from the distribution misspecification),  we observe a small
forecasting bias, which decreases as $h$ increases.  The decrease in the
forecasting bias,  as the forecasting origin is updated,  can be attributed to the
fact that we start the recurrence formula (step \textbf{F2}) assuming
$\mathds{E}(|Z_0|) = \sqrt{2/\pi}$ and as the new observations $X_{N+h}$ are
introduced, the constant $\mathds{E}(|Z_0|) $ is replaced by its  sample
estimate (step  \textbf{F3}), which provides more accurate values for  $g(Z_t)$
as $t$ increases ($t> N$).

Regarding the $h$-step ahead predictors $\check{\sigma}_{i,n+h}^2$ and  $\tilde{\sigma}_{i,n+h}^2$,
Figure \ref{figforecast} shows that   the estimation bias  is higher if we
consider the former one.  This figure also shows that,  for all models, the predicted
value converges to a constant as $h$ increases. This is expected since the
$h$-step ahead predictor is defined in terms of the conditional expectation.  In
fact, from expression \eqref{sigmacheckconvergence}, $\check{\sigma}_{N+h}^2$ converges to
$L_1(i) := e^{\omega(i)}$ as $h$ goes to infinity, where $\omega(i)$ denotes the parameter
$\omega$ for model M$i$ and hence,  from  expression \eqref{relation},
\begin{align}
  \tilde\sigma_{N+h}^2 :=  \check{\sigma}_{N+h}^2\bigg( 1
  +\frac{1}{2}\sigma^2_g\sum_{k=0}^{h-2}\lambda_{d,k}^2\bigg) \overset{h\to
    \infty}{-\!\!\!\longrightarrow}  & \, e^{\omega(i)}\bigg( 1
  +\frac{1}{2}\sigma^2_g(i)\sum_{k=0}^{\infty}\lambda_{d,k}^2(i)\bigg)
  \nonumber\\
   \approx & \,  e^{\omega(i)}\bigg( 1
  +\frac{1}{2}\sigma^2_g(i)\sum_{k=0}^{m}\lambda_{d,k}^2(i)\bigg) := L_2(i),\label{convergence}
\end{align}
for each $i \in \{1,\cdots,6\}$  and $m$ sufficiently large.  The values of
 $\omega(i)$ (also given in Table \ref{simpar}),  $L_1(i)$  and  $L_2(i)$,  for $m = 50,000$ and $i \in \{1,\cdots,6\}$, are presented
in Table \ref{values2}.

\begin{table}[!ht]
  \renewcommand{\arraystretch}{1.2}
  \caption{Values of  $\omega(i)$,  $L_1(i) :=e^{\omega(i)}$  and $L_2(i)$, defined in \eqref{convergence},  for $m = 50,000$ and $i \in
    \{1,\cdots,6\}$. }\label{values2}\vspace{0.2cm}
  {\footnotesize
  \begin{tabular*}{1\textwidth}{@{\extracolsep{\fill}}crrrrrr}
        \hline
        $i$       & \multicolumn{1}{c}{1} & \multicolumn{1}{c}{2}&
        \multicolumn{1}{c}{3} & \multicolumn{1}{c}{4}&
        \multicolumn{1}{c}{5} &\multicolumn{1}{c}{6} \\
    \cline{1-1} \cline{2-7}
    $\omega(i)$ & -6.5769 & -6.6278 & -6.6829 & -7.2247 & -5.8927 & -6.6829 \\
    $L_1(i)\times 100$ &      0.1392&      0.1323&      0.1252&      0.0728&
    0.2760&      0.1252\\
        $L_2(i)\times 100$  &     0.1775&      0.1431&      0.1581&      0.0919&      0.2966&      0.1298\\
        \hline
    \end{tabular*}}
  \end{table}

 Upon comparing the values of $L_1(i)$ and $L_2(i)$, given  in Table
 \ref{values2} (also reported in Figure  \ref{figforecast} as $L_1$ and $L_2$),
 for each $i\in\{1,\cdots, 6\}$,  respectively,   with the limits  $\lim_{h\to
   \infty} \check{\sigma}_{N+h}^2$  and  $\lim_{h\to \infty}
 \tilde{\sigma}_{N+h}^2$ (see Figure \ref{figforecast}), we conclude that these
 values are close to each other,  for all models.    A small difference between
 $L_1(i)$ and $\lim_{h\to    \infty} \check{\sigma}_{N+h}^2$ (respectively,
 $L_2(i)$ and $\lim_{h\to    \infty} \tilde{\sigma}_{N+h}^2$) is expected since
 the former one is calculated using  the true parameter values while
 $\check{\sigma}_{N+h}^2$ is obtained by considering the estimates for the  parameter.

\section{Analysis of an Observed Time Series}\label{analysisObserved}

This section presents the analysis of the S\~ao Paulo Stock Exchange Index
(Bovespa Index or IBovespa) log-return time series. We consider the FIEGARCH
model, fully described in this paper, and we compare its forecasting performance
with other ARCH-type models.  The total number of observations for the IBovespa
time series is $n = 1737$. We consider the first 1717 observations to fit the
models and we reserve the last 20 ones to compare with the out-of-sample
forecast.

Figure \ref{figibov1} (a) presents IBovespa time series $\{P_t\}_{t=1}^{1718}$,
in the period of January/1995 to December/2001. We observe a strong decay in the
index value close to $t=1,000$ (that is, January 15, 1999). This period is
characterized by the Real (the Brazilian currency) devaluation.  Figures
\ref{figibov1} (b) and (c) present, respectively, the IBovespa log-return time
series, $\{r_t\}_{t=1}^{1717}$, and the square of the log-return time series,
$\{r_t^2\}_{t=1}^{1717}$, in the same period.  Observe that the log-return
series presents the stylized facts of financial time series such as apparent
stationarity, mean around zero and clusters of volatility. Also, in Figure
\ref{figibov2} we observe that, while the log-return series presents almost no
correlation, the sample autocorrelation of the square of the log-return series
assumes high values for several lags, pointing to the existence of
heteroskedasticity and possibly long memory. Notice that the periodogram of
$\{\ln(r_t^2)\}_{t=1}^{1717}$, presented in Figure \ref{spectral} (c), also
indicates possibly long-memory in the conditional variance. Regarding the
histogram and the QQ-Plot, we observe that the distribution of the log-return
series seems approximately symmetric and leptokurtic.

\begin{figure}[htbp]
  \centering \mbox{ \hspace{-6pt}\subfigure[ $\{P_t\}_{t=1}^{1718}$]
    {\includegraphics[width=0.3\textwidth, height =
      0.15\textheight]{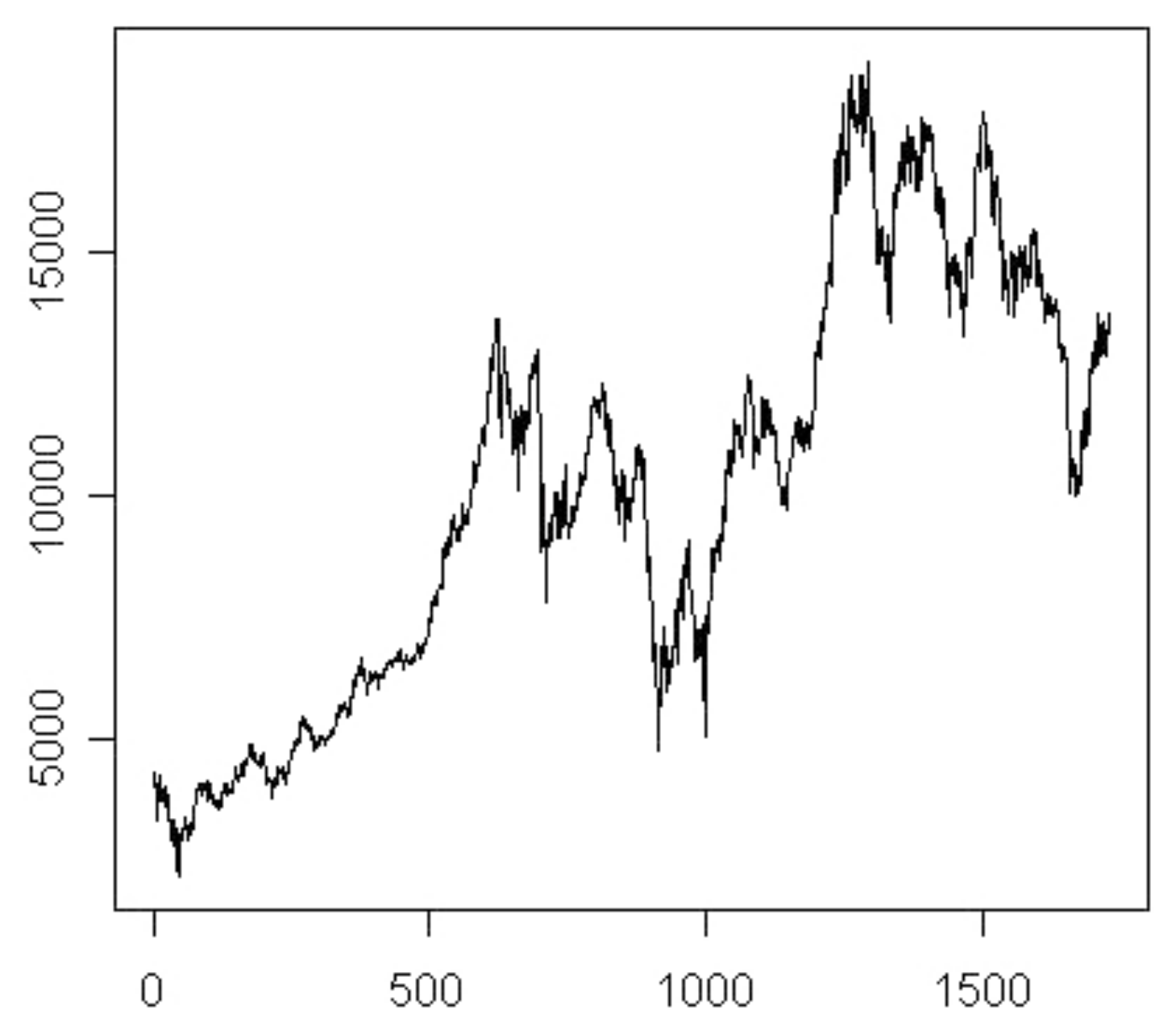}}\hspace{10pt} \subfigure[
    $\{r_t\}_{t=1}^{1717}$] {\includegraphics[width=0.3\textwidth, height =
      0.15\textheight]{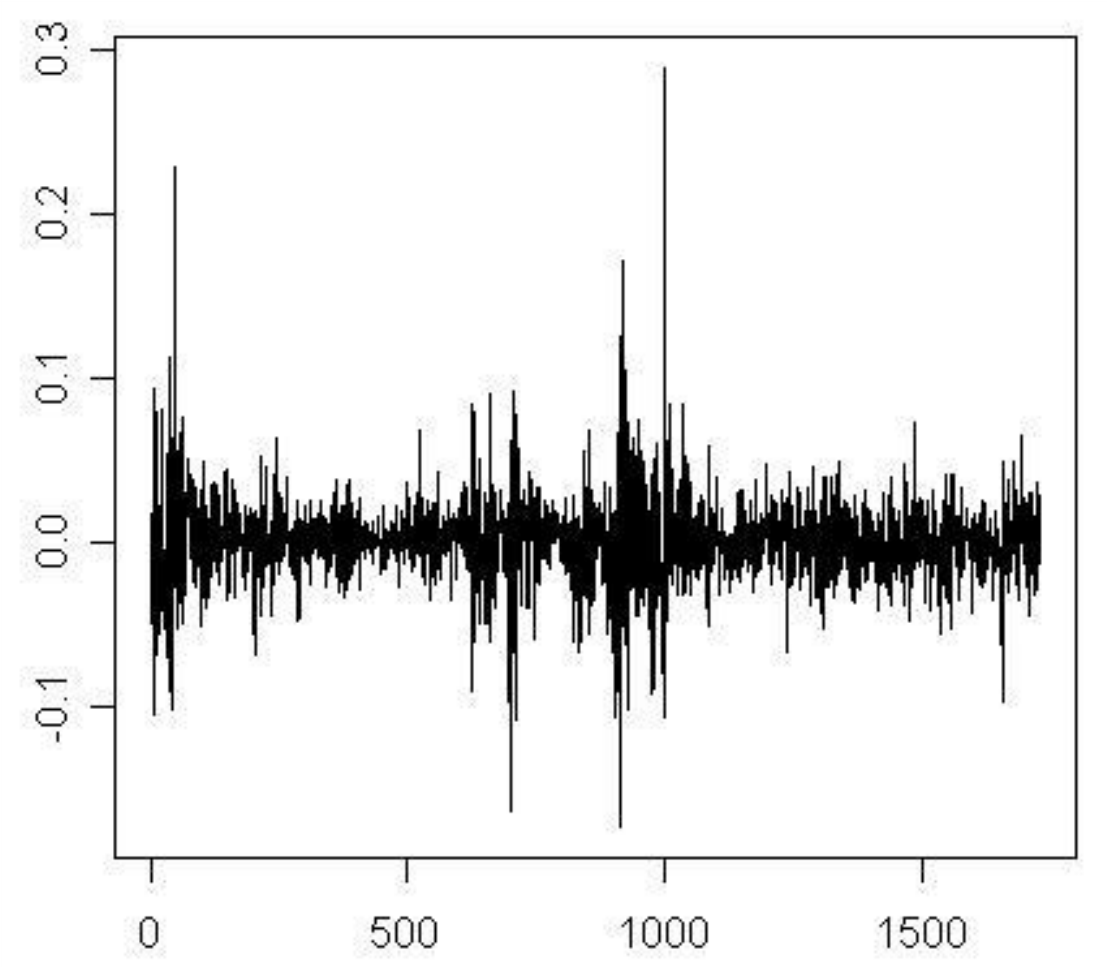}}\hspace{10pt} \subfigure[
    $\{r_t^2\}_{t=1}^{1717}$] {\includegraphics[width=0.3\textwidth, height =
      0.15\textheight]{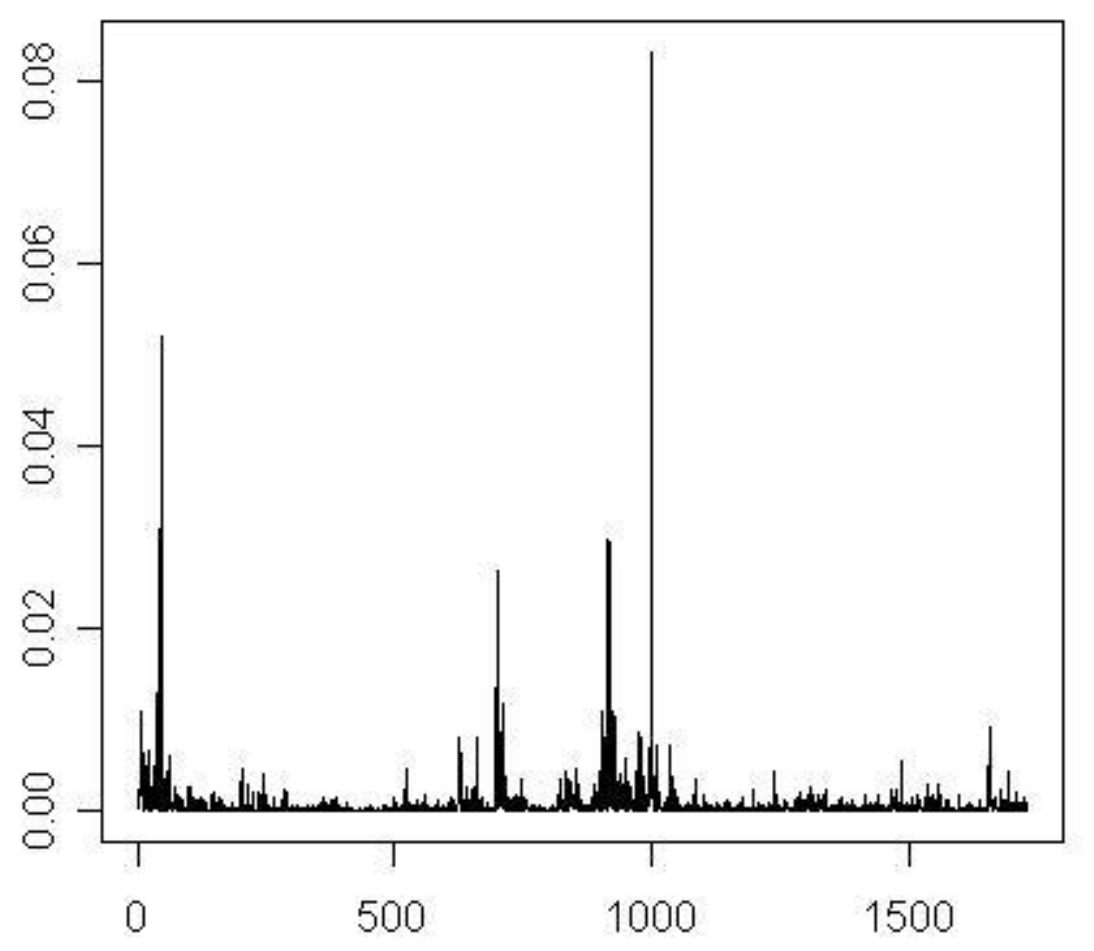}} }
  \caption{Time series: (a) Bovespa index; (b) IBovespa log-returns; (c) square
    of the IBovespa log-returns, in the period of January/1995 to
    December/2001.}\label{figibov1}
\end{figure}

\begin{figure}[h]
  \centering \mbox{ \subfigure[]{\includegraphics[width=0.23\textwidth, height =
      0.15\textheight]{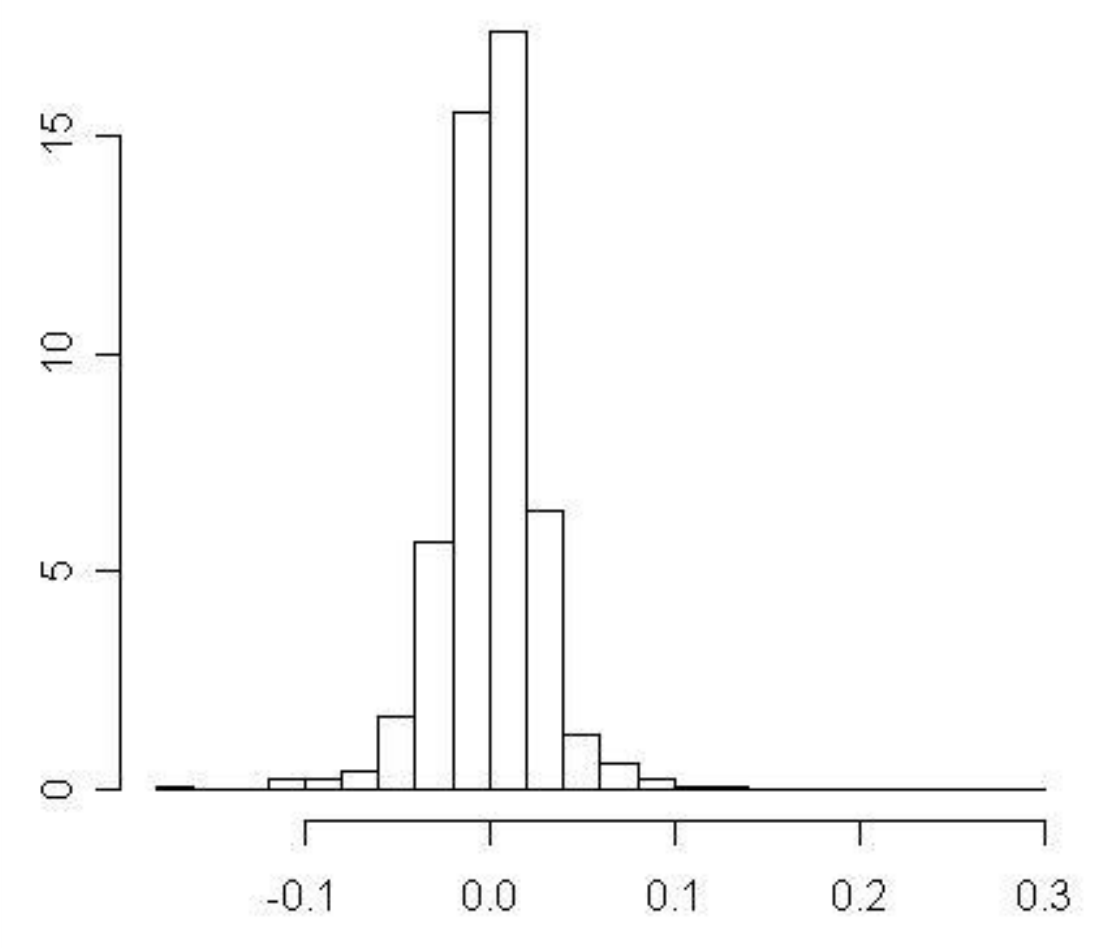}}
    \subfigure[]{\includegraphics[width=0.23\textwidth, height =
      0.15\textheight]{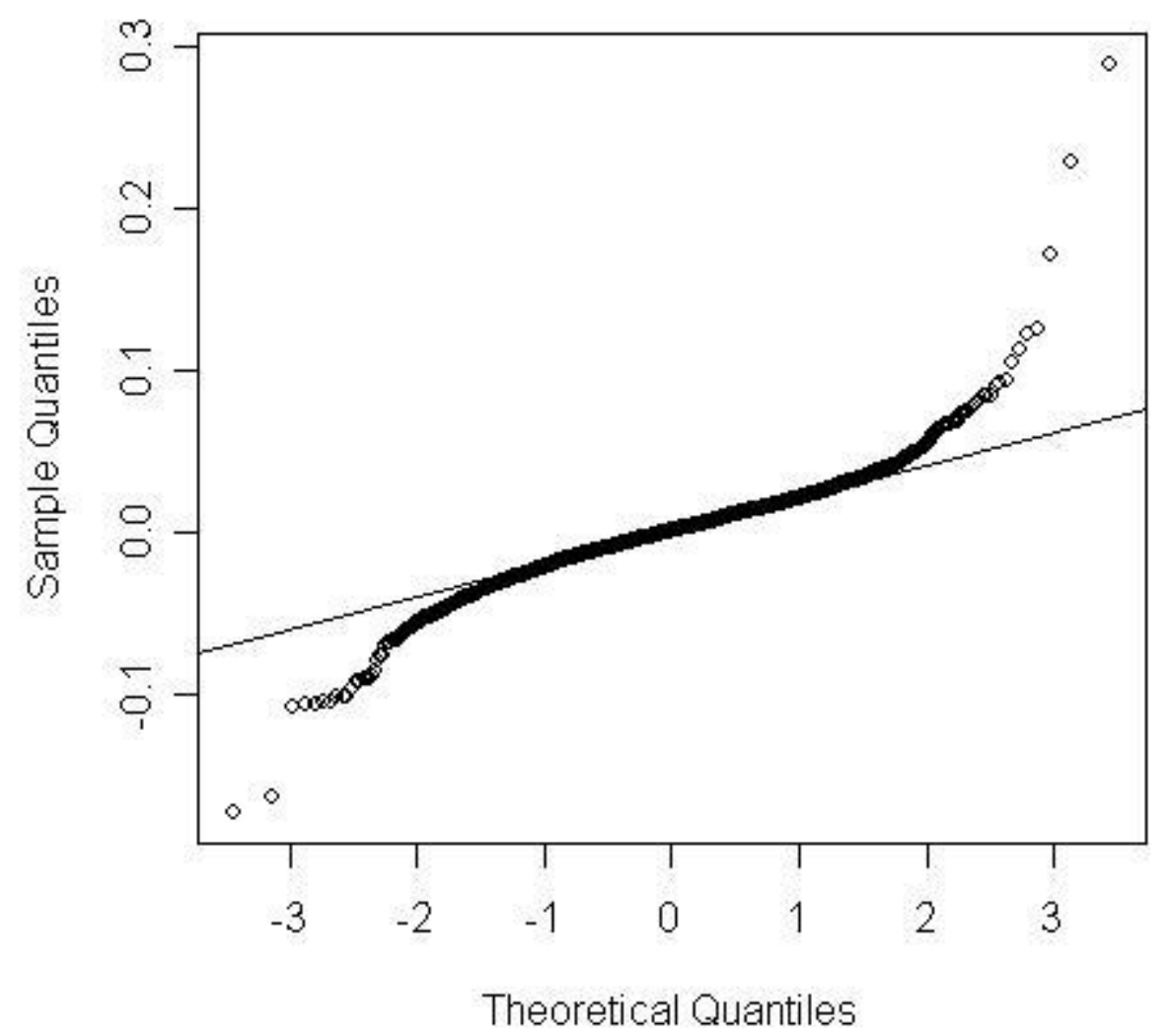}}
    \subfigure[]{\includegraphics[width=0.23\textwidth, height =
      0.15\textheight]{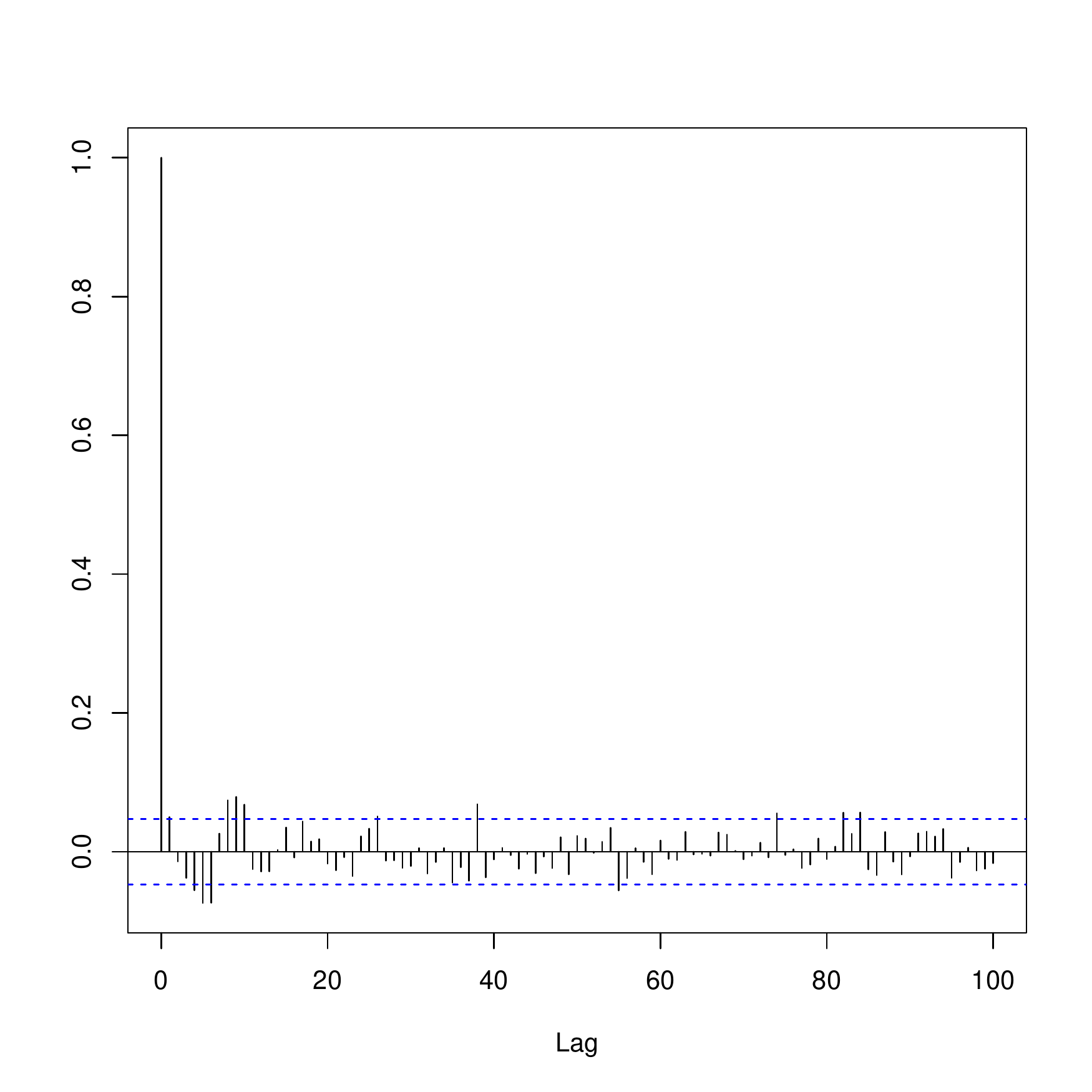}}
    \subfigure[]{\includegraphics[width=0.23\textwidth, height =
      0.15\textheight]{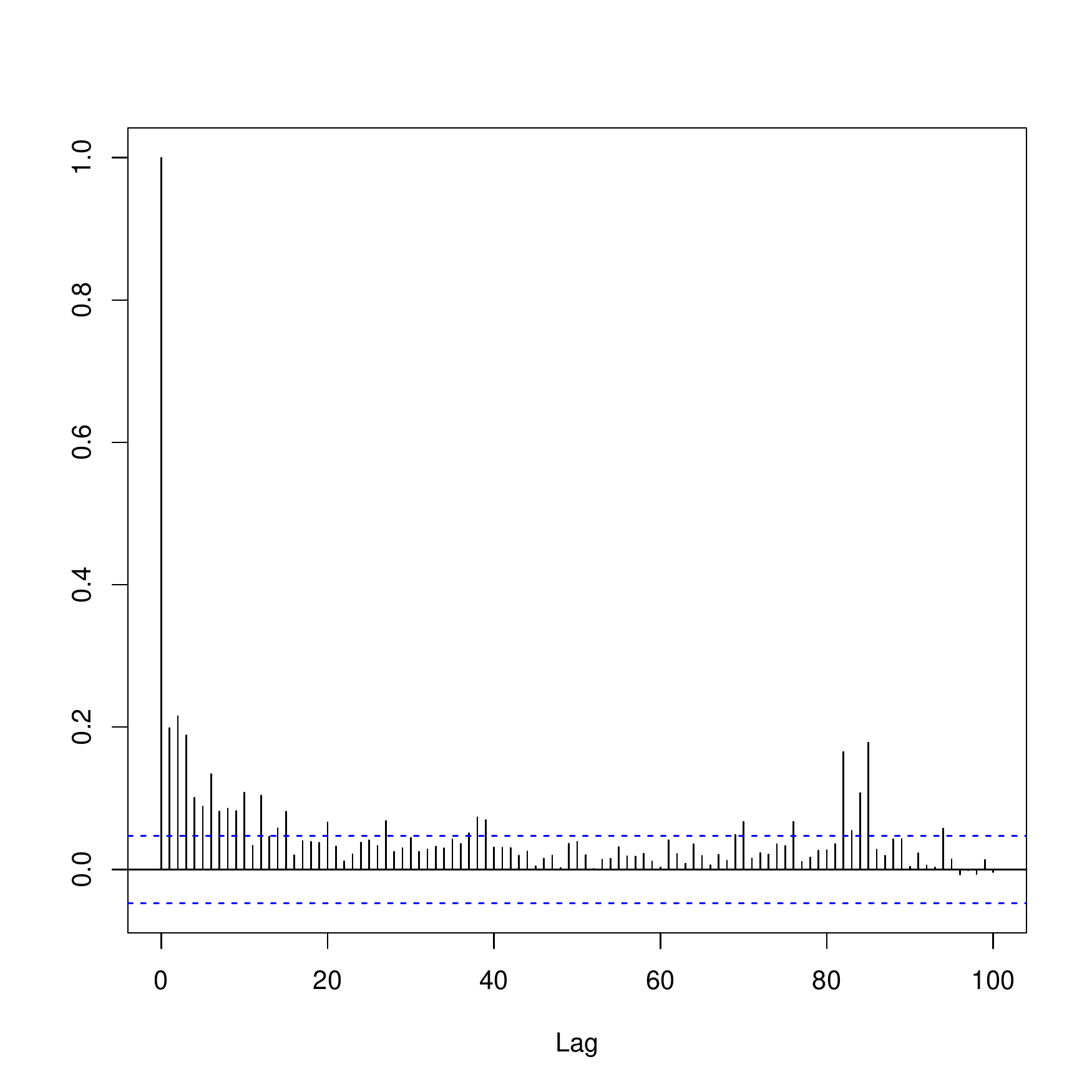}} }
  \caption{(a) Histogram; (b) QQ-Plot and (c) sample autocorrelation of the
    IBovespa log-return series and (d) sample autocorrelation of the square of
    the IBovespa log-return series.}\label{figibov2}
\end{figure}

To investigate whether the stationarity property holds  for the  time series $\{r_t\}_{t=1}^{1717}$ we apply the runs test (or Wald-Wolfwitz test), as described in  \cite{GR2012}. Due to the magnitude of the data   we multiply the time series values by 100 before applying the test.
The p-values for the test considering the moments of order\footnote{For $r>10$ the values of $\{r_t^r\}_{t=1}^{1,717}$   are too close to zero and the test always returns the same p-value as $r=10$. } $r\in\{1, \cdots, 10\}$  are reported in Figure \ref{GRtest}.     For comparison, this figure also shows the p-values of the  test applied to the simulated time series presented in Figure \ref{fiegsim}.  Notice that, for all $r\in\{1, \cdots, 10\}$  the test does not reject the null hypothesis of stationarity.

\begin{figure}[h]
  \centering
  \includegraphics[width=0.5\textwidth]{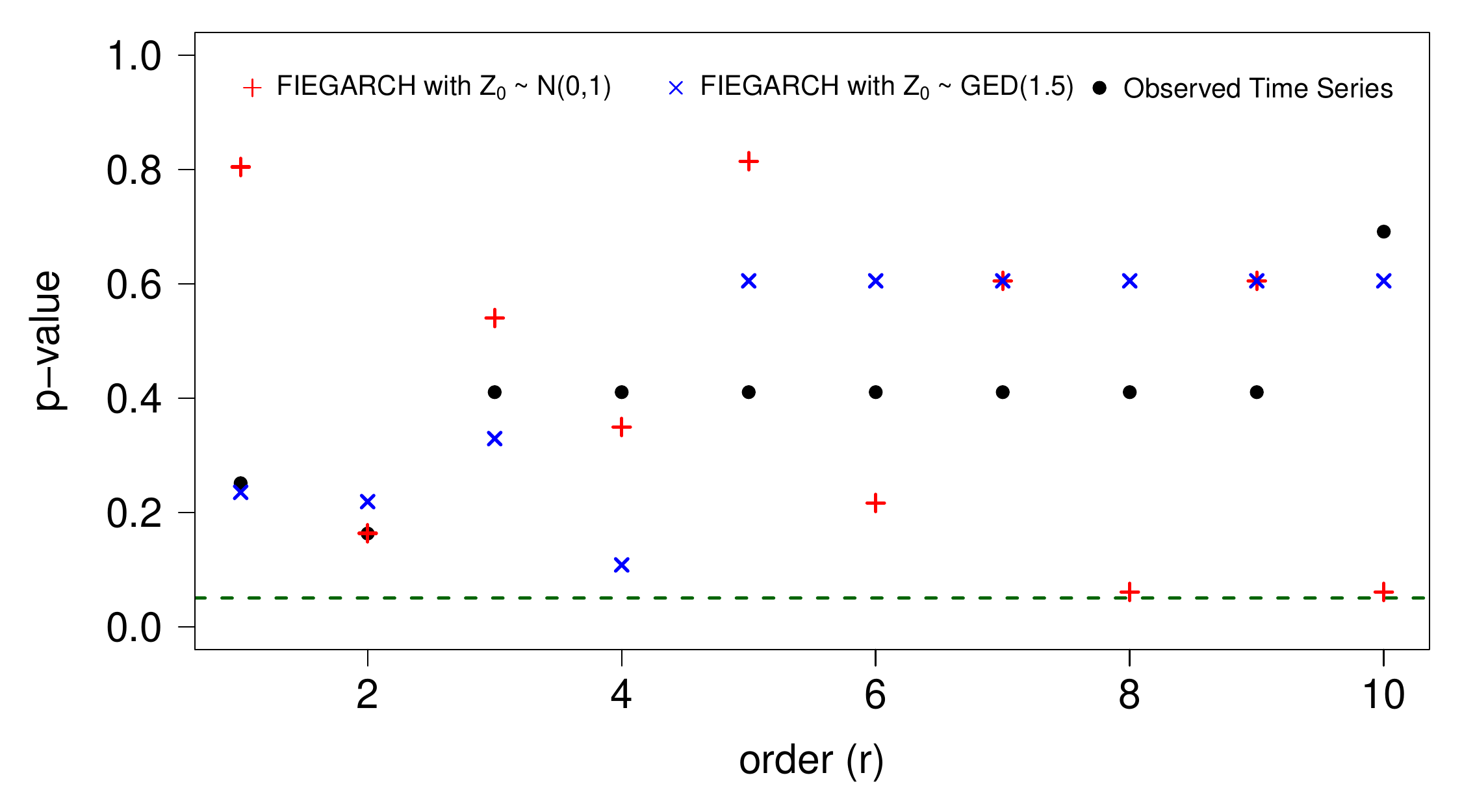}
  \caption{The p-values for the stationarity hypothesis  test considering the moments of order $r\in\{1, \cdots, 10\}$. The dashed line corresponds to p-value = 0.05.}\label{GRtest}
\end{figure}

To analyze if the ergodicity property holds for the time series $\{r_t\}_{t=1}^{1717}$ we perform the  test  described in \cite{DEL2001}.   For comparison, we also apply this  test to the simulated time series (only for sample size $n=2,000$) considered in Section \ref{simulationsection}.   The test results are given in Table \ref{ergodicityTest}.  The reported values are the proportion of p-values smaller than 0.05 and 0.10 in a total of 100 repetitions of  step 3 of the  Algorithm 1  given in  \cite{DEL2001}.  Moreover,   for  the simulated time series, the values in Table  \ref{ergodicityTest} correspond to the mean taken over 1,000 replications.    Notice that  the proportion of p-values smaller than  0.05 (equivalently, 0.10) is always higher for the simulated time series (known to be ergodic)  then for the observed time series. Given that the proportion of p-values smaller than 0.05 and 0.10 is close to the expected, we conclude that the ergodicity property holds for $\{r_t\}_{t=1}^{1717}$.

\begin{table}[!htbp]
    \renewcommand{\arraystretch}{1.2}
  \centering
  \caption{Proportion of p-values smaller than 0.05 and 0.10 in a total of 100 repetitions of  step 3 of the  Algorithm 1  given in  \cite{DEL2001} for the simulated time series obtained from model M$i$, with $i\in\{1,\cdots,6\}$, and for the observed time series $\{r_t\}_{t=1}^{1717}$.} \label{ergodicityTest}\vspace{0.2cm}
  {\footnotesize
    \begin{tabular*}{1\textwidth}{@{\extracolsep{\fill}}cccccccc}
      \hline
p-values &  M1 & M2 & M3& M4 & M5& M6 & $\{r_t\}_{t=1}^{1717}$\\
   \cline{1-1}  \cline{2-7} \cline{8-8}
0.05 & 0.10&  0.08&  0.09&  0.09&  0.07&  0.07& 0.05           \\
0.10 & 0.17&  0.13&  0.14&  0.15&  0.13&  0.12& 0.11\\
\hline
    \end{tabular*}}
\end{table}

The analysis of the sample autocorrelation function suggests an
ARMA$(p_1,q_1)$-FIEGARCH$(p_2,d,q_2)$ model. While an ARMA model accounts for
the correlation among the log-returns, a FIEGARCH model take into account the
long memory (in the conditional variance) and the heteroskedasticity
characteristics of the time series.   To select the best ARMA$(p_1,q_1)$-FIEGARCH$(p_2,d,q_2)$ model for  the data   we initially
considered all possible models with $p_1,q_1\in\{0,1,2,3\}$ and
$p_2,q_2 \in\{0,1,2\}$ and applied the quasi-likelihood method to estimate the
unknown parameters.  Then we eliminate the models with correlated residuals  and
selected the best models, with respect to the log-likelihood, Bayesian  (BIC),  Akaike  (AIC) and
    Hannan-Quinn (HQC)  information  criteria   (in this step three models were selected).  The models order and the
corresponding AIC, BIC and HQC values are reported in Table \ref{aic}.
Boldface indicates that the model was the best with respect to the corresponding   the criterion.

\begin{table}[!ht]
      \renewcommand{\arraystretch}{1.2}
  \caption{Log-likelihood value and   Bayesian  (BIC),  Akaike  (AIC) and
    Hannan-Quinn (HQC)  information  criteria values   for \blue three  \black competitive ARMA$(p_1,q_1)$-FIEGARCH$(p_2,d,q_2)$ models
    fitted to the IBovespa log-return time series.}\label{aic}\vspace{0.2cm}
  {\footnotesize
\begin{tabular*}{1\textwidth}{@{\extracolsep{\fill}}ccccccccc}
  \hline
  \multicolumn{5}{c}{Order} & \multicolumn{4}{c}{Criterion}\\
    \cline{1-5} \cline{6-9}
    $p_1$ & $q_1$& $p_2$& $d$ & $q_2$ &Log-likelihood&  BIC & AIC & HQC \\
        \cline{1-5} \cline{6-9}
    3	&2	&1	&0.3651	&1	&\textbf{4142.260}	& -8202.588 & -8262.520 & -8240.344\\
    0	&1	&0	&0.3578	&1	&4138.552	& \textbf{-8232.414}	&-8265.104&\textbf{	-8253.008}\\
    0	&  2	& 0	&0.3785   &	1	&4141.197	& -8230.256 & \textbf{-8268.394} & -8254.282\\
  \cline{1-5} \cline{6-9}
  \hline
  \end{tabular*}}
 \renewcommand{\arraystretch}{1}
\begin{flushleft}\vspace{-0.2cm}
{\footnotesize Note: Boldface indicates that the model was the best, among all combinations of $p_1,q_1\in\{0,1,2,3\}$ and
$p_2,q_2 \in\{0,1,2\}$,\linebreak
 \phantom{Note:} with respect to the corresponding criterion. }
\end{flushleft}
\end{table}
As shown in Table \ref{aic}, the  values of the  selection criteria  did not vary
much amongst the tested models so we choose the most parsimonious one, namely, ARMA(0,1)-FIEGARCH$(0,d,1)$.  We compare the forecasting
performance of this model with other ARCH-type models  and with a radial basis function model.   For this comparison
the  order of the   ARMA$(p_1,q_1)$ part of the  model was not changed, that is,
we fixed $p_1 = 0$ and $q_1 =1$ for all ARCH-type models.  The EGARCH$(p_2,q_2)$
model was   set to have the same values for  $p_2$ and $q_2$ as the FIEGARCH
model  so we could investigate the influence of the long memory parameter $d$.
For the GARCH($p_2,q_2$) model we choose the smallest values of $p_2$ and
$q_2$ for which the residuals of the model are not correlated. The same was done
for the ARCH$(p_2)$ model (which resulted in $p_2 = 6$).  The ARCH(1) model was
presented only for comparison.     The estimated coefficients for the  ARCH-type models are given in Table \ref{allmodels},
with the corresponding log-likelihood value.  Notice that, the FIEGARCH model
fitted to this time series present the same parameters values as model M4
considered in the simulated study in Section \ref{simulationsection}.

\begin{table}[!htbp]
  \renewcommand{\arraystretch}{1.1}
  \centering
  \caption{Fitted models and their respective log-likelihood, BIC, AIC and HQC values.  The number in parenthesis corresponds to
    the  standard error of the estimate.}\label{allmodels}\vspace{0.2cm}
  {\footnotesize
    \begin{tabular*}{1\textwidth}{@{\extracolsep{\fill}}cccccc}
      \hline
      \multirow{2}{*}{Estimate}     & {\scriptsize ARMA(0,1) + } & {\scriptsize ARMA(0,1) + } & {\scriptsize ARMA(0,1) + } & {\scriptsize ARMA(0,1) + } & {\scriptsize ARMA(0,1) + }\\
      &  {\scriptsize ARCH(1)}   &   {\scriptsize ARCH(6) }  &  {\scriptsize GARCH(1,1) }  &  {\scriptsize EGARCH(0,1)}   &
      {\scriptsize FIEGARCH(0,$d$,1)}\\
      \hline
      \vspace{-0.2cm}\\
      $\hat\theta_1$   &   -0.1138 (0.0200)   &  -0.0642 (0.0267)  &   -0.0647 (0.0266)
      &   -0.0751   (0.0254) &   -0.0776  (0.0257)\\

      $\hat\omega$   &   0.0004  (0.0000)  &   0.0002  (0.0000)  &   0.0000  (0.0000)
      &  -7.4694 (0.0969)   &   -7.2247  (0.2143)\\

      $\hat\alpha_1$   &   0.6071 (0.0581)   &   0.2307 (0.0417)  &   0.2019 (0.0247)  &   -   &   -\\

      $\hat\alpha_2$   &   -    &   0.1540  (0.0333)   &   -   &   -   &   - \\

      $\hat\alpha_3$   &   -    &   0.1852  (0.0390)   &   -   &   -   &   - \\

      $\hat\alpha_4$   &   -   &   0.1145 (0.0348)  &   -   &   -   &   - \\

      $\hat\alpha_5$   &   -   &   0.0641 (0.0290)   &   -   &   -   &   -\\

      $\hat\alpha_6$   &   -   &   0.0635 (0.0257)   &   -   &   -   &   -\\

      $\hat\beta_1$   &   -   &   -   &   0.7659 (0.0271)   &   0.9373  (0.0103)  &
      0.6860 (0.0986)\\

      $\hat d$   &   -   &   -   &   -   &   -   &  0.3578 (0.0810)\\

      $\hat\theta$  &   -   &   -   &   -   &   -0.1653 (0.0197)   &   -0.1661 (0.0224)\\

      $\hat\gamma$ &   -   &   -   &   -   &  0.2782 (0.0300)  &   0.2972 (0.0332)\vspace{0.1cm}\\

      \hline

     log-likelihood & 3934.337 & 4060.372 & 4072.622 &
      4137.625 & 4138.552\\
BIC & -7846.329 &-8061.157 &-8115.451& -8238.008& -8232.414\\
AIC & -7862.674 &-8104.744 &-8137.244 &-8265.250 & -8265.104\\
HQC & -7856.626 &-8088.616 &-8129.180 &-8255.170 &-8253.008\\

      \hline

    \end{tabular*}}
\end{table}

To fit a radial basis model to the data (no exogenous variables are considered) we assume that  $\{r_t\}_{t\in\mathds{Z}}$ can be written as    (see \cite{GI2003},  \cite{DB2010})
\[
r_t = \phi(\boldsymbol{y}_{t-1}) + \psi(\boldsymbol{y}_{t-1})Z_t :=  \phi(\boldsymbol{y}_{t-1})  + \varepsilon_t, \quad \mbox{for all} \, \,  t\in\mathds{Z},
\]
with $\boldsymbol{y}_{t-1} =(r_{t-1}, \cdots, r_{t-p})$, for some $p > 0$, $\varepsilon_t :=  \psi(\boldsymbol{y}_{t-1})Z_t$, $\mathds{E}(Z_t) = 0$ and $\mathds{E}(Z_t^2) = 1$.    Under these assumptions,   $\mathds{E}(r_t | \boldsymbol{y}_{t-1})  = \phi(\boldsymbol{y}_{t-1})$ and $\mathds{E}(\varepsilon_t^2 | \boldsymbol{y}_{t-1}) = \psi^2( \boldsymbol{y}_{t-1})$,  for all $t\in\mathds{Z}$.   Therefore,   we use  neural networks   $\Phi_{n}$  and  $\boldsymbol{\Psi}_{n}$  to approximate, respectively,  $\phi(\boldsymbol{y})$ and $\psi^2(\boldsymbol{y})$,  and obtain
    \[
\hat\phi(\boldsymbol{y}) = \Phi_{n}(\boldsymbol{y}; \boldsymbol{\hat w}_1) \quad \mbox{and} \quad \hat\psi^2(\boldsymbol{y}) = \Psi_{n}(\boldsymbol{y}; \boldsymbol{\hat w}_2), \quad \mbox{for all } \,\, \boldsymbol{y} \in \mathds{R}^p,
\]
where
\small
\[
\boldsymbol{\hat w}_1 = \mbox{arg min}\bigg\{\frac{1}{n-p}\sum_{t = p+1}^n\Big[r_t - \Phi_{n}(\boldsymbol{y}_{t-1}; \boldsymbol{w})\Big]^2\bigg\}
\quad \mbox{\normalsize and} \quad
 \boldsymbol{\hat w}_2 = \mbox{arg min}\bigg\{\frac{1}{n-p}\sum_{t = p+1}^n\Big[\hat\varepsilon_t^2 - \Psi_{n}(\boldsymbol{y}_{t-1}; \boldsymbol{w})\Big]^2\bigg\},
 \]\normalsize
with $\hat \varepsilon_t =  r_t - \hat \phi( \boldsymbol{y}_{t-1})$,  for all $ t\in \mathds{Z}$.  In both cases, we consider one hidden layer containing $N$  neurons, for some $N\in\mathds{N}$,  that is,
\[
\Phi_{n}(\boldsymbol{y}; \boldsymbol{ w}_1) = \sum_{i = 1}^N a_i\rho_{i}(||\boldsymbol{y} - \boldsymbol{c}_i||) \quad \mbox{and }\quad  \Psi_{n}(\boldsymbol{y}; \boldsymbol{ w}_2) = \sum_{i = 1}^N a_i^* \rho_{i}^*(||\boldsymbol{y} - \boldsymbol{c}_i^*||), \quad \mbox{for all } \boldsymbol{y} \in\ \mathds{R}^p,
\]
with   $\boldsymbol{ w}_1 = (a_1,\cdots, a_N, b_1,\cdots,b_N, \boldsymbol{c}_1, \cdots, \boldsymbol{c}_N)$,  $\boldsymbol{ w}_2 = (a_1^*,\cdots, a_N^*,  b_1^*,\cdots,b_N^*, \boldsymbol{c}_1^*, \cdots, \boldsymbol{c}_N^*)$,  $a_i, b_i, a_i^*,b_i^* \in\mathds{R}$,  $\boldsymbol{c}_i, \boldsymbol{c}_i^*\in\mathds{R}^p$,  $||\cdot ||$ the Euclidean norm, $\rho_{i}(z) = e^{-(b_ iz)^2 }$  and  $\rho_{i}^*(z) = e^{-(b_i^{*}z)^2 }$, for any $z\in\mathds{R}$ and $i\in\{1,\cdots,N\}$.

To obtain  a $h$-step ahead predictor  for $r_{n+h}^2$ given $\{r_t\}_{t=1}^n$,   we observe that, for all $t\in\mathds{Z}$,
\[
\mathds{E}\big(r_t | \{r_k\}_{k < t}\big)  = \mathds{E}(r_t | \boldsymbol{y}_{t-1})  = \phi(\boldsymbol{y}_{t-1}) \quad \mbox{and } \quad
\mbox{\rm Var}\big(r_t | \{r_k\}_{k < t}\big)  =  \mbox{\rm Var}(r_t|\boldsymbol{y}_{t-1}) = \psi^2( \boldsymbol{y}_{t-1}).
\]
Therefore,  $\mathds{E}\big(r_t^2 | \{r_k\}_{k < t}\big)  = \mathds{E}(r_t^2 | \boldsymbol{y}_{t-1})  = \varphi(\boldsymbol{y}_{t-1}) =  \psi^2( \boldsymbol{y}_{t-1}) +  \phi^2( \boldsymbol{y}_{t-1})$,  for some $\varphi: \mathds{R}^p \to  \mathds{R}^p$. Thus,  once  $\phi(\cdot)$ and  $\psi^2(\cdot)$ are estimated, the predictors
 $\hat r_{n+h}$  and   $\hat r_{n+h}^2$  can be obtained recursively  as
\begin{align*}
\hat r_{n+1} = \hat\phi( \boldsymbol{y}_n)  \quad & \mbox{and} \quad \hat r_{n+1}^2 =  \hat\psi^2( \boldsymbol{y}_n) + \hat\phi^2(\boldsymbol{y}_n),\\
 \hat r_{n+h} = \hat\phi( \boldsymbol{\hat y}_{n+h}) \quad & \mbox{and} \quad     \hat r_{n+h}^2 =  \hat\psi^2( \boldsymbol{\hat y}_{n+h}) + \hat\phi^2(\boldsymbol{\hat y}_{n+h}), \quad \mbox{for all } h > 1,
\end{align*}
where  $\boldsymbol{\hat y}_{n+h} = (\hat r_{n+h-1}, \cdots, \hat r_{n+h-1-p})$, with  $\hat r_{n+h-1-k} = r_{n+h-1-k}$, whenever $n+h-1-k \leq n$.

 Tables \ref{allmodelsmae} and  \ref{allmodelsmae2} present some statistics to access the out-of-sample forecasting performance, respectively,  of ARCH-type  and radial basis models.  The values in these tables correspond to the mean absolute error ($mae$), the mean percentage
error ($mpe$) and the maximum absolute error ($max_{ae}$) of forecast,
respectively defined as
\[
mae = \frac{1}{20}\sum_{h=1}^{20}|e_{n+h}|, \quad mpe :=
\frac{1}{20}\sum_{h=1}^{20}\frac{|e_{n+h}|}{r_{n+h}^2} \quad \mbox{and} \quad
max_{ae} := \max_{h\in\{1,\cdots,20\}}\{|e_{n+h}|\}
\]
where, $e_{n+h} := \hat{r}_{n+h}^2 - r_{n+h}^2$, for $h\in \{1,\cdots, 20\}$ and
$n = 1717$, is the $h$-step ahead forecast error.   Note that, when considering the ARMA combined with ARCH-type models,
from the ARMA(0,1)
part of the models, $r_t = X_t -\theta_1X_{t-1}$, where $X_t = \sigma_tZ_t$, for
all $t\in\mathds{Z}$.  Since we define $\hat r_{t+h}^2 =
\mathds{E}(r_{t+h}^2|\mathcal{F}_{t})$ and $\sigma_t^2$ is
$\mathcal{F}_{t-1}$-measurable, for all $t\in\mathds{Z}$, by elementary
calculations we conclude that, $\hat r_{n+1}^2 = \sigma_{n+1}^2 +
\theta_1^2X_{n}^2$ and $\hat r_{n+h}^2 = \hat\sigma_{n+h}^2 +
\theta_1^2\hat\sigma_{n+h-1}^2$, for all $h>1$, with $\hat\sigma_{n+1}^2 =
\sigma_{n+1}^2$.  For EGARCH and FIEGARCH models, $\hat\sigma_{n+1}^2$ is
replaced by $\tilde\sigma_{n+1}^2$, given in expression \eqref{sigmatilde}, and
$\check{\sigma}_{n+h}^2 := \exp\{\hat{\ln}(\sigma_{n+h}^2)\}$, where
$\hat{\ln}(\sigma_{n+h}^2)$ is defined in Proposition \ref{hstepaheadX}.

\begin{table}[!htbp]
    \renewcommand{\arraystretch}{1.1}
  \centering
  \caption{Mean absolute error ($mae$), mean percentage error ($mpe$) and
    maximum absolute error ($max_{ae}$)  of forecasting  for the models in Table \ref{allmodels}. }\label{allmodelsmae}\vspace{0.2cm}
  {\footnotesize
    \begin{tabular*}{1\textwidth}{@{\extracolsep{\fill}}crrrrrrr}
      \hline
      \multirow{2}{*}{Model}     & \multicolumn{1}{c}{\scriptsize ARMA(0,1) + } &
      \multicolumn{1}{c}{\scriptsize ARMA(0,1) +} &
      \multicolumn{1}{c}{\scriptsize ARMA(0,1) + } & \multicolumn{2}{c}{\scriptsize ARMA(0,1) + } & \multicolumn{2}{c}{\scriptsize ARMA(0,1) + }\\
      &  \multicolumn{1}{c}{\scriptsize ARCH(1)}   &   \multicolumn{1}{c}{\scriptsize ARCH(6) }  &  \multicolumn{1}{c}{\scriptsize GARCH(1,1) }  &  \multicolumn{2}{c}{\scriptsize EGARCH(1,1)}   &
      \multicolumn{2}{c}{\scriptsize FIEGARCH(1,$d$,1)}\\
      \cline{2-2}\cline{3-3}\cline{4-4}\cline{5-6}\cline{7-8}

      Predictor\phantom{\Big{|}} & $\hat \sigma_{t+h}^2$ &$\hat \sigma_{t+h}^2$ &$\hat \sigma_{t+h}^2$
      & $\tilde \sigma_{t+h}^2$&$\check \sigma_{t+h}^2$   & $\tilde \sigma_{t+h}^2$&
      $\check \sigma_{t+h}^2$ \\
      \hline
      \vspace{-0.2cm}\\

      $mae$ &  $\phantom{00}$0.00053 & $\phantom{0}$0.00045 & $\phantom{0}$0.00043 &0.00045&$\phantom{0}$0.00044 & 0.00045 &$\phantom{0}$0.00043\\
      $mpe$ &  109.40844 & 68.97817 & 60.29677 & 71.33057 &61.26625 &68.42884 &59.88066\\
      $ max_{ae}$ & $\phantom{00}$0.00094 & $\phantom{0}$0.00094 & $\phantom{0}$0.00094 & 0.00082&$\phantom{0}$0.00087 & 0.00084&$\phantom{0}$0.00088\vspace{0.1cm}\\
      \hline
      \multicolumn{8}{l}{\footnotesize Note: The high $mpe$ values  are
        due to 5 observations close to zero.}
    \end{tabular*}}
\end{table}

From Table \ref{allmodelsmae} we conclude that, given its high $mpe$ value, the
ARMA(0,1)-ARCH(1) does not fit the data well. In fact, the square of the
residuals from this model are still correlated and we use the model only for
comparison.  The ARMA(0,1)-ARCH(6) model performed similar to the
ARMA(0,1)-GARCH(1,1) model, in terms of both, $mae$ and $max_{ae}$ values,
presenting a higher $mpe$ value.  However, the latter is more parsimonious.
Although the log-likelihood value is higher (and the $max_{ae}$ value is
smaller) for the ARMA(0,1)-EGARCH(0,1) model, the $mae$ and the $mpe$ values are
smaller for the ARMA(0,1)-GARCH(0,d,1) model.  Overall, the
ARMA(0,1)-FIEGARCH(0,d,1) performs slightly better than the other models.

The fact that all models present a similar perfomance confirms the following,
already known in the literature.
\begin{itemize}
\item In practice, ARCH$(p)$ models perform relatively well for most
  applications.

\item GARCH$(p,q)$ models are more parsimonious than the ARCH ones. For
  instance, notice that similar results were obtained here by considering an
  ARCH$(6)$ model and a GARCH$(1,1)$ model.

\item For EGARCH$(p,q)$ models the conditional variance is defined in terms of
  the logarithm function and less (usually none) restrictions have to be imposed
  during parameter estimation.  Moreover, EGARCH models are not necessarily more
  parsimonious than ARCH/GARCH ones since it also carries information on the
  returns' asymmetry ($\theta$ and $\gamma$ parameters).

\item FIEGARCH$(p,d,q)$ models can describe not only the same characteristics as
  ARCH, GARCH and EGARCH models do, but also the long-memory in the volatility.
  Also, the performance of all models will be very similar if the volatility
  presents high persistence.  For instance, notice that for the ARCH$(6)$ model
  $\alpha_1+\cdots+\alpha_6 = 0.812$, for the GARCH$(1,1)$ model $\alpha_1 +
  \beta_1 = 0.9678$ and for the EGARCH model $\beta_1 = 0.9373$, which imply
  high persistence in the volatility.  Moreover, for the FIEGARCH model, we
  found $d = 0.3578$ with standard error equal to 0.0810, which indicates that
  the parameter $d$ is statistically different from zero and thus, there is
  evidence of long-memory in the volatility.

\item Given their definition, it is expected that EGARCH and FIEGARCH models
  will provide better forecasts for $\ln(\sigma_{t+h}^2)$ than for
  $\sigma_{t+h}^2$ and, consequently, for $X_{t+h}^2$.
\end{itemize}

\begin{table}[!htbp]
 \renewcommand{\arraystretch}{1.1}
 \centering
 \caption{Mean absolute error ($mae$), mean percentage error ($mpe$) and
  maximum absolute error ($max_{ae}$) of forecasting  for radial basis models
 with  $N \in\{5,10, \cdots, 45\}$ hidden  neurons and $p \in\{1,5,10,15\}$.}
 \label{allmodelsmae2}\vspace{0.2cm}
 {\footnotesize
  \begin{tabular*}{1\textwidth}{@{\extracolsep{\fill}}crrrrccrrrr}
  \hline
 $p$ & N & $mae$ &$mpe$ & $ max_{ae}$ &&  $p$ & N & $mae$ &$mpe$ & $ max_{ae}$\\
  \cline{1-2}  \cline{3-5}  \cline{7-8}  \cline{9-11}
 1 &  5 &     0.00189&   168.16694&     0.00276 && 10 &  5 &     0.00046&    84.07916&     0.00096\\
    & 10 &     0.00464&   360.57740&     0.02105 &&   & 10 &     0.00209&   211.49929&     0.00288\\
        & 15 &     0.00306&   205.95363&     0.01798 &&    & 15 &     0.00076&    40.16931&     0.00156\\
    & 20 &     0.00284&   405.17466&     0.00406   &&    & 20 &     0.00251&   353.29510&     0.00329\\
    & 25 &     0.00106&    69.24385&     0.00193   &&    & 25 &     0.00099&    65.04972&     0.00177\\
    & 30 &     0.00077&    35.08914&     0.00165   &&    & 30 &     0.00214&   309.03589&     0.00292\\
    & 35 &     0.00117&    81.84698&     0.00204  &&    & 35 &     0.00047&    60.11370&     0.00083\\
    & 40 &     0.00082&    40.86115&     0.00169  &&   & 40 &     0.00224&   214.27183&     0.00302\\
    & 45 &     0.00044&     7.76332&     0.00130  &&    & 45 &     0.00043&    46.54092&     0.00084\\
  \cline{1-2}  \cline{3-5}  \cline{7-8}  \cline{9-11}
 5 &  5 &     \textbf{0.00040}&    49.60723&     0.00090 && 15 &  5 &     \textbf{0.00040}&    20.88682&     0.00111\\
    & 10 &     0.00050&    92.13256&     0.00092 && & 10 &     0.00063&    42.05418&     0.00164\\
    & 15 &     0.00058&   111.93650&     0.00109 && & 15 &     0.00110&   185.41861&     0.00212\\
    & 20 &     0.00040&    21.32100&     0.00116 && & 20 &     0.00277&   326.16372&     0.00378\\
    & 25 &     0.00052&     5.95880&     0.00138 && & 25 &     0.00045&    63.80141&     0.00082\\
    & 30 &     0.00046&     4.61686&     0.00129 && & 30 &     0.00047&     \textbf{3.95304}&     0.00123\\
    & 35 &     0.00041&    19.79905&     0.00116 && & 35 &     0.00045&    72.19310&     0.00098\\
    & 40 &    \textbf{0.00040}&    31.93826&     0.00107 && & 40 &     0.00044&    63.04763&     \textbf{0.00079}\\
    & 45 &     0.00120&    88.07146&     0.00207 && & 45 &     0.00271&   363.47039&     0.00343\\
    \hline
              \multicolumn{11}{l}{\footnotesize Note: Boldface indicates the best model for each criterion.}
 \end{tabular*}}
 \end{table}

From Table \ref{allmodelsmae2} we observe that
\begin{itemize}
\item   in terms of $mae$ or $max_{ae}$, both radial basis  and  ARCH-type (see Table \ref{allmodelsmae}) models have a similar performance.    In this case, ARCH-type models seem a better choice given the smaller number of parameter to be estimated;

\item for each $p$ there exists at least one $N$ for which the $mpe$ value for the  radial basis model is much smaller then any ARCH-type models.  However,  given the similarity regarding  $mae$,  the small  $mpe$ values only indicate that   radial basis models provide a better forecast for observations too  close to zero.
\end{itemize}

\section{Conclusions}\label{conclusions}

Here we show complete mathematical proofs for the stationarity, the ergodicity,
the conditions for the causality and invertibility properties, the
autocorrelation and spectral density functions decay and the convergence order
for the polynomial coefficients that describe the volatility for any
FIEGARCH$(p,q,d)$ process. We prove that if $\{X_t\}_{t \in \mathds{Z}}$ is a
FIEGARCH$(p,d,q)$ process and
$\mathds{E}(\left[\ln(Z_0^2)\right]^2)<\infty$, then
$\{\ln(X_t^2)\}_{t\in\mathds{Z}}$ is an ARFIMA$(q,d,0)$ process with correlated
innovations.  Expressions for the kurtosis and the asymmetry measures of any
stationary FIEGARCH$(p,d,q)$ process were also provided.

We also prove that if $\{X_t\}_{t \in \mathds{Z}}$ is a FIEGARCH$(p,d,q)$
process then  it is a martingale difference with respect to the filtration
$\{\mathcal{F}_{t}\}_{t\in\mathds{Z}}$, where $\mathcal{F}_{t} :=
\sigma(\{Z_s\}_{s\leq t})$. The $h$-step ahead forecast for the processes
$\{X_t\}_{t \in \mathds{Z}}$, $\{\ln(\sigma_t^2)\}_{t\in\mathds{Z}}$ and
$\{\ln(X_t^2)\}_{t\in\mathds{Z}}$ are given with their respective mean square
error forecast.  Since $\mathds{E}(\sigma_{t+h}^2|\mathcal{F}_{t})$ cannot be
easily calculated for FIEGARCH models, we also discuss some alternative
estimators for the $h$-step ahead forecast of $\sigma_{t+h}^2$, for all $h>0$.

We present a Monte Carlo simulation study showing how to perform the generation,
the estimation and the forecasting of six different FIEGARCH models. The
parameter selection of these six models are related to the real time series
analyzed in \cite{PRLO1}.  Parameter estimation was performed by considering the
well known quasi-likelihood method.  We conclude that, given the complexity of
FIEGARCH models, the quasi-likelihood method performs relatively well, which is
indicated by the small $bias$, $mae$ and $mse$ values for the estimates.
Regarding the $h$-step ahead forecast for the processes
$\{\sigma_t^2\}_{t\in\mathds{Z}}$ and $\{X_t^2\}_{t\in\mathds{Z}}$, we observe
that the mean square error of forecast decreases as the sample size
increases. However, while the conditional variance is well estimated, which is
indicated by the small $mae$ values, the estimator $\tilde{X}_{n+h}^2 := \tilde
\sigma_{t+h}^2$, which is an approximation for $\hat X_{t+h}^2 :=
\mathds{E}(X_{n+h}^2|\mathcal{F}_{n}) = \hat{\sigma}_{n+h}^2$, does not perform
well in predicting $X_{n+h}^2$. This result is expected since the purpose of the
model is to forecast the logarithm of the conditional variance and not the
process $\{X_t\}_{t \in \mathds{Z}}$ itself.

Finally, we present the analysis of the S\~ao Paulo Stock Exchange Index
(Bovespa Index or IBovespa) log-return time series. We compared the forecasting
performance of FIEGARCH models, fully described in this paper, with other
ARCH-type models.  All models presented a similar performance which was
attributed to the fact that the ARCH, GARCH and EGARCH models indicated high
persistence in the volatility.  We also  compared the forecasting performance of ARCH-type with radial basis models.
 Given the similarity regarding the mean (and maximum) absolute error of forecast we conclude that both classes show a similar forecasting performance. Comparing the mean percentage error of forecasts we concluded that radial basis models provide  a better forecast for observations too  close to zero.

\section*{Acknowledgments}

S.R.C. Lopes was partially supported by CNPq-Brazil, by CAPES-Brazil, by INCT em
\emph{Ma\-te\-m\'a\-ti\-ca} and by Pronex {\it Probabilidade e Processos
  Estoc\'asticos} - E-26/170.008/2008 -APQ1.  T.S. Prass was supported by
CNPq-Brazil. The authors are grateful to the (Brazilian) National Center of
Super Computing (CESUP-UFRGS) for the computational resources.


\end{document}